\newtheorem{prop}{Proposition}[section]
\newtheorem{thm}[prop]{Theorem}
\newtheorem{lemm}[prop]{Lemma}
\newtheorem{coro}[prop]{Corollary}
\newtheorem*{claim*}{Claim}
\theoremstyle{definition}
\newtheorem{defi}[prop]{Definition}
\newtheorem{rmk}[prop]{Remark}
\newcommand{\CC}{\mathbb{C}}
\newcommand{\NN}{\mathbb{N}}
\newcommand{\RR}{\mathbb{R}}
\newcommand{\ZZ}{\mathbb{Z}}
\newcommand{\cA}{\mathcal A}
\newcommand{\cB}{\mathcal B}
\newcommand{\cC}{\mathcal C}
\newcommand{\cD}{\mathcal D}
\newcommand{\cE}{\mathcal E}
\newcommand{\cF}{\mathcal F}
\newcommand{\cK}{\mathcal K}
\newcommand{\cM}{\mathcal M}
\newcommand{\cN}{\mathcal N}
\newcommand{\cP}{\mathcal P}
\newcommand{\cS}{\mathcal S}
\newcommand{\cU}{\mathcal U}
\def\bB{\mathbf{B}}
\def\bT{\mathbf{T}}
\DeclareMathOperator{\im}{Im}
\DeclareMathOperator{\supp}{supp}
\DeclareMathOperator{\Div}{div}
\DeclareMathOperator{\loc}{loc}
\DeclareMathOperator{\osc}{osc}
\DeclareMathOperator{\re}{Re}
\DeclareMathOperator{\dist}{dist}
\DeclareMathOperator{\Ind}{Ind}
\DeclareMathOperator{\Ran}{Ran}
\DeclareMathOperator{\Vol}{\text{Vol}}
\newcommand{\ep}{\varepsilon}
\newcommand{\wep}{\widetilde{\varepsilon}}
\newcommand{\pa}[2]{\frac{\partial #1}{\partial #2}}
\newcommand{\paop}[1]{\pa{}{#1}}
\newcommand{\rom}[1]{\expandafter\romannumeral #1}
\newcommand{\Rom}[1]{\uppercase\expandafter{\romannumeral #1}}
\setlist[enumerate]{leftmargin = 2em}
\numberwithin{equation}{section}
\title[Existence of free boundary CMC disks]{Existence of free boundary disks with constant mean curvature in $\RR^3$}
\author{Da Rong Cheng}
\address{Department of Mathematics, University of Miami, Coral Gables, FL 33146}
\email{darong.cheng@miami.edu}
\begin{document}

\begin{abstract} 
Given a surface $\Sigma$ in $\RR^3$ diffeomorphic to $S^2$, Struwe~\cite{Struwe88} proved that for almost every $H$ below the mean curvature of the smallest sphere enclosing $\Sigma$, there exists a branched immersed disk which has constant mean curvature $H$ and boundary meeting $\Sigma$ orthogonally. We reproduce this result using a different approach and improve it under additional convexity assumptions on $\Sigma$. Specifically, when $\Sigma$ itself is convex and has mean curvature bounded below by $H_0$, we obtain existence for all $H \in (0, H_0)$. Instead of the heat flow in~\cite{Struwe88}, we use a Sacks-Uhlenbeck type perturbation. As in previous joint work with Zhou~\cite{ChengZhou-cmc}, a key ingredient for extending existence across the measure zero set of $H$'s is a Morse index upper bound. 
\end{abstract}

\maketitle

\section{Introduction}
Let $\Sigma$, henceforth referred to as the constraint, be a closed surface in $\RR^3$ diffeomorphic to $S^2$. In this paper we adapt the approach in previous joint work with Zhou on constant mean curvature spheres~\cite{ChengZhou-cmc} to study the existence of constant mean curvature disks with boundary meeting $\Sigma$ orthogonally. For our main results we assume that $\Sigma$ is enclosed by a closed surface $\Sigma'$ which is strictly convex and has mean curvature bounded from below by some $H_0 > 0$ with respect to the inward pointing normal. For example, define 
\[
R_0 = \inf\{ R>0\ |\ \Sigma \subseteq \overline{B_R(y)} \text{ for some }y \in \RR^3 \}.
\]
Then there exists $y_0 \in \RR^3$ such that $\Sigma \subseteq \overline{B_{R_0}(y_0)}$, and we can take $\Sigma' = \partial B_{R_0}(y_0)$ and $H_0 = \frac{2}{R_0}$. Our first result is a refinement of the seminal work of Struwe~\cite{Struwe88}. Below we denote the open unit disk in $\RR^2$ by $\bB$. The bounded open regions enclosed by $\Sigma$ and $\Sigma'$ are denoted $\Omega$ and $\Omega'$, respectively.
\begin{thm}\label{thm:main-1}
For almost every $H \in (0, H_0)$, there exists a non-constant, free boundary, branched immersion $u: (\overline{\bB}, \partial \bB) \to (\overline{\Omega'}, \Sigma)$ with constant mean curvature $H$ and Morse index at most $1$.
\end{thm}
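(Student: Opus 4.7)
My plan is to seek critical points of the Hildebrandt-type functional
\begin{equation*}
E_H(u) \;=\; \frac{1}{2}\int_{\bB} |\nabla u|^2 \, dx \;+\; \frac{2H}{3}\int_{\bB} u \cdot (u_x \wedge u_y) \, dx,
\end{equation*}
restricted to the admissible class of maps $u:(\overline{\bB},\partial\bB) \to (\overline{\Omega'},\Sigma)$. Such critical points, subject to free boundary variations along $\Sigma$, are precisely the branched immersed CMC-$H$ disks meeting $\Sigma$ orthogonally. Because the Dirichlet part of $E_H$ is conformally invariant, $E_H$ fails to satisfy the Palais--Smale condition, so following the Sacks--Uhlenbeck philosophy advertised in the abstract I would substitute the perturbed functional
\begin{equation*}
E_{H,\alpha}(u) \;=\; \frac{1}{2}\int_{\bB} (1+|\nabla u|^2)^{\alpha} \, dx \;+\; \frac{2H}{3}\int_{\bB} u \cdot (u_x \wedge u_y) \, dx, \qquad \alpha>1,
\end{equation*}
which does satisfy Palais--Smale on the admissible class. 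The strict convexity of $\Sigma'$ together with the bound $H < H_0$ furnishes a mean-curvature barrier, so that critical points of $E_{H,\alpha}$ remain in $\overline{\Omega'}$ by a standard maximum-principle argument, and the admissibility constraint is compatible with variational deformations.

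\textbf{Minimax for the perturbed functional.} Next I would exhibit a mountain-pass geometry for $E_{H,\alpha}$: nearly-constant admissible maps have small energy, while a one-parameter sweep-out through maps covering a definite portion of the region enclosed by $\Sigma'$ drives the oriented-volume term sufficiently negative to dominate the $(1+|\nabla u|^2)^{\alpha}$ term at the other endpoint, using $H < H_0$ in an essential way. This produces a strictly positive mountain-pass value $\beta_\alpha(H)$ and, via the Palais--Smale condition, a critical point $u_{H,\alpha}$. To secure Morse index at most one, I would invoke an index-bounded deformation theorem analogous to the one developed in the joint work with Zhou \cite{ChengZhou-cmc}, producing almost-critical sequences whose augmented Morse index is bounded by one, a bound that survives passage to the critical point by semicontinuity of the index for smooth critical points.

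\textbf{Passage to the unperturbed limit.} The main obstacle is the degeneration $\alpha \downarrow 1$. Here I would exploit Struwe's monotonicity trick: $\beta_\alpha(H)$ is monotone in $H$, hence differentiable in $H$ for almost every $H \in (0,H_0)$, and at such $H$ one extracts almost-critical sequences with uniform control on the $\alpha$-Dirichlet energy and thus uniform $W^{1,2}$-bounds on $u_{H,\alpha}$. A standard bubbling analysis for sequences of CMC maps then shows that $u_{H,\alpha}$ subconverges smoothly away from finitely many concentration points to a branched immersion $u$, with the lost energy carried either by interior CMC-$H$ spheres or by boundary free-boundary CMC-$H$ half-disks. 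The hardest step---and where the index bound is indispensable---is to rule out the scenario where $u$ is constant and all energy escapes into bubbles. The combination of positive mountain-pass energy, the Morse index bound of one, and the fact that each nontrivial bubble contributes its own negative direction to the second variation forces $u$ itself to retain a positive amount of energy, yielding the desired non-constant free-boundary CMC-$H$ disk of Morse index at most one.
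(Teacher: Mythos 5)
Your overall strategy (perturb, min-max, monotonicity trick, bubbling, index bound) is broadly aligned with the paper, but there are several substantive deviations, and one of your key steps does not work as stated.

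First, you propose the genuine Sacks--Uhlenbeck $\alpha$-energy $\frac{1}{2}\int(1+|\nabla u|^2)^\alpha$ with $\alpha\downarrow 1$; the paper explicitly eschews this in favor of the Uhlenbeck-type perturbation $D_{\ep,p}(u)=\int\frac{|\nabla u|^2}{2}+\frac{\ep^{p-2}}{p}(1+|\nabla u|^2)^{p/2}$ with $p$ \emph{fixed} close to $2$ and $\ep\to0$. The reason is technical but real: when one rescales at a concentration point by $t_k$, the rescaled map is a critical point of $E_{\ep/t_k,p,f}$, and the crucial step is a Pohozaev-type identity showing $\ep_k/t_k\to 0$ (Proposition~\ref{prop:rescale}), which makes the bubble a genuine critical point of the unperturbed problem. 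With $\alpha$-energy the analogous parameter is $\alpha_k-1$, which does not interact with the rescaling scale in the same clean way, and energy-identity/neck issues are well known in that setting. You would need to address this. Similarly, your volume term $\frac{2H}{3}\int u\cdot(u_x\wedge u_y)$ is the fixed-boundary Hildebrandt functional; under free-boundary variations it produces nonvanishing boundary terms that do not combine with $\int_{\partial\bB}u_r\cdot\psi$ to give the orthogonality condition, which is why the paper instead uses a path-dependent enclosed volume $V_f(\gamma)$ together with a cut-off $f$ supported near $\overline{\Omega'}$; the cut-off also supplies the uniform $L^\infty$-bound via the maximum principle (Proposition~\ref{prop:max-principle}), a step you do not address.

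The more serious gap is in your final paragraph. You claim the index bound rules out the scenario where the weak limit $u$ is constant and all energy escapes into bubbles, on the grounds that ``each nontrivial bubble contributes its own negative direction.'' This is not correct: a single bubble may well have index exactly one (in Theorem~\ref{thm:main-1}, $\Sigma$ itself is not assumed convex, so the bubble may even be stable), so an index bound of one for $u_j$ does not preclude all the energy going into one bubble with $u$ constant. The paper does not try to rule this out. Instead: (i) interior concentration is excluded by comparing the interior rescaled limit with the Brezis--Coron classification of entire finite-energy CMC-$H$ maps, using $H<H_{\Sigma'}$ (Proposition~\ref{prop:no-interior-bubble}); and (ii) when boundary concentration occurs, the boundary bubble itself (after removable-singularity and conformal reparametrization) \emph{is} the desired non-constant free-boundary CMC disk, and the index bound is transferred to that bubble by testing with cut-off variations supported in the rescaling region. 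You are missing (i) entirely, and your intended use of the index bound for (ii) is logically reversed relative to what is actually needed.

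Finally, a minor point: with your normalization the first variation gives $\Delta u = 2H\, u_x\wedge u_y$; the coefficient should be $\frac{H}{3}$, not $\frac{2H}{3}$, to match the CMC-$H$ equation.
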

Theorem~\ref{thm:main-1} with the barrier $\Sigma'$ being a sphere and without the Morse index statement was first obtained by Struwe~\cite{Struwe88} using the heat flow approach he initiated in~\cite{Struwe1985} (see also the survey~\cite{Struwe1988survey}). Here, to bound the Morse index using the techniques in~\cite{ChengZhou-cmc}, we go back to the approach of Struwe's earlier work~\cite{Struwe1984} and perturb the relevant functional, although we do not use the $\alpha$-energy of Sacks-Uhlenbeck~\cite{Sacks-Uhlenbeck81}. While getting a Morse index bound appears to be only a slight improvement, it turns out to be the key to recovering the measure zero set of mean curvatures left out in Theorem~\ref{thm:main-1}, under additional convexity assumptions that makes $\Sigma$ itself a barrier. This leads to our second existence result. 
\begin{thm}\label{thm:main-2}
Suppose $\Sigma$ is strictly convex and has mean curvature bounded from below by some $H_0 > 0$. Then for all $H \in (0, H_0)$, there exists a non-constant, free boundary, branched immersion $u: (\overline{\bB}, \partial \bB) \to (\overline{\Omega}, \Sigma)$ with constant mean curvature $H$.
\end{thm}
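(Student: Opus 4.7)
The strategy is to bootstrap from Theorem~\ref{thm:main-1} using the stronger convexity hypothesis on $\Sigma$. Because $\Sigma$ itself is now strictly convex with mean curvature bounded below by $H_0$, it can serve as its own enclosing barrier surface, so that $\Omega' = \Omega$ in the notation of the introduction. Theorem~\ref{thm:main-1} then already supplies, for almost every $H \in (0, H_0)$, a non-constant free boundary branched CMC-$H$ immersion $u : (\overline{\bB}, \partial \bB) \to (\overline{\Omega}, \Sigma)$ of Morse index at most $1$. What remains is to recover the measure-zero set of exceptional mean curvatures, and this is precisely where the Morse index bound pays off, following the strategy of~\cite{ChengZhou-cmc}.

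Fix an arbitrary $H^* \in (0, H_0)$ and choose a sequence $H_n \to H^*$ of ``good'' values for which Theorem~\ref{thm:main-1} applies, with corresponding solutions $u_n$. I would first record the uniform bounds these solutions enjoy: a uniform $C^0$ bound from the inclusion $u_n(\overline{\bB}) \subset \overline{\Omega}$, and a uniform Dirichlet energy bound, obtained either from upper semicontinuity of the min-max level in $H$ or from a direct competitor argument using a small spherical cap inside $\overline{\Omega}$. By standard elliptic regularity for the CMC system with free boundary on $\Sigma$, together with an $\varepsilon$-regularity lemma adapted from~\cite{ChengZhou-cmc}, the $u_n$ subconverge smoothly on $\overline{\bB}$ away from a finite set of interior and boundary concentration points.

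At each such concentration point, a standard rescaling produces a non-trivial bubble: either an interior CMC-$H^*$ sphere in $\RR^3$ at an interior point, or a free boundary CMC-$H^*$ disk in a half-space bounded by the tangent plane to $\Sigma$ at a boundary point. Each bubble carries strictly positive Morse index, so the bound $\Ind(u_n) \leq 1$ permits at most one bubble. If a bubble forms, the strict inequality $H^* < H_0$ together with convexity of $\Sigma$ allow one to translate or otherwise promote the bubble to a bona fide free boundary CMC-$H^*$ branched immersion in $\overline{\Omega}$. If no bubble forms, then $u_n \to u_\infty$ smoothly and $u_\infty$ is itself a free boundary CMC-$H^*$ branched immersion into $\overline{\Omega}$.

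The hard part will be ruling out the degenerate scenario in which the body limit $u_\infty$ is constant \emph{and} no bubble forms. Excluding this requires a quantitative lower bound on the area of any non-constant free boundary CMC-$H$ disk in $\overline{\Omega}$, uniform for $H$ in a neighborhood of $H^*$, together with a careful free boundary bubble-tree accounting ensuring that the index and energy budgets are tight enough to preserve at least one non-trivial piece in the limit. Once this degenerate case is excluded, the resulting non-constant branched immersion has the required mean curvature $H^*$ and free boundary on $\Sigma$, completing the proof.
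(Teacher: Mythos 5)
Your overall strategy matches the paper's: set $\Sigma' = \Sigma$, invoke Theorem~\ref{thm:main-1} for a sequence of good mean curvatures converging to the target $H$, and pass to the limit using the Morse index bound. But you have glossed over what is in fact the central difficulty, and the methods you propose for the crucial step do not work.

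The gap is in your claim that a uniform Dirichlet energy bound on the sequence $(u_n)$ can be obtained ``either from upper semicontinuity of the min-max level in $H$ or from a direct competitor argument using a small spherical cap.'' Neither works. The $D_{\ep, p}$-bound $C_0$ in Proposition~\ref{prop:existence-with-index} comes from Struwe's monotonicity trick (Lemma~\ref{lemm:derivative-bound}), which yields a derivative bound $c$ valid only for almost every $r$, with $c$ depending on $r$ and potentially blowing up as $r_n \to 1$. The min-max value $\omega_{\ep, p, r_n f}$ itself mixes a Dirichlet term with an enclosed-volume term; bounding it does not bound the Dirichlet energy of the min-max critical point, and a competitor built from a small spherical cap would only bound the min-max value, not $D(u_n)$. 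The entire point of Section~\ref{sec:improved} of the paper is that a uniform Dirichlet bound independent of the min-max level must be extracted from the index bound itself. The paper does this in two steps: Proposition~\ref{prop:index-comparison} (an Ejiri--Micallef-type index comparison, proved via a careful $\bar{\partial}$-type argument with a Riemann--Hilbert boundary condition) transfers $\Ind_{r_n H}(u_n) \leq 1$ to an index bound on the normal bilinear form $B_{r_n H}(u_n)$; and Proposition~\ref{prop:dirichlet-bound} combines this with the strict convexity of $\Sigma$ (which gives the good sign $\int_{\partial\bB} A_\Sigma^{u_r}(\mathbf n, \mathbf n) \leq 0$) and a Hersch-type balancing via a conformal diffeomorphism of $S^2$ to deduce $D(u_n) \leq 16\pi/(r_n^2 H^2)$. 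Without these two propositions your argument has no mechanism for producing the uniform bound.

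A secondary inaccuracy: you suggest the index bound controls the number of bubbles (``each bubble carries strictly positive Morse index''). In the paper, interior bubbles are ruled out not by index counting but by Proposition~\ref{prop:no-interior-bubble}: any interior bubble would be, by Brezis--Coron classification, a round sphere of mean curvature $r_n H < H_0$ inside $\overline{\Omega}$, which the maximum principle together with the mean-curvature lower bound on $\Sigma$ forbids. Likewise the non-degeneracy (limit not constant) is handled cleanly by the uniform energy gap of Proposition~\ref{prop:uniform-lower-bound}, not by a delicate bubble-tree accounting. These parts would likely work out, but your description of the mechanism is off.
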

We next elaborate on the statements. The branched immersed, free boundary constant mean curvature disks we produce are smooth solutions to the following system of equations.
\begin{align}
&\Delta u= H \cdot u_{x^1} \times u_{x^2} \text{ on }\overline{\bB}, \label{eq:cmc-H}\\
&|u_{x^1}|  = |u_{x^2}|,\ \langle u_{x^1}, u_{x^2}\rangle = 0  \text{ on }\overline{\bB}, \label{eq:weakly-conformal}\\
&u(x) \in \Sigma \text{ and }u_r(x) \perp T_{u(x)}\Sigma  \text{ for all }x \in \partial \bB.\label{eq:fb}
\end{align}
It is standard~\cite{Hildebrandt-Nitsche1979} that any non-constant smooth map satisfying~\eqref{eq:cmc-H},~\eqref{eq:weakly-conformal} and~\eqref{eq:fb} is an immersion away from finitely many branch points. Moreover, since the domain is a disk,~\eqref{eq:weakly-conformal} follows from~\eqref{eq:cmc-H} and~\eqref{eq:fb} by a standard Hopf differential argument (see for example~\cite[Lemma 2.2, especially page 27]{Struwe88}). Note that under the weak conformality condition~\eqref{eq:weakly-conformal}, equation~\eqref{eq:cmc-H} expresses, up to the conformal factor $\frac{|\nabla u|^2}{2}$, that away from the branch points, the mean curvature of the image of $u$ equals $H$ times a unit normal.

Although our approach is different from that in~\cite{Struwe88}, we still produce solutions to~\eqref{eq:cmc-H} and~\eqref{eq:fb} as critical points of the functional
\[
E_H: (u, \gamma) \mapsto \left(\int_{\bB} \frac{|\nabla u|^2}{2} \right) + H \cdot V(\gamma)=: D(u) + H \cdot V(\gamma),
\]
defined for pairs $(u, \gamma)$ consisting of a map $u: (\overline{\bB}, \partial \bB)\to (\RR^3, \Sigma)$ and a path $\gamma$ in the space of such maps leading from a constant to $u$. Here $V(\gamma)$ can be thought of as the volume of the region swept out by $\gamma$. See Section~\ref{subsec:volume} for the definition, which is based on the one in~\cite{Struwe88} but differs technically. In any case, it still turns out that changing the choice of $\gamma$ alters the value of $E_H$ by $H$ times an integer multiple of the volume enclosed by $\Sigma$. In particular, the derivatives of $E_H$ depend only on $u$, and maps at which the first variation vanishes are, at least formally, solutions to~\eqref{eq:cmc-H} and~\eqref{eq:fb}. The Morse index in the statement of Theorem~\ref{thm:main-1} refers to the index of $u$ as a critical point of $E_H$.

Finally, as in~\cite{Struwe88}, the upper bound $H_0$ in Theorems~\ref{thm:main-1} and~\ref{thm:main-2} is there to ensure that when ``energy concentration'' occurs, we obtain disks rather than spheres. Through appropriate local estimates and a classification result due to Brezis-Coron~\cite{BrezisCoron1985}, this is reduced to the fact that there are no round spheres of mean curvature $H$ inside a region whose boundary mean curvature is everywhere larger than $H$.


\vskip 1em
Among the previous variational constructions of free boundary, constant mean curvature surfaces, most directly related to this paper are the works of Struwe~\cite{Struwe1984,Struwe88}, already mentioned above. The existence of free boundary minimal ($H = 0$) disks proved in~\cite{Struwe1984} was extended to more general ambient spaces by Fraser~\cite{Fraser2000}. Both authors used the $\alpha$-energy introduced by Sacks-Uhlenbeck~\cite{Sacks-Uhlenbeck81}, with the result in~\cite{Struwe1984} later reproduced using a heat equation as a special case of the main theorem in~\cite{Struwe88}. In turn,~\cite{Fraser2000} was refined by Lin-Sun-Zhou~\cite{Lin-Sun-Zhou2020}, and independently by Laurain-Petrides~\cite{Laurain-Petrides2019}, using the harmonic replacement procedure developed by Colding and Minicozzi~\cite{Colding-Minicozzi08b}. The very recent work of Sun~\cite{YuchinSun22} extends~\cite{Lin-Sun-Zhou2020,Laurain-Petrides2019} from disks to surfaces of other topological types. On the other hand, in a series of papers (for instance~\cite{Riviere17,Pigati-Riviere-CPAM,Pigati-Riviere-Duke}), an alternative mapping approach together with a regularity theory have been developed by Rivi\`ere and Pigati in the case of closed surfaces, and then adapted by Pigati~\cite{Pigati2022} to obtain free boundary minimal surfaces. Returning to the case $H \neq 0$, we note that B\"urger-Kuwert~\cite{Wolfram-Kuwert2008} obtained free boundary, constant mean curvature disks by a constrained minimization of the Dirichlet energy, motivated by partitioning problems. However, in this setting the value of the mean curvature occurs as a Lagrange multiplier and seems hard to prescribe in advance.

Another major approach to the existence problem uses geometric measure theory. For example, Gr\"uter and Jost~\cite{GruterJost1986,GruterJost1986b} produced embedded free boundary minimal disks in convex bodies in $\RR^3$. More general existence results along this line for free boundary minimal surfaces with controlled topology were obtained by Li~\cite{Li2015}. On the other hand, specializing to the unit ball in $\RR^3$, free boundary minimal surfaces of arbitrary genus and possessing various symmetries were constructed by Ketover~\cite{Ketover2016} and Carlotto-Franz-Schulz~\cite{Carlotto-Franz-Schulz2020}. Going beyond surfaces in $3$-manifolds, we refer to De Lellis-Ramic~\cite{DeLellis-Ramic16} and Li-Zhou~\cite{Li-Zhou2021} for the existence of embedded, free boundary minimal hypersurfaces in higher dimensions. See also~\cite{Guang-Wang-Zhou2018,Guang-Li-Wang-Zhou2021,Wang2020,Sun-Wang-Zhou2020}, for example, for other recent progress on the existence theory of free boundary minimal hypersurfaces, which has undergone rapid development parallel to advances in the closed case. As for the case $H \neq 0$, very recently, Li-Zhou-Zhu~\cite{Li-Zhou-Zhu2021} developed a new boundary regularity theory and proved that in a compact $3$-manifold with boundary, for generic metrics, there exist almost properly embedded surfaces with any prescribed constant mean curvature and contacting the boundary at any prescribed angle, extending the existence result for closed constant mean curvature hypersurfaces established earlier by Zhou-Zhu~\cite{Zhou-Zhu19,Zhou-Zhu20}. Independently, De Masi-De Philippis~\cite{DMDP2021} obtained minimal surfaces with any prescribed boundary contact angle in a bounded convex domain in $\RR^3$.
\subsection*{Outline of proofs}
Our task is to find non-constant critical points of $E_H$ using min-max methods, since direct minimization may result in trivial solutions, as would be the case if $\Sigma$ is convex. It is well-known that the Dirichlet energy barely fails the Palais-Smale condition, and thus a certain perturbation is required, which in~\cite{Struwe88} was effected through a heat flow. Here, we instead opt for a Sacks-Uhlenbeck type perturbation of the functional. Specifically, we consider, for $\ep \in (0, 1]$, $p > 2$ and $H < H_0$, 
\[
E_{\ep, p, H}: (u, \gamma) \mapsto \left(\int_{\bB} \frac{|\nabla u|^2}{2} + \frac{\ep^{p-2}(1 + |\nabla u|^2)^{\frac{p}{2}}}{p}\right) + H \cdot V(\gamma) = : D_{\ep, p}(u) + H \cdot V(\gamma).
\]
Perturbing the Dirichlet energy by adding a small multiple of the $p$-energy was already considered by Uhlenbeck in~\cite{Uhlenbeck1981}. It turns out that for $p$ sufficiently close to $2$ depending only on $\Sigma$, critical points of $E_{\ep, p, H}$ are smooth and satisfy various \textit{a priori} estimates. (See Section~\ref{sec:estimates}.) Such a $p$ will be fixed throughout, while $\ep$ will eventually be sent to zero.

We now outline the proof of Theorem~\ref{thm:main-1}. Given $H \in (0, H_0)$, we follow the steps in~\cite[Section 3]{ChengZhou-cmc} to obtain, for almost every $r \in (0, 1]$, critical points $u_{\ep}$ of $E_{\ep, p, rH}$ with $D_{\ep, p}(u_\ep)$ bounded uniformly in $\ep$, and with Morse index at most $1$. Even though in proving the deformation lemma~\cite[Lemma 3.9]{ChengZhou-cmc}, the Hilbert manifold structure of the underlying space of mappings is used to obtain local coordinates (\cite[Proposition 2.23]{ChengZhou-cmc}) that help us deform sweepouts away from high-index critical points, it turns out to be enough for the second variation at critical points to extend to a Hilbert space (see Section~\ref{sec:2nd-var}, especially Proposition~\ref{prop:coordinates}), and a deformation lemma is still valid in the present setting (Proposition~\ref{prop:bypassing}). Another difference from~\cite{ChengZhou-cmc} is that since $\RR^3$ is non-compact, we need a uniform $L^{\infty}$-bound on $u_\ep$. As in~\cite{Struwe88}, this is where the barrier $\Sigma'$ enters, although we use the maximum principle differently. In the perturbed functional, in place of the constant $H$, we actually put a smooth function $f$ on $\RR^3$ which equals $H$ on a neighborhood of $\overline{\Omega'}$ and has compact support in $\Omega'_{t_0} := \{\dist(\cdot, \Omega') < t_0\}$ for some $t_0$ so small that $\partial\Omega'_t$ still has mean curvature at least $\frac{H + H_0}{2}$ for $t \in (0, t_0]$. A simple application of the maximum principle (Proposition~\ref{prop:max-principle}(a)) using the convexity of $\Sigma'$ gives the preliminary bound $u_\ep(\overline{\bB}) \subseteq \overline{\Omega'_{t_0}}$, which will later be improved. 

The next step is passing to the limit as $\ep \to 0$. For that we adapt the interior and boundary estimates in~\cite{Sacks-Uhlenbeck81} and~\cite{Fraser2000}. As already mentioned, it is the proofs of these estimates that inform our choice of $p$. Then, similar to~\cite{Sacks-Uhlenbeck81}, whether or not there is concentration of energy, we obtain a non-constant solution $v$ to~\eqref{eq:cmc-H} and~\eqref{eq:fb} with $rf(v)$ in place of $H$. Another application of the maximum principle (Proposition~\ref{prop:max-principle}(b)) gives $v(\overline{\bB})\subseteq \overline{\Omega'}$, and hence $v$ solves~\eqref{eq:cmc-H} and~\eqref{eq:fb} with $rH$ in place of $H$ by our choice of $f$. An important point in this process is to ensure that when energy concentration occurs, the parameter $\ep$ still converges to zero after rescaling, and that the rescaled limit is defined on a half-plane as opposed to $\RR^2$. These boil down to comparing the rate of rescaling with $\ep$ (Proposition~\ref{prop:rescale}) and with the distance to $\partial \bB$ from the center of rescaling (Proposition~\ref{prop:no-interior-bubble}). Our choice of perturbation is essential to the former, while the latter uses, similar to~\cite{Struwe88}, the condition $H < H_{\Sigma'}$ and the classification by Brezis-Coron~\cite{BrezisCoron1985} of finite-energy solutions to~\eqref{eq:cmc-H} on $\RR^2$. Finally, arguing as in~\cite[Section 4]{ChengZhou-cmc} shows that the index upper bound on $u_{\ep}$ passes to $v$. 

For Theorem~\ref{thm:main-2}, we apply Theorem~\ref{thm:main-1} with $\Sigma' = \Sigma$ and take a sequence $(H_n)$ in the resulting full-measure set that converges to a given $H \in (0, H_0)$. The corresponding solutions $u_n$ then all map into $\overline{\Omega}$. We next generalize the computation in~\cite[Section 5.1]{ChengZhou-cmc}, which in turn was inspired by the work of Ejiri-Micallef~\cite{EM} on closed minimal surfaces, to pass the bound on the index of $u_n$ as a critical point of $E_{H_n}$ to what is essentially its index as a critical point of the functional $\text{Area}(u) + H_n \cdot V(\gamma)$. (A similar computation was carried out for free boundary minimal surfaces in~\cite{Lima2017}.) We then use the Hersch trick (see for instance~\cite{LiYau}, or~\cite{ABCS2019} for a more relevant setting) and the convexity of $\Sigma$ to obtain a uniform bound on $D(u_n)$. We finish the proof of Theorem~\ref{thm:main-2} by a bubbling analysis similar to the one in the proof of Theorem~\ref{thm:main-1}.
 
\subsection*{Organization} 
In Section~\ref{sec:perturbed} we define the enclosed volume and the perturbed functional, $E_{\ep, p, f}$. In Section~\ref{sec:estimates} we prove that critical points of the perturbed functional are smooth and derive a number of \textit{a priori} estimates. In Section~\ref{sec:2nd-var} we study the behavior of $E_{\ep, p, f}$ near a critical point and establish a substitute for~\cite[Proposition 2.23]{ChengZhou-cmc}. In Section~\ref{sec:existence-perturbed} we carry out the min-max construction of critical points of $E_{\ep, p, f}$ with index at most $1$, using the estimates in Section~\ref{sec:2nd-var} to ensure the index bound. In Section~\ref{sec:passage} we analyze these critical points as $\ep \to 0$, based on the \textit{a priori} estimates obtained in Section~\ref{sec:estimates}. Theorem~\ref{thm:main-1} is proved at the end of Section~\ref{sec:passage}. Finally, in Section~\ref{sec:improved} we prove Theorem~\ref{thm:main-2}. 
\subsection*{Acknowledgement} I would like to thank Xin Zhou for numerous enlightening discussions.
\section{The perturbed functional}\label{sec:perturbed}
\subsection{Function spaces and special coordinates}\label{subsec:spaces-and-coordinates}
Let $\Sigma \subseteq \RR^3$ be a surface which is the image of the standard $S^2$ under a smooth diffeomorphism from $\RR^3$ to itself, and let $\Omega$ denote the bounded open region enclosed by $\Sigma$. Below we set up some additional notation.
\vskip 1mm
\begin{flushleft}
\begin{tabular}{ll}
$(x^1, x^2), (y^1, y^2, y^3)$ & Standard coordinates on $\RR^2$ and $\RR^3$, respectively.\\
$B_r(y)$ & An open ball in $\RR^3$.\\
$\bB_r(x)$ & An open disk in $\RR^2$.\\
$\RR^2_+$ & The open upper half-plane.\\
$\bB$ & The open unit disk in $\RR^2$.\\
$\bB^+$ & $\bB \cap \RR^2_+$.\\
$\bT$ & $\bB \cap \partial \RR^2_+$.\\
$\bB^+_{r}(x)$ & $\bB_r(x) \cap \RR^2_+$.\\
$\bT_r(x)$ & $\bB_r(x) \cap \partial \RR^2_+$.\\
\end{tabular}
\end{flushleft}
\vskip 1mm
For $p > 2$, we define
\[
\cM_p = \{u \in W^{1, p}(\bB; \RR^3)\ |\ u(x) \in \Sigma \text{ for all }x \in \partial \bB\}.
\]
We will eventually fix $p$ depending only on $\Sigma$. The set $\cM_p$ is an embedded smooth submanifold of the Banach space $W^{1, p}(\bB; \RR^3)$ and is closed as a subset. The tangent space of $\cM_p$ at $u$ is given by 
\[
T_u\cM_p = \{\psi \in W^{1, p}(\bB; \RR^3)\ |\ \psi(x) \in T_{u(x)}\Sigma \text{ for all }x \in \partial \bB\},
\]
which is a closed subspace of $W^{1, p}(\bB; \RR^3)$ with a closed complement. Moreover, restricting the $W^{1, p}$-norm to each tangent space yields a Finsler structure on $\cM_p$. 

As explained for instance in~\cite[Section 7.2]{ACS2017}, by interpolating the product metric on a tubular neighborhood of $\Sigma$ with the Euclidean metric $g_{\RR^3}$, we obtain a metric $h$ on $\RR^3$ which differs from $g_{\RR^3}$ only near $\Sigma$, such that $\Sigma$ is totally geodesic with respect to $h$. For $u \in \cM_p$, the map
\[
\Theta_u : \psi \mapsto \exp^{h}_u(\psi),
\]
for $\psi \in T_u\cM_p$ with $\|\psi\|_{1, p}$ sufficiently small, is a chart for $\cM_p$. Here and below, unless otherwise stated, the standard metric and its Levi-Civita connection will be used in computing norms, distances and derivatives on $\RR^3$, while $h$ will only be an auxiliary device.

Certain coordinates adapted to $\Sigma$ are needed to derive local boundary estimates in later sections. First, note that we can find $\rho_\Sigma > 0$ and a finite open covering $\cF$ of $\Sigma$ such that for all $y \in \Sigma$, the ball $B_{\rho_\Sigma}(y)$ is contained in some $U \in \cF$, and that for each $U\in \cF$, there exists a diffeomorphism $\Psi$ which maps $U$ onto some open ball $V \subseteq \RR^3$ and has the following additional properties:
\begin{enumerate}
\item[(f1)] $\Psi(U \cap \Sigma) = V \cap \{y^3 = 0\}$,  $\Psi(U \cap \Omega) = V \cap \{y^3 > 0\}$, and $(d\Psi)_{p}(\eta(p)) = \paop{y^3} \text{ for }p \in U \cap \Sigma$, where $\eta(p)$ denotes the unit normal to $\Sigma$ pointing into $\Omega$.
\vskip 1mm
\item[(f2)] Letting $\Upsilon = \Psi^{-1}$, $g = \Upsilon^{\ast}g_{\RR^3}$ and $Q = \Upsilon^{\ast}\Vol_{\RR^3}$, we have that $g$ is uniformly equivalent to the Euclidean metric on $V$, and that the components of $g$ and $Q$ along with their derivatives of all orders are bounded on $V$. Here by components we mean 
\[
g_{ij}(y) = \langle \pa{\Upsilon}{y^i},  \pa{\Upsilon}{y^j} \rangle,\text{ and }Q_{ikl}(y) =\langle \pa{\Upsilon}{y^i}, \pa{\Upsilon}{y^k} \times \pa{\Upsilon}{y^l} \rangle.
\]
Also, by (f1), $g_{i3}(y) = 0$ for all $y \in V \cap \{y^3 = 0\}$, $i = 1, 2$.
\end{enumerate}

Secondly, we will sometimes need to locally flatten $\partial\bB$. Specifically, there exist a universal constant $r_{\bB}$ and, for each $x \in \partial \bB$, a conformal map $F$ from $\overline{\RR^2_+}$ onto $\overline{\bB}\setminus \{-x\}$ such that $|F_z - 1| \leq \frac{1}{4}$ on $\bB_{r_{\bB}}^+$, that
\begin{equation}\label{eq:F-interpolate}
\overline{\bB_{\frac{3}{4}r}(x)} \cap \overline{\bB} \subseteq F(\overline{\bB_{r}^+}) \subseteq \overline{\bB_{r}(x)} \cap \overline{\bB}, \text{ for all }r \in (0, r_{\bB}],
\end{equation}
and that all the derivatives of $F$ on $\bB_{r_{\bB}}^+$ are bounded independently of $x \in \partial \bB$. Below, whenever such an $F$ is introduced, we write $\lambda$ for $|F_z|$, so that $F^{\ast}g_{\RR^2} = \lambda^2 g_{\RR^2}$. As an example of a typical situation where the above coordinates will be introduced, take $u \in \cM_p$ and suppose that for some $L > 0$ and $r \in (0, r_{\bB}]$ there holds
\[
r^{p-2}\int_{\bB_{r}(x) \cap \bB} |\nabla u|^p \leq L^p \text{ for all }x \in \partial \bB.
\]
Then by Sobolev embedding there exists $\theta \in (0, \frac{1}{4})$ depending only on $p, \Sigma, L$ such that
\[
u(\overline{\bB_{\theta r}(x)} \cap \overline{\bB}) \subseteq B_{\rho_\Sigma}(u(x)) \text{ for all }x \in \partial \bB.
\]
Thus we can find $U, V, \Psi, \Upsilon, ...$ as above, and define 
\[
\widehat{u}(x) = (\Psi\circ u \circ F)(r x), \text{ for }x \in \overline{\bB_{\theta}^+}.
\]
The condition $u(\partial\bB) \subseteq \Sigma$ implies $\widehat{u}^3 = 0$ on $\bT_{\theta}$. Also, for all $\widehat{\psi} \in W^{1, p}(\bB^+_{\theta}; \RR^3)$ such that $\widehat{\psi} = 0$ on $\partial \bB_{\theta} \cap \RR^2_+$ and $\widehat{\psi}^3 = 0$ on $\bT_{\theta}$, if we define $\psi$ by the following relation
\[
\psi(F(rx)) = (d\Upsilon)_{\widehat{u}(x)}(\widehat{\psi}(x))  \text{ for }x \in \overline{\bB_{\theta}^+}, 
\] 
and extend $\psi$ to be zero on $\overline{\bB} \setminus F(\overline{\bB_{\theta r}^+})$, then $\psi \in T_u \cM_p$.

\subsection{The enclosed volume}\label{subsec:volume}
Suppose $f: \RR^3 \to \RR$ is a bounded smooth function such that $\int_{\Omega}f > 0$. In the case $f \equiv 1$, what we now define can be viewed as the volume swept out by a continuous path $\gamma: [0, 1] \to \cM_p$. The idea originated with~\cite{Struwe88}, even though our definition is technically different, primarily because we work in a perturbed setting and have the embedding $W^{1, p} \to C^0$ at our disposal. 

By this embedding, any continuous path $\gamma: [0, 1] \to \cM_p$ induces a continuous map $G_{\gamma}: ([0, 1] \times \overline{\bB}, [0, 1] \times \partial \bB) \to (\RR^3, \Sigma)$. Moreover, the weak derivatives $(G_{\gamma})_{x^1}, (G_\gamma)_{x^2}$ exist and lie in $L^{p}([0, 1] \times \bB)$. For the moment, we assume in addition that $\gamma$ is piecewise $C^1$, in which case the weak derivative $(G_\gamma)_t$ exists as well, and lies in $L^{\infty}([0, 1] \times \bB)$. We then define
\begin{equation*}
V_f(\gamma) = \int_{[0, 1] \times \bB}G_{\gamma}^{\ast}(f \Vol_{\RR^3}) := \int_{[0, 1] \times \bB} f(G_{\gamma}) \big\langle (G_\gamma)_t, (G_\gamma)_{x^1} \times (G_\gamma)_{x^2} \big\rangle.
\end{equation*}
Letting ``$+$'' denote concatenation and ``$-$'' the reversal of orientation, it is easy to see that $V_f(\gamma + \widetilde{\gamma}) = V_f(\gamma) + V_f(\widetilde{\gamma})$, and that $V_f(-\gamma) = -V_f(\gamma)$.

The next lemma allows us to extend $V_f$, henceforth referred to as the enclosed volume, to continuous paths in general. 

\begin{lemm}\label{lemm:homotopy}
There exists a constant $\delta_0$ depending only on $\Sigma$ such that if $\gamma, \widetilde{\gamma}: [0, 1] \to \cM_p$ are two continuous and piecewise $C^1$ paths with $\gamma(0) = \widetilde{\gamma}(0)$, $\gamma(1) = \widetilde{\gamma}(1)$, and if
\begin{equation}\label{eq:closeness}
\sup_{t \in [0, 1]}\|\gamma(t) - \widetilde{\gamma}(t)\|_{\infty} < \delta_0, 
\end{equation}
then $V_f(\gamma) = V_f(\widetilde{\gamma})$.
\end{lemm}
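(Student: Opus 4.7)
The plan is to homotope $\gamma$ to $\widetilde{\gamma}$ within $\cM_p$ (rel endpoints), regard the homotopy as a map $\widetilde G: [0,1]^2 \times \overline{\bB} \to \RR^3$, and apply Stokes' theorem to the pullback of $\omega := f \Vol_{\RR^3}$. Since $\omega$ is a top-dimensional form on $\RR^3$, $d\omega = 0$, so Stokes reduces everything to boundary-face contributions, on which $V_f(\gamma)$ and $V_f(\widetilde{\gamma})$ will appear on two opposite faces while all other faces contribute zero for geometric reasons.

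To build the homotopy, I would exploit the auxiliary metric $h$ on $\RR^3$ from Section~\ref{subsec:spaces-and-coordinates}, under which $\Sigma$ is totally geodesic. Choose $\delta_0 > 0$, depending only on $\Sigma$, smaller than a universal fraction of the $h$-injectivity radius along $\Sigma$, so that for any $p, q \in \RR^3$ with $|p - q| < \delta_0$ there is a unique short $h$-geodesic from $p$ to $q$ depending smoothly on $(p, q)$, and so that when $p, q \in \Sigma$ this geodesic stays on $\Sigma$ by total geodesy. Define
\[
\Gamma(s, t)(x) \;=\; \exp^h_{\gamma(t)(x)}\bigl(s\, V(t, x)\bigr), \qquad (s, t) \in [0, 1]^2,\ x \in \overline{\bB},
\]
where $V(t, x)$ is the $h$-initial velocity of the short geodesic from $\gamma(t)(x)$ to $\widetilde{\gamma}(t)(x)$. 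By~\eqref{eq:closeness} and the choice of $\delta_0$, this is well defined, $\Gamma(s, t) \in \cM_p$ for all $(s, t)$, $\Gamma(0, \cdot) = \gamma$, $\Gamma(1, \cdot) = \widetilde{\gamma}$, and $\Gamma(s, 0) \equiv \gamma(0)$, $\Gamma(s, 1) \equiv \gamma(1)$.

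With $\widetilde G(s, t, x) = \Gamma(s, t)(x)$ and $C = [0, 1]^2 \times \overline{\bB}$, assume first that $\gamma, \widetilde{\gamma}$ are smooth, so that $\widetilde G$ is smooth. Stokes yields
\[
0 \;=\; \int_C d\bigl(\widetilde G^\ast \omega\bigr) \;=\; \int_{\partial C} \widetilde G^\ast \omega.
\]
The six faces of $\partial C$ contribute as follows. On $\{i\} \times [0, 1] \times \overline{\bB}$ for $i \in \{0, 1\}$, the restriction is $G_\gamma$ or $G_{\widetilde{\gamma}}$, contributing $\mp V_f(\gamma)$ and $\pm V_f(\widetilde{\gamma})$ with opposite signs from the outward orientation. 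On $[0, 1] \times \{j\} \times \overline{\bB}$ for $j \in \{0, 1\}$, the map $\Gamma(\cdot, j)$ is constant in $s$, so $\partial_s \widetilde G \equiv 0$ and the pullback vanishes. On $[0, 1]^2 \times \partial \bB$, the image lies in the $2$-dimensional $\Sigma$, hence the pullback of a $3$-form vanishes. Combining, $V_f(\widetilde{\gamma}) = V_f(\gamma)$.

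The only real obstacle is extending this from smooth paths to the stated regularity: continuous $\gamma, \widetilde{\gamma}: [0, 1] \to \cM_p$ that are piecewise $C^1$. I would handle this by working on each smooth subinterval of $t$ and mollifying in $(t, x)$ using the coordinate charts $\Psi$ from Section~\ref{subsec:spaces-and-coordinates} near $\partial\bB$ (which flatten $\Sigma$ to $\{y^3 = 0\}$), thereby producing smooth approximants $\gamma_\varepsilon, \widetilde{\gamma}_\varepsilon \in \cM_p$ still satisfying~\eqref{eq:closeness} after slightly shrinking $\delta_0$. Since $p > 2$, the factor $(G_\gamma)_{x^1} \times (G_\gamma)_{x^2}$ belongs to $L^{p/2}$ with $p/2 > 1$, and $(G_\gamma)_t$ is in $L^\infty$ on each smooth piece, so the integrand defining $V_f$ is dominated and convergent along the mollification; hence $V_f(\gamma_\varepsilon) \to V_f(\gamma)$ and similarly for $\widetilde{\gamma}$. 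Passing to the limit in $V_f(\gamma_\varepsilon) = V_f(\widetilde{\gamma}_\varepsilon)$ completes the argument.
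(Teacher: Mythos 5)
Your Stokes argument gives a correct alternative route, genuinely different from the paper's. You pull $f\,\Vol_{\RR^3}$ back over the full homotopy cube $[0,1]^2\times\overline{\bB}$ and observe that the $t\in\{0,1\}$ faces vanish (there $\partial_s\widetilde{G}\equiv 0$ since the endpoints coincide), the $[0,1]^2\times\partial\bB$ face vanishes (the image lies in the $2$-dimensional $\Sigma$), and the $s\in\{0,1\}$ faces contribute $V_f(\widetilde{\gamma})-V_f(\gamma)$; these checks are all correct. The paper, by contrast, never invokes Stokes on the $4$-manifold. It partitions $[0,1]^2$ into small cells, uses the quantization $V_f(\beta_{ij})=\deg\bigl((F_{ij})|_{S^1\times\partial\bB}\bigr)\int_\Omega f\in(\int_\Omega f)\cdot\ZZ$ for each small cell loop $\beta_{ij}$, and forces $V_f(\beta_{ij})=0$ by the quantitative bound coming from~\eqref{eq:segment-vol}; telescoping then gives the conclusion. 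What the paper's route buys is that it works directly with piecewise-$C^1$ paths whose slices are only $W^{1,p}$ maps, so no smoothing in the $x$-variable is ever needed. Your argument instead requires $\widetilde{G}$ to be $C^1$ in \emph{all} variables before Stokes can be applied, and the mollification you outline to achieve this carries real weight that is glossed over: keeping $\gamma_\varepsilon(t)(\partial\bB)\subseteq\Sigma$ does require the local flattening and reflection you gesture at, but also a patching step in which the local mollifications are combined via $h$-geodesic interpolation rather than a linear partition of unity in the target (a convex combination would move boundary values off $\Sigma$), and keeping $\gamma_\varepsilon$ and $\widetilde{\gamma}_\varepsilon$ with common endpoints requires first reparametrizing both paths to be constant near $t=0,1$. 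Your claim that $V_f(\gamma_\varepsilon)\to V_f(\gamma)$ is correct since $p>2$ places the relevant products in $L^1$, but without spelling out the preceding points the plan is not yet airtight; the paper's partition-and-quantize strategy was designed precisely to sidestep this machinery.
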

\begin{proof}
We begin with some preliminary estimates. Since $h$ agrees with $g_{\RR^3}$ outside of a tubular neighborhood of $\Sigma$, there exist $\delta_0$ and a smooth function 
\[
c: [0, 1] \times \{(y, y') \in \RR^3 \times \RR^3\ |\ |y - y'| < \delta_0\} \to \RR^3,
\]
with bounded derivatives of all orders, such that $c(\cdot, y, y')$ is the unique minimizing $h$-geodesic from $y$ to $y'$, and maps into $\Sigma$ whenever $y, y' \in \Sigma$. For $u, v \in \cM_p$ with $\|u - v\|_{\infty} < \delta_0$, we define a $C^1$-path $l_{u, v}:[0, 1] \to \cM_p$ by $l_{u, v}(s)(x) = c(s, u(x), v(x))$. Then we have the following pointwise estimates. 
\begin{equation}\label{eq:slice-energy}
|(l_{u, v}'(s))(x)| \leq C|u(x) - v(x)| \text{ and } |\nabla (l_{u, v}(s))(x)| \leq C(|\nabla v(x)| + |\nabla u(x)|),
\end{equation}
with both constants depending only on $\Sigma$. Hence, 
\begin{align}\label{eq:segment-vol}
|V_f(l_{u, v})| \leq  C\|f\|_{\infty} \int_{[0, 1] \times \bB}\big| u - v\big|  | \nabla l_{u, v}(s) |^2 &\leq C\|f\|_{\infty}\|u - v\|_{\infty}(\|\nabla u\|_2^2 + \|\nabla v\|_2^2).
\end{align}

To continue, by assumption~\eqref{eq:closeness}, we can define a homotopy $\Gamma: [0, 1] \times [0, 1] \to \cM_p$ by $\Gamma(s, t) = c(s, \gamma(t), \widetilde{\gamma}(t))$. Note that $\Gamma(\cdot, t)$ is a $C^1$-path in $\cM_p$ for all $t \in [0, 1]$, while $\Gamma(s, \cdot)$ is piecewise $C^1$ for all $s\in [0, 1]$. Letting $L = \sup_{t \in [0, 1]} \left( \|\nabla \gamma(t)\|_{2} + \| \nabla \widetilde{\gamma}(t) \|_2\right)$, we see from~\eqref{eq:slice-energy} that 
\[
\sup_{(s, t) \in [0, 1] \times [0, 1]} \|\nabla \Gamma(s, t)\|_2 \leq CL.
\]
Hence by~\eqref{eq:segment-vol} we get
\[
|V_f(\Gamma(\cdot, t)|_{[s, s']})| \leq CL^2\|f\|_{\infty} \| \Gamma(s, t) - \Gamma(s', t) \|_{\infty}, \text{ for all } 0 \leq s \leq s' \leq 1 \text{ and }0 \leq t \leq 1.
\]
In particular, we can choose a partition $0 = s_0 < \cdots < s_k = 1$ such that
\begin{equation}\label{eq:short-cut}
|V_f(\Gamma(\cdot, t)|_{[s_{i-1}, s_i]})| < \frac{1}{4}\int_{\Omega}f, \text{ for }i = 1, \cdots, k \text{ and }t \in [0, 1].
\end{equation}
Next, take a partition $0 = t_0 < \cdots < t_l = 1$ in the $t$-direction such that each $\Gamma(s_i, \cdot)$ is continuously differentiable on the sub-intervals. By the continuity of each $t \mapsto V_f(\Gamma(s_i, \cdot)|_{[0, t]})$, we can refine the partition if necessary so that
\[
|V_f(\Gamma(s_i, \cdot)|_{[t_{j-1}, t_j]})| < \frac{1}{4}\int_\Omega f \text{ for }i = 1, \cdots, k \text{ and } j = 1, \cdots, l.
\]
Then, letting 
\[
\beta_{ij} = \Gamma(s_{i-1}, \cdot)|_{[t_{j-1}, t_j]} + \Gamma(\cdot, t_j)|_{[s_{i-1}, s_i]} + (-\Gamma(s_i, \cdot)|_{[t_{j-1}, t_j]}) + (-\Gamma(\cdot, t_{j-1})|_{[s_{i-1}, s_i]}),
\]
we have, for all $1 \leq i \leq k$ and $1 \leq j \leq l$, that
\begin{equation}\label{eq:short-circuit}
\Big |V_f (\beta_{ij}) \Big| < \int_{\Omega}f.
\end{equation}
On the other hand, as each $\beta_{ij}$ is a closed, piecewise $C^1$ path, the induced map $G_{\beta_{ij}}$ descends to a map $F_{ij}:(S^1 \times \overline{\bB}, S^1 \times \partial \bB) \to (\RR^3, \Sigma)$ such that
\[
V_f(\beta_{ij}) = \deg((F_{ij})|_{S^1 \times \partial \bB})\int_{\Omega}f \in \big(\int_{\Omega}f \big)\cdot\ZZ.
\]
Thus,~\eqref{eq:short-circuit} implies that $V_f (\beta_{ij}) = 0$. Since $\Gamma(s_{i-1}, \cdot)$ and $\Gamma(s_i, \cdot)$ have common endpoints, we have for $i = 1, \cdots, k$ that 
\begin{align*}
V_f(\Gamma(s_{i-1}, \cdot)) - V_{f}(\Gamma(s_i, \cdot))=\ & \sum_{j = 1}^l \big[V_f(\Gamma(s_{i-1}, \cdot)|_{[t_{j-1}, t_j]}) - V_f(\Gamma(s_i, \cdot)|_{[t_{j-1}, t_j]})\big]\\
=\ & \sum_{j = 1}^l V_f(\beta_{ij}) = 0.
\end{align*}
We complete the proof upon recalling that $\gamma = \Gamma(0, \cdot)$ and $\widetilde{\gamma} = \Gamma(1, \cdot)$.
\end{proof}
Now, given a continuous path $\gamma:[0, 1] \to \cM_p$, we define $V_f(\gamma) = V_f(\gamma_1)$, where $\gamma_1$ is any choice of piecewise $C^1$ path having the same endpoints as $\gamma$ and satisfying $\|\gamma(t) - \gamma_1(t)\|_{\infty} < \delta_0/2$ for all $t \in [0, 1]$. By Lemma~\ref{lemm:homotopy}, the choice of $\gamma_1$ is irrelevant. One way to produce such a $\gamma_1$, in fact with any prescribed threshold in place of $\delta_0/2$, is to take a fine enough partition $0 = t_0 < \cdots < t_k = 1$ and let $\gamma_1= \sum_{i = 1}^k l_{\gamma(t_{i - 1}), \gamma(t_i)}$.

Moreover, from the above definition, it is not hard to see that Lemma~\ref{lemm:homotopy} continues to hold for paths that are merely continuous. An immediate corollary of this extension of Lemma~\ref{lemm:homotopy} is the following.
\begin{coro}\label{coro:homotopy}
Let $\gamma, \widetilde{\gamma}$ be two homotopic continuous paths in $\cM_p$. Then $V_f(\gamma) = V_f(\widetilde{\gamma})$.
\end{coro}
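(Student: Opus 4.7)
\medskip
\noindent\textbf{Proof proposal.} The plan is to reduce everything, via compactness of the square $[0,1]\times[0,1]$, to the extension of Lemma~\ref{lemm:homotopy} stated in the paragraph immediately preceding the corollary, which allows one to compare $V_f$ of any two continuous paths with common endpoints that are uniformly close in the sup norm.

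I would fix a homotopy $H:[0,1]\times[0,1] \to \cM_p$ with $H(0,\cdot) = \gamma$, $H(1,\cdot) = \widetilde{\gamma}$, and with $H(s,0)$, $H(s,1)$ independent of $s$ (in this setting ``homotopic'' should be read as homotopic rel endpoints, since that is what is needed for $V_f$ to be defined on the intermediate slices in a compatible way). Since $[0,1]\times[0,1]$ is compact and $H$ is continuous into the Banach space $W^{1,p}(\bB;\RR^3)$, it is uniformly continuous there; composing with the continuous Sobolev embedding $W^{1,p} \hookrightarrow C^0$ (recall $p > 2$) makes $H$ uniformly continuous as a map into $C^0(\overline{\bB};\RR^3)$ as well. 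I would then choose a partition $0 = s_0 < s_1 < \cdots < s_N = 1$ fine enough that
\[
\sup_{t \in [0,1]} \|H(s_{i-1}, t) - H(s_i, t)\|_\infty < \delta_0 \quad \text{for } i = 1, \ldots, N,
\]
with $\delta_0$ the constant from Lemma~\ref{lemm:homotopy}.

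For each $i$ the paths $H(s_{i-1},\cdot)$ and $H(s_i,\cdot)$ are continuous paths in $\cM_p$ sharing endpoints $\gamma(0)$ and $\gamma(1)$, and are within $\delta_0$ in the sup norm uniformly in $t$. The extension of Lemma~\ref{lemm:homotopy} to continuous paths therefore gives $V_f(H(s_{i-1},\cdot)) = V_f(H(s_i,\cdot))$, and telescoping yields $V_f(\gamma) = V_f(\widetilde{\gamma})$.

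The main subtle point I expect to check carefully is precisely that extension of Lemma~\ref{lemm:homotopy} to merely continuous paths: given two continuous paths $\alpha_0, \alpha_1$ with common endpoints and $\sup_t\|\alpha_0(t) - \alpha_1(t)\|_\infty < \delta_0$, one approximates each $\alpha_j$ by a piecewise $C^1$ path $\alpha_j^\#$ with $\sup_t \|\alpha_j - \alpha_j^\#\|_\infty$ as small as desired (using the chain of short geodesic segments $\sum l_{\alpha_j(t_{k-1}),\alpha_j(t_k)}$ described before the corollary, with sufficiently fine partition), arranging by the triangle inequality that $\sup_t\|\alpha_0^\#(t) - \alpha_1^\#(t)\|_\infty < \delta_0$. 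The original Lemma~\ref{lemm:homotopy} applies to $\alpha_0^\#, \alpha_1^\#$, and by definition $V_f(\alpha_j) = V_f(\alpha_j^\#)$, giving the desired equality. Apart from this preliminary bookkeeping step, no further analytic estimate is required, and the rest of the corollary is a one-line telescoping argument.
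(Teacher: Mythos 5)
Your argument is correct and is precisely the argument the paper has in mind, since the paper gives no explicit proof of the corollary and simply declares it an ``immediate'' consequence of the extension of Lemma~\ref{lemm:homotopy} to merely continuous paths. Your structure — uniform continuity of the homotopy on the compact square $[0,1]^2$, a fine partition of the $s$-interval so consecutive slices are $\delta_0$-close in sup norm, then telescoping via the extended Lemma~\ref{lemm:homotopy} — fills in exactly those details, and your account of the extension step (approximate each continuous path by a piecewise $C^1$ path with the same endpoints via $\sum_k l_{\alpha(t_{k-1}),\alpha(t_k)}$, close enough that the triangle inequality preserves the $\delta_0$ gap, then use the definition $V_f(\alpha)=V_f(\alpha^\#)$) is the correct bookkeeping. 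Your remark that ``homotopic'' must be read as homotopic rel endpoints is also the right reading and worth flagging: without fixed endpoints the intermediate slices would not share endpoints, Lemma~\ref{lemm:homotopy} would not apply, and indeed the conclusion would in general only hold modulo $\left(\int_\Omega f\right)\cdot\ZZ$; the paper's intended use (two paths from $u_0$ to $u$ inside a simply-connected $\cA$) is precisely the rel-endpoints case.
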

Below we single out another important property of the enclosed volume.
\begin{lemm}\label{lemm:vol-prop}
Let $u \in \cM_p$ and let $\gamma, \widetilde{\gamma}$ be two continuous paths in $\cM_p$ such that $\gamma(0), \widetilde{\gamma}(0)$ are constant maps and $\gamma(1) = \widetilde{\gamma}(1) = u$. Then
\[
V_f(\gamma) - V_f(\widetilde{\gamma}) \in (\int_{\Omega}f)\cdot \ZZ.
\]
\end{lemm}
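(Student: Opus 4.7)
The plan is to reduce the statement to one about closed loops in $\cM_p$, and then to apply the degree-theoretic formula already invoked in the proof of Lemma~\ref{lemm:homotopy}. First, by the additivity and orientation-reversal properties of $V_f$ recorded just before Lemma~\ref{lemm:homotopy},
\[
V_f(\gamma) - V_f(\widetilde\gamma) = V_f\big(\gamma + (-\widetilde\gamma)\big) =: V_f(\beta),
\]
where $\beta$ is a continuous path in $\cM_p$ from the constant map $c_0 := \gamma(0)$ to the constant map $c_1 := \widetilde\gamma(0)$; both $c_0$ and $c_1$ are valued in $\Sigma$.

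Next I would close $\beta$ into a loop. Since $\Sigma \cong S^2$ is path-connected, pick a smooth $\sigma:[0,1]\to\Sigma$ with $\sigma(0)=c_1$ and $\sigma(1)=c_0$, and let $\alpha(t)$ be the constant map with value $\sigma(t)$. Then $(G_\alpha)_{x^1}\equiv 0$ and $(G_\alpha)_{x^2}\equiv 0$, so $V_f(\alpha)=0$ directly from the definition. Hence $\widehat\beta:=\beta+\alpha$ is a continuous loop in $\cM_p$ based at $c_0$, and $V_f(\widehat\beta)=V_f(\beta)$.

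The main step is to show $V_f(\widehat\beta)\in(\int_\Omega f)\cdot\ZZ$. Using the recipe in the paragraph immediately following Lemma~\ref{lemm:homotopy}, I would fix a fine partition $0=t_0<\cdots<t_k=1$ and set
\[
\widehat\beta_1 = \sum_{i=1}^k l_{\widehat\beta(t_{i-1}),\,\widehat\beta(t_i)},
\]
where the $h$-geodesic segments $l_{\cdot,\cdot}$ exist once consecutive values of $\widehat\beta$ are within $\delta_0$. Because $\widehat\beta(0)=\widehat\beta(1)=c_0$, the path $\widehat\beta_1$ is itself a piecewise $C^1$ loop based at $c_0$, and by the continuous-path extension of Lemma~\ref{lemm:homotopy} noted just before Corollary~\ref{coro:homotopy} one has $V_f(\widehat\beta_1)=V_f(\widehat\beta)$. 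The induced map $G_{\widehat\beta_1}$ then descends to a map $F:(S^1\times\overline{\bB},\,S^1\times\partial\bB)\to(\RR^3,\Sigma)$, and precisely the argument already used for the loops $\beta_{ij}$ in the proof of Lemma~\ref{lemm:homotopy} gives
\[
V_f(\widehat\beta_1)=\deg\big(F|_{S^1\times\partial\bB}\big)\cdot\int_\Omega f\in\Big(\int_\Omega f\Big)\cdot\ZZ,
\]
which is exactly what we want.

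The only technical point I anticipate is arranging that the piecewise $C^1$ approximant of the continuous loop $\widehat\beta$ be itself a loop (not merely a path whose endpoints agree up to $\delta_0/2$). This is automatic from the geodesic-segment construction above, since both the first and last segments are anchored at the common basepoint $c_0$. Every other ingredient — additivity of $V_f$, the continuous-path extension of Lemma~\ref{lemm:homotopy}, and the degree formula for closed piecewise $C^1$ loops — is explicitly available in the excerpt.
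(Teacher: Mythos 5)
Your proof is correct, and it takes a genuinely different route from the paper's. The paper reduces to the case where $\gamma, \widetilde\gamma$ are piecewise $C^1$ and constant near $t=0$, then exploits the fact that \emph{both} endpoints of $\gamma+(-\widetilde\gamma)$ are constant maps to collapse $\{0\}\times\overline\bB$ and $\{1\}\times\overline\bB$ to points, so that $G_{\gamma+(-\widetilde\gamma)}$ descends to a map $F:(B^3,\partial B^3)\to(\RR^3,\Sigma)$; the integrality then comes from a sphere degree, $V_f = \deg(F|_{\partial B^3})\int_\Omega f$. You instead close the open path $\beta=\gamma+(-\widetilde\gamma)$ into a genuine loop by inserting a bridge $\alpha$ of constant maps (which contributes zero enclosed volume because $(G_\alpha)_{x^1}=(G_\alpha)_{x^2}=0$), and then reuse verbatim the torus degree formula $V_f(\text{closed p.w.~}C^1\text{ loop}) = \deg(F|_{S^1\times\partial\bB})\int_\Omega f$ already established for the loops $\beta_{ij}$ in the proof of Lemma~\ref{lemm:homotopy}. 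What your version buys is modularity: rather than introducing a new $B^3$ descent argument, you reduce to the exact integrality statement already proved for loops, at the cost of the small extra observation that constant-map segments are $V_f$-null. Both approaches are sound; your chain of reductions (additivity of $V_f$, zero volume of $\alpha$, the approximation scheme, the torus degree identity) is each individually available in the text. One minor nit: the identity $V_f(\widehat\beta_1)=V_f(\widehat\beta)$ follows directly from the \emph{definition} of $V_f$ on continuous paths (any piecewise $C^1$ approximant within $\delta_0/2$ with the same endpoints is declared to compute the value), not from the continuous-path extension of Lemma~\ref{lemm:homotopy}; that extension is only needed to see the definition is independent of the approximant, which you do not re-invoke here.
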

\begin{proof}
It suffices to prove this when both $\gamma$ and $\widetilde{\gamma}$ are piecewise $C^1$ and constant near $t = 0$. Then, similar to the previous proof, we can use the map $F: (B^3, \partial B^3) \to (\RR^3, \Sigma)$ which descends from $G_{\gamma + (-\widetilde{\gamma})}$ and is constant near a pair of antipodal points to compute $V_f(\gamma + (-\widetilde{\gamma})) = \deg(F|_{\partial B^3}) \cdot \int_{\Omega}f$. This gives the result.
\end{proof}
\subsection{The perturbed functional}\label{subsec:perturbed}
We continue to let $f$ be a bounded, smooth function on $\RR^3$ with $\int_{\Omega} f > 0$. Given $u \in \cM_p$, we let 
\[
\cE(u) = \{\gamma \in C^0([0, 1]; \cM_p)\ |\ \gamma(0) = \text{constant,\ }\gamma(1) = u\}.
\]
To see that $\cE(u)$ is non-empty, we first note that there exists a continuous map $F:([0, 1] \times \overline{\bB}, [0, 1] \times \partial \bB) \to (\RR^3, \Sigma)$ such that $F(0, \cdot)$ is a constant and $F(1, \cdot) = u$. As $C^{\infty}(\overline{\bB}, \partial \bB; \RR^3, \Sigma)$ is dense in $C^0(\overline{\bB}, \partial \bB; \RR^3, \Sigma)$, we obtain a path in $\cE(u)$ by taking a fine enough partition $0 = t_0 < \cdots < t_k = 1$, smoothly approximating $F(t_i, \cdot)$ for $i = 1, \cdots k-1$ to get $u_1, \cdots, u_{k-1}$, and letting
\[
\gamma = l_{F(0, \cdot), u_1} + l_{u_1, u_2} + \cdots + l_{u_{k-1}, u}.
\]
We now define the perturbed functional. For $p > 2$, $\ep > 0$, $u \in \cM_p$, and $\gamma \in \cE(u)$, we let
\[
E_{\ep, p, f}(u, \gamma) = \left(\int_{\bB}\frac{\ep^{p-2}}{p}(1 + |\nabla u|^2)^{\frac{p}{2}} + \frac{|\nabla u|^2}{2} \right) + V_f(\gamma) =: D_{\ep, p}(u) + V_f(\gamma).
\]
As in~\cite{ChengZhou-cmc}, locally the variable $\gamma$ in $E_{\ep, p, f}$ can be eliminated. Indeed, given a simply-connected neighborhood $\cA \subseteq \cM_p$, we fix $u_0 \in \cA$ and $\gamma_0 \in \cE(u_0)$, and define, for $u \in \cA$, 
\[
E^{\cA}_{\ep, p, f}(u) = E_{\ep, p, f}(u, \gamma_0 + l_u),
\]
where $l_u$ is any continuous path in $\cA$ from $u_0$ to $u$. By simply-connectedness and Corollary~\ref{coro:homotopy}, we see that the choice of $l_u$ is irrelevant. We refer to $E^{\cA}_{\ep, p, f}$ as the local reduction of $E_{\ep, p, f}$ on $\cA$ induced by $(u_0, \gamma_0)$. The dependence of $E^{\cA}_{\ep, p, f}$ on $(u_0, \gamma_0)$ is suppressed from the notation as the choice should always be clear from the context. 

\begin{lemm}\label{lemm:C2}
Given $\cA$ as above and $u_0 \in \cA$, $\gamma_0 \in \cE(u_0)$, the local reduction $E^{\cA}_{\ep, p, f}$ is a $C^2$-functional on $\cA$. 
\end{lemm}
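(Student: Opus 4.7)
The plan is to split $E^{\cA}_{\ep, p, f}(u) = D_{\ep, p}(u) + V_f(\gamma_0 + l_u)$ and verify $C^2$-smoothness of each summand separately. Working in the chart $\Theta_u$ on $\cA$ around any fixed point $u \in \cA$, it suffices to check that $\psi \mapsto E^{\cA}_{\ep, p, f}(\Theta_u(\psi))$ is $C^2$ on a neighborhood of $0$ in the Banach space $T_u\cM_p$.

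For the Dirichlet-type term, the integrand $F(\xi) := \tfrac{\ep^{p-2}}{p}(1+|\xi|^2)^{p/2} + |\xi|^2/2$ is smooth in $\xi \in \RR^{3\times 2}$ with $|D^k F(\xi)| \leq C_k(1+|\xi|^{p-k})$ for $k = 0, 1, 2$. Standard Nemytskii-operator arguments, using the continuity of $\xi\mapsto D^2 F(\xi)$ as a map $L^p \to L^{p/(p-2)}$ (dual of $L^{p/2}$, the natural target for the pairing against $|\nabla\psi||\nabla\phi|$), show that $u \mapsto \int_\bB F(\nabla u)\,dx$ is $C^2$ on $W^{1, p}(\bB; \RR^3)$ with the expected formulas for its derivatives. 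Since $\cM_p \hookrightarrow W^{1, p}$ as a smooth submanifold, $D_{\ep, p}$ is $C^2$ on $\cA$.

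For the enclosed-volume term, I use the simple-connectedness of $\cA$ and the homotopy invariance of $V_f$ (Corollary~\ref{coro:homotopy}) to rewrite, for $\psi \in T_u\cM_p$ with $\|\psi\|_{1,p}$ small,
\[
V_f(\gamma_0 + l_{\Theta_u(\psi)}) - V_f(\gamma_0 + l_u) = V_f(\ell_\psi),
\]
where $\ell_\psi(t) := \Theta_u(t\psi) = \exp^h_u(t\psi)$ is a piecewise $C^1$ path in $\cA$ from $u$ to $\Theta_u(\psi)$. By the definition of $V_f$ on such paths,
\[
V_f(\ell_\psi) = \int_0^1 \int_\bB f(\ell_\psi(t))\, \bigl\langle \partial_t \ell_\psi, \partial_{x^1}\ell_\psi \times \partial_{x^2} \ell_\psi \bigr\rangle\,dx\,dt.
\]
Since $\exp^h$ is smooth with bounded derivatives of all orders, $\psi \mapsto \ell_\psi$ is smooth as a map from a neighborhood of $0\in T_u\cM_p$ into $C^1([0,1]; W^{1,p}(\bB;\RR^3))$, and $\partial_t\ell_\psi$ depends smoothly on $\psi$ as an element of $C^0([0,1]; L^\infty\cap W^{1,p})$. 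The Sobolev embedding $W^{1,p}\hookrightarrow C^0\cap L^\infty$, valid because $p>2$, lets me treat $f(\ell_\psi)$ and $\partial_t\ell_\psi$ as uniformly bounded pointwise multipliers, so the full integrand is controlled by a product of two $L^p$ factors and several $L^\infty$ factors, giving an $L^{p/2}$-bound with norm continuous in $\psi$. Differentiating twice under the integral sign yields explicit multilinear formulas that are polynomial in the spatial derivatives of $\ell_\psi$ and linear in $f$ and its first derivative composed with $\ell_\psi$; continuity in $\psi$ then follows from dominated convergence applied to the relevant Nemytskii operators.

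The main technical point to be careful with is the continuity of the second derivative in $\psi$ and its extension to the full Banach space $T_u\cM_p$: the formula mixes values of $\psi$ (via $f'(\ell_\psi)$) with $\nabla\psi$ (via $\partial_{x^i}\ell_\psi$), and one needs the pointwise products to consistently land in $L^1(\bB)$. The hypothesis $p > 2$ supplies exactly the $L^\infty$-bound on $\psi$ and $\ell_\psi$ needed here, so no endpoint subtlety arises. Combined with the standard $C^2$-smoothness of $D_{\ep,p}$, this establishes the lemma.
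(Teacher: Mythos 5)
Your proof follows essentially the same approach as the paper: you work in the chart $\Theta_u$, split $E^{\cA}_{\ep, p, f}$ into $D_{\ep, p}$ and the enclosed-volume term, reduce the latter (using homotopy invariance of $V_f$ in a simply-connected domain) to the explicit integral $V_f(\ell_\psi) = \int_{[0,1]\times\bB} f(\Theta_u(s\psi))\langle \partial_s\Theta_u(s\psi), \partial_{x^1}\Theta_u(s\psi)\times\partial_{x^2}\Theta_u(s\psi)\rangle$, and then invoke Nemytskii-operator continuity together with the Sobolev embedding $W^{1,p}\hookrightarrow C^0$ ($p>2$) to verify $C^2$-regularity. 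The paper's proof is terser --- it simply calls the $C^2$-verification ``standard'' --- but the decomposition, the reduction via~\eqref{eq:local-difference}, and the underlying estimates are the same as yours.
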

\begin{proof}
It suffices to prove that given $u \in \cA$, the composition $E^{\cA}_{\ep, p, f}\circ \Theta_u$ is $C^2$ on a small ball $\cB$ (centered at the origin) in $T_u\cM_p$ with $\Theta_u(\cB) \subseteq \cA$, where, recall,
\[
\Theta_u(\psi) (x) = \exp_{u(x)}^h(\psi(x)),\text{ for }\psi \in \cB \text{ and }x \in \overline{\bB}.
\]
To that end, note that, for all $\psi \in \cB$,
\begin{align}
E_{\ep, p, f}^\cA(\Theta_u(\psi)) - E_{\ep, p, f}^\cA(u) =\ & D_{\ep, p}(\Theta_u(\psi)) - D_{\ep, p}(u)\nonumber\\
& + \int_{[0, 1] \times \bB} f(\Theta_u(s\psi)) \pa{\Theta_u(s\psi)}{s} \cdot \pa{\Theta_u(s\psi)}{x^1} \times \pa{\Theta_u(s\psi)}{x^2}.\label{eq:local-difference}
\end{align}
It is standard to verify that the right-hand side is $C^2$ in $\psi$. The details are omitted.
\end{proof}
In particular, each local reduction $E^{\cA}_{\ep, p, f}$ is continuous, and hence by Lemma~\ref{lemm:vol-prop} and the connectedness of $\cA$, switching to a different choice of $(u_0, \gamma_0)$ alters the value of $E^{\cA}_{\ep, p, f}$ by a constant integer multiple of $\int_{\Omega}f$. More generally, any two local reductions differ by a constant integer multiple of $\int_{\Omega}f$ on any connected component of their common domain.
\subsection{First variation and a Palais-Smale condition}
Given $u \in \cM$, choose a simply-connected neighborhood $\cA$ of $u$ and consider a local reduction $E^{\cA}_{\ep, p, f}$. We define
\[
\delta E_{\ep, p, f}(u) = \delta E^{\cA}_{\ep, p, f}(u).
\]
This is well-defined thanks to the last paragraph of the previous section. We say that $u$ is a critical point of $E_{\ep, p, f}$ if $\delta E_{\ep, p, f}(u) = 0$, which means that $\delta E^{\cA}_{\ep, p, f}(u) = 0$ for any local reduction whose domain contains $u$. In this case there is a well-defined Hessian $\delta^2 E^{\cA}_{\ep, p, f}(u)$, which we take to be $\delta^2 E_{\ep, p, f}(u)$. Again this does not depend on the choice of local reduction. 

The second variation is the subject of Section~\ref{sec:2nd-var}. For now we focus on the first variation, which is given as follows. For $u \in \cM_p$ and $\psi \in T_u\cM_p$, we have
\begin{equation}\label{eq:first-var-formula}
\langle \delta E_{\ep, p, f}(u), \psi \rangle = \int_{\bB}[1 + \ep^{p-2}(1 + |\nabla u|^2)^{\frac{p}{2} - 1}] \langle \nabla u, \nabla \psi \rangle + \int_{\bB} f(u)\langle\psi, u_{x^1} \times u_{x^2}\rangle.
\end{equation}
It follows that if $u \in \cM_p$ is a \textit{smooth} critical point of $E_{\ep, p, f}$, then $u$ satisfies
\begin{equation}\label{eq:perturbed-EL}
\left\{
\begin{array}{ll}
\Div\Big( [1 + \ep^{p-2}(1 + |\nabla u|^2)^{\frac{p}{2} - 1}] \nabla u\Big) = f(u)u_{x^1} \times u_{x^2} & \text{ in }\bB,\\
u_r(x) \perp T_{u(x)}\Sigma & \text{ for all }x \in \partial \bB.
\end{array}
\right.
\end{equation}
The next proposition is a Palais-Smale condition for $E_{\ep, p, f}$. It differs from the standard version in that the uniform upper bound is placed on $D_{\ep, p}$ instead of on $E_{\ep, p, f}$ itself.
\begin{prop}\label{prop:PS}
Given $C_0 > 0$, let $(u_m)$ be a sequence in $\cM_p$ such that $D_{\ep, p}(u_m) \leq C_0$ for all $m$ and that
\[
\|\delta E_{\ep, p, f}(u_m)\| \to 0 \text{ as }m \to \infty.
\] 
Then, passing to a subsequence if necessary, $(u_m)$ converges strongly in $W^{1, p}$ to a critical point $u$ of $E_{\ep, p, f}$ with $D_{\ep, p}(u) \leq C_0$.
\end{prop}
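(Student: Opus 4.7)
The overall strategy is standard for Palais-Smale conditions involving $p$-Laplacian-type operators: extract a weak limit, use monotonicity of the leading-order nonlinearity to upgrade to strong convergence, and then pass the almost-criticality to a genuine criticality at the limit. The main technical subtlety is that $u_m - u$ need not be tangent to $\cM_p$ at $u_m$, so one must construct genuine test functions via the chart $\Theta_{u_m}$ of Section~\ref{subsec:spaces-and-coordinates}.

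First, because $D_{\ep, p}(u_m) \leq C_0$ controls $\|\nabla u_m\|_p^p$ (the constant blowing up in $\ep$ but being harmless for fixed $\ep$) and $u_m|_{\partial \bB} \subseteq \Sigma$ is a bounded set, the sequence $(u_m)$ is bounded in $W^{1,p}$. Extract a subsequence with $u_m \rightharpoonup u$ in $W^{1,p}$, and, by the compact embedding $W^{1, p}(\bB) \hookrightarrow C^{0, \alpha}(\overline{\bB})$ coming from $p > 2$, $u_m \to u$ in $C^0$. Uniform convergence on $\partial \bB$ preserves the constraint, so $u \in \cM_p$. Lower semi-continuity immediately yields $D_{\ep, p}(u) \leq C_0$.

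Next, for $m$ large we may define $\xi_m \in T_{u_m}\cM_p$ by $\xi_m(x) := (\exp^h_{u_m(x)})^{-1}(u(x))$; this lies in $T_{u_m(x)}\Sigma$ on $\partial \bB$ because $\Sigma$ is totally geodesic for $h$, and the pointwise estimates $|\xi_m| \leq C|u - u_m|$ and $|\nabla \xi_m| \leq C(|\nabla u| + |\nabla u_m|)$ give that $(\xi_m)$ is bounded in $W^{1, p}$ and $\|\xi_m\|_\infty \to 0$. Testing against $\xi_m$ and writing $A(v) := 1 + \ep^{p-2}(1 + |\nabla v|^2)^{\frac{p}{2} - 1}$, the first-variation formula reads
\[
\langle \delta E_{\ep, p, f}(u_m), \xi_m\rangle = \int_{\bB} A(u_m)\, \nabla u_m \cdot \nabla \xi_m + \int_{\bB} f(u_m)\langle \xi_m, (u_m)_{x^1} \times (u_m)_{x^2}\rangle,
\]
where the left side tends to $0$ (since $\|\delta E_{\ep,p,f}(u_m)\| \to 0$ and $\|\xi_m\|_{1,p}$ is bounded) and the last summand is controlled by $\|f\|_\infty \|\xi_m\|_\infty \|\nabla u_m\|_2^2 \to 0$. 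Differentiating the relation $u = \exp^h_{u_m}(\xi_m)$ gives $\nabla u - \nabla u_m = \nabla \xi_m + R_m$ with $|R_m| \leq C|\xi_m|(|\nabla u_m| + |\nabla \xi_m|)$, and the resulting remainder integral $\int A(u_m)\nabla u_m \cdot R_m$ is bounded by $C\|\xi_m\|_\infty$ times a quantity controlled by $\|\nabla u_m\|_p^p + \|\nabla u_m\|_p^{p-1}\|\nabla \xi_m\|_p$, hence is $o(1)$. Combining, $\int A(u_m)\nabla u_m \cdot (\nabla u_m - \nabla u) \to 0$. Meanwhile, $\int A(u)\nabla u \cdot (\nabla u_m - \nabla u) \to 0$ by the weak convergence $\nabla u_m \rightharpoonup \nabla u$ in $L^p$ and the fact that $A(u)\nabla u \in L^{p/(p-1)}$. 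Subtracting,
\[
\int_{\bB} \bigl[A(u_m)\nabla u_m - A(u)\nabla u\bigr] \cdot (\nabla u_m - \nabla u) \to 0.
\]
Since $v \mapsto A(v) v$ is the gradient of the strictly convex integrand $F(w) = \tfrac{|w|^2}{2} + \tfrac{\ep^{p-2}}{p}(1 + |w|^2)^{p/2}$, the standard monotonicity inequality for $p \geq 2$ yields pointwise
\[
\bigl[A(a) a - A(b) b\bigr] \cdot (a - b) \geq |a - b|^2 + c_p\, \ep^{p-2} |a - b|^p.
\]
Integrating gives $\|\nabla(u_m - u)\|_p \to 0$, hence strong convergence $u_m \to u$ in $W^{1, p}$.

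Finally, I pass the vanishing of $\delta E_{\ep, p, f}(u_m)$ to $u$: for any $\psi \in T_u\cM_p$ supported away from a small set, transport $\psi$ to $\psi_m \in T_{u_m}\cM_p$ via the exponential chart (or via the projection onto $T_{u_m(x)}\Sigma$ on $\partial\bB$), so that $\psi_m \to \psi$ in $W^{1,p}$; the strong $W^{1, p}$-convergence $u_m \to u$ then gives $\langle \delta E_{\ep, p, f}(u_m), \psi_m\rangle \to \langle \delta E_{\ep, p, f}(u), \psi\rangle$, and the left side is $o(1)$ by hypothesis. Hence $\delta E_{\ep, p, f}(u) = 0$. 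The main obstacle is really the bookkeeping in the monotonicity step: since the natural ``test direction'' $u_m - u$ fails to lie in $T_{u_m}\cM_p$, one must replace it by $\xi_m$ and control the discrepancy $R_m$ between $\nabla \xi_m$ and $\nabla u - \nabla u_m$ in $L^p$ by using the smallness of $\|\xi_m\|_\infty$ against the uniform $W^{1,p}$ bound. Everything else is relatively routine once this substitution is in place.
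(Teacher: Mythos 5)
Your proof is correct, and it takes a genuinely different technical route from the paper's. The paper localizes near $\partial \bB$ by flattening $\Sigma$ via the charts $\Psi, \Upsilon, F$ of Section~\ref{subsec:spaces-and-coordinates}, then tests the difference $\langle \delta E_{\ep,p,f}(u_m), \cdot\rangle - \langle \delta E_{\ep,p,f}(u_n), \cdot\rangle$ against $\widehat\psi_{mn} = \zeta^2(\widehat u_m - \widehat u_n)$ (admissible because the flattened third components vanish on $\bT$), and deduces a Cauchy estimate for $(\nabla\widehat u_m)$ in $L^p$ on a half-disk, without reference to the weak limit. You instead avoid the localization entirely by working with the auxiliary metric $h$ in which $\Sigma$ is totally geodesic: the vector field $\xi_m = (\exp^h_{u_m})^{-1}(u)$ is a globally admissible element of $T_{u_m}\cM_p$, and testing against it lets you compare $u_m$ directly to the weak limit $u$ rather than to $u_n$. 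Both routes hinge on the same $p$-monotonicity of $w \mapsto [1 + \ep^{p-2}(1+|w|^2)^{\frac{p}{2}-1}]w$; the paper's version is more in line with the local-coordinate machinery it develops anyway for the regularity estimates of Section~\ref{sec:estimates}, while yours is cleaner and more intrinsic, at the cost of tracking the $O(\|\xi_m\|_\infty)$ discrepancy $R_m$ between $\nabla\xi_m$ and $\nabla u - \nabla u_m$. One small stylistic remark: the qualifier ``supported away from a small set'' in your final step is unnecessary — the parallel-transport construction of $\psi_m$ works for any $\psi \in T_u\cM_p$, again because $\Sigma$ is $h$-totally geodesic.
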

\begin{proof}
By assumption, Sobolev embedding and the compactness of $\Sigma$, $\|u_m\|_{p} + \|\nabla u_m\|_{p}$ is uniformly bounded. Thus, taking a subsequence if necessary, $(u_m)$ converges weakly in $W^{1, p}$ and uniformly to some $u \in \cM_p$. In particular, there exists $r \in (0, r_{\bB})$ and $m_0 \in \NN$ such that for all $m \geq m_0$ and $x \in \partial \bB$, we have that $u(\overline{\bB_r(x)} \cap \overline{\bB})$ and $u_m(\overline{\bB_r(x)} \cap \overline{\bB})$ are both contained in $B_{\rho_\Sigma}(u(x))$. Now fix $x \in \partial \bB$, let $U, V, \Psi, \Upsilon, g, Q$ and $F, \lambda$ be as in Section~\ref{subsec:spaces-and-coordinates}. In addition, let $\widehat{f} = f\circ \Upsilon$, write
\[
\widehat{u}_m(x) = (\Psi\circ u_m \circ F)(rx) \text{ for }x \in \overline{\bB^+}
\]
and define, for all $\widehat{\psi} \in W^{1, p}(\bB^+; \RR^3)$ such that $\widehat{\psi} = 0$ on $\partial \bB \cap \RR^2_+$ and $\widehat{\psi}^3 = 0$ on $\bT$, 
\[\psi^{(m)} = \left\{
\begin{array}{ll}
 (d\Upsilon)_{\widehat{u}_m}(\widehat{\psi}) \circ (\frac{1}{r}F^{-1}), & \text{ if } x \in F(\bB^+_r),\\
 0, & \text{ if }x \in \bB \setminus F(\bB^+_{r}).
 \end{array}
 \right.
\] 
For brevity, we also introduce $\wep = \ep/r$ and define
\[
N(x, z, \xi) =  (\lambda(rx ))^{-2}g_{kl}(z)\xi_{\alpha}^{k}\xi_{\alpha}^{l}, \text{ for }(x, z, \xi) \in \overline{\bB^+} \times V \times \RR^{3 \times 2}.
\]
Then the first variation formula takes the following form:
\begin{align}
\langle  \delta E_{\ep, p, f}(u_m), \psi^{(m)} \rangle =\ &\int_{\bB^{+}} \big[ 1 + (\wep)^{p-2}(r^2 + N(\cdot, \widehat{u}_m, \nabla \widehat{u}_m))^{\frac{p}{2} - 1} \big] g_{ij}(\widehat{u}_m) \nabla \widehat{u}_m^j \cdot \nabla \widehat{\psi}^i \nonumber\\
&+ \int_{\bB^+}  \big[ 1 + (\wep)^{p-2}(r^2 + N(\cdot, \widehat{u}_m, \nabla \widehat{u}_m))^{\frac{p}{2} - 1} \big]\frac{1}{2}g_{kl,i}(\widehat{u}_m)(\nabla\widehat{u}_m^k\cdot \nabla\widehat{u}_m^l) \widehat{\psi}^i   \nonumber\\
& + \int_{\bB^+}\widehat{f}(\widehat{u}_m)Q_{ikl}(\widehat{u}_m) (\widehat{u}_m^k)_{x^1}  (\widehat{u}_m^l)_{x^2} \widehat{\psi}^i. \label{eq:fermi-variation}
\end{align}
Indeed, as intermediaries between $\psi^{(m)}$ and $\widehat{\psi}$ and respectively $u_m$ and $\widehat{u}_m$, we define $\widetilde{\psi}: \bB \to \RR^3$ by
\[
\widetilde{\psi}(x) = \left\{
\begin{array}{ll}
\widehat{\psi}\big( \frac{1}{r}F^{-1}(x) \big), & \text{ if }x \in F(\bB^+_{r}),\\
0, & \text{ if } x \in \bB \setminus F(\bB^+_{r}),
\end{array}
\right.
\]
and let $\widetilde{u}_m(x) = (\Psi \circ u_m)(x) \text{ for } x \in F(\bB^+_{r})$. Then we have 
\begin{equation}\label{eq:m-to-tilde}
u_m = \Upsilon \circ \widetilde{u}_m,\ \ \psi^{(m)} = (d\Upsilon)_{\widetilde{u}_m}(\widetilde{\psi}),
\end{equation}
and that
\begin{equation}\label{eq:tilde-to-hat}
\widehat{u}_m = \widetilde{u}_m \circ F(r\ \cdot\ ),\ \ \widehat{\psi} = \widetilde{\psi} \circ F(r\ \cdot\ ).
\end{equation}
Using~\eqref{eq:m-to-tilde} and recalling the definition of $g_{ij}, Q_{ikl}$ in Section~\ref{subsec:spaces-and-coordinates}, we see that the integrands on the right-hand side of~\eqref{eq:first-var-formula} can be expressed in terms of $\widetilde{u}_m$ and $\widetilde{\psi}$ as 
\[
\begin{split}
|\nabla u_m|^2 =\ & g_{ij}(\widetilde{u}_m)\nabla \widetilde{u}^i_m \cdot \nabla \widetilde{u}^j_m,\\
\langle \nabla u_m, \nabla \psi^{(m)}\rangle =\ & g_{ij}(\widetilde{u}_m) \nabla \widetilde{u}_m^j \cdot \nabla \widetilde{\psi}^i + \frac{1}{2}g_{jk, i}(\widetilde{u}_m) (\nabla \widetilde{u}_m^j \cdot \nabla \widetilde{u}_m^k )\widetilde{\psi}^i,\\
f(u_m)\langle \psi^{(m)}, (u_m)_{x^1} \times (u_m)_{x^2} \rangle =\ & \widehat{f}(\widetilde{u}_m) Q_{ijk}(\widetilde{u}_m) (\widetilde{u}_m^j)_{x^1} (\widetilde{u}_m^k)_{x^2} \widetilde{\psi}^i.
\end{split}
\]
Consequently, 
\begin{align}
\langle  \delta E_{\ep, p, f}(u_m), \psi^{(m)} \rangle =\ &\int_{F(\bB^+_{r})} \big[ 1 + \ep^{p-2}(1 + g_{ij}(\widetilde{u}_m)\nabla \widetilde{u}^i_m \cdot \nabla \widetilde{u}^j_m)^{\frac{p}{2} - 1} \big] g_{ij}(\widetilde{u}_m) \nabla \widetilde{u}_m^j \cdot \nabla \widetilde{\psi}^i  \nonumber\\
&+ \int_{F(\bB^+_{r})}  \big[ 1 + \ep^{p-2}(1 + g_{ij}(\widetilde{u}_m)\nabla \widetilde{u}^i_m \cdot \nabla \widetilde{u}^j_m)^{\frac{p}{2} - 1} \big]\frac{1}{2}g_{jk, i}(\widetilde{u}_m) (\nabla \widetilde{u}_m^j \cdot \nabla \widetilde{u}_m^k )\widetilde{\psi}^i   \nonumber\\
& + \int_{F(\bB^+_{r})}\widehat{f}(\widetilde{u}_m)Q_{ijk}(\widetilde{u}_m) (\widetilde{u}_m^j)_{x^1} (\widetilde{u}_m^k)_{x^2} \widetilde{\psi}^i. \nonumber
\end{align}
Pulling back via the conformal map $F(r\ \cdot\ )$ and using~\eqref{eq:tilde-to-hat} yields~\eqref{eq:fermi-variation}.

To continue, let $\zeta \in C^{\infty}_0(\bB)$ be a cut-off function which is identically $1$ on $\bB_{\frac{1}{2}}$, and let $\widehat{\psi}_{mn} = \zeta^2(\widehat{u}_m - \widehat{u}_n)$, which is admissible since $\widehat{u}_m^3, \widehat{u}_n^3$ vanish on $\bT$. Then by a straightforward computation which we explain at the end of this proof, we get
\begin{equation}\label{eq:PS-estimate}
\langle \delta E_{\ep, p, f}(u_m), \psi_{mn}^{(m)} \rangle - \langle \delta E_{\ep, p, f}(u_n), \psi_{mn}^{(n)} \rangle \geq C\|\nabla \widehat{u}_m - \nabla \widehat{u}_n\|_{p; \bB^+_{\frac{1}{2}}}^p - R_{mn},
\end{equation}
where $R_{mn} \to 0$ as $m, n \to \infty$ since $(\widehat{u}_m)$ is bounded in $W^{1, p}$ and converges uniformly. On the other hand, since $\lim_{m \to \infty}\|\delta E_{\ep, p, f}(u_m)\| = 0$ and $\|\psi_{mn}^{(m)}\|_{1, p}, \|\psi_{mn}^{(n)}\|_{1, p}$ are bounded independently of $m, n$, we see that the left-hand side above also tends to zero as $m, n \to \infty$. Thus $(\widehat{u}_m)$ converges strongly in $W^{1, p}(\bB^+_{\frac{1}{2}}; \RR^3)$. As $x \in \partial \bB$ is arbitrary, we deduce that $(u_m)$ converges strongly in $W^{1, p}$ on a neighborhood of $\partial \bB$. A similar argument gives strong $W^{1, p}$-convergence on any compact subset of $\bB$. Hence we get strong $W^{1, p}$-convergence on $\overline{\bB}$. The remaining conclusions follow easily.
\end{proof}
\begin{proof}[Proof of~\eqref{eq:PS-estimate}]
Throughout this proof, the constants $c, C$ appearing in the estimates are independent of $m, n$ unless otherwise stated. To further abbreviate the first integrand on the right-hand side of~\eqref{eq:fermi-variation}, we define, for $(x, z, \xi) \in \overline{B^+} \times V \times \RR^{3 \times 2}$, the functions
\begin{equation}\label{eq:A-a-i-definition}
A_{\alpha, i}(x, z, \xi) = \big[ 1 + (\wep)^{p-2}(r^2 + N(x, z, \xi))^{\frac{p}{2} - 1} \big]g_{ij}(z)\xi_{\alpha}^j,
\end{equation}
where $\alpha, \beta, \cdots \in \{1, 2\}$ and $i, j, \cdots \in \{1, 2, 3\}$. Then, taking the $\widehat{\psi}_{mn}$ defined above as a test function and using~\eqref{eq:fermi-variation} to compute $\langle  \delta E_{\ep, p, f}(u_m), \psi_{mn}^{(m)} \rangle$ and $\langle  \delta E_{\ep, p, f}(u_n), \psi_{mn}^{(n)} \rangle$, we get
\[
\begin{split}
&\langle \delta E_{\ep, p, f}(u_m), \psi_{mn}^{(m)} \rangle - \langle \delta E_{\ep, p, f}(u_n), \psi_{mn}^{(n)} \rangle \\
\geq\ & \int_{\bB^+} \big[ A_{\alpha, i}(\cdot, \widehat{u}_{m}, \nabla\widehat{u}_m) - A_{\alpha, i}(\cdot, \widehat{u}_{n}, \nabla\widehat{u}_n) \big] \big(\partial_{\alpha}\widehat{u}^{i}_m - \partial_{\alpha}\widehat{u}^{i}_n \big)\zeta^2 \\
& - 2 \int_{\bB^+}\big(|A_{\alpha, i}(\cdot, \widehat{u}_{m}, \nabla\widehat{u}_m)| + |A_{\alpha, i}(\cdot, \widehat{u}_{n}, \nabla\widehat{u}_n) | \big) \cdot |\widehat{u}^i_m - \widehat{u}^i_n| |\partial_{\alpha} \zeta|\zeta\\
& - C\int_{\bB^+} \big[ 1 + (\wep)^{p-2}(r^2 + |\nabla \widehat{u}_m|^2)^{\frac{p}{2} - 1}\big] |\nabla \widehat{u}_m|^2 |\widehat{u}_m - \widehat{u}_n|\zeta^2\\
& - C\int_{\bB^+} \big[ 1 + (\wep)^{p-2}(r^2 + |\nabla \widehat{u}_n|^2)^{\frac{p}{2} - 1}\big] |\nabla \widehat{u}_n|^2 |\widehat{u}_m - \widehat{u}_n|\zeta^2.
\end{split}
\]
Using~\eqref{eq:A-a-i-definition} to estimate the second integral and also combining the third and fourth integrals leads to
\begin{equation}\label{eq:PS-estimate-difference}
\begin{split}
&\langle \delta E_{\ep, p, f}(u_m), \psi_{mn}^{(m)} \rangle - \langle \delta E_{\ep, p, f}(u_n), \psi_{mn}^{(n)} \rangle \\
\geq\ & \int_{\bB^+} \big[ A_{\alpha, i}(\cdot, \widehat{u}_{m}, \nabla\widehat{u}_m) - A_{\alpha, i}(\cdot, \widehat{u}_{n}, \nabla\widehat{u}_n) \big] \big(\partial_{\alpha}\widehat{u}^{i}_m - \partial_{\alpha}\widehat{u}^{i}_n \big)\zeta^2\\
& - C\int_{\bB^+}(1 + |\nabla \widehat{u}_m|^{p-2} + |\nabla \widehat{u}_n|^{p-2})(|\nabla \widehat{u}_m| + |\nabla\widehat{u}_n|) |\widehat{u}_m - \widehat{u}_n| |\nabla \zeta|\zeta\\
& - C\int_{\bB^+} (1 + |\nabla\widehat{u}_{m}|^{p-2} + |\nabla \widehat{u}_{n}|^{p-2})(|\nabla\widehat{u}_m|^2 + |\nabla \widehat{u}_{n}|^2) |\widehat{u}_{m} - \widehat{u}_{n} |\zeta^2.
\end{split}
\end{equation}
To bound the integrand involving $A_{\alpha, i}(\cdot, \widehat{u}_{m}, \nabla\widehat{u}_m) - A_{\alpha, i}(\cdot, \widehat{u}_{n}, \nabla\widehat{u}_n)$, we follow the standard procedure described, for instance, in~\cite[page 129]{Tolksdorf1984}; that is, recalling that $V$ is convex, we use the fundamental theorem of calculus to write
\[
\begin{split}
&A_{\alpha, i}(\cdot, \widehat{u}_{m}, \nabla\widehat{u}_m) - A_{\alpha, i}(\cdot, \widehat{u}_{n}, \nabla\widehat{u}_n)\\
=\ & \Big( \int_{0}^{1} \pa{A_{\alpha, i}}{\xi^{j}_{\beta}}(\cdot,\ t\widehat{u}_m + (1 - t)\widehat{u}_n,\ t\nabla \widehat{u}_m + (1 - t)\nabla\widehat{u}_n) dt\Big) (\partial_{\beta}\widehat{u}_m^{j} - \partial_{\beta}\widehat{u}_n^{j}) \\
& + \Big( \int_{0}^{1} \pa{A_{\alpha, i}}{z^k}(\cdot,\ t\widehat{u}_m + (1 - t)\widehat{u}_n,\ t\nabla \widehat{u}_m + (1 - t)\nabla\widehat{u}_n) dt\Big) (\widehat{u}_m^{k} - \widehat{u}_n^{k}).
\end{split}
\]
Using~\eqref{eq:A-a-i-definition} to find the partial derivatives $\pa{A_{\alpha, i}}{z^{k}}$ and $\pa{A_{\alpha, i}}{\xi^{j}_{\beta}}$, we deduce that
\begin{equation}\label{eq:PS-main-lower-bound}
\begin{split}
& \int_{\bB^+} \big[ A_{\alpha, i}(\cdot, \widehat{u}_{m}, \nabla\widehat{u}_m) - A_{\alpha, i}(\cdot, \widehat{u}_{n}, \nabla\widehat{u}_n) \big] \big(\partial_{\alpha}\widehat{u}^{i}_m - \partial_{\alpha}\widehat{u}^{i}_n \big)\zeta^2\\
\geq\ & c\int_{\bB^+}\Big(  \int_{0}^{1}1 +  |t\nabla \widehat{u}_m + (1 - t)\nabla\widehat{u}_n|^{p-2} dt\Big) |\nabla \widehat{u}_m - \nabla \widehat{u}_n|^2\zeta^2 \\
& - C\int_{\bB^+} (1 + |\nabla\widehat{u}_{m}|^{p-2} + |\nabla \widehat{u}_{n}|^{p-2})\cdot (|\nabla\widehat{u}_m| + |\nabla \widehat{u}_{n}|)\cdot  |\widehat{u}_{m} - \widehat{u}_{n}| |\nabla\widehat{u}_{m} - \nabla\widehat{u}_{n}|\zeta^2.
\end{split}
\end{equation}
At this point we recall another standard fact (again see for example~\cite[page 129]{Tolksdorf1984}), namely that 
\[
\begin{split}
\int_{0}^{1} |t\xi + (1 - t)\eta|^{p-2}dt \geq c_{p}(|\xi|^{p-2} + |\eta|^{p-2}), \text{ for all }\xi, \eta \in \RR^{3 \times 2},
\end{split}
\]
as can be verified by restricting the integration to $[0, \frac{1}{4}]$ if $|\xi| \leq |\eta|$, and to $[\frac{3}{4}, 1]$ if otherwise. Applying this to the first term on the right-hand side of~\eqref{eq:PS-main-lower-bound} and substituting the result back into~\eqref{eq:PS-estimate-difference}, we obtain
\[
\begin{split}
&\langle \delta E_{\ep, p, f}(u_m), \psi_{mn}^{(m)} \rangle - \langle \delta E_{\ep, p, f}(u_n), \psi_{mn}^{(n)} \rangle \\
\geq\ &  c\int_{\bB^+} (1 + |\nabla\widehat{u}_{m}|^{p-2} + |\nabla\widehat{u}_{n}|^{p-2})  |\nabla \widehat{u}_m - \nabla \widehat{u}_n|^2\zeta^2\\
& - C\int_{\bB^+}(1 + |\nabla \widehat{u}_m|^{p-2} + |\nabla \widehat{u}_n|^{p-2})(|\nabla \widehat{u}_m| + |\nabla\widehat{u}_n|) |\widehat{u}_m - \widehat{u}_n| |\nabla \zeta|\zeta\\
& - C\int_{\bB^+} (1 + |\nabla\widehat{u}_{m}|^{p-2} + |\nabla \widehat{u}_{n}|^{p-2})(|\nabla\widehat{u}_m|^2 + |\nabla \widehat{u}_{n}|^2) |\widehat{u}_{m} - \widehat{u}_{n} |\zeta^2,
\end{split}
\]
which gives~\eqref{eq:PS-estimate} since $\zeta$ is equal to $1$ on $\bB^+_{\frac{1}{2}}$, and since 
\[
\begin{split}
|\nabla\widehat{u}_m  - \nabla\widehat{u}_n|^{p} =\ & |\nabla\widehat{u}_m  - \nabla\widehat{u}_n|^{p-2}|\nabla\widehat{u}_m  - \nabla\widehat{u}_n|^2 \\
\leq\ & C_p (| \nabla\widehat{u}_m  |^{p-2} + | \nabla\widehat{u}_n|^{p-2})|\nabla\widehat{u}_m  - \nabla\widehat{u}_n|^2.
\end{split}
\]
\end{proof}

\section{Estimates for critical points of the perturbed functional}\label{sec:estimates}
It is the argument in this section that imposes restrictions on how close $p$ needs to be to $2$. Section~\ref{subsec:smooth} mostly concerns qualitative smoothness of critical points, with the exception of Remark~\ref{rmk:W22}, whereas in Sections~\ref{subsec:a-priori} and~\ref{subsec:max-principle} we derive quantitative estimates.
\subsection{Smoothness of critical points}\label{subsec:smooth}
Suppose $H_0, H_1, L > 0$, $\ep \in (0, 1]$, $p > 2$, and take a bounded smooth function $f:\RR^3 \to \RR$ such that $\int_{\Omega}f > 0$ and 
\begin{equation}\label{eq:f-C1}
\|f\|_{\infty; \RR^3} \leq H_0,\  \|\nabla f\|_{\infty;\RR^3} \leq H_1.
\end{equation}
Let $u \in \cM_p$ be a critical point of $E_{\ep, p, f}$ which satisfies, for some $r \in (0, r_{\bB}]$, 
\begin{equation}\label{eq:W14-scale}
r^{p-2}\int_{\bB_r(x) \cap \bB} |\nabla u|^p \leq L^p \text{ for all }x \in \overline{\bB},
\end{equation}
and let $\theta = \theta(p, \Sigma, L) \in (0, \frac{1}{4})$ be as in Section~\ref{subsec:spaces-and-coordinates}. Of course, for a single map $u$, given any $L$ we can always find a small enough $r$ such that~\eqref{eq:W14-scale} holds. Nonetheless, we introduce the condition as it shows up in \textit{a priori} estimates and determines the scale on which these estimates hold.

In proving that critical points are smooth, we will focus only on boundary regularity. Note that by our choice of $\theta$, we have 
\[
u(\overline{\bB_{\theta r}(x)} \cap \overline{\bB}) \subseteq B_{\rho_\Sigma}(u(x)), \text{ for all }x \in \partial \bB,
\]
and thus, with $U, V, \Psi, \Upsilon, ...$ as in Section~\ref{subsec:spaces-and-coordinates}, we may define
\[
\widehat{u} = (\Psi \circ u \circ F)(r\ \cdot\ ):(\overline{\bB_\theta^+}, \bT_\theta) \to (\RR^3, \{y^3 = 0\}).
\]
Since $u$ is a critical point of $E_{\ep, p, f}$, we get, letting $\widetilde{\ep} = \frac{\ep}{r}$ and $\widetilde{\lambda} = \lambda( r\ \cdot\ )$, that
\begin{align}
&\int_{\bB_\theta^{+}} \big[ 1 + (\wep)^{p-2}(r^2 + \widetilde{\lambda}^{-2}g_{kl}(\widehat{u})\nabla \widehat{u}^k \cdot \nabla \widehat{u}^l )^{\frac{p}{2} - 1} \big] g_{ij}(\widehat{u}) \nabla \widehat{u}^j \cdot \nabla \widehat{\psi}^i \nonumber\\
&+ \int_{\bB_\theta^+}  \big[ 1 + (\wep)^{p-2}(r^2 + \widetilde{\lambda}^{-2}g_{lm}(\widehat{u})\nabla \widehat{u}^l \cdot \nabla \widehat{u}^m )^{\frac{p}{2} - 1} \big]\frac{1}{2}g_{jk,i}(\widehat{u})(\nabla\widehat{u}^j\cdot \nabla\widehat{u}^k) \widehat{\psi}^i   \nonumber\\
& + \int_{\bB_\theta^+}\widehat{f}(\widehat{u})Q_{ijk}(\widehat{u}) \widehat{u}^j_{x^1}  \widehat{u}^k_{x^2} \widehat{\psi}^i = 0, \label{eq:fermi-pde}
\end{align}
for all $\widehat{\psi} \in W^{1, p}(\bB^+_{\theta}; \RR^3)$ such that $\widehat{\psi} = 0$ on $\partial \bB_{\theta} \cap \RR^2_+$ and $\widehat{\psi}^3 = 0$ on $\bT_{\theta}$. Recall that $\widehat{f} = f\circ \Upsilon$.

We now introduce some further abbreviations, as we did in the proof of Proposition~\ref{prop:PS}, to elucidate the structure of the equation~\eqref{eq:fermi-pde}. We shall see that it is of the form considered in~\cite[Chapter 4]{LadyzhenskayaUraltseva1968}. Specifically, recalling that $V$ is the domain of $\Upsilon$ and hence of the components $g_{ij}$ and $Q_{ikl}$ defined in (f2) of Section~\ref{subsec:spaces-and-coordinates}, for $(x, z, \xi) \in \overline{\bB_{\theta}^{+}} \times V \times \RR^{3 \times 2}$ we define (below $\alpha, \beta \in \{1, 2\}$ and $i, j, k, \dots \in \{1, 2, 3\}$)
\begin{align*}
N(x, z, \xi) &= (\widetilde{\lambda}(x))^{-2} g_{kl}(z) \xi_\alpha^{k}\xi_\alpha^{l}\\
A_{\alpha, i}(x, z, \xi) &= \big[ 1 + (\wep)^{p-2}(r^2 + N(x, z, \xi))^{\frac{p}{2} - 1} \big]g_{ij}(z)\xi_{\alpha}^j\\
A_{i}(x, z, \xi) &= \big[ 1 + (\wep)^{p-2}(r^2 + N)^{\frac{p}{2}-1} \big] \frac{1}{2}g_{kl, i}(z)\xi_\alpha^k \xi_\alpha^l + \widehat{f}(z)Q_{ikl}(z) \xi^{k}_1\xi^{l}_2.
\end{align*}
We then have the following estimates. (The constants $c, C$ depend only on $p, \Sigma$ and $H_0$ except for the bound on $\pa{A_i}{z^m}$, which depends in addition on $H_1$. In any case, they do not depend on $\wep \in (0, \infty)$ and $r \in (0, r_{\bB}]$.)  
\begin{align*}
c|\xi|^2 & \leq N(x, z, \xi) \leq C|\xi|^2\\
A_{\alpha, i}(x, z, \xi)\xi_{\alpha}^i &\geq c\big[ 1 + (\wep)^{p-2}(r^2 + |\xi|^2)^{\frac{p}{2} - 1} \big]|\xi|^2,\\
\pa{A_{\alpha, i}}{\xi_{\beta}^j}\eta_{\alpha}^i \eta_{\beta}^j & \geq c\big[ 1 + (\wep)^{p-2}(r^2 + |\xi|^2)^{\frac{p}{2} - 1} \big]|\eta|^2,\\
|\xi||A_{\alpha, i}| + |A_i| &\leq C\big[ 1 + (\wep)^{p-2}(r^2 + |\xi|^2)^{\frac{p}{2} - 1} \big]|\xi|^2,\\
|\xi| \left| \pa{A_{\alpha, i}}{\xi_{\beta}^j} \right| + \left| \pa{A_{\alpha, i}}{z^m} \right| &\leq C\big[ 1 + (\wep)^{p-2}(r^2 + |\xi|^2)^{\frac{p}{2} - 1} \big]|\xi|, \\
|\xi| \left| \pa{A_{i}}{\xi_{\beta}^j} \right| + \left| \pa{A_{i}}{z^m} \right| &\leq C \big[ 1 + (\wep)^{p-2}(r^2 + |\xi|^2)^{\frac{p}{2} - 1} \big]|\xi|^2,\\
|\xi| \left| \pa{A_{\alpha, i}}{x^{\gamma}} \right| + \left| \pa{A_{i}}{x^{\gamma}} \right| &\leq C (\wep)^{p-2}(r^2 + |\xi|^2)^{\frac{p}{2} - 1} |\xi|^2.
\end{align*}
Also,~\eqref{eq:fermi-pde} becomes
\begin{equation}\label{eq:fermi-abbr}
\int_{\bB_\theta^+} A_{\alpha, i}(x, \widehat{u}, \nabla \widehat{u}) \partial_{\alpha}\psi^{i} + A_{i}(x, \widehat{u}, \nabla \widehat{u})\psi^i = 0, 
\end{equation}
for all $\psi \in W^{1, p}(\bB^+_{\theta}; \RR^3)$ such that $\psi = 0$ on $\partial \bB_{\theta} \cap \RR^2_+$ and $\psi^3 = 0$ on $\bT_{\theta}$. Formally integrating by parts, dividing both sides by $1 + (\wep)^{p-2}(r^2 + N(x, \widehat{u}, \nabla\widehat{u}))^{\frac{p}{2} -1}$ and inverting $(g_{ij}(\widehat{u}))_{1\leq i, j \leq 3}$, we obtain 
\begin{equation}\label{eq:fermi-nondiv}
\Delta \widehat{u}^i + (p-2)\frac{(\wep)^{p-2}(r^2 + N(x, \widehat{u}, \nabla \widehat{u}))^{\frac{p}{2} -1}}{1 + (\wep)^{p-2}(r^2 + N(x, \widehat{u}, \nabla \widehat{u}))^{\frac{p}{2} -1}}\frac{\widetilde{\lambda}^{-2} g_{kl}(\widehat{u})\partial_{\alpha}\widehat{u}^i \partial_{\beta}\widehat{u}^k }{r^2 + N(x, \widehat{u}, \nabla \widehat{u})}\partial_{\alpha\beta}\widehat{u}^l =b_i(x, \widehat{u}, \nabla\widehat{u}),
\end{equation}
for $i = 1, 2, 3$, where $b_i$ satisfies
\begin{equation}\label{eq:f-estimate}
|b_i(x, z, \xi)| \leq C(|\xi| + |\xi|^2)
\end{equation}
with $C$ depending only on $p, \Sigma$ and $H_0$. On the other hand, there is a constant $A_0$ which depends only on $\Sigma$ such that 
\begin{equation}\label{eq:deviation}
\left| \frac{(\wep)^{p-2}(r^2 + N(x, \widehat{u}, \nabla \widehat{u}))^{\frac{p}{2} -1}}{1 + (\wep)^{p-2}(r^2 + N(x, \widehat{u}, \nabla \widehat{u}))^{\frac{p}{2} -1}}\frac{\widetilde{\lambda}^{-2} g_{kl}(\widehat{u})\partial_{\alpha}\widehat{u}^i \partial_{\beta}\widehat{u}^k }{r^2 + N(x, \widehat{u}, \nabla \widehat{u})} \right| \leq A_0, 
\end{equation}
for all $\alpha, \beta \in \{1, 2\}$ and $i, l  \in \{1, 2, 3\}$. Next, for all $q> 1$, consider the space 
\[
X_{q, 0} = \{v \in W^{2, q}(\bB^+; \RR^3)\ |\ v = 0 \text{ on }\partial \bB \cap \RR^2_+, \partial_2 v^1 = \partial_2 v^2 = v^3 = 0 \text{ on }\bT\},
\]
where the vanishing of $\partial_2 v^1, \partial_2 v^2$ on $\bT$ is in the trace sense. For all $f \in L^q(\bB^+; \RR^3)$ there exists a unique $v \in X_{q, 0}$ such that $\Delta v = f$ and 
\begin{equation}\label{eq:CZ}
\|v\|_{2, q; \bB^+} \leq K_q\|f\|_{q; \bB^+}.
\end{equation}
Below we choose $2 < p_0 < 3$ satisfying 
\begin{equation}\label{eq:p0-choice}
9A_0(K_2 + K_4)(p_0 - 2) < \frac{1}{2}. 
\end{equation}
\begin{prop}\label{prop:regularity}
Let $p \in (2, p_0]$ and suppose $u \in \cM_p$ is a critical point of $E_{\ep, p, f}$. Then $u$ is smooth on $\overline{\bB}$.
\end{prop}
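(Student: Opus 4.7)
The plan is to prove boundary regularity (interior regularity is an analogous but simpler argument) by a Calder\'on-Zygmund perturbation scheme for the non-divergence form~\eqref{eq:fermi-nondiv}, exploiting the uniform bound~\eqref{eq:deviation} and the smallness of $p-2$ guaranteed by the choice of $p_0$. Fix $x_0 \in \partial \bB$ and work with the flattened map $\widehat{u}$ from Section~\ref{subsec:spaces-and-coordinates}, solving~\eqref{eq:fermi-pde} weakly with $\widehat{u}^3 = 0$ on $\bT_\theta$. Considering test functions of the form $\widehat{\psi} = (\widehat{\psi}^1,\widehat{\psi}^2,0)$ that do not vanish on $\bT_\theta$ and integrating by parts formally, one reads off the natural boundary conditions $\partial_2 \widehat{u}^1 = \partial_2 \widehat{u}^2 = 0$ on $\bT_\theta$ --- precisely the mixed Neumann/Dirichlet conditions defining $X_{q,0}$. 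This alignment with $X_{q,0}$ is what allows~\eqref{eq:CZ} to be applied directly.

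\textbf{Step 1: $W^{2,2}$ regularity.} I would use Nirenberg's tangential difference-quotient method, testing the divergence form~\eqref{eq:fermi-abbr} against $\zeta^2 D_1^{-h}(D_1^h \widehat{u})$ for a cutoff $\zeta \in C^\infty_c(\bB_\theta)$; this is an admissible test function since $D_1^h$ preserves the vanishing of the third component on $\bT_\theta$. The uniform coercivity of $\partial A_{\alpha,i}/\partial \xi^j_\beta$, combined with the polynomial growth bounds on $A_i, A_{\alpha, i}$ and the assumed $W^{1,p}$ control on $\widehat{u}$, yields $\partial_1 \widehat{u} \in W^{1,2}_{\loc}(\bB^+_\theta)$. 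The remaining second derivative $\partial_{22}\widehat{u}$ is then recovered algebraically from~\eqref{eq:fermi-nondiv} together with~\eqref{eq:deviation}, giving $\widehat{u} \in W^{2,2}_{\loc}(\bB^+_\theta)$.

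\textbf{Step 2: Bootstrap to $W^{2,4}$.} In two dimensions $W^{2,2} \hookrightarrow W^{1,q}$ for every $q < \infty$, so~\eqref{eq:f-estimate} gives $b_i \in L^q_{\loc}$ for all such $q$. Rewriting~\eqref{eq:fermi-nondiv} as $\Delta \widehat{u}^i = b_i - (p-2)\, T^{il}_{\alpha\beta}(x,\widehat{u},\nabla\widehat{u})\, \partial_{\alpha\beta}\widehat{u}^l$ with $|T| \le A_0$, I would apply~\eqref{eq:CZ} to $\eta\widehat{u}$ for a suitable cutoff $\eta \in C^\infty_c(\bB_\theta)$ that is even across $\{y^2 = 0\}$, so that $\eta\widehat{u}$ (extended by zero) lies in $X_{q,0}$. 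Expanding the Laplacian $\Delta(\eta\widehat{u}) = \eta\Delta\widehat{u} + 2\nabla\eta\cdot\nabla\widehat{u} + (\Delta\eta)\widehat{u}$ and rewriting $\eta\partial_{\alpha\beta}\widehat{u}^l = \partial_{\alpha\beta}(\eta\widehat{u}^l) - (\partial_{\alpha\beta}\eta)\widehat{u}^l - (\partial_\alpha\eta)(\partial_\beta\widehat{u}^l) - (\partial_\beta\eta)(\partial_\alpha\widehat{u}^l)$ yields, by~\eqref{eq:CZ} at $q=4$,
\[
\|\eta\widehat{u}\|_{W^{2,4}(\bB^+)} \le K_4 A_0 (p-2)\,\|\nabla^2(\eta\widehat{u})\|_{L^4} + C,
\]
where $C$ absorbs $\|b\|_{L^4}$ and the commutator terms (all controlled by the $W^{2,2}$-bound of Step~1 after Sobolev embedding). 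Since $K_4 A_0 (p-2) < 1/2$ by the choice of $p_0$, the second-derivative term can be absorbed, giving $\widehat{u} \in W^{2,4}_{\loc}(\bB^+_\theta)$. The $K_2$ factor in the definition of $p_0$ is used in the same way at $q=2$ to produce a quantitative version of Step~1 that feeds into the iteration.

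\textbf{Step 3 and main obstacle.} Once $\widehat{u} \in W^{2,4}_{\loc}$, the 2D embedding $W^{2,4}\hookrightarrow C^{1,\alpha}$ makes the coefficients of~\eqref{eq:fermi-nondiv} H\"older continuous, and classical Schauder theory for quasilinear systems with oblique boundary conditions (as in~\cite{LadyzhenskayaUraltseva1968}) then yields $\widehat{u} \in C^{k,\alpha}$ for all $k$; pulling back by $F$ and $\Upsilon$ gives smoothness of $u$ near $\partial \bB$. The main technical obstacle is Step~2: specifically, arranging the cutoff and extension so that $\eta\widehat{u}$ genuinely belongs to $X_{q,0}$ (requiring $\partial_2\eta = 0$ on $\bT_\theta$, or an even reflection across $\{y^2=0\}$), and verifying that the commutator terms from $\Delta(\eta\widehat{u})$ carry no $\|\nabla^2\widehat{u}\|_{L^4}$ contribution on the right-hand side --- otherwise, no amount of smallness in $p-2$ would close the absorption estimate.
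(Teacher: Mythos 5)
Your overall strategy --- flatten the boundary, view~\eqref{eq:fermi-nondiv} as a small perturbation of $\Delta$, get $W^{2,2}$ by difference quotients, bootstrap to $W^{2,4}$ using the smallness $9A_0(1+K_2+K_4)(p_0-2)<\frac12$, then apply Schauder --- is the same as the paper's, but there are two genuine gaps in your execution.

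First, in Step 1 you claim that once $\partial_1\widehat{u}\in W^{1,2}_{\loc}$ is established, the algebraic recovery of $\partial_{22}\widehat{u}$ from~\eqref{eq:fermi-nondiv} gives $\widehat{u}\in W^{2,2}_{\loc}$ directly. That recovery (equation~\eqref{eq:22-component} in the paper) bounds $|\widehat{u}_{x^2x^2}|$ by $|\nabla\widehat{u}|+|\nabla\widehat{u}|^2+|\nabla\widehat{u}_{x^1}|$, so putting $\widehat{u}_{x^2x^2}$ in $L^2$ requires $\nabla\widehat{u}\in L^4$. At this stage you only know $\nabla\widehat{u}\in L^p$ with $p\leq p_0<3<4$; having $\partial_1\widehat{u}\in W^{1,2}$ improves only $\partial_1\widehat{u}$ (Sobolev in 2D gives $L^q$ for all $q<\infty$), not $\partial_2\widehat{u}$, since you control $\partial_{12}\widehat{u}$ but not $\partial_{22}\widehat{u}$. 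So the first pass only yields $\widehat{u}\in W^{2,p/2}$, and the paper then iterates with $p_{k+1}=\frac{2p_k}{4-p_k}$ (Sobolev of $W^{2,p_k/2}$ into $W^{1,p_{k+1}}$) until the gradient lands in $L^4$, after which $W^{2,2}$ follows. Without that iteration your Step 1 does not close.

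Second, in Step 2 the absorption estimate
\[
\|\eta\widehat{u}\|_{W^{2,4}(\bB^+)} \le K_4 A_0 (p-2)\,\|\nabla^2(\eta\widehat{u})\|_{L^4} + C
\]
can only be absorbed if $\|\nabla^2(\eta\widehat{u})\|_{L^4}$ is already known to be finite --- exactly what you are trying to prove. Starting from $W^{2,2}$, the right-hand side is a priori $\infty$ and the inequality is vacuous. You either need an approximation argument (mollify, derive the estimate for smooth approximants, pass to the limit) or, as the paper does, a contraction-mapping/fixed-point argument: $T_q(w)=w-\Delta^{-1}(Lw-b)$ is a contraction on $X^{(\theta_2)}_{q,\widehat{u}}$ for $q=2$ and $q=4$, the $q=4$ fixed point is produced in $W^{2,4}$, and it agrees with $\widehat{u}$ by uniqueness of the $q=2$ fixed point. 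Your acknowledged obstacle about making $\eta\widehat{u}$ lie in $X_{q,0}$ is real but secondary; the paper sidesteps it by working on the fixed domain $\bB^+_{\theta_2}$ with boundary data matching $\widehat{u}$ on $\partial\bB_{\theta_2}\cap\RR^2_+$ rather than cutting off.
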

\begin{proof}
Interior regularity follows in essentially the same way as in~\cite{Sacks-Uhlenbeck81}. In particular $u$ is smooth in $\bB$. Below we focus on boundary regularity. As remarked above, given $u \in \cM_p$ we can always find $r \in (0, r_{\bB}]$ and $L > 0$ such that~\eqref{eq:W14-scale} holds. Given $x_0 \in \partial\bB$ we can then change coordinates as in the start of this section and get from $u|_{\overline{\bB_{\theta r}(x_0)} \cap \overline{\bB}}$ a map $\widehat{u}: (\overline{\bB_\theta^+}, \bT_\theta) \to (\RR^3, \{y^3 = 0\})$ satisfying~\eqref{eq:fermi-abbr} for all $\psi \in W^{1, p}(\bB^+_{\theta}; \RR^3)$ such that $\psi = 0$ on $\partial \bB_{\theta} \cap \RR^2_+$ and $\psi^3 = 0$ on $\bT_{\theta}$. 
\vskip 2mm
\noindent\textbf{Step 1: $W^{2, 2}$-estimate up to boundary}
\vskip 2mm
The most important ingredient is the following estimate due to Ladyzhenskaya and Uralt'seva~\cite{LadyzhenskayaUraltseva1968}, which we state without proof. We only note that the test function $\psi = \zeta^2 (\widehat{u} - \widehat{u}(0))$ used in~\cite{LadyzhenskayaUraltseva1968} continues to be admissible here. 
\begin{lemm}[\cite{LadyzhenskayaUraltseva1968}, Chapter 4, Lemma 1.3, or~\cite{Morrey1968}, Lemma 5.9.1]
\label{lemm:LU}
There exist $C> 0$ and $\theta_1 \in (0, \theta]$ depending on $p, H_0, \Sigma, L$ such that for all $\rho \leq \theta_1$ we have
\begin{align*}
\int_{\bB_{\rho}^+} &\big[ 1 + (\wep)^{p-2}(r^2 + |\nabla \widehat{u}|^2)^{\frac{p}{2} - 1} \big]\zeta^2 |\nabla \widehat{u}|^2 \leq C\rho^{2(1 - \frac{2}{p})} \int_{\bB_{\rho}^+} \big[ 1 + (\wep)^{p-2}(r^2 + |\nabla \widehat{u}|^2)^{\frac{p}{2} - 1} \big] |\nabla\zeta|^2,
\end{align*}
for all $\zeta \in W^{1, p}_0 (\bB_{\rho})$.
\end{lemm}
By Lemma~\ref{lemm:LU} and a standard difference quotient argument (see for instance~\cite[Chapter 4, Section 5]{LadyzhenskayaUraltseva1968} or~\cite[Theorem 1.11.1$'$]{Morrey1968}), there exist $\theta_2 < \frac{\theta_1}{2}$ and $C$, depending on $p, \Sigma, H_0, H_1$ and $L$, such that $\nabla\widehat{u}_{x^1} \in L^2(\bB_{\theta_2}^+)$, and that 
\begin{equation}\label{eq:hori-W22}
\int_{\bB_{\theta_2}^+}\big[ 1 + (\wep)^{p-2}(r^2 + |\nabla \widehat{u}|^2)^{\frac{p}{2} -1} \big]  |\nabla \widehat{u}_{x^1}|^2 \leq C\int_{\bB_{2\theta_2}^+}\big[ 1 + (\wep)^{p-2}(r^2 + |\nabla \widehat{u}|^2)^{\frac{p}{2} -1} \big] |\nabla \widehat{u}|^2
\end{equation}
In particular, scaling back on the right-hand side, we get
\begin{equation}\label{eq:hori-W22-result}
 \| \nabla \widehat{u}_{x^1} \|^2_{2; \bB_{\theta_2}^+} \leq C D_{\ep, p}(u; \bB_{2\theta_2r}(x_0) \cap \bB).
\end{equation}
To estimate $\widehat{u}_{x^2x^2}$, note that as $\widehat{u}$ is smooth on $\bB_{\theta_2}^+$, equation~\eqref{eq:fermi-nondiv} holds pointwise. Multiplying $(g_{ij}(\widehat{u}))_{1\leq i, j \leq 3}$ back, rearranging to leave only terms involving $\widehat{u}_{x^2 x^2}$ on the left-hand side, and then inverting the matrix $(a^{22}_{ij})_{1\leq i, j \leq 3}$ on both sides, where
\[
a^{\alpha\beta}_{ij} = g_{ij}(\widehat{u})\delta_{\alpha\beta} + (p-2)\frac{(\wep)^{p-2}(r^2 + N)^{\frac{p}{2} - 1}}{1 + (\wep)^{p-2}(r^2 + N)^{\frac{p}{2} - 1}}\frac{\widetilde{\lambda}^{-2}g_{ik}(\widehat{u})\partial_\alpha \widehat{u}^k g_{jl}(\widehat{u})\partial_\beta \widehat{u}^l}{r^2 + N},
\]
we see with the help of~\eqref{eq:f-estimate}, ~\eqref{eq:deviation} that
\begin{equation}\label{eq:22-component}
|\widehat{u}_{x^2 x^2}| \leq C(|\nabla \widehat{u}| + |\nabla\widehat{u}|^2 + |\nabla\widehat{u}_{x^1}|),
\end{equation}
with $C$ depending only on $p, \Sigma$ and $H_0$. This together with~\eqref{eq:hori-W22-result} gives $\widehat{u} \in W^{2, p/2}(\bB_{\theta_2}^+)$, and that
\begin{equation}\label{eq:2-p/2}
\|\nabla \widehat{u}\|_{1, p/2; \bB_{\theta_2}^+} \leq C( \|\nabla \widehat{u}\|_{p; \bB_{\theta_2}^+} + \|\nabla \widehat{u}\|_{p; \bB_{\theta_2}^+}^2 + \| \nabla \widehat{u}_{x^1}\|_{2; \bB_{\theta_2}^+} ),
\end{equation}
with $C$ depending on $p, \Sigma, L, H_0$ and $H_1$. (Recall that $\theta_2$ depends on these.)

We now define a sequence $(p_k)$ by letting $p_1 = p$ and 
\[
p_{k + 1} = \left\{
\begin{array}{ll}
\frac{2}{4 - p_k}p_k &, \text{ if }p_k < 4,\\
4 &, \text{ if }p_k \geq 4.
\end{array}
\right.
\]
We claim that $p_k \geq 4$ for all large enough $k$. From the above recurrence relation, it suffices to show that the inequality holds for some $k$. Suppose towards a contradiction that $p_k < 4$ for all $k$. Then, as $p > 2$, we can argue by induction to see that $p_{k} > 2$ for all $k$. Using again the assumption that the sequence stays less than $4$, we deduce that $(p_k)$ is strictly increasing, and hence $p_{k + 1} > \frac{2}{4 - p}\cdot p_k$ for all $k$. Consequently,
\[
4 > p_k > \Big( \frac{2}{4 - p} \Big)^{k-1}p, \text{ for all }k,
\]
which is a contradiction since $\frac{2}{4-p} > 1$ (recall that $2 < p < 3$). Thus, we may define
\[
k_0 = \max\{k \geq 1\ |\ p_k < 4\},
\]
which depends on $p$ only. Suppose by induction that $\nabla \widehat{u} \in W^{1, p_k/2}(\bB_{\theta_2}^+)$ for some $1 \leq k < k_0$. Then by Sobolev embedding $\nabla \widehat{u} \in L^{p_{k + 1}}(\bB_{\theta_2}^+)$, and hence with the help of~\eqref{eq:22-component}, we see that $\nabla \widehat{u} \in W^{1, p_{k + 1}/2}(\bB_{\theta_2}^+)$, with 
\begin{align*}
\|\nabla \widehat{u}\|_{1, p_{k + 1}/2; \bB_{\theta_2}^+} &\leq C(\|\nabla \widehat{u}\|_{p_{k + 1}; \bB_{\theta_2}^+} + \|\nabla \widehat{u}\|_{p_{k + 1}; \bB_{\theta_2}^+}^2 + \| \nabla \widehat{u}_{x^1}\|_{2; \bB_{\theta_2}^+} ) \nonumber\\
&\leq C(\|\nabla \widehat{u}\|_{1, p_{k}/2; \bB_{\theta_2}^+} + \|\nabla \widehat{u}\|_{1, p_{k}/2; \bB_{\theta_2}^+}^2 + \| \nabla \widehat{u}_{x^1}\|_{2; \bB_{\theta_2}^+} )  \label{eq:2-pk/2}.
\end{align*}
Iterating from $k = 1$ to $k = k_0-1$ and recalling~\eqref{eq:2-p/2},~\eqref{eq:hori-W22-result} and~\eqref{eq:W14-scale}, we get
\begin{equation}\label{eq:W14-almost}
\|\nabla \widehat{u}\|_{1, p_{k_0}/2; \bB_{\theta_2}^+} \leq C\big(p, \Sigma, H_0, H_1, L, D_{\ep, p}(u; \bB_{2\theta_2r}(x_0) \cap \bB)\big).
\end{equation}
Taking the $L^2$-norm over $\bB_{\theta_2}^+$ on both sides of~\eqref{eq:22-component}, we get
\[
\|\widehat{u}_{x^2 x^2}\|_{2; \bB_{\theta_2}^+} \leq C(\|\nabla \widehat{u}\|_{2; \bB_{\theta_2}^+} + \|\nabla \widehat{u}\|^2_{4; \bB_{\theta_2}^+} + \|\nabla \widehat{u}_{x^1}\|_{2; \bB_{\theta_2}^+}).
\]
Using~\eqref{eq:W14-almost} to control the $L^4$-norm and recalling~\eqref{eq:hori-W22-result}, we obtain
\begin{equation}\label{eq:W22}
\|\nabla \widehat{u}\|_{4; \bB_{\theta_2}^+} +  \|\nabla^2 \widehat{u}\|_{2; \bB_{\theta_2}^+} \leq C\big(p, \Sigma, H_0, H_1, L, D_{\ep, p}(u; \bB_{2\theta_2r}(x_0) \cap \bB)\big).
\end{equation}
Consequently, $A_{\alpha, i}(\cdot, \widehat{u}, \nabla\widehat{u})$ belongs to $(L^q \cap W^{1, s})(\bB_{\theta_2}^+)$ for all $q < \infty$ and $s < 2$, and we may now go back to~\eqref{eq:fermi-abbr} and integrate by parts to find that, for all $\psi \in W^{1, p}(\bB^+_{\theta_2}; \RR^3)$ such that $\psi = 0$ on $\partial \bB_{\theta_2} \cap \RR^2_+$ and $\psi^3 = 0$ on $\bT_{\theta_2}$, there holds
\[
0 = -\int_{\bT_{\theta_2}}A_{2, i}(\cdot, \widehat{u}, \nabla \widehat{u}) \psi^{i} + \int_{\bB^+_{\theta_2}}  \big[-
\paop{x^{\alpha}}\big(A_{\alpha, i}(\cdot, \widehat{u}, \nabla\widehat{u})\big) + A_i(\cdot, \widehat{u}, \nabla\widehat{u})\big] \psi^{i}.
\]
Recalling that~\eqref{eq:fermi-nondiv} holds pointwise in $\bB^+_{\theta_2}$, we see that the second integral on the right-hand side vanishes, and hence
\begin{equation*}
\int_{\bT_{\theta_2}}A_{2, i}(\cdot, \widehat{u}, \nabla \widehat{u}) \psi^{i} = 0, 
\end{equation*}
for all $\psi$ as above. As $g_{i, 3} \equiv 0$ for $i = 1, 2$, we deduce that $\widehat{u}^i_{x^2} = 0$ on $\bT_{\theta_2}$ in the trace sense for $i = 1, 2$.
\vskip 2mm
\noindent\textbf{Step 2: $C^{1, \frac{1}{2}}$-regularity and smoothness}
\vskip 2mm
The idea is, as in Sacks-Uhlenbeck~\cite{Sacks-Uhlenbeck81}, to view the left-hand side of~\eqref{eq:fermi-nondiv} as a perturbation of the Laplace operator. Consider for $q > 1$ the following space
\[
X_{q, \widehat{u}}^{(\theta_2)} = \{v \in W^{2, q}(\bB_{\theta_2}^+; \RR^3)\ |\ v = \widehat{u} \text{ on } \partial \bB_{\theta_2} \cap \RR^2_+, v_{x^2}^1 = v_{x^2}^2 = v^3 = 0 \text{ on } \bT_{\theta_2}\}.
\] 
For all $f \in L^q(\bB_{\theta_2}^+; \RR^3)$, there exists a unique $v \in X^{(\theta_2)}_{q, 0}$ such that $\Delta v = f$ and that 
\begin{equation}\label{eq:CZ-half}
\theta_2^{-2}\|v\|_{q; \bB_{\theta_2}^+} + \theta_2^{-1}\|\nabla v\|_{q; \bB_{\theta_2}^+} +  \|\nabla^2 v\|_{q;\bB_{\theta_2}^+} \leq K_q\|f\|_{q; \bB_{\theta_2}^+},
\end{equation}
with $K_q$ as in~\eqref{eq:CZ}. On the other hand, define $L:X^{(\theta_2)}_{q, \widehat{u}} \to L^{q}(\bB_{\theta_2}^+; \RR^3)$ by
\begin{equation*}
(Lw)^i = \Delta w^i + (p-2)\frac{(\wep)^{p-2}(r^2 + N)^{\frac{p}{2} -1}}{1 + (\wep)^{p-2}(r^2 + N)^{\frac{p}{2} -1}}\frac{\widetilde{\lambda}^{-2} g_{kl}(\widehat{u})\partial_{\alpha}\widehat{u}^i \partial_{\beta}\widehat{u}^k }{r^2 + N}\partial_{\alpha\beta}w^l,
\end{equation*}
and then $T_q: X^{(\theta_2)}_{q, \widehat{u}} \to X^{(\theta_2)}_{q, \widehat{u}}$ by 
\[
T_q(w) = w - \Delta^{-1}(Lw - b(\cdot, \widehat{u}, \nabla\widehat{u})).
\]
Note that since $\widehat{u} \in W^{2,2}(\bB^+_{\theta_2})$, by~\eqref{eq:f-estimate} and Sobolev embedding $b(\cdot, \widehat{u}, \nabla\widehat{u})$ lies in $L^q$. Also, $\widehat{u}$ is a fixed point of $T_2$. Next, by~\eqref{eq:CZ-half}, the definition of $L$ and~\eqref{eq:deviation}, and recalling our choice of $p_0$ in~\eqref{eq:p0-choice}, we see that both $T_4$ and $T_2$ are contractions. The unique fixed point of $T_4$ in $X_{4, \widehat{u}}^{(\theta_2)}$ must then coincide in $X_{2, \widehat{u}}^{(\theta_2)}$ with $\widehat{u}$, and hence $\widehat{u}\in W^{2, 4}(\bB_{\theta_2}^+; \RR^3)$, which embeds continuously into $C^{1, \frac{1}{2}}(\overline{\bB_{\theta_2}^+}; \RR^3)$. As in~\cite[Proposition 1.4]{Fraser2000}, we may then apply Schauder theory for linear elliptic systems to~\eqref{eq:fermi-nondiv} to obtain smoothness.
\end{proof}
\begin{rmk}\label{rmk:W22}
Taking into account~\eqref{eq:F-interpolate}, the estimate~\eqref{eq:W22} formulated in terms of $u$ implies
\[
r^{\frac{1}{2}}\|\nabla u\|_{4; \bB_{\frac{3}{4}\theta_2 r}(x) \cap \bB} + r\|\nabla^2 u\|_{2; \bB_{\frac{3}{4}\theta_2 r}(x) \cap \bB} \leq C(p, \Sigma, H_0, H_1, L, D_{\ep, p}(u; \bB_{2\theta_2r}(x) \cap \bB)),
\]
for all $x \in \partial \bB$. We emphasize that $\theta_2$ depends on $L$, among other things. There is also an interior version of this estimate, whose proof is much simpler as there is no need to flatten the constraint or to separately treat tangential and normal derivatives.
\end{rmk}
\subsection{A priori estimates for critical points}\label{subsec:a-priori}
In this section we derive several \textit{a priori} estimates that will play important roles in Section~\ref{sec:passage}, the main estimate being Proposition~\ref{prop:small-regular-2}. Most of the arguments here are adapted from~\cite[Section 1.2]{Fraser2000}.

The next proposition will later be used in conjunction with Remark~\ref{rmk:W22} with $r = \ep$ (Proposition~\ref{prop:rescale}), and is also an ingredient in the proof of Proposition~\ref{prop:small-regular-2}, which we will rely on to get estimates that hold on scales independent of $\ep$. 
\begin{prop}[See also~\cite{Fraser2000}, Lemma 1.5]
\label{prop:W14-W2q}
With $p_0$ as in Proposition~\ref{prop:regularity}, given $H_0, L > 0$, $\ep \in (0, 1]$ and $q \geq 2$, there exist $\theta_0 \in (0, \frac{1}{4})$ depending only on $L$ and $\Sigma$, and $p_1 \in (2, p_0]$ depending only on $q$ and $\Sigma$, such that if $u \in \cM_p$ is a critical point of $E_{\ep, p, f}$ with $p\leq p_1$, $\|f\|_{\infty} \leq H_0$, and if
\begin{equation}\label{eq:W14-assumption}
r^{2}\int_{\bB_{r}(x_0) \cap \bB} |\nabla u|^4 \leq L^4 \text{ for some }x_0 \in \partial \bB, r \in (0, r_{\bB}],
\end{equation}
then for all $\sigma < \rho \leq \theta_0$, there holds
\begin{align}\label{eq:W14-W2q}
r^{2 - \frac{2}{q}}&\|\nabla^2 u\|_{q; \bB_{\frac{3}{4}\sigma r}(x_0) \cap \bB} \leq C(\rho, \rho - \sigma, p, q, \Sigma, H_0, L) r^{\frac{1}{2}}\|\nabla u\|_{4; \bB_{\rho r}(x_0)\cap \bB}.
\end{align}
\end{prop}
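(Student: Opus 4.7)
The strategy, following~\cite{Fraser2000} and parallel to Step~2 in the proof of Proposition~\ref{prop:regularity}, is to flatten $\Sigma$ and $\partial\bB$, view the quasilinear equation~\eqref{eq:fermi-nondiv} for the rescaled map $\widehat{u}$ as a uniformly small perturbation of the Laplacian (pointwise bounded by $(p-2)A_0$, via~\eqref{eq:deviation}), apply a localized Calder\'on-Zygmund/contraction estimate on half-balls, and bootstrap the assumed $L^4$ input through Sobolev embedding.

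First I reduce to $\widehat{u}$. Since $p\leq p_0<3<4$, H\"older applied to~\eqref{eq:W14-assumption} yields the $W^{1,p}$-scale bound~\eqref{eq:W14-scale} with constant depending only on $L$, so Section~\ref{subsec:spaces-and-coordinates} provides $\theta\in(0,\tfrac{1}{4})$ depending only on $L,\Sigma$, which I take as $\theta_0$, along with the flattened, rescaled map $\widehat{u}(y)=(\Psi\circ u\circ F)(ry)$ on $\overline{\bB_{\theta_0}^+}$. By Proposition~\ref{prop:regularity}, $\widehat{u}$ is smooth, satisfies~\eqref{eq:fermi-nondiv} pointwise, and has boundary conditions $\widehat{u}^3=0$, $\partial_{x^2}\widehat{u}^1=\partial_{x^2}\widehat{u}^2=0$ on $\bT_{\theta_0}$. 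A direct change of variables using the uniform bounds on $\Psi$ and $\lambda=|F_z|$ shows $\|\nabla\widehat{u}\|_{4;\bB_\rho^+}\leq C(\Sigma)\,r^{1/2}\|\nabla u\|_{4;\bB_{\rho r}(x_0)\cap\bB}$, and an analogous computation recovers $r^{2-2/q}\|\nabla^2 u\|_{q;\bB_{\frac34\sigma r}(x_0)\cap\bB}$ from $\|\nabla^2\widehat{u}\|_{q;\bB_\sigma^+}$ via~\eqref{eq:F-interpolate}; thus it suffices to prove the intrinsic estimate $\|\nabla^2\widehat{u}\|_{q;\bB_\sigma^+}\leq C\|\nabla\widehat{u}\|_{4;\bB_\rho^+}$.

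For the main estimate, fix a radial cutoff $\zeta\in C_c^\infty(\bB_\rho)$ with $\zeta\equiv 1$ on $\bB_\sigma$ and $|\nabla^k\zeta|\leq C(\rho-\sigma)^{-k}$; radiality guarantees $\partial_{x^2}\zeta=0$ on $\bT_\rho$, so $v=\zeta(\widehat{u}-c)$ with $c=((\widehat{u}^1)_{\bB_\rho^+},(\widehat{u}^2)_{\bB_\rho^+},0)$ inherits the homogeneous boundary conditions of $X_{q,0}^{(\rho)}$. Multiplying~\eqref{eq:fermi-nondiv} by $\zeta$ and commuting derivatives gives
\[
\Delta v^i+(p-2)\,M^i_{\alpha\beta,l}(\widehat{u},\nabla\widehat{u})\,\partial_{\alpha\beta}v^l=R^i,
\]
where $|M|\leq A_0$ and the remainder $R$ is a sum of $\zeta b_i$, terms of the form $\nabla\zeta\otimes\nabla\widehat{u}$, $\nabla^2\zeta\otimes(\widehat{u}-c)$, and their contractions against $M$. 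Because the scale-invariant Calder\'on-Zygmund constant $K_q$ on $X^{(\rho)}_{q,0}$ is independent of $\rho$, choosing $p_1\in(2,p_0]$ depending only on $q$ and $\Sigma$ so that $(p_1-2)A_0K_q\leq\tfrac{1}{2}$ makes $v\mapsto v-\Delta^{-1}(\Delta v+(p-2)M\partial_{\alpha\beta}v^l)$ a contraction on $X^{(\rho)}_{q,0}$, and the fixed-point argument from Proposition~\ref{prop:regularity} gives $\|\nabla^2 v\|_{q;\bB_\rho^+}\leq C(q)\|R\|_{q;\bB_\rho^+}$. Combining~\eqref{eq:f-estimate} with the Poincar\'e estimate $\|\widehat{u}-c\|_{q;\bB_\rho^+}\leq C\rho\|\nabla\widehat{u}\|_{q;\bB_\rho^+}$ controls $\|R\|_q$ linearly by $\|\nabla\widehat{u}\|_{q;\bB_\rho^+}+\|\nabla\widehat{u}\|_{2q;\bB_\rho^+}^2$, multiplied by a factor depending on $\rho,\rho-\sigma,\Sigma,H_0$.

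Finally I bootstrap in two stages. Pick an intermediate radius $\rho'=(\sigma+\rho)/2$. The hypothesis $\nabla\widehat{u}\in L^4$ makes $|\nabla\widehat{u}|^2\in L^2(\bB_\rho^+)$, so the previous step applied with target exponent $2$ (requiring $(p_1-2)A_0K_2\leq\tfrac{1}{2}$) yields $\widehat{u}\in W^{2,2}(\bB_{\rho'}^+)$ with bound linear in $\|\nabla\widehat{u}\|_{4;\bB_\rho^+}$. The two-dimensional Sobolev embedding $W^{1,2}\hookrightarrow L^{2q}$ then upgrades $\nabla\widehat{u}$ to $L^{2q}(\bB_{\rho'}^+)$, and one further application of the step on $\bB_{\rho'}^+$ with target exponent $q$ (tightening $p_1$ so $(p_1-2)A_0K_q\leq\tfrac{1}{2}$) delivers the desired $W^{2,q}$ bound on $\bB_\sigma^+$. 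Undoing the rescaling and the flattening, with~\eqref{eq:F-interpolate} and the uniform $C^\infty$-control on $\Psi,\Upsilon,F$, produces~\eqref{eq:W14-W2q}. The main obstacle is bookkeeping: $K_q$ grows with $q$, forcing $p_1$ to shrink with $q$; and the cutoff derivatives together with the intermediate radius $\rho'$ account for the explicit $\rho$ and $\rho-\sigma$ dependence in the final constant.
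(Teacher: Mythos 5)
Your proof is correct and follows essentially the same route as the paper: flatten via $\Psi$ and $F$, use a cutoff with vanishing $x^2$-derivative on $\bT$ and a constant $c$ with $c^3=0$ to put $\zeta(\widehat u - c)$ in the half-ball Calder\'on--Zygmund space, absorb the $(p-2)A_0$-sized perturbation of the Laplacian by taking $p_1$ small (the paper tracks a factor $6A_0K_q$ rather than $A_0K_q$, and obtains $\partial_{x^2}\zeta=0$ by symmetrization instead of radiality — both inessential), and bootstrap through an intermediate $W^{2,2}$ estimate plus Sobolev $W^{2,2}\hookrightarrow W^{1,2q}$, exactly as in the paper. One small point worth making explicit: the RHS of the target estimate is linear in $\|\nabla u\|_{4}$, but your $\|R\|_q$ contains the quadratic term $\|\nabla\widehat u\|_{2q}^2$; as in the paper, this is rendered linear only because~\eqref{eq:W14-assumption} gives a uniform $L$-bound on $\|\nabla\widehat u\|_4$, which is why $L$ appears in the final constant.
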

\begin{proof}
The structure of the proof follows that of~\cite[Lemma 1.5]{Fraser2000}. By assumption and Sobolev embedding, there exists $\theta_0 \in (0, \frac{1}{4})$ depending only on $L, \Sigma$ such that 
\[
u(\overline{\bB_{\theta_0 r}(x_0)} \cap \overline{\bB}) \subseteq B_{\rho_\Sigma}(u(x_0)),
\] 
so we may let $U, V, \Psi, \Upsilon, g, Q, F$ and $\lambda$ be as before on similar occasions, and define 
\[
\widehat{u} = (\Psi \circ u \circ F)(r\ \cdot\ ) \text{ on }\bB^+_{\theta_0}.
\]
Then $\widehat{u}$ is a smooth solution to~\eqref{eq:fermi-nondiv} on $\bB^+_{\theta_0}$ with boundary conditions 
\begin{equation}\label{eq:fermi-bc}
\widehat{u}^{1, 2}_{x^2} = 0,\ \widehat{u}^3 = 0 \text{ on }\bT_{\theta_0}.
\end{equation}
Given $\sigma < \rho \leq \theta_0$, we let $\rho' = \frac{\rho + \sigma}{2}$ and take a cut-off function $\zeta \in C^{\infty}_{0}(\bB_{\rho'})$ satisfying $\zeta = 1$ on $\bB_{\sigma}$ and $\|\nabla^i \zeta \|_{\infty} \leq C(\rho - \sigma)^{-i}$, where $C$ is independent of $\sigma$ and $\rho$. By a reflection we can also arrange that $ \zeta_{x^2} = 0$ on $\bT_{\rho}$. Writing $\widehat{u}$ as $u$ for simplicity, we get for all $a \in \RR^3$ with $a^3 = 0$ that 
\begin{align}
|\Delta (\zeta (u-a))| \leq\ &  6A_0(p-2)|\nabla^2 (\zeta (u-a))| + C(\rho - \sigma)^{-1}|\nabla u| \nonumber\\
& + C(\rho - \sigma)^{-2}|u-a| + C(|\nabla u| + |\nabla u|^2). \label{eq:curved-Delta-estimate}
\end{align}
Under the boundary conditions~\eqref{eq:fermi-bc} and how $\zeta$ is chosen, standard elliptic theory gives
\[
\|\nabla^2(\zeta (u-a))\|_{q; \bB^+_{\rho}} \leq K_q \|\Delta (\zeta (u-a))\|_{q; \bB_{\rho}^+}.
\]
Combining this with~\eqref{eq:curved-Delta-estimate} gives
\begin{align*}
\| \nabla^2(\zeta (u-a)) \|_{q; \bB^+_{\rho}} \leq\ & 6A_0 K_q (p-2) \| \nabla^2 (\zeta (u-a)) \|_{q; \bB^+_{\rho}} + CK_q(\rho - \sigma)^{-1}\| \nabla u \|_{q; \bB^+_{\rho'}}\\
& + CK_q(\rho - \sigma)^{-2}\|u-a\|_{q; \bB^+_{\rho'}} + CK_q ( \|\nabla u\|_{q; \bB^+_{\rho'}} +\|\nabla u\|^2_{2q; \bB^+_{\rho'}}).
\end{align*}
Requiring that 
\begin{equation*}
6A_0 K_q (p_1 - 2) < \frac{1}{2},
\end{equation*}
and choosing $a^1, a^2$ appropriately to apply Poincar\'e's inequality, we get 
\begin{align}\label{eq:W2q-W12q}
\|\nabla^2 u\|_{q; \bB_{\sigma}^+} \leq\ & C[1 + (\rho - \sigma)^{-2}\rho] \|\nabla u\|_{q; \bB_{\rho'}^+} + C \|\nabla u\|_{2q; \bB_{\rho'}^+}^2.
\end{align}
Repeating the above argument with $2, \rho', \rho$ in place of $q, \sigma, \rho'$, respectively, we see that, further requiring
\begin{equation*}
6A_0 K_2 (p_1 - 2) < \frac{1}{2},
\end{equation*}
we get 
\begin{equation}\label{eq:W22-W14}
\|\nabla^2 u\|_{2; \bB_{\rho' }^+} \leq C[ 1+ (\rho - \sigma)^{-2}\rho ] \|\nabla u\|_{2; \bB_{\rho }^+}
+ C\|\nabla u\|_{4; \bB^+_{\rho }}^2.
\end{equation}
The Sobolev embedding of $W^{2, 2}$ into $W^{1, q}$ and $W^{1, 2q}$ allows us to string~\eqref{eq:W2q-W12q} and~\eqref{eq:W22-W14} together and obtain (reverting to the notation $\widehat{u}$)
\[
\|\nabla\widehat{u}\|_{2q; \bB^+_{\sigma}} + \|\nabla^2\widehat{u}\|_{q; \bB^+_{\sigma}} \leq C(\rho, \rho - \sigma, p, q, \Sigma, H_0, L)\|\nabla\widehat{u}\|_{4; \bB_\rho^+},
\]
which implies~\eqref{eq:W14-W2q}.
\end{proof}
\begin{rmk}\label{rmk:interior-W2q}
Decreasing $p_1$ depending only on $q$ if necessary, a similar proof as above gives an interior version of Proposition~\ref{prop:W14-W2q} where $\bB_r(x_0)\cap \bB$ is replaced with some $\bB_r(x_0) \subseteq \bB$. In this case, since there is no need to locally flatten the constraint, we can allow any $\sigma < \rho \leq 1$. Also, the counterpart of the estimate~\eqref{eq:W14-W2q} will be independent of $\Sigma$. 
\end{rmk}

Remark~\ref{rmk:W22} gives~\eqref{eq:W14-assumption} with a suitable $L$ provided we have a bound of the form~\eqref{eq:W14-scale} to begin with. However, in application we only expect a bound on $D_{\ep, p}$, which does not imply~\eqref{eq:W14-scale} unless $r$ is controlled by $\ep$. The next proposition gives estimates on scales independent of $\ep$ when the Dirichlet energy does not concentrate. The statement and its proof are analogous to~\cite[Lemmas 1.6 and 1.7]{Fraser2000}. 

\begin{prop}[See also~\cite{Fraser2000}, Lemmas 1.6 and 1.7]
\label{prop:small-regular-2}
Suppose $u\in \cM_p$ is a critical point of $E_{\ep, p, f}$, with $\ep \in (0, 1]$, $p \in (2, p_1]$ and $\|f\|_{\infty} \leq H_0$, where $p_1$ is given by Proposition~\ref{prop:W14-W2q} and Remark~\ref{rmk:interior-W2q} with $q = 4$. There exists $\eta \in (0, \frac{1}{16})$ depending only on $p, \Sigma$ and $H_0$ such that if 
\begin{equation}\label{eq:small-assumption}
\int_{\bB_{r}(x_0)\cap \bB} |\nabla u|^2 < \eta \text{ for some }x_0 \in \partial \bB, r \in (0, r_{\bB}], 
\end{equation}
then we have 
\begin{equation}\label{eq:local-lipschitz}
r|\nabla u| \leq 8 \text{ on }\overline{\bB_{\frac{r}{2}}(x_0)} \cap \overline{\bB}.
\end{equation}
Moreover, letting $\theta_0$ be as in Proposition~\ref{prop:W14-W2q} with $L = 1$, we have
\begin{equation}\label{eq:ep-reg-estimate}
r^{\frac{1}{2}}\| \nabla u \|_{4; \bB_{\frac{1}{8}\theta_0 r}(x_0) \cap \bB} + r^{\frac{3}{2}}\| \nabla^2 u \|_{4; \bB_{\frac{1}{8}\theta_0 r}(x_0) \cap \bB} \leq C(p,\Sigma, H_0)\|\nabla u\|_{2; \bB_{\frac{1}{2}\theta_0 r}(x_0) \cap \bB}.
\end{equation}
\end{prop}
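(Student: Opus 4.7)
The plan is two-step. First, establish the pointwise gradient bound $r|\nabla u| \leq 8$ by a blow-up/contradiction argument. Second, use this pointwise control together with Proposition~\ref{prop:W14-W2q} and a Gagliardo-Nirenberg interpolation to bootstrap up to the $L^4$-estimates in~\eqref{eq:ep-reg-estimate}. The Lipschitz statement is the heart of the matter; the higher-order estimate is then largely routine elliptic bootstrapping.

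For the pointwise bound I would argue by contradiction. If no such $\eta$ exists, then for each $k \in \NN$ there are $\ep_k \in (0,1]$, $p_k \in (2, p_1]$, $f_k$ with $\|f_k\|_{\infty} \leq H_0$, a critical point $u_k$ of $E_{\ep_k, p_k, f_k}$, and $x_k \in \partial\bB$, $r_k \in (0, r_{\bB}]$ satisfying $\int_{\bB_{r_k}(x_k)\cap\bB}|\nabla u_k|^2 < 1/k$ while the gradient bound fails on $\overline{\bB_{r_k/2}(x_k)} \cap \overline{\bB}$. Both the hypothesis and the negation of the conclusion are scale-invariant, so after dilation I may normalize $r_k = 1$. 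A Hildebrandt-type point-picking argument then produces $y_k \in \overline{\bB_{3/4}(x_k)} \cap \overline{\bB}$ and $a_k := |\nabla u_k(y_k)| > 8$ such that the rescaled maps $v_k(z) := u_k(y_k + z/a_k)$ satisfy $|\nabla v_k(0)| = 1$ and $|\nabla v_k| \leq C$ on each bounded ball about the origin (intersected with the rescaled domain) for $k$ large. Their Dirichlet energies on fixed balls vanish in the limit, since $\int_{\bB_R(0)\cap \Omega_k}|\nabla v_k|^2 \leq \int_{\bB_1(x_k)\cap\bB}|\nabla u_k|^2 < 1/k$. Each $v_k$ is a critical point of a rescaled functional whose non-divergence form equation still has the shape~\eqref{eq:fermi-nondiv} with the coefficient of $\partial_{\alpha\beta} u^l$ bounded by $A_0(p-2)$ independently of the rescaled parameter $\wep_k = \ep_k a_k$, per~\eqref{eq:deviation}. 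Hence Proposition~\ref{prop:W14-W2q} (or its interior analogue in Remark~\ref{rmk:interior-W2q}, depending on whether $a_k \dist(y_k, \partial\bB)$ remains bounded or not) with $q = 4$ supplies uniform $W^{2, 4}$-bounds on a fixed neighborhood of the origin. The compact embedding $W^{2, 4} \hookrightarrow C^{1,\alpha}$ in dimension two then yields $C^{1,\alpha}_{\text{loc}}$-convergence of a subsequence to a limit $v_\infty$ defined on $\RR^2$ or $\overline{\RR^2_+}$. Since the gradient $L^2$-norm of $v_\infty$ vanishes, $v_\infty$ is constant, contradicting $|\nabla v_\infty(0)| = 1$.

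With the pointwise bound in hand, the remaining estimate follows by bootstrapping. The pointwise bound together with $\|\nabla u\|_2^2 < \eta$ verifies the hypothesis of Proposition~\ref{prop:W14-W2q} at scale $\tilde r = r/2$ with $L = 1$, since $\tilde r^2 \int|\nabla u|^4 \leq (8/r)^2 \tilde r^2 \eta \leq 16\eta \leq 1$ once $\eta \leq 1/16$. Passing to the coordinate representation $\widehat u$ on $\bB^+_{\theta_0}$ as in Section~\ref{subsec:spaces-and-coordinates} yields a map with $|\nabla \widehat u| \leq C$ everywhere. Substituting this into~\eqref{eq:W22-W14} from the proof of Proposition~\ref{prop:W14-W2q} and absorbing $\|\nabla \widehat u\|_{4;\bB^+_\rho}^2 \leq \|\nabla \widehat u\|_{\infty} \|\nabla \widehat u\|_{2;\bB^+_\rho}$ gives $\|\nabla^2 \widehat u\|_{2;\bB^+_{\rho'}} \leq C \|\nabla \widehat u\|_{2;\bB^+_\rho}$. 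A Gagliardo-Nirenberg interpolation on the half-ball (using reflection across $\bT$, permitted by the boundary conditions $\partial_2 \widehat u^{1,2} = \widehat u^3 = 0$, together with a cutoff) upgrades this to $\|\nabla \widehat u\|_{4;\bB^+_\sigma} \leq C \|\nabla \widehat u\|_{2;\bB^+_\rho}$. Unscaling produces the $r^{1/2}\|\nabla u\|_4$ portion of~\eqref{eq:ep-reg-estimate}, and feeding this $L^4$-bound back into Proposition~\ref{prop:W14-W2q} yields the $r^{3/2}\|\nabla^2 u\|_4$ portion.

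The main obstacle is the blow-up step. Beyond the standard point-picking bookkeeping, one must (a) treat the interior and half-space cases uniformly, since the location of the rescaled maximizer dictates whether Proposition~\ref{prop:W14-W2q} or its interior counterpart is invoked; and (b) confirm that the constants in Proposition~\ref{prop:W14-W2q} remain controlled even when the rescaled parameter $\wep_k = \ep_k a_k$ exits $(0, 1]$. The latter is what makes the whole argument viable: the coefficient bound~\eqref{eq:deviation} and the Calder\'on-Zygmund constants $K_q$ used in the proof of Proposition~\ref{prop:W14-W2q} are independent of $\wep$, so after rescaling the equations remain small perturbations of the Laplacian with parameter-free ellipticity.
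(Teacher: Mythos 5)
Your first step (the pointwise gradient bound) takes a genuinely different route from the paper, while your second step (the $L^4$ bootstrap) is essentially the same argument. Both routes to the Lipschitz bound rely on the same core ingredient, Proposition~\ref{prop:W14-W2q}, but the mechanism for extracting the contradiction differs. You run a Sacks--Uhlenbeck style blow-up: rescale by the maximal gradient $a_k$, get uniform $W^{2,4}$ bounds via Proposition~\ref{prop:W14-W2q}, pass to a $C^1_{\loc}$ limit $v_\infty$ on $\RR^2$ or $\overline{\RR^2_+}$, and observe that $v_\infty$ is forced to be constant (since $\int|\nabla v_\infty|^2 = 0$) yet has $|\nabla v_\infty(0)| = 1$. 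The paper instead argues quantitatively and never passes to a limit: with $e_0 = |\nabla u(x_1)|$ the rescaled gradient, the smallness hypothesis~\eqref{eq:small-assumption} forces $\inf_{\bB_{\eta^{1/3}e_0^{-1}}(x_1)\cap\bB}|\nabla u|^2 < C_0\eta^{1/3}e_0^2$ by a pigeonhole argument, which combined with $|\nabla u(x_1)|^2 = e_0^2$ gives a lower bound $\gtrsim \eta^{-\alpha/3}e_0^{2+\alpha}$ on the H\"older seminorm $[|\nabla u|^2]_{\alpha}$ near $x_1$; Proposition~\ref{prop:W14-W2q} plus Morrey embedding gives a matching upper bound independent of $\eta$, forcing $\eta^{-\alpha/3} \leq C(p,\Sigma,H_0)$, so small $\eta$ rules out $\rho_0 e_0 \geq 2$. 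What each approach buys: the paper's direct argument makes the dependence of $\eta$ on $(p, \Sigma, H_0)$ explicit and avoids the bookkeeping of passing to a limit (no need to distinguish whether $a_k$ stays bounded, whether the rescaled domain is compact, or whether $a_k\dist(y_k,\partial\bB)$ stays bounded); your compactness argument is perhaps shorter to state but needs those cases handled and implicitly reproduces the same estimates.

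A few things to tighten if you pursued your route. The normalization ``$r_k = 1$ by dilation'' cannot be taken literally, since $\bB$ and $\Sigma$ are fixed and the perturbed functional is not scale-invariant; one should work directly at scale $a_k^{-1}$ as the paper does, which Proposition~\ref{prop:W14-W2q} already accommodates through its parameter $r$. The constraint $y_k \in \overline{\bB_{3/4}(x_k)}$ is not what the standard point-picking (maximizing $(r-|x-x_0|)|\nabla u|$) gives; it is fine to drop it. And since $\eta$ is permitted to depend on $p$, the compactness argument should fix $p$ rather than allow $p_k$ to vary. Your observation that the coefficient bound~\eqref{eq:deviation} and the Calder\'on--Zygmund constants $K_q$ are uniform over \emph{all} $\wep > 0$ (not merely $\wep \in (0,1]$) is correct and indeed implicit in the paper's own use of Proposition~\ref{prop:W14-W2q} at scale $e_0^{-1}$, where $\wep = \ep e_0$ can exceed $1$; the fraction $\frac{(\wep)^{p-2}(\cdots)}{1+(\wep)^{p-2}(\cdots)}$ is bounded by $1$ regardless of $\wep$.
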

\begin{proof}
Let $x_1$ be a point where $\sup_{\bB_{r}(x_0) \cap \bB}(r - |x - x_0|)|\nabla u(x)|$ is attained, and set $e_0 = |\nabla u(x_1)|$ and $\rho_0 = \frac{1}{2}(r - |x_1 - x_0|)$. Then 
\begin{equation}\label{eq:local-bound}
|\nabla u(x)| \leq 2 e_0, \text{ on }\bB_{\rho_0}(x_1)\cap \bB.
\end{equation}
Now assume that $\rho_0 e_0\geq 2$, so that by~\eqref{eq:local-bound} we have 
\begin{equation}\label{eq:local-W14}
e_0^{-2}\int_{\bB_{e_0^{-1}}(x_1)\cap \bB} |\nabla u|^4 \leq L^4
\end{equation}
for some numerical constant $L$, which in turn determines a $\theta_0 \in (0, \frac{1}{4})$ as in Proposition~\ref{prop:W14-W2q}. Next, by~\eqref{eq:small-assumption}, there exists some numerical constant $C_0$ such that
\begin{equation}\label{eq:du-lowerbound}
\inf_{\bB_{\eta^{\frac{1}{3}}e_0^{-1}}(x_1)\cap \bB}|\nabla u|^2 < C_0\eta^{\frac{1}{3}}e_0^2.
\end{equation}
Since $|\nabla u(x_1)|^2 = e_0^2$, we deduce that, with $\alpha = \frac{1}{2}$,
\begin{equation}\label{eq:holder-lowerbound}
[|\nabla u|^2]_{\alpha; \bB_{\eta^{\frac{1}{3}}e_0^{-1}}(x_1)\cap \bB} \geq \frac{e_0^2 (1 - C_0\eta^{\frac{1}{3}})}{\eta^{\frac{\alpha}{3}}e_0^{-\alpha}} > \frac{1}{2}\eta^{-\frac{\alpha}{3}}e_0^{2 + \alpha},
\end{equation}
provided $C_0\eta^{\frac{1}{3}} < \frac{1}{2}$. On the other hand, with $q = 4$, by~\eqref{eq:local-bound} and Sobolev embedding,
\begin{align}
(e_0^{-1})^{2 + \alpha} [|\nabla u|^2]_{\alpha; \bB_{\eta^{\frac{1}{3}}e_0^{-1}}(x_1)\cap \bB} \leq\ & C(e_0^{-1})^{1 + \alpha} [\nabla u]_{\alpha; \bB_{\eta^{\frac{1}{3}} e_0^{-1}}(x_1)\cap \bB}\nonumber\\
\leq\ & C(e_0^{-1})^{2 - \frac{2}{q}}\| \nabla^2 u \|_{q; \bB_{\eta^{\frac{1}{3}} e_0^{-1}}(x_1)\cap \bB}.\label{eq:holder-sobolev-transition}
\end{align}
Next suppose in addition that $\eta^{\frac{1}{3}} < \frac{\theta_0}{8}$. If $1 - |x_1| \geq \frac{\theta_0}{4}e_0^{-1}$, then by Remark~\ref{rmk:interior-W2q} and~\eqref{eq:local-W14},
\[
(e_0^{-1})^{2 - \frac{2}{q}}\| \nabla^2 u \|_{q; \bB_{\eta^{\frac{1}{3}} e_0^{-1}}(x_1)}  \leq C(\theta_0, p, H_0, L)e_0^{-\frac{1}{2}}\|\nabla u\|_{4; \bB_{\frac{\theta_0}{4}e_0^{-1}}(x_1) \cap \bB} \leq \widetilde{C}(\theta_0, p,  H_0, L).
\]
On the other hand, if $1 - |x_1| <  \frac{\theta_0}{4}e_0^{-1}$, then we let $x_2 = \frac{x_1}{|x_1|}$ and note that
\begin{align*}
(e_0^{-1})^{2 - \frac{2}{q}}\| \nabla^2 u \|_{q; \bB_{\eta^{\frac{1}{3}} e_0^{-1}}(x_1)\cap \bB} \leq\ & (e_0^{-1})^{2 - \frac{2}{q}}\| \nabla^2 u \|_{q; \bB_{(\eta^{\frac{1}{3}} + \frac{\theta_0}{4}) e_0^{-1}}(x_2)\cap \bB}\\
 \leq\ & C(\theta_0, p, \Sigma, H_0, L) e_0^{-\frac{1}{2}}\|\nabla u\|_{4; \bB_{\theta_0 e_0^{-1}}(x_2)\cap \bB}\\
 \leq\ &  \widetilde{C}(\theta_0, p, \Sigma, H_0, L),
\end{align*}
where we used Proposition~\ref{prop:W14-W2q} with $\sigma = \frac{\theta_0}{2}$ and $\rho = \theta_0$ to obtain the second line. Next we combine the above estimates with~\eqref{eq:holder-lowerbound} and~\eqref{eq:holder-sobolev-transition}. Since $\theta_0$ depends only on $\Sigma$ and $L$, and the latter is merely a numerical constant, we obtain
\[
\eta^{-\frac{\alpha}{3}} \leq C(p, \Sigma, H_0),
\]
subject to the requirements $C_0\eta^{\frac{1}{3}} < \frac{1}{2}$ and $\eta^{\frac{1}{3}} < \frac{\theta_0}{8}$. In any case, provided $\eta$ is small enough depending only on $p, \Sigma$ and $H_0$, we have $\rho_0 e_0 \leq 2$, in which case~\eqref{eq:local-lipschitz} follows. 

To get the second conclusion, note that~\eqref{eq:small-assumption} and~\eqref{eq:local-lipschitz} implies~\eqref{eq:W14-assumption} with $L = 1$ and with $\frac{r}{2}$ in place of $r$. Let $\theta_0, \widehat{u}$ and $\zeta$ be as in the proof of Proposition~\ref{prop:W14-W2q} with $r$ in the definition of $\widehat{u}$ replaced with $\frac{r}{2}$. Then by~\eqref{eq:local-lipschitz} again there holds
\begin{equation}\label{eq:local-lipschitz-fermi}
\sup_{\bB_{\theta_0}^+} |\nabla \widehat{u}| \leq C,
\end{equation}
with $C$ depending only on $\Sigma$. Consequently the term $|\nabla u|^2$ in~\eqref{eq:curved-Delta-estimate} can be absorbed into the term $|\nabla u|$ that precedes it. Repeating the argument that follows with $q = 4$, $\sigma = \frac{\theta_0}{2}$ and $\rho = \theta_0$ then yields~\eqref{eq:W2q-W12q} and~\eqref{eq:W22-W14} without the squared terms at the end. The Sobolev embedding from $W^{2, 2}$ into $W^{1, 4}$ bridges these two estimates to give
\[
\|\nabla \widehat{u}\|_{4; \bB_{\frac{\theta_0}{2}}^+} +  \|\nabla^2 \widehat{u}\|_{4; \bB_{\frac{\theta_0}{2}}^+} \leq C(p, \Sigma, H_0)\|\nabla\widehat{u}\|_{2; \bB_{\theta_0}^+}.
\]
Combining this with~\eqref{eq:local-lipschitz-fermi} gives~\eqref{eq:ep-reg-estimate}.
\end{proof}
\begin{rmk}\label{rmk:small-regular-int}
Again, Proposition~\ref{prop:small-regular-2} has an interior version with $\bB_{r}(x_0) \cap \bB$ replaced by some $\bB_{r}(x_0) \subseteq \bB$. In this case both $\eta$ and the analogue of~\eqref{eq:ep-reg-estimate} are independent of $\Sigma$, and the latter holds with the $\theta_0$-factor removed from the radii of the balls. 
\end{rmk}
\begin{rmk}\label{rmk:ep-zero}
With essentially the same proofs, Propositions~\ref{prop:W14-W2q} and~\ref{prop:small-regular-2}, as well as their interior counterparts, remain valid when $\ep = 0$, that is, when $u: (\overline{\bB}, \partial\bB) \to (\RR^3, \Sigma)$ is assumed to be a smooth solution to~\eqref{eq:perturbed-EL} with $\ep = 0$ and $\|f\|_{\infty} \leq H_0$. In particular, in this case both the threshold $\eta$ and the estimate~\eqref{eq:ep-reg-estimate} are independent of $p$.
\end{rmk}

An important consequence of Proposition~\ref{prop:small-regular-2} is an energy lower bound independent of $\ep$ for non-constant critical points of $E_{\ep, p, f}$. The proof is similar enough to~\cite[Theorem 1.8]{Fraser2000} and is therefore omitted.
\begin{prop}[See~\cite{Fraser2000}, Theorem 1.8]
\label{prop:uniform-lower-bound} 
With $p_1$ as in Proposition~\ref{prop:small-regular-2}, there exist $p_2 \in (2, p_1]$, depending only on $\Sigma$, and, for all $p \in (2, p_2]$, some $\beta > 0$ depending only on $p, \Sigma$ and $H_0$ such that if $\ep \in (0, 1]$, $\|f\|_{\infty} \leq H_0$ and $u$ is a critical point of $E_{\ep, p, f}$ with $D(u) < \beta$, then $u$ is constant.
\end{prop}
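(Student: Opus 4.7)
The plan is to leverage the $\epsilon$-regularity from Proposition~\ref{prop:small-regular-2} to produce a global pointwise bound on $|\nabla u|$ of order $\sqrt{D(u)}$, and then test the first variation at $u$ against a carefully chosen variation field approximating $u - y_0$ for some $y_0 \in \Sigma$, forcing the Dirichlet energy to vanish.

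First I would set $p_2 = p_1$, so that Proposition~\ref{prop:small-regular-2} and its interior counterpart in Remark~\ref{rmk:small-regular-int} apply for all $p \in (2, p_2]$; this choice depends only on $\Sigma$. I then pick $\beta > 0$ small enough, depending on $p, \Sigma, H_0$, that $D(u) < \beta$ forces the local Dirichlet energy on every disk of radius $r_\bB$ intersected with $\bB$ to fall below the threshold $\eta$. Applying~\eqref{eq:ep-reg-estimate} at the fixed scale $r = r_\bB$ (and its interior analogue away from $\partial\bB$) yields $r_\bB^{1/2}\|\nabla u\|_{L^4} + r_\bB^{3/2}\|\nabla^2 u\|_{L^4} \leq C\sqrt{\beta}$ on slightly smaller disks, with $C = C(p,\Sigma,H_0)$. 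The Sobolev embedding $W^{2,4} \hookrightarrow C^{1,1/2}$ in two dimensions upgrades this to an $L^\infty$-bound $\|\nabla u\|_\infty \leq C r_\bB^{-1} \sqrt{\beta}$ on each disk, and a finite covering of $\overline{\bB}$ produces the global estimate $\sup_{\overline{\bB}} |\nabla u| \leq C(p,\Sigma,H_0)\sqrt{\beta}$. Fixing any $x_0 \in \partial\bB$ and setting $y_0 = u(x_0) \in \Sigma$, this also gives $|u(x) - y_0| \leq C\sqrt{\beta}$ for all $x \in \overline{\bB}$.

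Next I would construct an admissible variation $\psi \in T_u\cM_p$ approximating $u - y_0$. Using the auxiliary metric $h$ from Section~\ref{subsec:spaces-and-coordinates}, for which $\Sigma$ is totally geodesic, and noting that the $h$-exponential at $u(x)$ is a diffeomorphism on a fixed-size ball (in particular on one containing $y_0$ once $\beta$ is small enough), I set
\[
\psi(x) \;=\; -\bigl(\exp^h_{u(x)}\bigr)^{-1}(y_0).
\]
For $x \in \partial\bB$ both $u(x)$ and $y_0$ lie in $\Sigma$, so the minimizing $h$-geodesic between them stays in $\Sigma$; this gives $\psi(x) \in T_{u(x)}\Sigma$, so $\psi \in T_u\cM_p$. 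From the Taylor expansion of $\exp^h$ in local coordinates, $\psi(x) = (u(x) - y_0) + O(|u(x) - y_0|^2)$, so on $\overline{\bB}$ one has
\[
|\psi| \leq C\sqrt{\beta}, \qquad |\nabla\psi - \nabla u| \leq C|u - y_0|\cdot|\nabla u| \leq C\sqrt{\beta}\,|\nabla u|.
\]

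Finally, writing $a := 1 + \ep^{p-2}(1 + |\nabla u|^2)^{p/2-1} \geq 1$, the identity $\langle \delta E_{\ep,p,f}(u), \psi\rangle = 0$ together with the above estimates yields
\begin{align*}
\int_\bB a|\nabla u|^2
&= -\int_\bB a \langle \nabla u, \nabla\psi - \nabla u\rangle - \int_\bB f(u)\langle \psi, u_{x^1}\times u_{x^2}\rangle \\
&\leq C\sqrt{\beta}\int_\bB a|\nabla u|^2 + C H_0 \sqrt{\beta}\int_\bB |\nabla u|^2
\;\leq\; C'\sqrt{\beta}\int_\bB a|\nabla u|^2,
\end{align*}
where $C'$ is independent of $\ep \in (0,1]$ because only the ratio of the two appearances of $a$ matters. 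Choosing $\beta$ so small that $C'\sqrt{\beta} < 1/2$ forces $\int_\bB a|\nabla u|^2 = 0$, and hence $\nabla u \equiv 0$, so $u$ is constant. The main delicate point is the construction of $\psi$: the variation must respect the nonlinear boundary constraint $\psi(x) \in T_{u(x)}\Sigma$ while being $C^1$-close to the Euclidean difference $u - y_0$, and it is precisely the totally geodesic property of $\Sigma$ in the metric $h$ that makes both possible simultaneously. A secondary technical point is that all constants remain uniform in $\ep \in (0,1]$, which is already built into Proposition~\ref{prop:small-regular-2} and into the cancellation of the degenerate factor $a$ in the final estimate.
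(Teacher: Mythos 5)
The paper omits its own proof of this proposition, referring instead to Fraser~\cite{Fraser2000}, Theorem 1.8, so a line-by-line comparison is not possible. Your argument is correct and self-contained, and it is the standard small-energy gap argument in the Sacks--Uhlenbeck framework: use the $\ep$-regularity estimate~\eqref{eq:ep-reg-estimate} at the fixed scale $r_\bB$ (together with its interior analogue and a finite covering) to obtain $\sup_{\overline\bB}|\nabla u| \leq C(p,\Sigma,H_0)\sqrt{\beta}$, hence $\osc_{\overline\bB} u \leq C\sqrt{\beta}$, and then test the first variation against the admissible field $\psi(x) = -(\exp^h_{u(x)})^{-1}(y_0)$, which approximates $u - y_0$ to first order while satisfying $\psi|_{\partial\bB} \in T_u\Sigma$ precisely because $\Sigma$ is totally geodesic for $h$. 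The cancellation that makes the final absorption uniform in $\ep \in (0,1]$ (the coefficient $a = 1 + \ep^{p-2}(1+|\nabla u|^2)^{p/2-1} \geq 1$ appears on both sides of the first term and can be used to dominate $\int|\nabla u|^2$ in the second) is exactly the right observation, and all three smallness requirements on $\beta$ depend only on $p, \Sigma, H_0$. Your choice $p_2 = p_1$ is also legitimate, since $p_1$ already depends only on $\Sigma$ and your argument imposes no further restriction on $p$; the paper's phrasing $p_2 \in (2, p_1]$ merely leaves room for such a restriction without requiring it. I see no gap.
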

\subsection{A maximum principle}\label{subsec:max-principle}
The estimates derived so far are on the derivatives of $u$. The main result of this section is an $L^{\infty}$-estimate independent of $\ep$. Below we let $\Sigma'$ be a closed surface which has mean curvature $H_{\Sigma'}$ bounded below by some $H_0 > 0$, and is strictly convex in the sense that its second fundamental form $A_{\Sigma'}$ is everywhere positive definite. Then $\Sigma'$ bounds a convex domain $\Omega'$, and we assume that $\Sigma \subseteq \overline{\Omega'}$. Letting $d = \dist(\cdot, \Omega')$, which is a smooth convex function on $\RR^3 \setminus \overline{\Omega'}$, we define, for $t > 0$,
\[
\Sigma'_t = \{y \in \RR^3\ |\ d(y) = t\},\ \Omega'_t = \text{the bounded open region enclosed by }\Sigma'_t.
\] 
Given $H \in (0, H_0)$, there exists $t_0 \in (0, 1/4)$ such that 
\[
H_{\Sigma'_t} \geq \frac{H_0 + H}{2} \text{ for all } t \in (0, t_0],
\]
and we choose $f:\RR^3 \to [0, H]$ to be a smooth function satisfying 
\[
f(y) = \left\{
\begin{array}{cl}
H & \text{ if } y \in \overline{\Omega'_{t_0/4}},\\
0 & \text{ if } y \not\in \Omega'_{3t_0/4}.
\end{array}
\right.
\]
\begin{prop}\label{prop:max-principle}
Let $f$ be as above and suppose $r \in [0, 1]$.
\vskip 1mm
\begin{enumerate}
\item[(a)] Let $u \in \cM_p$ be  a smooth critical point of $E_{\ep, p, rf}$. Then $u(\overline{\bB}) \subseteq \overline{\Omega'_{t_0}}$.
\vskip 1mm
\item[(b)] Suppose $u: (\overline{\bB},\partial\bB) \to (\overline{\Omega'_{t_0}}, \Sigma)$ is a smooth solution to
\begin{equation}\label{eq:cmc-rf}
\left\{
\begin{array}{ll}
\Delta u = rf(u) u_{x^1} \times u_{x^2} & \text{ in }\bB,\\
u_r(x) \perp T_{u(x)}\Sigma & \text{ for all }x \in \partial \bB.
\end{array}
\right.
\end{equation}
Then $u(\overline{\bB}) \subseteq \overline{\Omega'}$.
\end{enumerate}
\end{prop}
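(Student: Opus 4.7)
The plan is to argue by maximum principles, with part (a) a direct application via the convexity of $d := \dist(\cdot, \Omega')$ and part (b) requiring extra care at branch points of $u$.

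For (a), the support condition on $f$ ensures $f(u) \equiv 0$ on the open set $U := \{x \in \bB : d(u(x)) > t_0\}$, so the Euler--Lagrange equation~\eqref{eq:perturbed-EL} reduces on $U$ to $\Div(a\nabla u) = 0$ with $a := 1 + \ep^{p-2}(1 + |\nabla u|^2)^{p/2-1} > 0$. Applying this componentwise together with the convexity of $d$ on $\RR^3\setminus \overline{\Omega'}$, a chain-rule computation yields
\[
\Div\big(a\,\nabla(d \circ u)\big) = a\, D^2 d(u)(\nabla u, \nabla u) \geq 0 \quad \text{on } U.
\]
Since $u(\partial\bB) \subseteq \Sigma \subseteq \overline{\Omega'}$ forces $d(u) \equiv 0 < t_0$ on $\partial \bB$, the weak maximum principle for the linear operator $\Div(a\nabla \cdot)$ forces $U$ to be empty, proving $u(\overline{\bB}) \subseteq \overline{\Omega'_{t_0}}$.

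For (b), $u$ is weakly conformal by the Hopf differential argument (mentioned after~\eqref{eq:fb}), and is real-analytic by elliptic regularity. Suppose for contradiction that $t^* := \max_{\overline{\bB}} d(u) > 0$; since $u(\overline{\bB}) \subseteq \overline{\Omega'_{t_0}}$ gives $t^* \in (0, t_0]$ and $d(u) \equiv 0$ on $\partial\bB$, the maximum is attained at some interior $x_0 \in \bB$, and $\nabla(d\circ u)(x_0) = 0$ forces both $u_{x^i}(x_0)$ to be tangent to $\Sigma'_{t^*}$ at $p := u(x_0)$. If $x_0$ is not a branch point, conformality makes $\{u_{x^1}(x_0), u_{x^2}(x_0)\}$ an orthogonal frame of $T_p\Sigma'_{t^*}$ of common length $\mu > 0$, and $(u_{x^1}\times u_{x^2})(x_0) = \pm\mu^2\nabla d(p)$. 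Since $D^2 d|_{T_p\Sigma'_{t^*}}$ coincides with the second fundamental form of $\Sigma'_{t^*}$ with respect to the inward normal (with trace $H_{\Sigma'_{t^*}} \geq (H_0 + H)/2$),
\[
\Delta(d\circ u)(x_0) = D^2 d(u)(\nabla u, \nabla u) + rf(u)\,\nabla d(p)\cdot (u_{x^1}\times u_{x^2}) = \mu^2\big(H_{\Sigma'_{t^*}} \pm rf(u(x_0))\big) > 0
\]
by the mean curvature comparison $H_{\Sigma'_{t^*}} > H \geq rf(u(x_0))$, contradicting $\Delta(d \circ u)(x_0) \leq 0$ at the maximum.

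The main obstacle is the branch point case $\mu(x_0) = 0$, where the previous computation only yields $\Delta(d \circ u)(x_0) = 0$. My plan is to exploit real-analyticity and the standard branch-point expansion $u(z) - u(x_0) = \mathrm{Re}(Vz^{m+1}) + \sum_{k\geq m+2}P_k(z)$, with $z = (x-x_0)^1 + i(x-x_0)^2$, $V \in \CC^3\setminus \{0\}$ satisfying $V\cdot V = 0$ (forced by conformality), integer $m \geq 1$, and $P_k$ a homogeneous polynomial of degree $k$ in $(x^1 - x_0^1, x^2 - x_0^2)$. Matching orders in $\Delta u = rf(u)\, u_{x^1}\times u_{x^2}$ forces $P_k$ to be harmonic for $m+1 \leq k \leq 2m+1$ and determines the non-harmonic part of $P_{2m+2}$ via $\Delta P_{2m+2} = rf(u(x_0))\,\partial_{x^1}P_{m+1}\times \partial_{x^2}P_{m+1}$. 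Expanding $w := d\circ u - t^* \leq 0$ via Taylor of $d$ at $p$, the constraint $w \leq 0$ first forces each harmonic polynomial $\nabla d(p)\cdot P_k$ to vanish identically for $k = m+1, \ldots, 2m+1$ (a homogeneous harmonic polynomial that is non-positive near the origin must be zero), and then forces the degree-$(2m+2)$ polynomial $R := \nabla d(p)\cdot P_{2m+2} + \tfrac{1}{2}D^2 d(p)(P_{m+1}, P_{m+1})$ to be pointwise $\leq 0$. However, the identities $|V_R| = |V_I|$, $V_R\perp V_I$, $V_R\times V_I \parallel \nabla d(p)$ (from $V\cdot V = 0$ and $\nabla d(p)\cdot V = 0$), combined with the formula for the constant part of $P_{2m+2}$ coming from the $\Delta P_{2m+2}$ equation, give
\[
\int_{|z|=\rho}R\,ds = \tfrac{\pi}{4}|V|^2 \big(H_{\Sigma'_{t^*}} \mp rf(u(x_0))\big)\rho^{2m+3},
\]
strictly positive by the mean curvature comparison, contradicting $R \leq 0$.
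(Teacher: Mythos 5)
Your part (a) is essentially identical to the paper's proof: on the set where $d(u) > t_0$ the cut-off $f$ vanishes, the Euler--Lagrange operator is a linear divergence-form operator in $u$, and the convexity of $d$ together with the weak maximum principle kills that set.

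For part (b) you take a genuinely different route from the paper, and it is worth contrasting the two. The paper chooses the auxiliary function $F = e^{ad}$ with $a$ large. This standard Hopf-type device has two virtues: first, the Hessian $\nabla^2 F = a e^{ad}(A_{\Sigma'_d} + a\,\nabla d\otimes \nabla d)$ has its two smallest eigenvalues equal to $a e^{ad}$ times the two principal curvatures of the level set, so the trace over \emph{any} $2$-plane (not just the tangent plane) is at least $ae^{ad}H_{\Sigma'_d}$; this is needed because $d\circ u$ is being tested over the image plane of $du$, which a priori need not be tangent to $\Sigma'_d$. Second, and more to the point for your argument, weak conformality makes both the good term $\tfrac{|\nabla u|^2}{2}(\lambda_1+\lambda_2)$ and the bad term $a e^{ad}rf(u)|u_{x^1}\times u_{x^2}| = ae^{ad}rf(u)\tfrac{|\nabla u|^2}{2}$ scale the same way, so $\Delta(F(u)) \geq 0$ holds pointwise \emph{including at branch points}, where both sides degenerate to $0$. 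The weak maximum principle then applies with no case distinction. Your proof instead works with $d\circ u$ directly and pins down the behaviour at a maximum point: at a non-branch maximum the gradient condition forces $du$ to map into the tangent plane of the level set, recovering the mean-curvature trace; at a branch point you fall back on the Taylor expansion $u - u(x_0) = \mathrm{Re}(Vz^{m+1}) + \cdots$ with $V\cdot V = 0$ and show that the circular average of the degree-$(2m+2)$ term of $d\circ u - t^*$ is strictly positive, contradicting nonpositivity of the whole function. I checked the key integral computation: with $V_R\perp V_I$, $|V_R| = |V_I| = |V|/\sqrt2$, and $V_R\times V_I = \pm\tfrac{|V|^2}{2}\nabla d(p)$, one gets $\int_{|z|=\rho} \nabla d(p)\cdot P_{2m+2}\,ds = \mp\tfrac{\pi rf(u(x_0))|V|^2}{4}\rho^{2m+3}$ and $\int_{|z|=\rho}\tfrac12 D^2d(p)(P_{m+1},P_{m+1})\,ds = \tfrac{\pi|V|^2 H_{\Sigma'_{t^*}}}{4}\rho^{2m+3}$, so your displayed identity is correct, and the mean-curvature comparison $H_{\Sigma'_{t^*}} > H \geq rf$ gives the contradiction.

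Two small remarks. First, you invoke real-analyticity of $u$, but $f$ is only a smooth cut-off function, so $u$ is not real-analytic in general; however this is harmless, since you only use a finite-order Taylor expansion (to degree $2m+2$ with $o(|z|^{2m+2})$ remainder), which smoothness provides, together with the Hartman--Wintner structure $u_z = cz^m + o(|z|^m)$ and the vanishing of the Hopf differential to identify $P_{m+1} = \mathrm{Re}(Vz^{m+1})$ with $V\cdot V = 0$. Second, your argument is correct but considerably longer than the paper's; the exponential trick is precisely designed to avoid both the plane-tangency issue and the branch-point case in one stroke, and it is worth internalizing as the ``right'' way to do convex barriers for weakly conformal maps.
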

\begin{proof}
For part (a), let $C_+ = \{x \in \overline{\bB}\ |\ d(u(x)) > t_0\}$. Since $u(\partial\bB) \subseteq \Sigma$, we see that $d(u) = t_0$ on $\partial C_+$. Next, our choice of $f$ implies that
\begin{equation*}\label{eq:harmonic}
\Div\Big( [1 + \ep^{p-2}(1 + |\nabla u|^2)^{\frac{p}{2} - 1}] \nabla u\Big) = 0 \text{ on }C_+.
\end{equation*}
Therefore, by the convexity of $d$, we have
\[
\Div\Big( [1 + \ep^{p-2}(1 + |\nabla u|^2)^{\frac{p}{2} - 1}] \nabla(d(u))\Big) \geq 0 \text{ on }C_+.
\]
The maximum principle then forces $C_+$ to be empty, and hence $u(x) \in \overline{\Omega'_{t_0}}$ for all $x \in \overline{\bB}$.

For part (b), we first note that $u$ is weakly conformal by~\eqref{eq:cmc-rf} and a Hopf differential argument (see~\cite[page 27]{Struwe88}; the function $\Phi$ there is still holomorphic in our case). Next, we define $C'_+ = \{x \in \overline{\bB} \ |\ d(u(x)) > 0\}$ and observe that $d(u) = 0$ on $\partial C'_+$ since $u(\partial\bB) \subseteq \Sigma$. Also, by assumption, $0 < d(u) \leq t_0$ on $C'_+$. Similar to the proof of the Hopf boundary point lemma, we let $F = e^{ad}$, with $a > 0$ to be determined, and compute, on $\RR^3 \setminus \overline{\Omega'}$,
\begin{align*}
\nabla^2 F =\ & e^{ad}(a\nabla^2 d + a^2 \nabla d \otimes \nabla d) = ae^{ad}(A_{\Sigma'_d} + a \nabla d \otimes \nabla d).
\end{align*}
Thus, letting $\lambda_1 \leq \lambda_2 \leq \lambda_3$ be the eigenvalues of $\nabla^2 F$, provided $a$ is large enough, we get that 
\begin{equation}\label{eq:two-convex}
\lambda_1 + \lambda_2 = ae^{ad}H_{\Sigma'_d} \geq ae^{ad}\frac{H_0 + H}{2}, \text{ when }d \in (0, t_0].
\end{equation}
Now, recalling that $d(u) \in (0, t_0]$ on $C'_+$, by a direct computation using~\eqref{eq:two-convex}, the weak conformality of $u$ and our choice of $f$, we have on $C'_+$ that
\begin{align}
\Delta (F(u)) =\ & ae^{ad(u)}(\nabla d)_u\cdot (\Delta u) + (\nabla^2 F)_u(u_{x^1}, u_{x^1}) + (\nabla^2 F)_u(u_{x^2}, u_{x^2}) \nonumber\\
\geq\ & - ae^{ad(u)} rf(u)|u_{x^1} \times u_{x^2}| + \frac{|\nabla u|^2}{2} (\lambda_1+ \lambda_2)\nonumber\\
\geq \ & ae^{ad(u)}\frac{H_0 - H}{2}\frac{|\nabla u|^2}{2} \geq 0.
\end{align}
Thus, by the maximum principle, $\sup_{C'_+} F(u) \leq \sup_{\partial C'_+}F(u) = 1$. Consequently $C'_+$ must be empty, and we are done.
\end{proof}
\section{Second variation and index}\label{sec:2nd-var}
Suppose $p_0, p_1$ and $p_2$ are respectively as in Propositions~\ref{prop:regularity},~\ref{prop:small-regular-2} and~\ref{prop:uniform-lower-bound}. Below, and throughout the remainder of the paper, we fix a single $p \in (2, p_2]$ (recall that $2 < p_2 \leq p_1 \leq p_0 < 3$). For now, $f$ is any bounded smooth function from $\RR^3$ to $\RR$ with $\int_{\Omega}f > 0$. 

The purpose of this section is to derive some useful local estimates for $E_{\ep, p, f}$ near a critical point, which will be crucial in the proof of Proposition~\ref{prop:bypassing}. First, a direct computation shows that at a critical point $u$ of $E_{\ep, p, f}$, which is smooth by our choice of $p$, the second variation formula is given as follows. For all $\psi \in T_u\cM_p$,
\begin{align}
\delta^2 E_{\ep, p, f}(u)(\psi, \psi) =\ & \int_{\bB} [1 + \ep^{p-2}(1 + |\nabla u|^2)^{\frac{p}{2} - 1}] |\nabla \psi|^2 + \ep^{p-2}(p-2)(1 + |\nabla u|^2)^{\frac{p}{2}-2}\langle \nabla u, \nabla \psi \rangle^2 \nonumber\\
&+ \int_{\bB} \langle(\nabla f)_u , \psi\rangle  \langle \psi, u_x \times u_y \rangle + f(u)\langle \psi , \psi_{x^1} \times u_{x^2} + u_{x^1} \times \psi_{x^2} \rangle\nonumber\\
&+ \int_{\partial \bB} [1 + \ep^{p-2}(1 + |\nabla u|^2)^{\frac{p}{2} - 1}]A_\Sigma^{u_r}(\psi,\psi)d\theta, \label{eq:2nd-variation-formula}
\end{align}
where for $x\in \partial \bB$, $A_\Sigma^{u_r}|_x$ is the second fundamental form of $\Sigma$ at $u(x)$ in the direction of $u_r(x)$, the latter being normal to $\Sigma$ by~\eqref{eq:perturbed-EL}. Since $u$ is smooth, $\delta^2 E_{\ep, p, f}(u)$ extends to a bounded, symmetric bilinear form satisfying a G$\mathring{a}$rding inequality on the Hilbert space
\[
H_u = \{\psi \in W^{1, 2}(\bB; \RR^3)\ |\ \psi|_{\partial \bB}(x) \in T_{u(x)}\Sigma \text{ for a.e. }x \in \partial \bB  \},
\]
where $\psi|_{\partial \bB}$ denotes the trace of $\psi$ on $\partial\bB$. Standard Hilbert space theory implies that $H_u$ has a basis which is orthonormal with respect to the $L^2$ inner product and consists of eigenfunctions of $\delta^2 E_{\ep, p, f}(u)$, with the corresponding eigenvalues all real and forming a sequence tending to infinity. Furthermore, polarizing~\eqref{eq:2nd-variation-formula}, we see by elliptic regularity that each eigenfunction is smooth on $\overline{\bB}$.
\begin{defi}\label{defi:index}
Given a critical point $u$ of $E_{\ep, p, f}$, we define its index, $\Ind_{\ep, p, f}(u)$, to be the number of negative eigenvalues of $\delta^2 E_{\ep, p, f}(u)$ counted with multiplicity. Moreover, denote by $V_u^-$ the sum of the negative eigenspaces of $\delta^2 E_{\ep, p, f}(u)$, and by $V_u^\perp$ the set of elements in $T_u \cM_p$ which are $L^2$-orthogonal to $V_u^-$. Then both $V_u^-$ and $V_u^\perp$ are closed subspaces of $T_u\cM_p$, with $\dim V_u^- = \Ind_{\ep, p, f}(u)$, and we have 
\[
T_u\cM_p = V^{-}_u \oplus V^{\perp}_u.
\]
Accordingly, given $\xi \in T_u\cM_p$, we write $\xi = \xi^- + \xi^{\perp}$. Note that since both direct summands are closed, $\|(\cdot)^-\|_{1, p} + \|(\cdot)^\perp\|_{1, p}$ and $\|\cdot\|_{1, p}$ are equivalent norms on $T_u\cM_p$.
\end{defi}

\begin{defi} Given a smooth solution $u: (\overline{\bB}, \partial\bB) \to (\RR^3, \Sigma)$ to 
\[
\left\{
\begin{array}{ll}
\Delta u = f(u)u_{x^1} \times u_{x^2}& \text{ in }\bB,\\
u_r(x) \perp T_{u(x)}\Sigma & \text{ for all }x \in \partial \bB,
\end{array}
\right.
\]
we let $\delta^2 E_{f}(u)$ denote the bilinear form on $H_u$ obtained by setting $\ep = 0$ in~\eqref{eq:2nd-variation-formula}, and let $\Ind_f(u)$ denote its number of negative eigenvalues counted with multiplicity.
\end{defi}

To state the main result of this section, we need some additional notation. Let $u \in \cM_p$ be a critical point of $E_{\ep, p, f}$ and suppose $\cA$ is a simply-connected neighborhood of $u$. Then for $s$ small enough, $\Theta_u: \psi \mapsto \exp^h_u(\psi)$ restricts to a chart on $\cB_{s} = \{\psi \in T_u\cM_p\ |\ \|\psi\|_{1,p} < s\}$ with image contained in $\cA$. Given a local reduction $E^{\cA}_{\ep, p, f}$, we write $\widetilde{E} = E_{\ep, p, f}^{\cA} \circ \Theta_u$. Then $(\delta \widetilde{E})_0 = 0 \text{ and }(\delta^2 \widetilde{E})_0 = (\delta^2 E_{\ep, p, f})_u$. Also, by Lemma~\ref{lemm:C2}, $\delta^2 \widetilde{E}$ is continuous from $\cB_s$ into the space of bounded bilinear forms on $T_u\cM_p$.
\begin{prop}\label{prop:coordinates}
There exist $r \in (0, \frac{s}{3})$, $a > 0$ and $\theta \in (0, 1)$ depending on $u$ such that for any local reduction $E^{\cA}_{\ep, p, f}$, in the above notation we have the following.
\begin{enumerate}
\item[(a)] For all $\xi \in T_u\cM_p$ such that $\|\xi\|_{1, p} \leq r$ and  $\|\xi^\perp\|_{1,p} \leq \theta\|\xi^-\|_{1, p}$, we have
\[
\widetilde{E}(0) - \widetilde{E}(\xi) \geq a\| \xi^- \|_{1, p}^2.
\]
\vskip 1mm
\item[(b)] For all $\varphi \in T_u\cM_p$ and $\xi^- \in V_u^-$ such that $\|\varphi\|_{1, p} \leq r$, $\|\xi^-\|_{1,p } = 1$ and $(\delta \widetilde{E})_\varphi(\xi^-) \leq 0$, we have 
\[
\widetilde{E}(\varphi) - \widetilde{E}(\varphi + t\xi^-) \geq at^2, \text{ for }0 \leq t \leq r.
\]
\end{enumerate} 
\end{prop}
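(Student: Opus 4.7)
The plan is to Taylor-expand $\widetilde{E}$ to second order: around $0$ for part (a), and around $\varphi$ for part (b). Three ingredients drive both estimates. (i) Strict negative definiteness of $(\delta^2\widetilde{E})_0$ on $V_u^-$ in the $W^{1,p}$-norm: combining the negativity of the eigenvalues with the fact that $V_u^-$ is finite-dimensional (its dimension equals $\Ind_{\ep,p,f}(u)$) produces $c_0>0$ with $(\delta^2\widetilde{E})_0(\xi^-,\xi^-) \leq -c_0\|\xi^-\|_{1,p}^2$ for all $\xi^- \in V_u^-$. (ii) Vanishing of the cross term $(\delta^2\widetilde{E})_0(\xi^-,\xi^\perp)=0$, a direct consequence of the eigenvalue identity $\delta^2\widetilde{E}(\phi_i,\psi) = \lambda_i(\phi_i,\psi)_{L^2}$ together with the $L^2$-orthogonality of $\xi^\perp$ to every negative eigenfunction. (iii) Continuity of $\psi \mapsto (\delta^2\widetilde{E})_\psi$ at $\psi=0$ in operator norm, furnished by the $C^2$-smoothness of $\widetilde{E}$ established in Lemma~\ref{lemm:C2}, yielding a modulus $\omega$ with $\omega(s)\to 0$ as $s\to 0$.

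For (a), since $(\delta\widetilde{E})_0=0$, Taylor's formula gives
\[
\widetilde{E}(\xi) - \widetilde{E}(0) = \int_0^1 (1-t)(\delta^2\widetilde{E})_{t\xi}(\xi,\xi)\,dt.
\]
Decomposing $\xi = \xi^- + \xi^\perp$ and replacing $(\delta^2\widetilde{E})_{t\xi}$ with $(\delta^2\widetilde{E})_0$ introduces an error bounded by $\omega(\|\xi\|_{1,p})\|\xi\|_{1,p}^2$. By (ii) the leading term splits, and using (i) together with the trivial bound $|(\delta^2\widetilde{E})_0(\xi^\perp,\xi^\perp)| \leq C_1\|\xi^\perp\|_{1,p}^2$, the leading piece is $\leq (-c_0 + C_1\theta^2)\|\xi^-\|_{1,p}^2$ under the hypothesis $\|\xi^\perp\|_{1,p} \leq \theta\|\xi^-\|_{1,p}$, while the same hypothesis gives $\|\xi\|_{1,p}^2 \leq 2(1+\theta^2)\|\xi^-\|_{1,p}^2$, controlling the error by $2(1+\theta^2)\omega(r)\|\xi^-\|_{1,p}^2$. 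Choosing $\theta$ small so that $C_1\theta^2<c_0/4$ and then $r$ small so that $2(1+\theta^2)\omega(r)<c_0/4$ yields (a) with $a$ of order $c_0/8$.

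For (b), set $g(t) = \widetilde{E}(\varphi + t\xi^-)$, so that $g'(0) = (\delta\widetilde{E})_\varphi(\xi^-) \leq 0$ by hypothesis and $g''(s) = (\delta^2\widetilde{E})_{\varphi+s\xi^-}(\xi^-,\xi^-)$. Since $\|\xi^-\|_{1,p}=1$, ingredient (i) gives $(\delta^2\widetilde{E})_0(\xi^-,\xi^-) \leq -c_0$, and ingredient (iii), applied with base points of $W^{1,p}$-size at most $2r$, upgrades this to $g''(s)\leq -c_0/2$ once $r$ is small. The Taylor identity $g(t) = g(0) + g'(0)t + \int_0^t(t-s)g''(s)\,ds$ then gives $g(t)-g(0) \leq -\tfrac{c_0}{4}t^2$, which is (b) with $a=c_0/4$.

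The principal technical point, and the reason a single control parameter $r$ does not suffice without the extra hypotheses on $\xi$, is the mismatch between the $W^{1,p}$-topology used by the chart $\Theta_u$ and the $L^2$-based spectral decomposition living in $H_u$. On $V_u^\perp$ these are genuinely incompatible — $\delta^2\widetilde{E}$ may fail to dominate the $W^{1,p}$-norm from below — but this is never needed: both hypotheses pin the negative-definiteness analysis to the finite-dimensional subspace $V_u^-$, where the $L^2$ and $W^{1,p}$ norms are comparable. Everything else is a careful bookkeeping of the perturbation $(\delta^2\widetilde{E})_\psi - (\delta^2\widetilde{E})_0$.
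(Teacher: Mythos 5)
Your proposal is correct and follows essentially the same approach as the paper: decomposing along $V_u^- \oplus V_u^\perp$, using finite-dimensionality of $V_u^-$ to get negative definiteness in the $W^{1,p}$-norm, $L^2$-orthogonality to kill the cross term, and the $C^2$-continuity of $\delta^2\widetilde{E}$ (Lemma~\ref{lemm:C2}) to control the Taylor remainders after first fixing $\theta$ and then shrinking $r$.
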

\begin{proof}
By the last paragraph of Section~\ref{subsec:perturbed}, it suffices to prove (a) and (b) for just one local reduction. Since $(\delta^2 \widetilde{E})_0$ is a bounded bilinear form on $T_u\cM_p$, there exists $C_u > 0$ such that 
\[
|(\delta^2 \widetilde{E})_0(\xi, \xi)| \leq C_u \|\xi\|_{1, p}^2, \text{ for all }\xi \in T_u\cM_p. 
\]
On the other hand, since $V_u^-$ is finite-dimensional, the $W^{1, p}$- and $L^2$-norms restricted to $V_u^-$ are equivalent, and hence there exists $c_u > 0$ such that 
\[
(\delta^2 \widetilde{E})_{0}(\xi, \xi)  \leq  -c_u \|\xi\|_{1, p}^2 \text{ for all }\xi \in V_u^-. 
\]
Next, by the continuity of $\delta^2 \widetilde{E}$, provided $r \in (0, \frac{s}{3})$ is sufficiently small, we have
\[
|(\delta^2 \widetilde{E})_{\psi}(\xi, \xi) - (\delta^2 \widetilde{E})_{0}(\xi, \xi)| \leq \frac{c_u}{16} \|\xi\|_{1, p}^2, \text{ for all }\xi \in T_u\cM_p \text{ and } \|\psi\|_{1, p}\leq 2r.
\]
Thus, given $\xi$ as in (a), we have by Taylor expansion that
\begin{align}
\widetilde{E}(\xi) =\ & \widetilde{E}(0) + \frac{1}{2}(\delta^2 \widetilde{E})_{0}(\xi^{-}, \xi^{-}) + \frac{1}{2}(\delta^2 \widetilde{E})_{0}(\xi^{\perp}, \xi^{\perp})\nonumber\\
& + \int_{0}^1 (1 - t)\big( (\delta^2 \widetilde{E})_{t\xi}(\xi, \xi) - (\delta^2 \widetilde{E})_{0}(\xi, \xi) \big) dt \nonumber\\
\leq\ & \widetilde{E}(0) - \frac{c_u}{2} \|\xi^{-}\|_{1, p}^2 + \frac{C_u}{2} \| \xi^{\perp} \|_{1, p}^2 + \frac{c_u}{32} \|\xi\|_{1, p}^{2}, \label{eq:compare-with-center}
\end{align}
where in the first line we used the fact that $(\delta^2\widetilde{E})_{0}(V_u^-, V_u^\perp) = 0$. Now, under the assumption that $\|\xi^\perp\|_{1,p }\leq \theta \|\xi^-\|_{1, p}$, we have $\|\xi\|_{1, p} \leq 2\|\xi^-\|_{1, p}$. Thus we deduce from~\eqref{eq:compare-with-center} that
\[
\widetilde{E}(\xi)  \leq \widetilde{E}(0) - \frac{1}{2}(c_u - C_u\theta^2 - \frac{c_u}{4}) \|\xi^-\|_{1, p}^2.
\]
Choosing $\theta$ so that $\theta^2 < \frac{c_u}{4C_u}$, we get (a) with $a = \frac{c_u}{4}$.

To prove (b), note that with $\varphi, \xi^-$ and $t$ as in the statement, we have $\varphi + t\xi^- \in \cB_{2r}$. We then compute,
\begin{align}
\widetilde{E}(\varphi + t\xi^{-}) -\widetilde{E}(\varphi) =\ & t (\delta\widetilde{E})_{\varphi}(\xi^{-})  +\frac{t^2}{2} (\delta^2 \widetilde{E})_{0}(\xi^{-}, \xi^{-}) \nonumber\\
&+ \int_{0}^t ( t- s)\big( (\delta^2 \widetilde{E})_{\varphi + s\xi^{-}}(\xi^{-}, \xi^{-}) - (\delta^2 \widetilde{E})_{0}(\xi^{-}, \xi^{-}) \big) ds. \nonumber
\end{align}
Thus, since $(\delta \widetilde{E})_{\varphi}(\xi^-) \leq 0$ by assumption,
\[
\widetilde{E}(\varphi) - \widetilde{E}(\varphi + t\xi^-) \geq \frac{t^2}{2} c_u \|\xi^-\|_{1, p}^2 - \frac{t^2}{2} \frac{c_u}{16} \|\xi^{-}\|_{1, p}^{2} \geq \frac{c_u}{4}t^2,
\]
which gives (b) with $a = \frac{c_u}{4}$.
\end{proof}

\section{Existence of critical points of the perturbed functional}
\label{sec:existence-perturbed}
We continue to assume that $p \in (2, p_2]$ and that $f$ is any smooth, real-valued function on $\RR^3$ with $\|f\|_{\infty} \leq H_0$ and $\int_{\Omega}f > 0$. Specific choices of $H_0$ and $f$ will be made right before Proposition~\ref{prop:existence-with-index}, which is the main result of this section. It gives non-constant critical points of the perturbed functional with $D_{\ep, p}$ bounded uniformly in $\ep$, index at most $1$, and image contained in a fixed compact set independent of $\ep$.

To set the stage for the min-max construction, we recall that each continuous path $\gamma:[0, 1] \to \cM_p$ starting and ending at constants induces a continuous map $F_\gamma: (B^3, \partial B^3) \to (\RR^3, \Sigma)$, and define
\[
\cP = \{\gamma \in C^0([0, 1]; \cM_p)\ |\ \gamma(0) = \text{constant, }\gamma(1) = \text{constant, }\deg(F_\gamma|_{\partial B^3}) = 1\},
\]
whose elements we refer to as (admissible) sweepouts. The construction in the proof of~\cite[Lemma 2.3]{Struwe88} shows that $\cP$ is non-empty. The min-max value is then defined as
\[
\omega_{\ep, p, f} = \inf_{\gamma \in \cP}\Big[\sup_{t \in [0, 1]} E_{\ep, p, f}(\gamma(t), \gamma|_{[0, t]})\Big].
\]
For $C > 0$, we let 
\[
\cK_{\ep, p, f, C} = \{u \in \cM_p\ |\ \delta E_{\ep, p, f}(u) = 0,\ D_{\ep, p}(u) \leq C, E_{\ep, p, f}(u, \gamma) = \omega_{\ep, p, f} \text{ for some }\gamma \in \cE(u)\}.
\]
By our choice of $p$ and Proposition~\ref{prop:regularity}, every $u \in \cK_{\ep, p, f, C}$ is smooth. Also, by Proposition~\ref{prop:PS} and the estimate~\eqref{eq:segment-vol}, we see that $\cK_{\ep, p, f, C}$ is compact. Next, for $\alpha, C > 0$ we introduce the subcollection $\cP_{\ep, p, f, \alpha, C}$ of $\cP$ consisting of sweepouts $\gamma$ such that 
\begin{enumerate}
\item[(i)] $E_{\ep, p, f}(\gamma(t), \gamma|_{[0, t]}) \leq \omega_{\ep, p, f} + \alpha$ for all $t \in [0, 1]$,
\vskip 1mm
\item[(ii)] $D_{\ep, p}(\gamma(t)) \leq C$ whenever $E_{\ep, p, f}(\gamma(t), \gamma|_{[0, t]}) \geq \omega_{\ep, p, f} - \alpha$.
\end{enumerate}

As in~\cite{ChengZhou-cmc}, to apply the mountain pass theorem, or rather the idea of its proof, it is important to produce minimizing sequences of sweepouts satisfying a uniform $D_{\ep, p}$-bound on almost-maximal slices, and we again use the monotonicity trick of Struwe (see~\cite{Struwe88}, especially equation (4.2) and Lemma 4.1) to achieve that. The result is summarized in the lemma below, whose proof is omitted as it is essentially the same as~\cite[Proposition 3.2, Lemma 3.3]{ChengZhou-cmc}. We only mention that the following identity should be used instead of equation (3.2) in~\cite{ChengZhou-cmc}: For all $u \in \cM_p$, $\gamma \in \cE(u)$ and $0 < r' < r \leq 1$, 
\[
\frac{1}{r - r'}\big( \frac{E_{\ep, p, r'f}(u, \gamma)}{r'} - \frac{E_{\ep, p, rf}(u, \gamma)}{r} \big) = \frac{1}{r'\cdot r} D_{\ep, p}(u).
\]
\begin{lemm}\label{lemm:monotonicity-trick}
We have the following.
\begin{enumerate}
\item[(a)] Given a sequence $\ep_j \to 0$, for almost every $r_0 \in (0, 1]$ there exists $c> 0$ and a subsequence of $\ep_j$, which we do not relabel, such that, for all $j$, the function $r \mapsto \frac{\omega_{\ep_j, p, rf}}{r}$ is differentiable at $r = r_0$ and
\[
0 \leq \frac{d}{dr}\left( -\frac{\omega_{\ep_j, p, rf}}{r} \right)\Big|_{r = r_0} \leq c.
\]
\vskip 1mm
\item[(b)] Suppose for some $\ep, r_0 \in (0, 1]$ and $c > 0$ we have
\[
\frac{d}{dr}\left( -\frac{\omega_{\ep, p, rf}}{r} \right)\Big|_{r = r_0} \leq c.
\] 
Then for $k$ sufficiently large, the collection $\cP_{\ep, p, r_0f, \frac{r_0}{k}, 8cr_0^2}$ is non-empty.
\end{enumerate}
\end{lemm}

Lemma~\ref{lemm:monotonicity-trick} feeds into Proposition~\ref{prop:good-PS} below to produce critical points of the perturbed functional. Before that, two more preliminary results are needed. First, we need to guarantee that the critical points thus obtained are non-constant (Lemma~\ref{lemm:non-trivial-lower-bound}). Secondly, we observe that $E_{\ep, p, f}$ has a single local reduction near the level set of $\omega_{\ep, p, f}$ (Lemma~\ref{lemm:local-simple}). 
\begin{lemm}[See also~\cite{Struwe88}, Lemma 2.4]
\label{lemm:non-trivial-lower-bound}
There exist $\eta_1, \eta_2 > 0$, depending only on $p, \Sigma, \ep, H_0$, such that given $\gamma \in \cP$, if $D_{\ep, p}(\gamma(t_0)) < \eta_1$ for some $t_0 \in [0, 1]$, then
\[
E_{\ep, p, f}(\gamma(t_0), \gamma|_{[0, t_0]}) \leq \sup_{t \in [0, 1]} E_{\ep, p, f}(\gamma(t), \gamma|_{[0, t]}) - \eta_2.
\]
\end{lemm}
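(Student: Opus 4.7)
My plan is to extract from $D_{\ep,p}(\gamma(t_0))<\eta_1$ that $\gamma(t_0)$ is $C^0$-close to a constant in $\Sigma$, then exploit the quantization of $V_f$ modulo $\int_\Omega f$ from Lemma~\ref{lemm:vol-prop} to reduce the estimate to a short case analysis.

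Since $(1+|\nabla u|^2)^{p/2}\ge 1+|\nabla u|^p$ for $p\ge 2$, the hypothesis gives $\|\nabla\gamma(t_0)\|_p^p\le p\eta_1/\ep^{p-2}$ and $\|\nabla\gamma(t_0)\|_2^2\le 2\eta_1$. The Sobolev embedding $W^{1,p}(\bB)\hookrightarrow C^{0,1-2/p}(\overline{\bB})$, valid since $p>2$ in dimension two, then yields $\mathrm{osc}(\gamma(t_0))\le C(p,\Sigma)(\eta_1/\ep^{p-2})^{1/p}$. Picking $c:=\gamma(t_0)(x_0)\in\Sigma$ for any $x_0\in\partial\bB$, the short path $\beta:=l_{\gamma(t_0),c}$ from $\gamma(t_0)$ to $c$ satisfies, by~\eqref{eq:segment-vol},
\[
|V_f(\beta)|\le CH_0\,\|\gamma(t_0)-c\|_\infty\,\|\nabla\gamma(t_0)\|_2^2 = o(1) \quad \text{as } \eta_1\searrow \tfrac{\ep^{p-2}\pi}{p}.
\]
The concatenation $\tilde\gamma:=\gamma|_{[0,t_0]}+\beta$ is a continuous path in $\cM_p$ joining the two constants $\gamma(0)$ and $c$, both in $\Sigma$; closing it with a $\Sigma$-valued segment back to $\gamma(0)$ (which contributes zero to $V_f$) and invoking the degree computation in the proof of Lemma~\ref{lemm:vol-prop} yields $V_f(\tilde\gamma)=k\int_\Omega f$ for some $k\in\ZZ$. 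Together with $V_f(\gamma)=\int_\Omega f$ (from $\deg(F_\gamma|_{\partial B^3})=1$), this gives $V_f(\gamma|_{[0,t_0]})=k\int_\Omega f-V_f(\beta)$ and $V_f(\gamma|_{[t_0,1]})=(1-k)\int_\Omega f+V_f(\beta)$.

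The conclusion follows by case analysis on $k$. When $k=0$, one has $E_{\ep,p,f}(\gamma(t_0),\gamma|_{[0,t_0]})\le\eta_1+o(1)$, whereas $\sup_t E_{\ep,p,f}(\gamma(t),\gamma|_{[0,t]})\ge E_{\ep,p,f}(\gamma(1),\gamma)\ge\int_\Omega f$, so $\eta_2:=\tfrac14\int_\Omega f$ with $\eta_1$ small enough (depending on $p,\Sigma,\ep,H_0$) secures the gap. For $|k|\ge 2$, the continuous function $t\mapsto V_f(\gamma|_{[0,t]})$ must attain magnitude at least $2\int_\Omega f-o(1)$ at some intermediate time, yielding $\sup_t E_{\ep,p,f}\ge 2\int_\Omega f-o(1)$ and hence the gap. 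The main obstacle is the case $k=1$, in which $E_{\ep,p,f}(\gamma(t_0),\gamma|_{[0,t_0]})\approx\int_\Omega f\approx E_{\ep,p,f}(\gamma(1),\gamma)$ at leading order; here I extract the required gap from a uniform isoperimetric-type lower bound $\sup_t E_{\ep,p,f}\ge E_{\ep,p,f}(\gamma(1),\gamma)+\eta_2$ valid for every $\gamma\in\cP$. This bound reflects the fact that no degree-one sweepout can remain at the minimal-energy level $E_{\ep,p,f}(\gamma(1),\gamma)$ throughout, and is proved by applying the quantization above (in its contrapositive form) at an intermediate time $t^\ast$ where $V_f(\gamma|_{[0,t^\ast]})$ is bounded away from every integer multiple of $\int_\Omega f$---for instance $\tfrac12\int_\Omega f$---which forces $D_{\ep,p}(\gamma(t^\ast))$ to lie above a threshold depending only on $\Sigma$ and $H_0$.
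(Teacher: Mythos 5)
Your opening moves are sound---the Sobolev/Poincar\'e step, the short segment $\beta$ to a constant, and the degree argument showing $V_f(\gamma|_{[0,t_0]}) = k\int_\Omega f - V_f(\beta)$ with $|V_f(\beta)|$ small---but the case analysis that follows has a genuine gap. The central difficulty is that you try to locate $E_{\ep,p,f}(\gamma(t_0),\gamma|_{[0,t_0]})$ on an absolute scale and compare it to $E_{\ep,p,f}(\gamma(1),\gamma)$, which forces you to invoke the ``uniform isoperimetric-type lower bound'' $\sup_t E_{\ep,p,f}\ge E_{\ep,p,f}(\gamma(1),\gamma)+\eta_2$. But after unwinding, this bound is precisely the present lemma applied at $t_0=1$ (since $D_{\ep,p}(\gamma(1))=\ep^{p-2}\pi/p$, which is below any reasonable $\eta_1$), so citing it is circular, and the sketch you give does not actually supply it: from $t^*$ with $V_f(\gamma|_{[0,t^*]})=\frac12\int_\Omega f$ and $D_{\ep,p}(\gamma(t^*))\ge\alpha_1$ you only get $E_{\ep,p,f}(\gamma(t^*),\gamma|_{[0,t^*]})\ge\alpha_1+\frac12\int_\Omega f$, and this beats $E_{\ep,p,f}(\gamma(1),\gamma)=\ep^{p-2}\pi/p+\int_\Omega f$ only if $\alpha_1\ge\frac12\int_\Omega f + \ep^{p-2}\pi/p$, a comparison between a small energy-concentration threshold and the enclosed volume that is neither available nor true in general. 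The branch $k\ge 2$ fails for the same reason: the claim $\sup_t E_{\ep,p,f}\ge 2\int_\Omega f-o(1)$ is already realized at $t_0$ itself and gives no gap over $E_{\ep,p,f}(\gamma(t_0),\gamma|_{[0,t_0]})\approx k\int_\Omega f$; an intermediate-value application as before would again need a large Dirichlet threshold at a half-integer level of $V_f$, which is not there.

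The paper's proof sidesteps all of this by making a \emph{local} comparison. Having chosen $\alpha_0$ so that $\{D_{\ep,p}<\alpha_0\}\subseteq\{\osc<\delta_0\}$ (a set that deformation retracts onto constants), the degree condition gives $\sup_t D_{\ep,p}(\gamma(t))\ge\alpha_0$. One then fixes $\eta_1<\alpha<\alpha_0$, uses continuity to find an adjacent $t_1$ with $D_{\ep,p}(\gamma(t_1))=\alpha$ and $D_{\ep,p}(\gamma(\cdot))\le\alpha$ on $[t_0,t_1]$, and caps $\gamma|_{[t_0,t_1]}$ by short segments $\gamma_0,\gamma_1$ to constants. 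The resulting concatenation $\gamma_0+\gamma|_{[t_0,t_1]}+(-\gamma_1)$ lies entirely in the contractible set, hence is homotopic to a path through constants, giving $|V_f(\gamma|_{[0,t_1]})-V_f(\gamma|_{[0,t_0]})|\le C\alpha^{1/p}\bigl(D_{\ep,p}(\gamma(t_0))+D_{\ep,p}(\gamma(t_1))\bigr)$. Therefore $E_{\ep,p,f}(\gamma(t_1),\gamma|_{[0,t_1]})-E_{\ep,p,f}(\gamma(t_0),\gamma|_{[0,t_0]})\ge\alpha-\eta_1-C\alpha^{1/p}(\eta_1+\alpha)$, which is bounded below by a positive constant once $\alpha$ is small and $\eta_1\ll\alpha$. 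Because this compares $t_0$ to a nearby time rather than to $t=1$, it is entirely insensitive to the (possibly large, possibly negative) value of $V_f(\gamma|_{[0,t_0]})$, and no case analysis on $k$ is needed.
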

\begin{proof}
With $\delta_0$ as in Lemma~\ref{lemm:homotopy}, we see that $\{u \in \cM_p\ |\ \osc_{\overline{\bB}}u < \delta_0\}$ deformation retracts onto the set of constant mappings in $\cM_p$. Also, Poincar\'e's inequality gives $\osc_{\overline{\bB}}u \leq C_{p, \ep}(D_{\ep, p}(u))^{1/p}$. Thus, choosing $\alpha_0 > 0$ such that $C_{p, \ep}\alpha_0^{1/p} < \delta_0$, we get
\[
\sup_{t \in [0, 1]} D_{\ep, p}(\gamma(t)) \geq \alpha_0 \text{, for all }\gamma \in \cP.
\]
Now take $\eta_1 < \alpha < \alpha_0$, with $\eta_1$ and $\alpha$ to be determined. Then by continuity, there exists $t_1 \neq t_0$ such that $D_{\ep, p}(\gamma(t_1)) = \alpha$. Without loss of generality, we may assume $t_0 < t_1$ and that 
\begin{equation}\label{eq:contract-tube}
D_{\ep, p}(\gamma(t)) \leq \alpha \text{ for all }t \in [t_0, t_1].
\end{equation}
By our choice of $\alpha_0$, for $i = 0, 1$ there exist $y_i \in \Sigma$ with $\|y_i - \gamma(t_i)\|_{\infty} \leq C_{p, \ep} \alpha^{\frac{1}{p}} < \delta_0$ and a path $\gamma_i$ in $\cM_p$ leading from $y_i$ to $\gamma(t_i)$. Moreover, by~\eqref{eq:slice-energy} and~\eqref{eq:segment-vol} we have, decreasing $\alpha$ if necessary, that
\begin{equation}\label{eq:small-slice}
D_{\ep, p}(\gamma_i(t)) \leq C_{\Sigma, p}\alpha < \alpha_0 \text{ for all }t \in [0, 1],
\end{equation}
\begin{equation}\label{eq:cap-vol}
V_f(\gamma_i) \leq C_{\Sigma, p, \ep, H_0}\alpha^{\frac{1}{p}} \|\nabla \gamma(t_i)\|_{2}^2 \text{ for }i = 1, 2.
\end{equation}
By~\eqref{eq:contract-tube},~\eqref{eq:small-slice} and our choice of $\alpha_0$, the concatenation $\gamma_0 + \gamma|_{[t_0, t_1]} + (-\gamma_1)$ is homotopic to a path through constant mappings in $\cM_p$, and hence
\[
V_f(\gamma_0 + \gamma|_{[t_0, t_1]} + (-\gamma_1)) = 0.
\]
Combining this with~\eqref{eq:cap-vol} gives
\[
\left| V_f(\gamma|_{[0, t_1]}) - V_f(\gamma|_{[0, t_0]}) \right| \leq C_{\Sigma, p, \ep, H_0}\alpha^{\frac{1}{p}}\left( D_{\ep, p}(\gamma(t_0)) + D_{\ep, p}(\gamma(t_1)) \right).
\]
We may now finish the proof as in~\cite[Lemma 3.4]{ChengZhou-cmc}.
\end{proof}

Below we let $\delta  = \frac{1}{4}\min\{\eta_2, \int_{\Omega}f\}$, with $\eta_2$ given by Lemma~\ref{lemm:non-trivial-lower-bound}, and define
\[
\cN = \cN_{\ep, p, f} = \{u \in \cM_p\ |\ \text{there exists } \gamma \in \cE(u) \text{ with }|E_{\ep, p, f}(u, \gamma) - \omega_{\ep, p, f}| < \delta \}.
\]
Given $u \in \cN$, we choose some $\gamma \in \cE(u)$ as in the above definition and set $E_{\ep, p, f}^\cN(u) = E_{\ep, p, f}(u, \gamma)$. Note that $E_{\ep, p, f}^{\cN}$ is well-defined thanks to Lemma~\ref{lemm:vol-prop} and our choice of $\delta$.

\begin{lemm}\label{lemm:local-simple}
$\cN$ is an open subset of $\cM_p$, and $E_{\ep, p, f}^\cN$ is a $C^2$-functional on $\cN$.
\end{lemm}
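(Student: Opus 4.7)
The plan is to establish openness of $\cN$ by a direct grafting construction and then to obtain $C^2$-regularity by showing that, locally near any $u_0 \in \cN$, the function $E^{\cN}_{\ep, p, f}$ coincides with one of the $C^2$ local reductions from Lemma~\ref{lemm:C2}. Throughout, the key mechanism is the inequality $4\delta \leq \int_{\Omega}f$, which pins down all integer ambiguities arising from Lemma~\ref{lemm:vol-prop}. The well-definedness of $E^{\cN}_{\ep, p, f}$ already noted in the excerpt fits this pattern: two competing witnesses $\gamma_1, \gamma_2 \in \cE(u)$ satisfy $|V_f(\gamma_1) - V_f(\gamma_2)| < 2\delta \leq \tfrac{1}{2}\int_{\Omega}f$, while Lemma~\ref{lemm:vol-prop} forces this difference to lie in $(\int_{\Omega}f)\cdot\ZZ$, so it vanishes.

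For openness, I would fix $u_0 \in \cN$ with witness $\gamma_0$ and, for $v$ in a small $W^{1, p}$-ball around $u_0$, use Sobolev embedding (recall $p > 2$) to get $\|u_0 - v\|_{\infty} < \delta_0$ and then form the grafted path $\gamma_v := \gamma_0 + l_{u_0, v}$, with $l_{u_0, v}$ as in the proof of Lemma~\ref{lemm:homotopy}. Continuity of $D_{\ep, p}$ in $W^{1, p}$, combined with the segment-volume bound
\[
|V_f(l_{u_0, v})| \leq C\|f\|_{\infty}\|u_0 - v\|_{\infty}(\|\nabla u_0\|_{2}^2 + \|\nabla v\|_{2}^2)
\]
from~\eqref{eq:segment-vol}, yields $E_{\ep, p, f}(v, \gamma_v) \to E_{\ep, p, f}(u_0, \gamma_0)$ as $v \to u_0$ in $\cM_p$. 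Since $|E_{\ep, p, f}(u_0, \gamma_0) - \omega_{\ep, p, f}| < \delta$ strictly, a sufficiently small neighborhood of $u_0$ lies in $\cN$.

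For the $C^2$ assertion, I would shrink to a simply-connected neighborhood $\cA \subseteq \cN$ of $u_0$ and consider the local reduction $E^{\cA}_{\ep, p, f}$ induced by the pair $(u_0, \gamma_0)$. Lemma~\ref{lemm:C2} gives that $E^{\cA}_{\ep, p, f}$ is $C^2$ (in particular continuous), and by construction $E^{\cA}_{\ep, p, f}(u_0) = E_{\ep, p, f}(u_0, \gamma_0) = E^{\cN}_{\ep, p, f}(u_0)$. For any $v \in \cA$, both $E^{\cA}_{\ep, p, f}(v)$ and $E^{\cN}_{\ep, p, f}(v)$ are of the form $E_{\ep, p, f}(v, \cdot)$ evaluated on some element of $\cE(v)$, so Lemma~\ref{lemm:vol-prop} forces their difference to lie in $(\int_{\Omega}f)\cdot\ZZ$. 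Combining continuity of $E^{\cA}_{\ep, p, f}$ at $u_0$ with the two bounds $|E^{\cA}_{\ep, p, f}(u_0) - \omega_{\ep, p, f}| < \delta$ and $|E^{\cN}_{\ep, p, f}(v) - \omega_{\ep, p, f}| < \delta$, one sees that on a possibly smaller neighborhood $\cA' \subseteq \cA$ this difference is strictly less than $3\delta < \int_{\Omega}f$, hence zero. Thus $E^{\cN}_{\ep, p, f} = E^{\cA}_{\ep, p, f}$ on $\cA'$, and $C^2$-regularity at $u_0$ follows from Lemma~\ref{lemm:C2}.

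I do not anticipate any step being genuinely difficult: continuity of $D_{\ep, p}$ in $W^{1, p}$ is standard, the segment-volume estimate is already recorded in~\eqref{eq:segment-vol}, and the $C^2$-property of the local reduction is precisely the content of Lemma~\ref{lemm:C2}. The only thing demanding care is the bookkeeping of the integer ambiguities in $V_f$, which is controlled cleanly by our choice $4\delta \leq \int_{\Omega}f$.
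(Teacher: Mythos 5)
Your proof is correct and follows essentially the same strategy as the paper: both rely on the local reduction $E^{\cA}_{\ep, p, f}$ from Lemma~\ref{lemm:C2}, the integer ambiguity from Lemma~\ref{lemm:vol-prop}, and the gap $4\delta \leq \int_{\Omega}f$. The only organizational difference is that you establish openness first via an explicit grafted path $\gamma_0 + l_{u_0,v}$ and then separately identify $E^{\cN}_{\ep, p, f}$ with a local reduction, whereas the paper obtains both conclusions in a single stroke: continuity of $E^{\cA}_{\ep, p, f}$ gives a neighborhood $\cU$ on which $|E^{\cA}_{\ep, p, f} - \omega_{\ep, p, f}| < \delta$, which simultaneously shows $\cU \subseteq \cN$ (since $E^{\cA}_{\ep, p, f}(u)$ is by definition realized by a path in $\cE(u)$) and that $E^{\cN}_{\ep, p, f} = E^{\cA}_{\ep, p, f}$ on $\cU$. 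Your version is slightly longer but equally sound.
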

\begin{proof}
For $u_0 \in \cN$, choose $\gamma_0 \in \cE(u_0)$ such that $|E_{\ep, p, f}(u_0, \gamma_0) - \omega_{\ep, p, f}| < \delta$ and consider the local reduction $E_{\ep, p, f}^\cA$ induced by $(u_0, \gamma_0)$ on a simply-connected neighborhood $\cA$ of $u_0$. In particular $|E_{\ep, p, f}^{\cA}(u_0) - \omega_{\ep, p, f}| < \delta$. Since $E_{\ep, p, f}^{\cA}$ is continuous, there exists a neighborhood $\cU$ of $u_0$ in $\cA$ such that $|E_{\ep, p, f}^\cA(u) - \omega_{\ep, p, f}| < \delta$ for all $u \in \cU$. Consequently $\cU \subseteq \cN$, and $E_{\ep, p, f}^{\cN} = E_{\ep, p, f}^{\cA}$ on $\cU$. This yields both conclusions of the lemma.
\end{proof}

The following proposition, analogous to the classical mountain pass theorem, is our basic existence result for critical points of the perturbed functional. 
\begin{prop}\label{prop:good-PS}
Suppose for some $C_0 > 0$ and sequence $\alpha_k \to 0$, we have a path $\gamma_k \in \cP_{\ep, p, f, \alpha_k, C_0}$ for each $k$. Then, up to taking a subsequence, there exist $t_k \in [0, 1]$ such that
\vskip 1mm
\begin{enumerate}
\item[(a)] $|E_{\ep, p, f}(\gamma_k(t_k), \gamma_k|_{[0, t_k]}) - \omega_{\ep, p, f}| \leq \alpha_k$ for all $k$.
\vskip 1mm
\item[(b)] $\gamma_k(t_k)$ converges strongly in $W^{1, p}$ to some $u \in \cK_{\ep, p, f, C_0}$.
\vskip 1mm
\item[(c)] $D_{\ep, p}(u) \geq \eta_1$, where $\eta_1$ is as in Lemma~\ref{lemm:non-trivial-lower-bound}.
 \end{enumerate}
\end{prop}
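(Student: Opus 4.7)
\medskip

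\noindent\textbf{Plan.} The proposition is a mountain-pass-type assertion, and the strategy is the classical one: we seek $t_k$ at which $E_{\ep,p,f}$ along the sweepout is nearly $\omega_{\ep,p,f}$ and at which the first variation is small, then apply the Palais-Smale condition (Proposition~\ref{prop:PS}). For each $k$, let
\[
A_k = \{t \in [0,1] \ | \ E_{\ep,p,f}(\gamma_k(t), \gamma_k|_{[0,t]}) \geq \omega_{\ep,p,f} - \alpha_k\},
\]
which is closed and nonempty since the max of $E_{\ep,p,f}$ along each admissible sweepout is $\geq \omega_{\ep,p,f}$. Property (i) of $\cP_{\ep,p,f,\alpha_k,C_0}$ gives conclusion (a) for any $t_k \in A_k$, while property (ii) gives $D_{\ep,p}(\gamma_k(t)) \leq C_0$ on $A_k$. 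Suppose, for contradiction, that there exist $\beta > 0$ and a subsequence such that $\|\delta E_{\ep,p,f}(\gamma_k(t))\| \geq \beta$ for all $t \in A_k$.

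For $k$ large we have $\alpha_k < \delta$, so the almost-maximal slices of $\gamma_k$ all lie in the open set $\cN$ of Lemma~\ref{lemm:local-simple}, on which $E_{\ep,p,f}^\cN$ is a $C^2$ functional satisfying $\delta E_{\ep,p,f}^\cN = \delta E_{\ep,p,f}$. Standard Banach manifold arguments produce a locally Lipschitz pseudo-gradient vector field $X$ for $E_{\ep,p,f}^\cN$ on the open set $\{\|\delta E_{\ep,p,f}^\cN\| \geq \beta/2\}$, with $\|X\|_{1,p} \leq 2$ and $\langle \delta E_{\ep,p,f}^\cN, X\rangle \geq \beta/2$, and we multiply $X$ by a cutoff supported in a small neighborhood of $\gamma_k(A_k)$ on which both inequalities still hold and $|E_{\ep,p,f}^\cN - \omega_{\ep,p,f}| < \delta/2$. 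The time-$s_0$ flow $\Phi_{s_0}$ of $-X$ with $s_0 = 4\alpha_k/\beta$ decreases $E_{\ep,p,f}^\cN$ by at least $2\alpha_k$ on $\gamma_k(A_k)$. Define a new sweepout $\tilde\gamma_k(t) = \Phi_{\tau_k(t)s_0}(\gamma_k(t))$, where $\tau_k: [0,1] \to [0,1]$ equals $1$ on $A_k$ and $0$ where $E_{\ep,p,f}(\gamma_k(t),\gamma_k|_{[0,t]}) \leq \omega_{\ep,p,f} - \alpha_k$. Taking the natural path from a constant to $\tilde\gamma_k(t)$, namely $\gamma_k|_{[0,t]}$ followed by the flow segment $\{\Phi_s(\gamma_k(t))\}_{s\in [0,\tau_k(t)s_0]}$, the value of $E_{\ep,p,f}$ along $\tilde\gamma_k$ coincides with $E_{\ep,p,f}^\cN$ evaluated on the flow, and we conclude $\sup_t E_{\ep,p,f}(\tilde\gamma_k(t), \tilde\gamma_k|_{[0,t]}) \leq \omega_{\ep,p,f} - \alpha_k < \omega_{\ep,p,f}$, contradicting the definition of $\omega_{\ep,p,f}$.

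Hence there exist $t_k \in A_k$ with $\|\delta E_{\ep,p,f}(\gamma_k(t_k))\| \to 0$ and $D_{\ep,p}(\gamma_k(t_k)) \leq C_0$. Proposition~\ref{prop:PS} provides a subsequence converging strongly in $W^{1,p}$ to a critical point $u$ with $D_{\ep,p}(u) \leq C_0$, giving most of (b). To identify $u$ as an element of $\cK_{\ep,p,f,C_0}$, note that by Sobolev embedding $\|\gamma_k(t_k) - u\|_\infty \to 0$, so for $k$ large the concatenation $\beta_k := \gamma_k|_{[0,t_k]} + l_{\gamma_k(t_k), u}$ (with $l$ as in the proof of Lemma~\ref{lemm:homotopy}) lies in $\cE(u)$. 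Using~\eqref{eq:segment-vol} we have $V_f(l_{\gamma_k(t_k),u}) \to 0$, whence $E_{\ep,p,f}(u, \beta_k) \to \omega_{\ep,p,f}$. By Lemma~\ref{lemm:vol-prop} the values $E_{\ep,p,f}(u, \beta_k)$ all lie in a single coset of $\bigl(\int_\Omega f\bigr)\cdot\ZZ$; a convergent sequence in such a coset is eventually constant, so $E_{\ep,p,f}(u, \beta_k) = \omega_{\ep,p,f}$ for large $k$, proving $u \in \cK_{\ep,p,f,C_0}$. Finally, if $D_{\ep,p}(u) < \eta_1$ then $D_{\ep,p}(\gamma_k(t_k)) < \eta_1$ for large $k$, so Lemma~\ref{lemm:non-trivial-lower-bound} yields $E_{\ep,p,f}(\gamma_k(t_k),\gamma_k|_{[0,t_k]}) \leq \omega_{\ep,p,f} + \alpha_k - \eta_2$, which combined with $t_k \in A_k$ forces $2\alpha_k \geq \eta_2$, contradicting $\alpha_k \to 0$. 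This proves (c).

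\medskip

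\noindent\textbf{Main obstacle.} The crux is the deformation step: we are working in a Finsler (not Hilbert) manifold, and the volume term depends globally on the path $\gamma$, not just on the endpoint. Using the locally-defined reduction $E_{\ep,p,f}^\cN$ from Lemma~\ref{lemm:local-simple} lets us replace $E_{\ep,p,f}$ by a single $C^2$ functional on the region where it matters, and the natural choice of path $\gamma_k|_{[0,t]}$ followed by the flow segment ensures that the value of $E_{\ep,p,f}$ along the deformed sweepout agrees with $E_{\ep,p,f}^\cN$ on the deformed slice, so that the gradient-flow decrease transfers cleanly to the sweepout level.
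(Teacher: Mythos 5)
Your overall strategy — negate, run a pseudo-gradient deformation using the local reduction $E^\cN_{\ep,p,f}$ of Lemma~\ref{lemm:local-simple} to control the path-dependence of the volume term, identify $u \in \cK_{\ep,p,f,C_0}$ via Lemma~\ref{lemm:vol-prop} and the discreteness of $(\int_\Omega f)\cdot\ZZ$, and derive (c) from Lemma~\ref{lemm:non-trivial-lower-bound} — is consistent with the paper's (omitted) argument. But the deformation step has a genuine gap. You assert that the time-$s_0$ flow of the cut-off pseudo-gradient, with $s_0 = 4\alpha_k/\beta$, decreases $E^\cN_{\ep,p,f}$ by at least $2\alpha_k$ on $\gamma_k(A_k)$. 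This requires each trajectory starting in $\gamma_k(A_k)$ to remain, for the full time $s_0$, inside the set where the cutoff equals $1$ and the bound $\|\delta E^\cN_{\ep,p,f}\| \geq \beta/2$ still holds. Your contradiction hypothesis only gives $\|\delta E^\cN_{\ep,p,f}\| \geq \beta$ on the slices $\gamma_k(A_k)$, not on a whole level band, and the ``small neighborhood'' of $\gamma_k(A_k)$ on which the bound persists has a size that \emph{a priori} depends on $k$. Since $s_0 = 4\alpha_k/\beta$ is a fixed function of $\alpha_k$, nothing in your argument prevents the flow from escaping the good region (either to low $\|\delta E^\cN_{\ep,p,f}\|$ or to small cutoff value) before time $s_0$, in which case the accrued decrease could be much less than $2\alpha_k$ and no contradiction is reached.

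The fix is to make the neighborhood size quantitatively independent of $k$: one needs to observe that $\delta E^\cN_{\ep,p,f}$ is uniformly Lipschitz (equivalently, $\delta^2 E^\cN_{\ep,p,f}$ is uniformly bounded) on the $W^{1,p}$-bounded region $\{D_{\ep,p} \leq C_0+1\}\cap\cN$ — a check using the explicit form of the first and second variations and Sobolev/H\"older estimates — so that $\|\delta E^\cN_{\ep,p,f}\| \geq \beta/2$ on a $W^{1,p}$-ball of radius $\rho_0$ around $\gamma_k(A_k)$, with $\rho_0$ depending only on $\beta$ and the Lipschitz constant. Similar uniform margins keep $D_{\ep,p} \leq C_0+1$ and $|E^\cN_{\ep,p,f} - \omega| < \delta/2$ along the flow, since the flow speed is $\leq 2$ and the derivatives of $D_{\ep,p}$ and $E^\cN_{\ep,p,f}$ are bounded on that region. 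With the cutoff supported in this uniform-size tube, the flow stays where $\chi = 1$ for time $\geq \rho_0/4$, and for $k$ large enough that $4\alpha_k/\beta \leq \rho_0/4$ the decrease bound holds and the contradiction goes through. Without this uniformity argument (or an equivalent replacement), the deformation step as written does not close. Apart from this, the remaining steps — including the use of~\eqref{eq:segment-vol} to show $V_f(l_{\gamma_k(t_k),u})\to 0$, the coset argument to conclude $E_{\ep,p,f}(u,\beta_k)=\omega_{\ep,p,f}$ eventually, and the derivation of (c) — are correct. (A minor slip: as written, $\tau_k$ is required to equal both $1$ and $0$ at points where $E_{\ep,p,f}(\gamma_k(t),\gamma_k|_{[0,t]}) = \omega_{\ep,p,f}-\alpha_k$; the two threshold levels must be separated.)
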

\begin{proof}
The proof is essentially the same as~\cite[Proposition 3.6]{ChengZhou2021} and hence is omitted. We only note that the estimate~\eqref{eq:segment-vol} should be used in place of~\cite[Lemma 2.4(c)]{ChengZhou2021}, and that Lemma~\ref{lemm:non-trivial-lower-bound} is used to deduce (c) from (a)(b).
\end{proof}

We next want to refine Proposition~\ref{prop:good-PS} to include an index upper bound on the resulting critical point. This is done with a deformation procedure similar to the ones used by Marques-Neves~\cite{Marques-Neves16} and Song~\cite{Song19} in the setting of the Almgren-Pitts min-max theory.
\begin{prop}\label{prop:bypassing}
Suppose $\cK'$ is a compact subset of $\cK_{\ep, p, f, C_0+1}$ such that $\Ind_{\ep, p, f}(v) \geq 2$ for all $v \in \cK'$. Let $C_0, (\alpha_k)$ and $(\gamma_k)$ be as in Proposition~\ref{prop:good-PS}. Then there exists another sequence of sweepouts $(\widetilde{\gamma}_k)$ with the following properties.
\begin{enumerate}
\item[(a)] $\widetilde{\gamma}_k \in \cP_{\ep, p, f, \alpha_k, C_0 + 1}$ for all $k$.
\vskip 1mm
\item[(b)] For all $u \in \cK'$, there exist $k_0(u) \in \NN$ and $d_0(u) > 0$ such that 
\[
\inf\{  \|\widetilde{\gamma}_k(t) - u \|_{1, p}  \ |\ E_{\ep, p, f}(\widetilde{\gamma}_k(t), \widetilde{\gamma}_k|_{[0, t]}) \geq \omega_{\ep, p, f} - \alpha_k \} \geq d_0(u) \text{ for all }k \geq k_0(u).
\]
\end{enumerate}
\end{prop}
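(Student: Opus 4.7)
The goal is to modify each $\gamma_k$ through finitely many local push-offs, one for each critical point in $\cK'$, so that the resulting $\widetilde{\gamma}_k$ keeps its near-maximal slices at a uniform $W^{1,p}$-distance from every element of $\cK'$. The index-$\geq 2$ hypothesis is precisely what enables this: to deflect a one-parameter family away from a critical point one needs at least two independent negative directions, which $\dim V_u^- \geq 2$ provides.

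\medskip
\emph{Step 1 (Finite cover).} Since $\cK'$ is compact in $\cM_p$, extract $u_1, \dots, u_N \in \cK'$ together with constants $r_j, a_j, \theta_j$ from Proposition~\ref{prop:coordinates} and radii $d_j \in (0, r_j/8)$, such that the $W^{1, p}$-balls $\{ \Theta_{u_j}(\cB_{d_j/4}) \}_{j=1}^N$ still cover $\cK'$. Shrinking if necessary, arrange that each $\Theta_{u_j}(\cB_{r_j})$ lies in a simply-connected neighborhood $\cA_j$ on which a single local reduction $\widetilde{E}_j := E^{\cA_j}_{\ep, p, f} \circ \Theta_{u_j}$ is well-defined. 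It then suffices to construct, for each $j$, a deformation that pushes slices out of $\Theta_{u_j}(\cB_{d_j/2})$ without substantially increasing $\widetilde{E}_j$.

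\medskip
\emph{Step 2 (Local pushoff).} Fix $u = u_j$ and drop the subscript. The central construction is a continuous map $\xi^- : \cB_{r} \to V_u^-$ satisfying $\|\xi^-(\psi)\|_{1, p} = 1$ and $(\delta \widetilde{E})_\psi(\xi^-(\psi)) \leq 0$ for every $\psi \in \cB_{r}$. A natural candidate is the $W^{1,p}$-normalized negative $L^2$-gradient of $\widetilde{E}$ restricted to the finite-dimensional $V_u^-$; the difficulty is making this continuous across the zero set of the gradient. Here the index-$\geq 2$ hypothesis is essential: in $V_u^- \cong \RR^n$ with $n \geq 2$ the gradient's zero set has codimension $\geq 1$, so the normalized gradient can be interpolated with a fixed unit reference vector $\xi_0 \in V_u^-$ across the singular locus while preserving the sign condition; in dimension one, no such continuous nonvanishing selection is possible. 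For a cutoff $\chi$ supported in $\cB_{r}$ and equal to $1$ on $\cB_{d_j}$, the flow of the vector field $\chi(\|\cdot\|_{1,p})\, \xi^-(\cdot)$ decreases $\widetilde{E}$ monotonically by Proposition~\ref{prop:coordinates}(b). Since $\xi^-$ takes values in $V_u^-$, on which $(\delta^2 \widetilde{E})_0$ is negative-definite, the flow increases the $V_u^-$-component of $\psi$ at leading linear order, and after a uniformly bounded time every $\psi \in \cB_{d_j/2}$ exits $\cB_{d_j/2}$. Transferring back via $\Theta_{u_j}$, one obtains a continuous homotopy $F^j : [0, 1] \times \cM_p \to \cM_p$, supported in $\Theta_{u_j}(\cB_{r_j})$, realizing the desired pushoff.

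\medskip
\emph{Step 3 (Assembly and verification).} Apply $F^1, \dots, F^N$ successively to $\gamma_k$, inserting cutoffs in the $t$-parameter so that the $j$-th deformation acts only near slices $\gamma_k(t) \in \Theta_{u_j}(\cB_{r_j})$. Because $\gamma_k(0), \gamma_k(1)$ are constants, hence separated from $\cK'$, the endpoints are preserved and the induced degree is unchanged, so $\widetilde{\gamma}_k \in \cP$. Property (a) follows from the nonincrease of each $\widetilde{E}_j$ along $F^j$ together with the continuity of $D_{\ep, p}$ under small $W^{1, p}$-perturbations (using Lemma~\ref{lemm:C2} and the estimate~\eqref{eq:segment-vol}); the slack in $C_0 + 1$ absorbs small overshoots during the flow. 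Property (b) follows directly: after the deformation, any slice $\widetilde{\gamma}_k(t)$ with $E_{\ep, p, f}(\widetilde{\gamma}_k(t), \widetilde{\gamma}_k|_{[0, t]}) \geq \omega_{\ep, p, f} - \alpha_k$ lies outside $\Theta_{u_j}(\cB_{d_j/2})$ for every $j$, which by the covering property yields the required separation $d_0(u) > 0$ from each $u \in \cK'$. The main obstacle is the continuous selection in Step 2 — it is here that the hypothesis $\Ind \geq 2$ is used in an essential way — with a subsidiary issue being the $t$-patching in Step 3, which must be arranged so that the actions near overlapping $\Theta_{u_j}(\cB_{r_j})$'s do not interfere destructively.
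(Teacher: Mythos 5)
Your proposal takes a genuinely different route from the paper: you propose a pseudo-gradient flow in the negative eigenspaces, deforming each $\gamma_k$ by following the flow of a continuously selected vector field $\xi^-(\cdot) \in V_u^-$. The paper instead uses a combinatorial ``partition, perturb, replace'' strategy: it partitions the parameter interval into subintervals $[t_l, t_{l+1}]$ on which $\gamma$ stays in a single chart $\cC_{v_i}$, perturbs the interior endpoints $\gamma(t_l)$ by a single straight push in a fixed direction of $V_{v_i}^-$, and then \emph{replaces} each subpath wholesale: it pushes both endpoints linearly in negative directions until they reach the shell $\partial_-(2\cC_{v_i})$, and then joins them by a path $q_2$ running along $\Theta_{v_i}\left(\{\|\varphi^-\|_{1,p} = r_{v_i}/2,\, \varphi^\perp = 0\}\right)$. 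The hypothesis $\Ind \geq 2$ enters only here, to guarantee that this set (a sphere of dimension $\geq 1$ in $V_{v_i}^-$) is connected, so that $q_2$ exists. No continuous selection of directions is ever needed; each perturbation uses a \emph{fixed} direction, and Proposition~\ref{prop:coordinates}(b) is applied along straight segments, exactly as stated.

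The central gap in your proposal is Step 2. You propose $\xi^-(\psi) = -\mathrm{proj}_{V_u^-}\nabla\widetilde{E}(\psi)$ normalized, interpolated with a fixed unit vector $\xi_0 \in V_u^-$ near the zero set of the projected gradient $g$. But the interpolation does not preserve the sign condition $(\delta\widetilde{E})_\psi(\xi^-(\psi)) \leq 0$: in the transition annulus where $0 < \|g(\psi)\| < \delta$, the blended vector is $\eta\, g/\|g\| + (1-\eta)\xi_0$ with $0 < \eta < 1$, and $\langle g(\psi), \xi_0\rangle$ can be strictly negative while $\eta\|g(\psi)\|$ is too small to compensate; nothing in the construction rules this out. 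The claimed reason that $\Ind \geq 2$ saves the day --- that ``the gradient's zero set has codimension $\geq 1$'' in $V_u^- \cong \RR^n$ --- does not make sense dimensionally (the zero set lives in the domain $\cB_r \subset T_u\cM_p$, not in $V_u^-$), and in any case is not what makes the paper's argument work. Your subsidiary claim that the flow exits $\cB_{d_j/2}$ in uniformly bounded time is also not established once the vector field stops being the exact radial direction: in the interpolation region the flow may be driven by $\xi_0$, which need not increase $\|\psi^-\|$. Finally, you flag but do not resolve the interaction of overlapping deformations $F^j$; the paper's finite partition into chart-local subpaths with perturbed endpoints handles this by construction, since replacements on disjoint subintervals cannot conflict. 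A workable repair of your approach would have to abandon the global continuous selection and instead mimic the paper's local, fixed-direction pushes combined with a topological connection step --- at which point you have essentially reproduced the paper's proof.
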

\begin{proof}
As with~\cite[Lemma 3.9]{ChengZhou-cmc}, the proof consists of five steps modeled after~\cite[Theorem 7]{Song19}. The main source of technicality is again the fact that $\cK'$ is not discrete in general. Nonetheless, this proof is slightly shorter than that of ~\cite[Lemma 3.9]{ChengZhou-cmc} thanks to the neighborhood $\cN$ introduced above. The deformation procedure can be summed up as ``partition, perturb and replace'', the details of which occupy Steps 3 and 4. Steps 1 and 2 construct coverings that help us localize the problem, one for replacement and the other for perturbation. In Step 5 we verify all the required properties.
\vskip 2mm
\noindent\textbf{Step 1: Covering for replacement}
\vskip 2mm
For each $v \in \cK'$, since $\cK' \subseteq \cN$, we can find a simply-connected neighborhood $\cA_v$ of $v$ in $\cN$. In particular there is a local reduction $E^{\cA_v}_{\ep, p, f}$ that agrees with $E^\cN_{\ep, p, f}$ on $\cA_v$. Proposition~\ref{prop:coordinates} then yields $r_v, \theta_v, a_v > 0$ such that $\Theta_v: \psi \mapsto \exp_u^h(\psi)$ restricts to a chart on 
\[
\cB_v := \{\psi \in T_v\cM_p \ |\ \|\psi\|_{1, p}  < 3r_v\},
\]
with image contained in $\cA_v$, and that conclusions (a) and (b) of Proposition~\ref{prop:coordinates} hold with $\widetilde{E}_v = E_{\ep, p, f}^\cN \circ \Theta_v$. Decreasing $r_v$ if necessary, we may also arrange that 
\begin{equation}\label{eq:energy-osc}
|D_{\ep, p}(w) - D_{\ep, p}(w')| < 1, \text{ for all }w, w' \in \Theta_v(\cB_v).
\end{equation}
Below we define, for $s \in [1, 4]$, 
\begin{align*}
s\cC_v =\ & \Theta_v \Big(\{\xi \in T_v\cM_p \ |\ \| \xi^- \|_{1, p}\leq \frac{r_v}{4}s,\ \| \xi^\perp \|_{1, p} \leq \frac{\theta_v r_v}{4} s \}\Big),\\
\partial_- (s\cC_v) =\ & \Theta_v \Big(\{\xi \in T_v\cM_p \ |\ \| \xi^- \|_{1, p} = \frac{r_v}{4}s,\ \| \xi^\perp \|_{1, p} \leq  \frac{\theta_v  r_v}{4}s \}\Big).
\end{align*}
Note that by Proposition~\ref{prop:coordinates}(a), with $b_v = a_v \frac{r_v^2}{4}$ we have
\begin{equation}\label{eq:drop-on-shell}
E_{\ep, p, f}^\cN(w) \leq E_{\ep, p, f}^\cN(v) - b_v \text{ for all } w\in\partial_{-}(2\cC_v).
\end{equation}
Next, essentially due to the last remark in Definition~\ref{defi:index}, there exists $\rho(v) > 0$ such that 
\[
B_{2\rho(v)}^{1, p}(v) := \{w \in \cM_p\ |\ \|w - v\|_{1, p} < 2\rho(v)\} \subseteq \cC_v.
\]
As $\cK'$ is compact, we can cover it by a finite collection $\{B^{1, p}_{\rho(v_i)}(v_i)\}_{i = 1}^M$ and let 
\begin{align*}
&\underline{a} = \min_{1 \leq i \leq M}a_{v_i},\  \underline{b} = \min_{1 \leq i \leq M}b_{v_i},\ \underline{\rho} = \min_{1 \leq i \leq M}\rho(v_i).
\end{align*}

\vskip 2mm
\noindent\textbf{Step 2: Covering for perturbation}
\vskip 2mm
Let $w \in \cK'$. Thanks to Proposition~\ref{prop:coordinates}(a) and the continuity of both
\[
(u, \xi^-) \mapsto \Theta_w \left( \Theta_w^{-1}(u) +\xi^- \right) - u, \text{ and }
\]
\[
(u, \xi^-) \mapsto E^\cN_{\ep, p, f}(\Theta_w \left( \Theta_w^{-1}(u) +\xi^- \right)) - E^\cN_{\ep, p, f}(u),
\]
there exist $s(w) < \rho(w)$, $\xi^-(w) \in V_w^-$ and $c_w > 0$ such that $B^{1, p}_{s(w)}(w) \subseteq \cup_{i = 1}^M B^{1, p}_{\rho(v_i)}(v_i)$, and that for all $u \in B^{1, p}_{s(w)}(w)$, letting $u_t = \Theta_w \left( \Theta_w^{-1}(u) + t\xi^-(w) \right)$, we have 
\begin{align}
\|u_t - u\|_{1, p} &< \underline{\rho}, \text{ for all }t \in [0, 1], \label{eq:proximity}\\
E_{\ep, p, f}^\cN(u_1) &\leq E_{\ep, p, f}^\cN(u) - c_w. \label{eq:drop}
\end{align}
Next we cover $\cK'$ by a finite collection $\{B^{1, p}_{s(w_i)}(w_i)\}_{i = 1}^L$, and define 
\[
\underline{c} = \min_{1 \leq i \leq L}c_{w_i}.
\]
Again by the compactness of $\cK'$, we can fix $\theta_1 > 0$ small enough so that the neighborhood
\[
B_{\theta_1}(\cK'):= \{u \in \cM_p\ |\ \|u - v\|_{1, p} < \theta_1 \text{ for some }v \in \cK' \}
\]
is contained in $\cup_{j =1}^L B^{1, p}_{s_j}(w_j) \subseteq \cup_{i = 1}^M B^{1, p}_{\rho_i}(v_i) \subseteq \cN$, where we have written $s_j = s(w_j)$ and $\rho_i = \rho(v_i)$. In particular, for all $u \in B_{\theta_1}(\cK')$ there exists a continuous path $t \mapsto u_t$ in $\cN$ such that
\begin{enumerate}
\item[(p1)] $u_0 = u$.
\vskip 1mm
\item[(p2)] $u_t \in B^{1, p}_{2\rho_{v_i}}(v_i)$ for all $t \in [0, 1]$ whenever $u \in B^{1, p}_{\rho_{i}}(v_i)$. 
\vskip 1mm
\item[(p3)] $E^{\cN}_{\ep, p, f}(u_1) \leq E^{\cN}_{\ep, p, f}(u) - \underline{c}$.
\end{enumerate}
\vskip 2mm
\noindent\textbf{Step 3: Partitioning $[0, 1]$ and perturbing the endpoints}
\vskip 2mm
Recall that $\delta = \frac{1}{4}\min\{\int_{\Omega}f, \eta_2\}$, and define
\begin{align*}
J_k &= \{t \in [0, 1]\ |\ \gamma_k(t) \in B_{\theta_1}(\cK'), E_{\ep, p, f}(\gamma_k(t), \gamma_k|_{[0, t]}) > \omega_{\ep, p, f} - 2\delta \},\\
I_k &= \{ t\in [0, 1]\ |\ \gamma_k(t) \in \overline{B_{\frac{3\theta_1}{4}}(\cK')}, E_{\ep, p, f}(\gamma_k(t), \gamma_k|_{[0, t]}) > \omega_{\ep, p, f} - 2\delta \}.
\end{align*}
Since $(\alpha_k)$ converges to $0$, by Lemma~\ref{lemm:non-trivial-lower-bound}, Lemma~\ref{lemm:vol-prop} and our choice of $\delta$, we see that for $k$ sufficiently large we have $0, 1 \not\in J_k$ and that 
\begin{equation}\label{eq:agreement}
E_{\ep,p, f}(\gamma_k(t), \gamma_k|_{[0, t]}) = E^\cN_{\ep, p, f}(\gamma_k(t)), \text{ for all }t \in J_k.
\end{equation}
In particular, $E_{\ep,p, f}(\gamma_k(t), \gamma_k|_{[0, t]}) > \omega_{\ep, p, f} - \delta$ for $t \in J_k$, which shows that $I_k$ is closed in $[0, 1]$, and hence compact.

To continue, we drop the subscripts in $\gamma_k, \alpha_k, I_k$ and $J_k$. Since $J$ is open and $I \subseteq J$ is compact, there exist finitely many intervals $[a_j, b_j]$, $j = 1, \cdots, m$, such that 
\[
I \subseteq \cup_{j = 1}^m [a_j, b_j] \subseteq J, \text{ and that }\gamma(a_j), \gamma(b_j) \not\in B_{\frac{\theta_1}{2}}(\cK').
\]
For each $j = 1, \cdots, m$, since $\gamma([a_j, b_j]) \subseteq B_{\theta_1}(\cK') \subseteq \cup_{i = 1}^M B_{\rho_i}^{1, p}(v_i)$, we can further partition each $[a_j, b_j]$ into 
\[
a_j = t_0 < \cdots < t_n = b_j,
\]
such that for all $l \in \{0, \cdots, n-1\}$, there exists $i = i(l) \in \{1, \cdots, M\}$ with
\[
\gamma([t_l, t_{l+1}]) \subseteq B_{\rho_i}^{1, p}(v_i).
\]
For $l \in \{1, \cdots, n-1\}$, since $\gamma(t_l) \in B_{\theta_1}(\cK') \cap B_{\rho_{i(l-1)}}^{1, p}(v_{i(l-1)})  \cap B_{\rho_{i(l)}}^{1, p}(v_{i(l)})$, by the remarks at the end of the previous step, there exists a continuous path $P_l: [0,1] \to \cN$ such that 
\begin{enumerate}
\item[(a1)] $P_l(0) = \gamma(t_l)$.
\vskip 1mm
\item[(a2)] $P_l(t) \in B_{2\rho_{i(l-1)}}^{1, p}(v_{i(l-1)})  \cap B_{2\rho_{i(l)}}^{1, p}(v_{i(l)})$ for all $t \in [0, 1]$.
\vskip 1mm
\item[(a3)] $E^\cN_{\ep, p, f}(P_l(1)) \leq E^\cN_{\ep, p, f}(\gamma(t_l)) - \underline{c}$.
\end{enumerate} 
Now let $h_0 = \gamma|_{[t_0, t_1]} + P_1,\ h_{n-1} = (-P_{n-1}) + \gamma|_{[t_{n-1}, t_n]}$, and let
\[
h_{l} = (-P_{l}) + (\gamma([t_{l}, t_{l + 1}])) + P_{l+1} \text{ for }l = 1, \cdots, n-2.
\]
Then $h_0 + \cdots + h_{n-1}$ is homotopic to $\gamma|_{[a_j, b_j]}$, and the endpoints $h_0(0) = \gamma(a_j)$ and $h_{n-1}(1) = \gamma(b_j)$ lie outside of $B_{\frac{\theta_1}{2}}(\cK')$.
\vskip 2mm
\noindent\textbf{Step 4: Replacing the path on the subintervals}
\vskip 2mm
Fix $l \in \{0, \cdots, n-1\}$ and note that since $h_l$ maps into $B^{1,p}_{2\rho_{i(l)}}(v_{i(l)}) \subseteq \cC_{v_{i(l)}}$, we may define $
\widehat{h}_l = (\Theta_{v_{i(l)}})^{-1}\circ h_l$, which satisfies $\|\widehat{h}_l(t)^-\|_{1, p} \leq \frac{r_{v_{i(l)}}}{4}$ and $\|\widehat{h}_l(t)\|_{1, p} \leq \frac{r_{v_{i(l)}}}{2}$ for all $t \in [0, 1]$. To reduce notation, below we drop the subscript $l$ and write $v_i = v_{i(l)}$ and $h = h_l$.

For $j = 0, 1$, we choose $\xi^-_j \in V_{v_i}^-$ such that $\|\xi^-_j\|_{1, p} =1$ and $(\delta\widetilde{E}_{v_i})_{\widehat{h}(j)}(\xi^-_j) \leq 0$. Then there exists $t_j \geq 0$ such that 
\[
\| \widehat{h}(j)^- + t_j \xi^-_j \|_{1, p} = \frac{r_{v_i}}{2}, \text{ and that}
\]
\[
\| \widehat{h}(j)^- + t \xi^-_j \|_{1, p} \leq \frac{r_{v_i}}{2} \text{ for all }0 \leq t \leq t_j.
\] 
Since $\|\xi^-_j\|_{1, p} = 1$ and $\|\widehat{h}(j)^-\|_{1, p} \leq \frac{r_{v_i}}{4}$, we have by the triangle inequality that $t_j \leq r_{v_i}$. Next we define $q_j:[0, 2] \to 2\cC_{v_i} \subseteq \cN$ in terms of $\widehat{q}_j = \Theta_{v_i}^{-1}\circ q_j$ by letting
\begin{equation}
\widehat{q}_j(t) = \left\{
\begin{array}{ll}
\widehat{h}(j) + (t\cdot t_j)\xi^-_j,& \text{ for }t\in [0, 1],\\
\ &\\
\widehat{h}(j) + t_j\xi^-_j - (t-1)\widehat{h}(j)^\perp, & \text{ for }t \in [1, 2].
\end{array}
\right.
\end{equation}
Then we have for $j = 0, 1$ the following:
\begin{enumerate}
\item[(b1)] $q_j([1, 2]) \subseteq \partial_-(2\cC_{v_i})$, and $q_j(2) \in \Theta_{v_i}\left( \{\|\varphi^-\|_{1, p} = \frac{r_{v_i}}{2}, \varphi^{\perp} = 0\} \right)$.
\vskip 1mm
\item[(b2)] For $t \in [0, 1]$, there holds by Proposition~\ref{prop:coordinates}(b) that 
\[
E^\cN_{\ep, p,f}(q_j(0)) - E^\cN_{\ep, p,f}(q_j(t)) \geq \underline{a}(t \cdot t_j)^2 = \underline{a}\| \widehat{q}_j(t) - \widehat{q}_j(0) \|_{1, p}^2.
\]
\end{enumerate}
Since $\Ind_{\ep, p, f}(v_i) \geq 2$, the set $\Theta_{v_i}\left( \{\|\varphi^-\|_{1, p} = \frac{r_{v_i}}{2}, \varphi^{\perp} = 0\} \right)$ is connected. Hence there exists a continuous path
\[
q_2:[0, 1] \to \Theta_{v_i}\left( \{\|\varphi^-\|_{1, p} = \frac{r_{v_i}}{2}, \varphi^{\perp} = 0\} \right) \subseteq 2\cC_{v_i},
\] 
with $q_2(j) = q_j(2)$ for $j = 0, 1$. We then replace $h$ with the path
\[
\widetilde{h} = q_0 + q_2 -q_1.
\]
Note that since $h$, $\widetilde{h}$ both map into the simply-connected set $2\cC_{v_i}$ and share the same endpoints, they are homotopic. We now define $\widetilde{\gamma}$ to be the path obtained by first replacing each $\gamma|_{[a_j, b_j]}$ with $h_0 + \cdots + h_{n-1}$, and then replacing each $h_l$ by $\widetilde{h}_l$. Note that $\widetilde{\gamma}([a_j, b_j]) \subseteq \cN$ for all $j \in \{1, \cdots, m\}$. Also, reparametrizing if necessary, we may assume that 
\[
\widetilde{\gamma}|_{[t_l, t_{l + 1}]} = \widetilde{h}_l,
\]
with $q_0|_{[0, 1]}, q_0|_{[1, 2]}, q_2, -(q_1|_{[1, 2]})$ and $-(q_1|_{[0, 1]})$ successively parametrized by a fifth of $[t_l, t_{l + 1}]$. For later use, we note the following properties in addition to (b1) and (b2) above. 
\vskip 1mm
\begin{enumerate}
\item[(b3)] By~\eqref{eq:drop-on-shell}, we have that 
\[
E^\cN_{\ep, p, f}(q_j(t)) \leq \omega_{\ep, p, f} - \underline{b}\text{ for }j = 0, 1 \text{ and }t \in [1, 2], \text{ and }
\]
\[
E^\cN_{\ep, p, f}(q_2(t)) \leq \omega_{\ep, p, f} - \underline{b} \text{ for }t \in [0, 1].
\]
\vskip 1mm
\item[(b4)] Either $E^\cN_{\ep, p, f}(q_0(0)) = E^\cN_{\ep, p, f}(\gamma(a_j))$, or $E^\cN_{\ep, p, f}(q_0(0)) \leq E^\cN_{\ep, p, f}(\gamma(t_l)) - \underline{c}$ for some $l \in \{1, \cdots, n-1\}$. A similar statement holds for $q_1(0)$.
\end{enumerate}
\vskip 2mm
\noindent\textbf{Step 5: Verifying properties}
\vskip 2mm
Let $I' = \cup_{j = 1}^m [a_j, b_j]$. Since $\gamma(t) = \widetilde{\gamma}(t)$ for all $t \not\in I'$ and $\gamma|_{[a_j, b_j]}$ is homotopic to $\widetilde{\gamma}|_{[a_j, b_j]}$ for all $j$, we see by Corollary~\ref{coro:homotopy} that 
\begin{equation}\label{eq:agree-out}
E_{\ep, p, f}(\gamma(t), \gamma|_{[0, t]}) = E_{\ep, p, f}(\widetilde{\gamma}(t), \widetilde{\gamma}|_{[0, t]}), \text{ for all }t \not\in I'.
\end{equation}
In particular, 
\[
E_{\ep, p, f}(\widetilde{\gamma}(t), \widetilde{\gamma}|_{[0, t]}) \leq \omega_{\ep, p, f} + \alpha, \text{ for all }t \not\in I'.
\]
On the other hand, for each $j$, note that both $E^{\cN}_{\ep, p, f}(\widetilde{\gamma}(\cdot))$ and $E_{\ep, p, f}(\widetilde{\gamma}(\cdot), \widetilde{\gamma}|_{[0, \cdot]})$ are continuous on $[a_j, b_j]$. Since they differ by a constant integer multiple of $\int_{\Omega}f$ and agree at $t = a_j$ by~\eqref{eq:agreement} and~\eqref{eq:agree-out}, they agree everywhere on $[a_j, b_j]$. That is,
\begin{equation}\label{eq:agreement-tilde}
E_{\ep, p, f}(\widetilde{\gamma}(t), \widetilde{\gamma}|_{[0, t]}) = E^{\cN}_{\ep, p, f}(\widetilde{\gamma}(t)) \text{ for all } t\in [a_j, b_j].
\end{equation}
Then, by (b2), (b3) and (b4) above, we see that, on $[a_j, b_j]$,
\[
E_{\ep, p, f}(\widetilde{\gamma}(t), \widetilde{\gamma}|_{[0, t]}) \leq \omega_{\ep, p, f} + \alpha.
\]
Therefore we have verified the first part of the definition of $\cP_{\ep, p, f, \alpha, C_0+1}$ for $\widetilde{\gamma}$. Next, if $t \in [0, 1]$ is such that 
\[
E_{\ep, p, f}(\widetilde{\gamma}(t), \widetilde{\gamma}|_{[0, t]}) \geq \omega_{\ep, p, f} - \alpha,
\]
then since $(\alpha_k)$ converges to $0$, again by (b2), (b3) and (b4), for large enough $k$ there are only three possibilities: (i) $t \not\in I'$, or (ii) $t$ belongs to the first fifth of $[t_0, t_1]$ (for some $j$), or (iii) $t$ lies in the last fifth of $[t_{n-1}, t_n]$ (for some $j$). 

In case (i) we have $\gamma(t) = \widetilde{\gamma}(t)$ and $E_{\ep, p, f}(\gamma(t), \gamma|_{[0, t]}) = E_{\ep, p, f}(\widetilde{\gamma}(t), \widetilde{\gamma}|_{[0, t]}) \geq \omega_{\ep, p, f} - \alpha$. Thus, since $\gamma \in \cP_{\ep, p, f, \alpha, C_0}$, we deduce that
\[
D_{\ep, p}(\widetilde{\gamma}(t)) = D_{\ep, p}(\gamma(t)) \leq C_0.
\]
In the case (ii), we have by~\eqref{eq:agreement},~\eqref{eq:agreement-tilde} and property (b2) that
\[
E_{\ep, p, f}(\widetilde{\gamma}(t), \widetilde{\gamma}|_{[0, t]}) = E^\cN_{\ep, p, f}(\widetilde{\gamma}(t)) \leq E^{\cN}_{\ep, p, f}(\gamma(a_n)) = E_{\ep, p, f}(\gamma(a_j), \gamma|_{[0, a_j]}).
\]
Thus $E_{\ep, p, f}(\gamma(a_j), \gamma|_{[0, a_j]}) \geq \omega_{\ep, p, f} - \alpha$ and consequently $D_{\ep, p}(\gamma(a_j)) \leq C_0$. Now by the definition of $q_0$, we see that $\widetilde{\gamma}(t)$ and $\gamma(a_j)$ both belong to $2\cC_{v_{i(0)}}$. Hence by~\eqref{eq:energy-osc} we get 
\[
D_{\ep, p}(\widetilde{\gamma}(t)) \leq D_{\ep, p}(\gamma(a_j)) + 1 \leq C_0 + 1.
\]
Finally case (iii) is similar. This proves that $\widetilde{\gamma} \in \cP_{\ep, p, f, \alpha, C_0 + 1}$.

To prove conclusion (b), assume towards a contradiction that there exist $u \in \cK'$, a subsequence of $\widetilde{\gamma}_k$, which we do not relabel, and a sequence $t_k \in [0, 1]$, such that 
\begin{equation}\label{eq:large-slices}
E_{\ep, p, f}(\widetilde{\gamma}_k(t_k), \widetilde{\gamma}_k|_{[0, t_k]}) \geq \omega_{\ep, p, f} - \alpha_k,
\end{equation}
\begin{equation}\label{eq:convergence}
\| \widetilde{\gamma}_k(t_k) - u \|_{1, p} \to 0 \text{ as }k \to \infty.
\end{equation}
To continue, we need to put back the indices dropped previously. Thus $I_k \subseteq \cup_{j = 1}^{m_k}[a_{k, j}, b_{k, j}] \subseteq J_k$, and for $j = 1, \cdots, m_k$, the partition of $[a_{k, j}, b_{k, j}]$ reads
\[
a_{k, j} = t_{k, j, 0} < \cdots < t_{k, j, n_{k, j}} = b_{k, j}.
\]
Also, for $l = 0, \cdots, n_{k, j }-1$,  the paths $q_0, q_1$ are $q_{k, j, l, 0}, q_{k, j, l, 1}$, respectively. Conditions~\eqref{eq:large-slices} and~\eqref{eq:convergence} imply that we eventually have $t_k \in \cup_{j = 1}^{m_k}[a_{k, j}, b_{k, j}]$. Furthermore, by~\eqref{eq:agreement-tilde},~\eqref{eq:large-slices} together with (b2), (b3), (b4) above, we have for sufficiently large $k$ that
\[
\widetilde{\gamma}_k(t_k) \in q_{k, j(k), l(k), a(k)}([0, 1]),
\] 
for some $j(k) \in \{1, \cdots, m_k\}$, $(l(k), a(k)) = (0, 0)$ or $(n_{k, j} - 1, 1)$. Also, by construction, $q_{k, j(k), l(k), a(k)}([0, 1]) \subseteq 2\cC_{v_{i_k}}$ for some $i_k \in \{1, \cdots, M\}$. Without loss of generality we may assume, after passing to a subsequence, that $(l(k), a(k))= (0, 0)$ and that $i_k = 1$ for all $k$. In particular $u \in 2\cC_{v_1}$ by~\eqref{eq:convergence}.

Below we let $Q_k = q_{k, j(k), 0, 0}$ and write $Q_k(t_k)$ for $\widetilde{\gamma}_k(t_k)$ by abuse of notation. Recalling that $Q_k(0) = \gamma_k(a_{k, j(k)})$ and that $\gamma_k \in \cP_{\ep, p, f, \alpha_k, C_0}$, we deduce using~\eqref{eq:agreement} that eventually 
\[
E^{\cN}_{\ep, p, f}(Q_k(0)) \leq \omega_{\ep, p, f} + \alpha_k.
\] 
On the other hand, by~\eqref{eq:agreement-tilde} and~\eqref{eq:large-slices} we have $E^\cN_{\ep, p, f}(Q_k(t_k)) \geq \omega_{\ep, p, f} - \alpha_k$. Therefore 
\[
E^{\cN}_{\ep, p, f}(Q_k(0)) - E^\cN_{\ep, p, f}(Q_k(t_k)) \leq 2\alpha_k \to 0\text{ as }k \to \infty.
\]
By property (b2) from Step 4, this implies 
\[
\lim_{k \to \infty}\| \Theta_{v_1}^{-1}(Q_k(0)) - \Theta_{v_1}^{-1}(Q_k(t_k))  \|_{1, p} =0.
\]
Combining this with~\eqref{eq:convergence} gives
\[
\lim_{k \to \infty} \|\Theta_{v_1}^{-1}(Q_k(0)) - \Theta_{v_1}^{-1}(u)\|_{1, p} = 0.
\]
Since $Q_k(0) = \gamma_{k}(a_{k, j(k)}) \not\in B_{\frac{\theta_1}{2}}(\cK')$, this is a contradiction. The proof of Proposition~\ref{prop:bypassing} is complete.
\end{proof}

Lemma~\ref{lemm:monotonicity-trick}, Proposition~\ref{prop:good-PS} and Proposition~\ref{prop:bypassing} together give the main result of this section. Below, in addition to the choice of $p$ already made, we assume that $\Sigma', H_0$ are as in Section~\ref{subsec:max-principle}. Also, given $H \in (0, H_0)$ we choose $t_0$ and $f$ as in that section. 
\begin{prop}\label{prop:existence-with-index}
For almost every $r \in (0, 1]$, there exist $C_0 > 0$ and a sequence $\ep_j \to 0$ such that for each $j$ there is a non-constant critical point $u_j$ of $E_{\ep_j, p, rf}$ with the following properties.
\vskip 1mm
\begin{enumerate}
\item[(a)] $D_{\ep_j, p}(u_j) \leq C_0+1$ and $D(u_j) \geq \beta$, where $\beta$ is from Proposition~\ref{prop:uniform-lower-bound}.
\vskip 1mm
\item[(b)] $\Ind_{\ep_j, p, rf}(u_j) \leq 1$.
\vskip 1mm
\item[(c)] $u_j$ is smooth on $\overline{\bB}$ and $u_j(\overline{\bB})\subseteq \overline{\Omega'_{t_0}}$.
\end{enumerate}
\end{prop}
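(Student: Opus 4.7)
The plan is to combine the derivative-bound trick of Lemma~\ref{lemm:derivative-bound}, the Palais--Smale type compactness of Proposition~\ref{prop:good-PS}, and the index-bounding deformation of Proposition~\ref{prop:bypassing}, with Proposition~\ref{prop:max-principle}(a) and Proposition~\ref{prop:uniform-lower-bound} providing the geometric conclusions (c) and (a), respectively.

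Starting from any sequence $\widetilde{\ep}_j \to 0$, I would first apply Lemma~\ref{lemm:derivative-bound}(a) to extract a full-measure set of $r \in (0, 1]$ along which, after passing to a subsequence $\ep_j \to 0$, the map $s \mapsto -\omega_{\ep_j, p, sf}/s$ has a uniform one-sided derivative $\leq c$ at $r$. Fixing such an $r$ and setting $C_0 = 8cr^2$, Lemma~\ref{lemm:derivative-bound}(b) supplies, for each $j$, sweepouts $\gamma_{j, k} \in \cP_{\ep_j, p, rf, r/k, C_0}$ for all large $k$.

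Next, for each $j$ I would produce a critical point with index at most $1$ by contradiction. Assume every $u \in \cK_{\ep_j, p, rf, C_0+1}$ has $\Ind_{\ep_j, p, rf}(u) \geq 2$. The set is compact (by Proposition~\ref{prop:PS} combined with~\eqref{eq:segment-vol}), so Proposition~\ref{prop:bypassing} applied with $\cK' = \cK_{\ep_j, p, rf, C_0+1}$ yields modified sweepouts $\widetilde{\gamma}_{j, k} \in \cP_{\ep_j, p, rf, r/k, C_0+1}$ whose almost-maximal slices remain $W^{1, p}$-bounded away from every point of $\cK'$. However, feeding $(\widetilde{\gamma}_{j, k})_k$ back into Proposition~\ref{prop:good-PS} (with $C_0 + 1$ in place of $C_0$) produces an almost-maximal slice converging in $W^{1, p}$ to some $u^\ast \in \cK_{\ep_j, p, rf, C_0+1} = \cK'$, in direct contradiction with property (b) of Proposition~\ref{prop:bypassing}. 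Hence $u_j \in \cK_{\ep_j, p, rf, C_0 + 1}$ exists with index at most $1$ and $D_{\ep_j, p}(u_j) \leq C_0 + 1$. Smoothness comes from Proposition~\ref{prop:regularity} (using our fixed choice $p \leq p_0$), Proposition~\ref{prop:max-principle}(a) then gives $u_j(\overline{\bB}) \subseteq \overline{\Omega'_{t_0}}$, and non-constancy of $u_j$ is guaranteed by the lower bound $D_{\ep_j, p}(u_j) \geq \eta_1$ from Proposition~\ref{prop:good-PS}(c), which exceeds $D_{\ep_j, p}$ on any constant map. Finally, Proposition~\ref{prop:uniform-lower-bound} upgrades non-constancy to $D(u_j) \geq \beta$, completing~(a).

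The delicate step is the contradiction argument in the middle paragraph, where the two deformation-compactness results must mesh. Specifically, one has to notice that the $\cK'$ used in the bypass and the set $\cK_{\ep_j, p, rf, C_0+1}$ in which the re-application of Proposition~\ref{prop:good-PS} lands are \emph{the same}, so property (b) of Proposition~\ref{prop:bypassing} contradicts conclusion (b) of Proposition~\ref{prop:good-PS}. The assumption $\Ind \geq 2$ (rather than $\geq 1$) is exactly what the bypass needs, because Proposition~\ref{prop:coordinates} only supplies a connected negative sphere $\{\|\varphi^-\|_{1, p} = r_v/2, \varphi^\perp = 0\}$ in $2\cC_v$ when $\dim V_v^- \geq 2$; this is what allows the paths $q_0$ and $q_1$ of Step~4 in the proof of Proposition~\ref{prop:bypassing} to be joined by $q_2$, giving the homotopy that pushes sweepouts off the unstable critical set.
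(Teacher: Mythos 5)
Your overall strategy matches the paper's (Lemma~\ref{lemm:derivative-bound} to get bounded sweepouts, Proposition~\ref{prop:good-PS} for compactness, Proposition~\ref{prop:bypassing} to bypass high-index critical points, then the maximum principle and Proposition~\ref{prop:uniform-lower-bound} for (c) and (a)), but your choice of $\cK'$ introduces a real gap. You set $\cK' = \cK_{\ep_j, p, rf, C_0+1}$, whereas the paper takes
\[
\cK' = \{v \in \cK_{\ep_j, p, rf, C_0 + 1}\ \mid\ D(v) \geq \beta\},
\]
which by Proposition~\ref{prop:uniform-lower-bound} is precisely the set of \emph{non-constant} elements of $\cK_{\ep_j, p, rf, C_0+1}$. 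This distinction matters for two reasons.

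First, constant maps $u \equiv c \in \Sigma$ are always critical points of $E_{\ep, p, rf}$ and have $\Ind_{\ep, p, rf}(u) = 0$ (the second variation reduces to $(1+\ep^{p-2})\|\nabla\psi\|_2^2 \geq 0$ since $\nabla u = 0$ and $u_r = 0$). If $\cK_{\ep_j, p, rf, C_0+1}$ happens to contain a constant, your contradiction hypothesis ``$\Ind \geq 2$ for all $u \in \cK'$'' fails trivially, and your conclusion ``there exists $u_j \in \cK'$ with $\Ind_{\ep_j, p, rf}(u_j) \leq 1$'' could be that constant map, which produces nothing.

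Second, and more to the point, your closing claim that ``non-constancy of $u_j$ is guaranteed by the lower bound $D_{\ep_j, p}(u_j) \geq \eta_1$ from Proposition~\ref{prop:good-PS}(c)'' is a non sequitur. Proposition~\ref{prop:good-PS}(c) applies to the limit $u^\ast$ of the almost-maximal slices of the sweepouts, which you used inside the contradiction argument to derive a contradiction; it says nothing about the $u_j$ you obtain by \emph{negating} the contradiction hypothesis. That $u_j$ is merely some element of $\cK'$ whose index is $\leq 1$, and with your $\cK'$ there is no reason it should satisfy any positive lower bound on its energy. The paper avoids this entirely by building the energy lower bound into the definition of $\cK'$: any $u_j \in \cK'$ with $\Ind \leq 1$ automatically satisfies $D(u_j) \geq \beta$, hence is non-constant, and one first verifies $\cK' \neq \emptyset$ via Proposition~\ref{prop:good-PS} and Proposition~\ref{prop:uniform-lower-bound} applied to the original sweepouts. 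The rest of your proposal (the role of connectedness of the negative sphere in Step 4 of Proposition~\ref{prop:bypassing} when $\dim V_v^- \geq 2$, smoothness via Proposition~\ref{prop:regularity}, and the $L^\infty$ bound via Proposition~\ref{prop:max-principle}(a)) is correct.
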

\begin{proof}
By Lemma~\ref{lemm:monotonicity-trick}, for almost every $r \in (0, 1]$ there exist $C_0 > 0$ and a sequence $\ep_j \to 0$ such that for each $j$, we can find $\gamma_k \in \cP_{\ep_j, p, rf, \frac{r}{k}, C_0}$ for $k$ sufficiently large. Fixing $j$, the following compact subset of $\cK_{\ep_j, p, rf, C_0 + 1}$ is then non-empty by Proposition~\ref{prop:good-PS} and Proposition~\ref{prop:uniform-lower-bound}:
\[
\cK' = \{v \in \cK_{\ep_j, p, rf, C_0 + 1}\ |\  D(v) \geq \beta \}.
\]
 Now assume by contradiction that $\Ind_{\ep_j, p, rf}(v) \geq 2 \text{ for all }v \in \cK'$. Then, letting $(\widetilde{\gamma}_k)$ be the sweepouts obtained by applying Proposition~\ref{prop:bypassing} to $(\gamma_k)$, we may use Proposition~\ref{prop:good-PS} on $(\widetilde{\gamma_k})$ to get a sequence $t_k \in [0, 1]$ such that 
\begin{equation}\label{eq:min-max-slices}
|E_{\ep_j, p, rf}(\widetilde{\gamma}_k(t_k), \widetilde{\gamma}_k|_{[0, t_k]}) - \omega_{\ep_j, p, rf}| \leq \frac{r}{k},
\end{equation}
and that, up to taking a subsequence, $\widetilde{\gamma}_k(t_k)$ converges strongly in $W^{1, p}$ to some non-constant $u \in \cK_{\ep_j, p, rf, C_0 + 1}$, which by Proposition~\ref{prop:uniform-lower-bound} must lie in $\cK'$. However, by~\eqref{eq:min-max-slices} and Proposition~\ref{prop:bypassing}(b), we get $d_0 > 0$ and $k_0 \in \NN$ such that
\[
\| \widetilde{\gamma}_k(t_k) - u \|_{1, p} \geq d_0  \text{ for all }k \geq k_0,
\]
which is a contradiction. Thus there exists some $u_j \in \cK'$ with $\Ind_{\ep_j, p, rf}(u_j) \leq 1$. This gives a non-constant critical point of $E_{\ep_j, p, rf}$ having properties (a) and (b). Smoothness of $u_j$ and the fact that it maps into $\overline{\Omega'_{t_0}}$ follow respectively from Propositions~\ref{prop:regularity} and~\ref{prop:max-principle}(a).
\end{proof}
\section{Passage to the limit as $\ep \to 0$}\label{sec:passage}
We continue to fix $p \in (2, p_2]$, $H \in (0, H_0)$ and let $t_0$ and $f$ be as in Section~\ref{subsec:max-principle}. In this section we analyze the critical points $u_j$ produced in Proposition~\ref{prop:existence-with-index} as $j \to \infty$. We first recall a removable singularity result, Lemma~\ref{lemm:remove}, that will be used repeatedly. Then in Proposition~\ref{prop:convergence-mod-bubbling} we prove that $(u_j)$ converges modulo energy concentration. Propositions~\ref{prop:rescale} and~\ref{prop:no-interior-bubble} guarantee that when energy concentrates somewhere, we still get a desired constant mean curvature disk after rescaling. Theorem~\ref{thm:main-1} is proved at the end.

\begin{lemm}[\cite{Gruter1984}, Theorem 2 and \cite{Fraser2000}, Theorem 1.10] 
\label{lemm:remove}
Suppose for some $x_0 \in \overline{\bB}$ and $\rho \in (0, r_{\bB})$ that $u: (\bB_{\rho}(x_0)\setminus \{x_0\}) \cap \overline{\bB} \to \RR^3$ is a smooth map satisfying
\begin{equation}\label{eq:finite-energy}
\int_{\bB_\rho(x_0) \cap \bB} |\nabla u|^2 <\infty,
\end{equation}
and that
\begin{equation}\label{eq:cmc-remove}
\left\{
\begin{array}{ll}
\Delta u= h(u) u_{x^1} \times u_{x^2} & \text{ on }(\bB_{\rho}(x_0)\setminus \{x_0\}) \cap \overline{\bB},\\
u(x)\in \Sigma \text{ and }\pa{u}{n}(x) \perp T_{u(x)}\Sigma & \text{ for }x \in (\bB_{\rho}(x_0)\setminus \{x_0\}) \cap \partial \bB,
\end{array}
\right.
\end{equation}
for some bounded smooth function $h:\RR^3 \to \RR$. Then $u$ extends smoothly across $x_0$. 
\end{lemm}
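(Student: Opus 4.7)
The plan is to split the argument into two stages: first remove the singularity at the level of $W^{1,2}$ weak solutions, then upgrade to smoothness via the $\ep=0$ version of the small-energy regularity.

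For the first stage, I would use that single points in $\RR^2$ have vanishing $2$-capacity. Concretely, pick logarithmic cutoff functions $\eta_\delta\in C^\infty_c(\bB_{\sqrt\delta}(x_0))$ with $\eta_\delta\equiv 1$ on $\bB_\delta(x_0)$, $0\le \eta_\delta\le 1$, and $\int|\nabla\eta_\delta|^2\to 0$ as $\delta\to 0$. Combined with \eqref{eq:finite-energy} and the trace theorem applied on half-annuli $(\bB_{\sqrt\delta}(x_0)\setminus\bB_\delta(x_0))\cap\bB$, this shows that $u$ admits a $W^{1,2}$ extension across $x_0$ whose boundary trace on $(\bB_\rho(x_0)\cap\partial\bB)$ still lies in $\Sigma$ almost everywhere, since the constraint is preserved on a set of full measure outside $x_0$. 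Next, multiplying an arbitrary admissible test function $\psi$ (vanishing on $\partial\bB_\rho(x_0)\cap\overline\bB$ and tangent to $\Sigma$ on the boundary trace) by $(1-\eta_\delta)$ yields an admissible test function on the punctured domain. Testing the weak form of \eqref{eq:cmc-remove} against $\psi(1-\eta_\delta)$ and sending $\delta\to 0$, the cutoff contributions vanish thanks to Cauchy--Schwarz and $|u_{x^1}\times u_{x^2}|\le\tfrac12|\nabla u|^2\in L^1$, so $u$ is a weak solution of \eqref{eq:cmc-remove} on the full $\bB_\rho(x_0)\cap\overline\bB$, with the free boundary condition encoded in the admissibility of test functions.

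For the second stage, by absolute continuity of the Dirichlet integral there exists $\rho'\in(0,\rho)$ with
\[
\int_{\bB_{\rho'}(x_0)\cap\bB}|\nabla u|^2 < \eta,
\]
where $\eta$ is the small-energy threshold from Proposition~\ref{prop:small-regular-2} (or its interior counterpart Remark~\ref{rmk:small-regular-int}) evaluated at $\ep=0$; this is legitimate by Remark~\ref{rmk:ep-zero}, which also makes $\eta$ independent of $p$. Since $u$ satisfies \eqref{eq:perturbed-EL} with $\ep=0$ and $f=h$ in the weak sense established above, the cited proposition applies and gives the pointwise gradient bound \eqref{eq:local-lipschitz} together with the $W^{2,4}$-estimate \eqref{eq:ep-reg-estimate} on a smaller half-disk centred at $x_0$. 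Morrey embedding then yields $u\in C^{1,1/2}$ near $x_0$, after which Schauder theory for linear elliptic systems with oblique derivative conditions (as in \cite{Fraser2000}, Proposition~1.4, invoked exactly as at the end of the proof of Proposition~\ref{prop:regularity}) bootstraps to $C^\infty$ across $x_0$.

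The main subtlety is the boundary case $x_0\in\partial\bB$ in the first stage: one must verify that, after extending across $x_0$, the \emph{weak} free-boundary condition is preserved, which amounts to justifying that test fields $\psi$ with $\psi(x)\in T_{u(x)}\Sigma$ on $\partial\bB\cap\bB_\rho(x_0)$ can be approximated by fields vanishing near $x_0$ while keeping the tangency on the boundary. This is handled by the same capacity cutoff applied after flattening $\partial\bB$ via the conformal map $F$ from Section~\ref{subsec:spaces-and-coordinates}, so that the cutoff lives on a half-disk and respects the tangency. Once this is in place, the interior case is strictly easier (no boundary trace to track), and both reduce to the same small-energy regularity input.
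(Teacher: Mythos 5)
Your stage 1 (logarithmic cutoffs to upgrade the punctured-domain weak formulation to a weak formulation on the full domain $\bB_\rho(x_0)\cap\overline{\bB}$, including the boundary constraint) is fine as far as it goes. The problem is stage 2, and it is a genuine circularity rather than a loose end.

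Proposition~\ref{prop:small-regular-2} and Remark~\ref{rmk:ep-zero} are \emph{a priori} estimates for maps that are already smooth: Remark~\ref{rmk:ep-zero} explicitly says ``when $u\colon(\overline{\bB},\partial\bB)\to(\RR^3,\Sigma)$ is assumed to be a \emph{smooth} solution to \eqref{eq:perturbed-EL} with $\ep=0$.'' Indeed, the proof of Proposition~\ref{prop:small-regular-2} opens by choosing the point $x_1$ where $\sup_{\bB_r(x_0)\cap\bB}(r-|x-x_0|)|\nabla u(x)|$ is attained, which presupposes a pointwise bounded, continuous gradient on the whole ball. After your $W^{1,2}$ extension, $u$ is only a weak solution near $x_0$ and smoothness there is precisely what the lemma is supposed to prove, so the hypothesis of Proposition~\ref{prop:small-regular-2}/Remark~\ref{rmk:ep-zero} is not available. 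You would need an $\ep$-regularity theorem for weak $W^{1,2}$ solutions of \eqref{eq:cmc-remove} with the free boundary condition (a free-boundary analogue of Wente-type or Rivi\`ere-type regularity); the paper has no such ingredient, and developing one would be a substantial separate task.

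The paper sidesteps the circularity by applying the $\ep=0$ small-energy estimate only on disks and annuli that stay \emph{away} from $x_0$, where $u$ is known to be smooth. This yields the scale-invariant decay $|x||\nabla u(x)|\le C\|\nabla u\|_{2;\bB_{2|x|}^+}$, which in turn localizes the image into a tubular neighbourhood of $\Sigma$, makes a reflection across $\bT$ possible, and produces the quadratic estimate $|\Delta\widetilde u|\le C|\nabla\widetilde u|^2$ on a punctured full disk together with a holomorphic Hopf differential (real on the boundary). The radial--angular energy equipartition from the Hopf differential then drives the Sacks--Uhlenbeck-style conclusion, exactly as in Fraser. If you want to keep your two-stage outline, you would have to insert this decay-plus-equipartition step (or some substitute) to obtain continuity at $x_0$ before you could invoke any of the paper's regularity machinery; otherwise stage 2 does not close.
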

\begin{proof}
The case where $x_0 \in \bB$ is covered in~\cite[Theorem 2]{Gruter1984}. Below we briefly outline the argument for the case $x_0 \in \partial \bB$, which is essentially the same as~\cite[Theorem 1.10]{Fraser2000}. By the conformal invariance of the Dirichlet integral and the equation~\eqref{eq:cmc-remove}, it suffices to consider the case where $u$ is defined on $\overline{\bB_\rho^+} \setminus \{0\}$ and is to be extended across $0$. By Remark~\ref{rmk:ep-zero} and the finite energy assumption~\eqref{eq:finite-energy}, decreasing $\rho$ if necessary, we have 
\begin{equation}\label{eq:ring-estimate}
|x||\nabla u(x)|  \leq C \|\nabla u\|_{2;\bB_{2|x|}^+} \text{ for all } x \in \overline{\bB_{\frac{\rho}{3}}^+}\setminus \{0\}.
\end{equation}
Thus, decreasing $\rho$ further, we can argue as in~\cite[page 948]{Fraser2000} to see that $u( \overline{\bB_{\frac{\rho}{3}}^+}\setminus \{0\})$ is contained in a tubular neighborhood of $\Sigma$, which allows us to extend $u$ to $\bB_{\frac{\rho}{3}}\setminus \{0\}$ by reflection as indicated on the same page in~\cite{Fraser2000}. Denoting the extension by $\widetilde{u}$, from~\eqref{eq:cmc-remove} we obtain analogues of (1.16) and (1.17) in~\cite{Fraser2000}, and consequently our $\widetilde{u}$ still satisfies
\begin{equation}\label{eq:quadratic-nonlinear}
|\Delta \widetilde{u}| \leq C|\nabla \widetilde{u}|^2, \text{ almost everywhere on }\bB_{\frac{\rho}{3}}.
\end{equation}
Next, since $u_{x^1} \times u_{x^2} \perp u_{x^1}, u_{x^2}$, it follows from~\eqref{eq:cmc-remove} that the Hopf differential defined in~\cite[Lemma 1.12]{Fraser2000} is still holomorphic on $\bB^+_{\frac{\rho}{3}}$ and real on $\bT_{\frac{\rho}{3}}\setminus\{0\}$. We then obtain as in that lemma an equipartition among the radial and angular energy. We finish the proof following~\cite[pages 949-951]{Fraser2000}, using this equipartition of energy,~\eqref{eq:ring-estimate}, and~\eqref{eq:quadratic-nonlinear}. In this final step we may need to decrease $\rho$ again.
\end{proof}

\begin{prop}\label{prop:convergence-mod-bubbling}
Let $r > 0$ be in the full-measure subset of $[0, 1]$ given by Proposition~\ref{prop:existence-with-index}, and let $C_0, \ep_j$ and $u_j$ be as in the conclusion there. Then there exist a finite set $\cS \subseteq \overline{\bB}$ and a smooth map $u:(\overline{\bB}, \partial \bB) \to (\overline{\Omega'}, \Sigma)$ satisfying
\begin{equation}\label{eq:fb-cmc}
\left\{
\begin{array}{ll}
\Delta u= rH \cdot u_{x^1} \times u_{x^2} & \text{ on }\overline{\bB},\\
u_r(x) \perp T_{u(x)}\Sigma & \text{ for all }x \in \partial \bB,
\end{array}
\right.
\end{equation}
such that, up to taking a subsequence, $u_j$ converges in $C^1_{\loc}(\overline{\bB}\setminus \cS)$ to $u$.
\end{prop}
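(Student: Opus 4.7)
The plan is a standard Sacks--Uhlenbeck type bubbling analysis, adapted to the free boundary setting and made possible for the perturbed equation by the $\ep$-uniform estimates from Section~\ref{sec:estimates}. Let $\eta > 0$ denote the smaller of the two small-energy thresholds provided by Proposition~\ref{prop:small-regular-2} and its interior counterpart in Remark~\ref{rmk:small-regular-int}, applied with $p$, $\Sigma$ and the bound $H_0$ on $\|rf\|_{\infty}$. Define the concentration set
\[
\cS = \{x \in \overline{\bB}\ |\ \limsup_{j \to \infty} \int_{\bB_{\rho}(x) \cap \bB} |\nabla u_j|^2 \geq \eta \text{ for every } \rho \in (0, r_\bB] \}.
\]
Since $D(u_j) \leq D_{\ep_j, p}(u_j) \leq C_0+1$ uniformly in $j$, a Vitali covering argument yields $\#\cS \leq (C_0+1)/\eta < \infty$. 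For every $x_0 \in \overline{\bB}\setminus \cS$ we may select $\rho_0 = \rho_0(x_0) \in (0, r_\bB]$ and, after passing to a subsequence that does not depend on $x_0$ (extracted via a countable exhaustion), assume that $\int_{\bB_{\rho_0}(x_0) \cap \bB} |\nabla u_j|^2 < \eta$ for all $j$.

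At each such $x_0$, Proposition~\ref{prop:small-regular-2} (or Remark~\ref{rmk:small-regular-int} if $x_0 \in \bB$) supplies a uniform Lipschitz bound $\rho_0 |\nabla u_j| \leq 8$ on $\overline{\bB_{\rho_0/2}(x_0)}\cap\overline{\bB}$, together with the uniform $W^{2,4}$-bound~\eqref{eq:ep-reg-estimate} on a concentric smaller ball. Since $W^{2,4}\hookrightarrow C^{1,1/2}$ in two dimensions, $(u_j)$ is uniformly bounded in $C^{1,1/2}$ on a neighborhood of $x_0$ inside $\overline{\bB}$. A standard diagonal subsequence extraction combined with Arzel\`a--Ascoli then produces, after relabeling, a limit $u \in C^1_{\loc}(\overline{\bB}\setminus \cS)$ with $u(\partial \bB\setminus \cS)\subseteq \Sigma$ and $u(\overline{\bB}\setminus \cS)\subseteq \overline{\Omega'_{t_0}}$.

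I then pass to the limit in the Euler--Lagrange equation~\eqref{eq:perturbed-EL} with $rf$ in place of $f$. Since $|\nabla u_j|$ is uniformly bounded on compact subsets of $\overline{\bB}\setminus\cS$ and $\ep_j \to 0$, the factor $\ep_j^{p-2}(1 + |\nabla u_j|^2)^{p/2 - 1}$ vanishes uniformly on such subsets, so $u$ solves
\[
\Delta u = rf(u)\, u_{x^1}\times u_{x^2}, \qquad u_r \perp T_u \Sigma \text{ on }\partial \bB \setminus \cS,
\]
in the classical sense; boundary elliptic regularity in the Fermi-type coordinates from Section~\ref{subsec:spaces-and-coordinates} (as in the proof of Proposition~\ref{prop:regularity} specialized to $\ep = 0$) upgrades $u$ to a smooth solution on $\overline{\bB}\setminus \cS$, and Fatou's lemma yields $D(u) \leq C_0 + 1 < \infty$.

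Finally, I apply Lemma~\ref{lemm:remove} with $h = rf$ at each $x_0 \in \cS$, treating interior and boundary singular points separately, to extend $u$ smoothly across $\cS$ as a solution of the same free boundary CMC equation on all of $\overline{\bB}$. By continuity, the extension still maps into $\overline{\Omega'_{t_0}}$, so Proposition~\ref{prop:max-principle}(b) forces $u(\overline{\bB}) \subseteq \overline{\Omega'}$. Since $f \equiv H$ on $\overline{\Omega'}$, this gives~\eqref{eq:fb-cmc}. The main technical difficulty is ensuring that the small-energy threshold $\eta$ and the ensuing $W^{2,4}$-estimates are independent of $\ep_j$; this is precisely the content of Propositions~\ref{prop:W14-W2q} and~\ref{prop:small-regular-2}. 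Once that input is granted, the remainder is routine compactness together with Lemma~\ref{lemm:remove} and the maximum principle of Proposition~\ref{prop:max-principle}.
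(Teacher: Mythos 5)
Your argument is correct and follows essentially the same route as the paper's proof: fix the $\ep$-independent small-energy threshold from Proposition~\ref{prop:small-regular-2} and Remark~\ref{rmk:small-regular-int}, extract a finite concentration set and a $C^1_{\loc}$-convergent subsequence away from it, pass to the limit in the Euler--Lagrange equation (the $\ep_j$-perturbation term drops out by the local gradient bound and $\ep_j\to 0$), remove singularities via Lemma~\ref{lemm:remove}, and conclude with Proposition~\ref{prop:max-principle}(b). Only a cosmetic slip: since $\int_\bB|\nabla u_j|^2\leq 2(C_0+1)$, the Vitali count gives $\#\cS\leq 2(C_0+1)/\eta$ rather than $(C_0+1)/\eta$, which of course does not affect the conclusion.
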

\begin{proof}
Since $D(u_j) \leq D_{\ep_j, p}(u_j) \leq C_0 + 1$ and $u_j(\overline{\bB}) \subseteq \overline{\Omega'_{t_0}}$ for all $j$, by Proposition~\ref{prop:small-regular-2}, Remark~\ref{rmk:small-regular-int} and a standard argument, we get a finite set $\cS \subseteq \overline{\bB}$, a positive constant $\eta = \eta(p, H_0, \Sigma)$, and a map $u\in C^{1}_{\loc}(\overline{\bB}\setminus \cS)$ with image contained in $\overline{\Omega'_{t_0}}$, such that passing to a subsequence of $(u_j)$ if necessary, we have the following. First of all, $u_j \to u$ in $C^1_{\loc}(\overline{\bB} \setminus \cS)$. 
Secondly, for all $x_0 \in \cS$ and $t > 0$,
\begin{equation}\label{eq:concentration}
\liminf_{j \to \infty}\int_{\bB_t(x_0) \cap \bB} |\nabla u_j|^2 \geq \eta/2.
\end{equation}
The $C^1$-convergence away from $\cS$ implies that $u(x) \in \Sigma$ and $u_r(x) \perp T_{u(x)}\Sigma$ for all $x \in \partial \bB \setminus \cS$. Moreover,
\[
\int_{\bB} \langle \nabla u, \nabla \psi \rangle + rf(u)\langle \psi, u_{x^1} \times u_{x^2} \rangle = 0, 
\]
whenever $\psi \in C^1_c(\overline{\bB}\setminus \cS; \RR^3)$ is such that $\psi(x) \in T_{u(x)}\Sigma$ for all $x \in \partial \bB \setminus \cS$. Standard elliptic theory then implies that $u$ is smooth on $\overline{\bB} \setminus \cS$. Next, since
\[
\int_{K} |\nabla u|^2 = \lim_{j \to \infty}\int_{K}|\nabla u_j|^2 \leq 2(C_0 + 1) \text{ for all compact }K \subseteq\overline{\bB} \setminus \cS,
\]
Lemma~\ref{lemm:remove} implies that $u$ extends to a smooth map $(\overline{\bB}, \partial \bB) \to (\overline{\Omega'_{t_0}}, \Sigma)$ satisfying~\eqref{eq:cmc-rf}. The fact that $u$ maps into $\overline{\Omega'}$ and solves~\eqref{eq:fb-cmc} now follows from Proposition~\ref{prop:max-principle}(b).
\end{proof}

When $\cS \neq \emptyset$, we will need to rescale each $u_j$ appropriately. Given $x_0 \in \cS$, the center and rate of rescaling are chosen as follows. Fix $d \in (0, r_{\bB}]$ such that $d \leq \frac{1}{2}\min_{x, x' \in \cS, x \neq x'} |x - x'|$ and define, for each $j$,
\[
Q_j(t) =  \max_{x \in \overline{\bB_{d}(x_0)} \cap \overline{\bB}} \int_{\bB_t(x) \cap \bB} |\nabla u_j|^2.
\]
By~\eqref{eq:concentration} and the $C^1$-convergence of $u_j$ away from $\cS$, we get sequences $t_k \in [0, 1/k]$ and $x_k \to x_0$, and a subsequence $u_{j_k}$ of $u_j$, such that writing $u_k = u_{j_k}$ and $Q_k = Q_{j_k}$ by abuse of notation, we have
\[
\int_{\bB_{t_k}(x_k) \cap \bB} |\nabla u_k|^2 = Q_{k}(t_k) = \eta/3 \text{, for all }k.
\]
The next proposition guarantees that the parameter $\ep_k (= \ep_{j_k})$ still converges to zero after we rescale by $t_k$.
\begin{prop}\label{prop:rescale}
In the above setting, we have $\liminf_{k \to \infty} \frac{\ep_k}{t_k} = 0$.
\end{prop}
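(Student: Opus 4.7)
Argue by contradiction. Assume $\liminf_k\ep_k/t_k>0$; after passing to a subsequence, $\wep_k:=\ep_k/t_k\ge c>0$ and $\wep_k\to\tilde\ep\in[c,+\infty]$. Form the rescaled maps $\tilde u_k(x):=u_k(x_k+t_kx)$ on $\Omega_k:=t_k^{-1}(\bB-x_k)$; up to a rotation, $\Omega_k$ Hausdorff-converges to a domain $\Omega_\infty$ equal to $\RR^2$ or a half-plane $\RR^2_+$. A change of variables in $D_{\ep_k,p}$ yields
\[
\int_{\bB_R\cap\Omega_k}\frac{|\nabla\tilde u_k|^2}{2}+\frac{\wep_k^{p-2}(t_k^2+|\nabla\tilde u_k|^2)^{p/2}}{p}=D_{\ep_k,p}(u_k;\bB_{Rt_k}(x_k)\cap\bB)\le C_0+1
\]
for every $R>0$; because $\wep_k\ge c$ and $(t_k^2+|\nabla\tilde u_k|^2)^{p/2}\ge|\nabla\tilde u_k|^p$, this forces an $R$-independent bound on $\int_{\bB_R\cap\Omega_k}|\nabla\tilde u_k|^p$. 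Combined with $\tilde u_k(\Omega_k)\subseteq\overline{\Omega'_{t_0}}$, one has uniform $W^{1,p}_{\loc}\cap L^\infty$ bounds.

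The threshold $\int_{\bB_{t_k}(x_k)\cap\bB}|\nabla u_k|^2=\eta/3<\eta$ lets Proposition~\ref{prop:small-regular-2} (or Remark~\ref{rmk:small-regular-int}) apply to $u_k$ at scale $t_k$ centered at $x_k$, giving $|\nabla\tilde u_k|\le 8$ and a $W^{2,4}$-bound on $\bB_{1/2}\cap\Omega_k$. The rescaled Euler--Lagrange equation,
\[
\Div\bigl([1+\wep_k^{p-2}(t_k^2+|\nabla\tilde u_k|^2)^{p/2-1}]\nabla\tilde u_k\bigr)=f(\tilde u_k)\,\tilde u_{k,x^1}\times\tilde u_{k,x^2},
\]
is of $p$-Laplace type with principal coefficient bounded below in nondegenerate directions by $\wep_k^{p-2}\ge c^{p-2}$; standard DiBenedetto-type regularity then upgrades the previous estimates to uniform $C^{1,\alpha}_{\loc}$ bounds on $\Omega_k$, yielding (up to a subsequence) $\tilde u_k\to\tilde u_\infty$ in $C^1_{\loc}(\overline{\Omega_\infty})$. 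When $\tilde\ep<\infty$ the limit solves
\[
\Div\bigl([1+\tilde\ep^{p-2}|\nabla\tilde u_\infty|^{p-2}]\nabla\tilde u_\infty\bigr)=f(\tilde u_\infty)\,\tilde u_{\infty,x^1}\times\tilde u_{\infty,x^2}
\]
on $\Omega_\infty$, with free-boundary conditions on $\partial\Omega_\infty$ when $\Omega_\infty=\RR^2_+$; when $\tilde\ep=\infty$, dividing the rescaled equation by $\wep_k^{p-2}$ first gives in the limit $\Div(|\nabla\tilde u_\infty|^{p-2}\nabla\tilde u_\infty)=0$. By $C^1$ convergence, $\int_{\bB_1\cap\Omega_\infty}|\nabla\tilde u_\infty|^2=\eta/3$, so $\tilde u_\infty$ is nonconstant.

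The contradiction comes from a Pohozaev identity for $\tilde u_\infty$. Testing the first variation against the scaling field $\psi=x\cdot\nabla\tilde u_\infty$ and exploiting the two-dimensional scale invariance of the Dirichlet term yields
\[
\tilde\ep^{p-2}\,\frac{p-2}{p}\int_{\Omega_\infty}|\nabla\tilde u_\infty|^p=0
\]
(and in the $\tilde\ep=\infty$ case, directly $\frac{p-2}{p}\int|\nabla\tilde u_\infty|^p=0$). The volume contribution vanishes via the pointwise identities $\langle u_{x^\alpha},u_{x^1}\times u_{x^2}\rangle\equiv 0$; in the half-plane case the boundary contribution at $\partial\RR^2_+$ vanishes because $x\cdot n=0$ there and $\langle\tilde u_{\infty,x^1},\tilde u_{\infty,x^2}\rangle=0$ on $\partial\RR^2_+$ (the first is tangent to $\Sigma$, the second normal). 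Since $\tilde\ep>0$ and $p>2$, this forces $\nabla\tilde u_\infty\equiv 0$, contradicting $\int_{\bB_1\cap\Omega_\infty}|\nabla\tilde u_\infty|^2=\eta/3>0$.

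The main obstacle is rigorously justifying the Pohozaev identity on the unbounded domain $\Omega_\infty$, since $\psi=x\cdot\nabla\tilde u_\infty$ is not an admissible variation (it fails to vanish at infinity). The standard remedy is to multiply by a cutoff $\zeta_R$, plug $\zeta_R\psi$ into the first-variation formula at $\tilde u_\infty$, and pass to a limit along some $R_j\to\infty$, using the global integrability $\int_{\Omega_\infty}(|\nabla\tilde u_\infty|^2+|\nabla\tilde u_\infty|^p)<\infty$ to select radii on which the boundary density decays faster than $1/R$. The specific perturbation $(1+|\nabla u|^2)^{p/2}$ enters essentially here: it produces, after rescaling, a coercive $p$-term with coefficient $\wep_k^{p-2}\ge c^{p-2}$ that survives in the limit and supplies the nontrivial Pohozaev contribution.
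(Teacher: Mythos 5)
Your core idea --- a Pohozaev/scaling identity that exploits the failure of the $p$-energy perturbation to be conformally invariant in two dimensions, forcing $\tfrac{p-2}{p}\int|\nabla w|^p=0$ and hence a constant blow-up limit, contradicting the $\eta/3$ energy lower bound at unit scale --- is exactly the paper's mechanism. The genuine technical difference is the choice of blow-up scale: you rescale at the bubble scale $t_k$, which produces a coefficient $\wep_k^{p-2}$ in front of the rescaled $p$-term that may tend to $+\infty$, forcing the case split on $\widetilde\ep=\lim_k\wep_k$. In your $\widetilde\ep=\infty$ case you can in fact skip Pohozaev entirely: the bound $\int_{\bB_R\cap\Omega_k}\wep_k^{p-2}(t_k^2+|\nabla\tilde u_k|^2)^{p/2}\leq p(C_0+1)$ together with $\wep_k^{p-2}\to\infty$ makes $\int_{\bB_R\cap\Omega_k}|\nabla\tilde u_k|^p\to0$, so $C^1_{\loc}$-convergence forces $\nabla\tilde u_\infty\equiv0$ directly, contradicting the normalization. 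The paper instead rescales at the perturbation scale $\ep_k$, so the rescaled coefficient is precisely $1+(\ep_k^2+|\nabla v_k|^2)^{p/2-1}$ with unit prefactor and no case split arises; the contradiction hypothesis $\ep_k/t_k\geq\alpha$ is then used only to ensure the $\eta/3$ of energy lies inside the fixed ball $\bB_{1/\alpha}$ after rescaling. The paper also runs the Pohozaev computation along the sequence $w_k$ on the compact half-disk $\bB_R^+$ and passes to the limits $k\to\infty$ then $R\to\infty$, which neatly sidesteps the ``main obstacle'' you flag of rigorously justifying the identity directly for the limit on an unbounded domain; your cutoff remedy is nonetheless workable given the global integrability $\int_{\Omega_\infty}(|\nabla\tilde u_\infty|^2+|\nabla\tilde u_\infty|^p)<\infty$. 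Two minor points: the appeal to DiBenedetto-type regularity is unnecessary, since the $W^{2,4}$ bounds from Proposition~\ref{prop:small-regular-2} and Remark~\ref{rmk:small-regular-int} already give $C^{1,1/2}$ compactness; and the Hausdorff convergence of the moving domains $\Omega_k$, whose boundary is curved and flattens only in the limit, needs some care --- the paper conformally maps onto $\overline{\RR^2_+}$ at each stage precisely to control this --- though the estimates required are available.
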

\begin{proof}
Assume towards a contradiction that there exists $\alpha > 0$ such that $\frac{\ep_k}{t_k} \geq \alpha$ for all $k$ large enough, and define 
\[
v_k(y) = u_k(x_k + \ep_k y),  \text{ for all }y \in \bB'_k := \bB_{\frac{1}{\ep_k}}(-\frac{x_k}{\ep_k}).
\]
Then we have
\begin{enumerate}
\item[(i)] $v_k(\overline{\bB_k'}) \subseteq \overline{\Omega'_{t_0}}$ and
\[
\int_{\bB'_k} \frac{|\nabla v_k|^2}{2} + \frac{1}{p}(\ep_k^2 + |\nabla v_k|^2)^{\frac{p}{2}} = \int_{\bB}\frac{|\nabla u_k|^2}{2} + \frac{\ep_k^{p-2}}{p}(1 + |\nabla u_k|^2)^{\frac{p}{2}} \leq C_0+1.
\]
\vskip 1mm
\item[(ii)] 
\begin{align*}
\int_{\bB_{\frac{1}{\alpha}}(0) \cap \bB'_k}|\nabla v_k|^2 =  \int_{\bB_{\frac{\ep_k}{\alpha}}(x_k) \cap \bB} |\nabla u_k|^2 \geq \int_{\bB_{t_k}(x_k) \cap \bB} |\nabla u_k|^2 = \eta/3.
\end{align*}
\vskip 1mm
\item[(iii)] $\Div\left( [1 + (\ep_k^2 + |\nabla v_k|^2)^{\frac{p}{2} - 1}]\nabla v_k \right) = rf(v_k) (v_k)_{x^1} \times (v_k)_{x^2}$ on $\bB'_k$. Also, $v_k(x) \in \Sigma$ and 
\[
\pa{v_k}{n}(x) \perp T_{v_k(x)}\Sigma \text{ for all }x \in\partial \bB'_k.
\]
\end{enumerate}

To continue, we shall only consider the case where $\limsup_{k \to \infty}\ep_k^{-1}(1 - |x_k|) < \infty$, as the other case is simpler. We first conformally transform the domain to obtain a sequence of maps defined on the upper half-plane. Specifically, let $z_k = \frac{1}{\ep_k}(\frac{x_k}{|x_k|} - x_k)$, take isometries $I_k$ of $\RR^2$ such that $I_k(z_k) = 0$ and $I_k(\bB_k') = \bB_k'' := \bB_{\frac{1}{\ep_k}}((0, \frac{1}{\ep_k}))$, and find, as in~\cite[page 957]{Fraser2000}, conformal maps $F_k$ from $\overline{\RR^2_+}$ onto $\overline{\bB''_k} \setminus \{(0, \frac{2}{\ep_k})\}$ that converge in $C^2_{\loc}(\overline{\RR^2_+})$ to the identity map. Since $(z_k)$ is a bounded sequence, so is $(I_k(0))$. Also, the conformal factors $\lambda_k:=  |(F_k)_{z}|$ converge to $1$ in $C^1_{\loc}(\overline{\RR^2_+})$. The new sequence of maps are then given by
\[
w_k = v_k \circ I_k^{-1} \circ F_k.
\]

Next, note that (i) implies~\eqref{eq:W14-scale} with $u = u_k$, $r = \ep_k$ and $L^p = p (C_0 + 1)$. Hence by Remark~\ref{rmk:W22}, Proposition~\ref{prop:W14-W2q} and Remark~\ref{rmk:interior-W2q}, and recalling that each $w_k$ maps into $\overline{\Omega'_{t_0}}$, we see that up to taking a subsequence, $w_k$ converges in $C^{1}_{\loc}(\overline{\RR^2_+})$ to a limit $w$ which is non-constant by (ii) and the boundedness of $(I_k(0))$. Moreover, we deduce from (i) that
\begin{equation}\label{eq:finite-2p}
\int_{\RR^2_+} |\nabla w|^2 + |\nabla w|^p < + \infty.
\end{equation}
On the other hand, for all $R > 0$, we have by (iii) and the conformality of $F_k$ that, for sufficiently large $k$,
\begin{equation}\label{eq:composed-pde}
\Div\left( [1 + (\ep_k^2 + \lambda_k^{-2}|\nabla w_k|^2)^{\frac{p}{2} - 1}]\nabla w_k \right) = rf(w_k) (w_k)_{x^1} \times (w_k)_{x^2}, \text{ on }\bB_R^+,
\end{equation}
along with $w_k(x) \in \Sigma \text{ and }(w_k)_{x^2}(x) \perp T_{w_k(x)}\Sigma$ for all $x \in \mathbf{T}_R$. Let $\zeta \in C^{\infty}_c(\bB)$ be a cut-off function which equals $1$ on $\bB_{\frac{1}{2}}$, and let $\zeta_R = \zeta\left( \frac{\cdot}{R} \right)$. Testing~\eqref{eq:composed-pde} against $\psi_k = (x^1 (w_k)_{x^1} + x^2 (w_k)_{x^2}) \zeta_R^2$ and noting that $(w_k)_{x^1} \times (w_k)_{x^2} \perp (w_k)_{x^1}, (w_k)_{x^2}$, we obtain
\begin{align*}
0=\ & -\int_{\bT_R} \left[1 + (\ep_k^2 + \lambda_k^{-2}|\nabla w_k|^2)^{\frac{p}{2} - 1}\right] \psi_k \cdot (w_k)_{x^2} - \int_{\bB^+_{R}} \left[1 + (\ep_k^2 + \lambda_k^{-2}|\nabla w_k|^2)^{\frac{p}{2} - 1}\right] \nabla w_k \cdot \nabla \psi_k. 
\end{align*}
Since for all $x \in \bT_R$ we have $\psi_k(x) \in T_{w_k(x)}\Sigma$ while $(w_k)_{x^2}(x) \perp T_{w_k(x)}\Sigma$, the integral over $\bT_R$ vanishes, and hence so does the integral over $\bB^+_R$. Thus, observing that 
\[
\begin{split}
\nabla w_{k} \cdot \nabla \psi_k =\ & (w_{k})_{x^i} (\psi_k)_{x^i}\\
=\ & (w_k)_{x^i} \big( \delta_{ij}(w_k)_{x^j}\zeta^2 + x^j (w_k)_{x^jx^i} \zeta^2 + 2x^j (w_k)_{x^j}\zeta \zeta_{x^i} \big)\\
=\ & |\nabla w_k|^2 \zeta^2 + x^j \big( \frac{|\nabla w_k|^2}{2} \big)_{x^j}\zeta^2 + 2\zeta \zeta_{x^i} x^j (w_k)_{x^i}(w_k)_{x^j},
\end{split}
\]
we obtain
\begin{equation}\label{eq:pohozaev-1}
\begin{split}
0 =\ &\int_{\bB^+_{R}} \left[1 + (\ep_k^2 + \lambda_k^{-2}|\nabla w_k|^2)^{\frac{p}{2} - 1}\right] \nabla w_k \cdot \nabla \psi_k  \\
 =\ & \int_{\bB^+_{R}}\left[1 + (\ep_k^2 + \lambda_k^{-2}|\nabla w_k|^2)^{\frac{p}{2} - 1}\right] \left( |\nabla w_k|^2 + x^j \big(\frac{|\nabla w_k|^2}{2} \big)_{x^j} \right)\zeta^2 \\
 & +2 \int_{\bB^+_{R}}\left[1 + (\ep_k^2 + \lambda_k^{-2}|\nabla w_k|^2)^{\frac{p}{2} - 1}\right] \zeta \zeta_{x^i} x^j (w_k)_{x^i} (w_k)_{x^j}.
\end{split}
\end{equation}
Next, multiplying both sides of the identity below by $\zeta^2 x^j$ (and summing over $j$),
\[
\begin{split}
&(\ep_k^2 + \lambda_k^{-2}|\nabla w_k|^2)^{\frac{p}{2} - 1}\big( \frac{|\nabla w_k|^2}{2} \big)_{x^j} \\=\ & \frac{\lambda_k^2}{p}\big[ (\ep_k^2 + \lambda_k^{-2}|\nabla w_k|^2)^{\frac{p}{2}} \big]_{x^j} + (\lambda_k)_{x^j} \lambda_k^{-1}(\ep_k^2 + \lambda_k^{-2}|\nabla w_k|^2)^{\frac{p}{2} - 1} |\nabla w_k|^2,
\end{split}
\]
we have upon integrating by parts that 
\[
\begin{split}
& \int_{\bB^+_{R}}  \zeta^2x^j  \big(\frac{|\nabla w_k|^2}{2} \big)_{x^j} + (\ep_k^2 + \lambda_k^{-2}|\nabla w_k|^2)^{\frac{p}{2} - 1}  \zeta^2x^j  \big(\frac{|\nabla w_k|^2}{2} \big)_{x^j} \\
 =\ & \int_{\bT_R}(-x^2) \left( \frac{|\nabla w_k|^2}{2} + \frac{\lambda_k^2(\ep_k^2 + \lambda_k^{-2}|\nabla w_k|^2)^{\frac{p}{2}}}{p} \right)\zeta^2 \\
 &- \int_{\bB_R^+} |\nabla w_k|^2 \zeta^2 + \zeta x^j \zeta_{x^j} |\nabla w_k|^2 \\
 & - \int_{\bB_R^+} \big(\frac{2\zeta^2 \lambda_k^2}{p} +  \frac{2\zeta  \zeta_{x^j} x^j\lambda_k^2}{p} + \frac{2\zeta^2 x^j \lambda_k (\lambda_k)_{x^j}}{p} \big) (\ep_k^2 + \lambda_k^{-2}|\nabla w_k|^2)^{\frac{p}{2}}\\
 & + \int_{\bB_R^+}\zeta^2 x^j (\lambda_k)_{x^j} \lambda_k^{-1}(\ep_k^2 + \lambda_k^{-2}|\nabla w_k|^2)^{\frac{p}{2} - 1} |\nabla w_k|^2.
\end{split}
\]
Combining this with~\eqref{eq:pohozaev-1} yields
\[
\begin{split}
0 =\ & \int_{\bB^+_{R}} \left( \ep_k^2 + \lambda_k^{-2}|\nabla w_k|^2 \right)^{\frac{p}{2}-1}|\nabla w_k|^2 \zeta^2 - \frac{2\zeta^2\lambda_k^2}{p}\left( \ep_k^2 + \lambda_k^{-2}|\nabla w_k|^2 \right)^{\frac{p}{2}} \\
&-\int_{\bB^+_{R}} \zeta x^j \zeta_{x^j}\left( |\nabla w_k|^2 +\frac{2\lambda_k^2}{p}\left( \ep_k^2 + \lambda_k^{-2}|\nabla w_k|^2 \right)^{\frac{p}{2}}  \right)\\
& + \int_{\bB^+_{R}} \zeta^2 x^j (\lambda_k)_{x^j} \lambda_k^{-1} \left( \ep_k^2 + \lambda_k^{-2}|\nabla w_k|^2 \right)^{\frac{p}{2}-1}|\nabla w_k|^2 - \frac{2\zeta^2x^j\lambda_k (\lambda_k)_{x^j}}{p} \Big( \ep_k^2 + \lambda_k^{-2}|\nabla w_k|^2 \Big)^{\frac{p}{2}}\\
& + 2 \int_{\bB^+_{R}}\left[1 + (\ep_k^2 + \lambda_k^{-2}|\nabla w_k|^2)^{\frac{p}{2} - 1}\right] \zeta \zeta_{x^i} x^j (w_k)_{x^i} (w_k)_{x^j}\\
& + \int_{\bT_{R}}  (-x^2) \left( \frac{|\nabla w_k|^2}{2} + \frac{\lambda_k^2(\ep_k^2 + \lambda_k^{-2}|\nabla w_k|^2)^{\frac{p}{2}}}{p} \right)\zeta^2. 
\end{split}
\]
The last integral vanishes since $x^2 = 0$ on $\bT_R$. Letting $k \to \infty$, and recalling that $w_k \to w$ and $\lambda_k \to 1$ in $C^1_{\loc}(\overline{\RR^2_+})$, we get
\begin{align*}
0 =\ & \int_{\bB_R^+} \frac{p-2}{p}\zeta_R^2 |\nabla w|^p\\
& -\int_{\bB_R^+ \setminus \bB_{\frac{R}{2}}^+} x^j (\zeta_R)_{x^j} \zeta_R \Big(|\nabla w|^2 + \frac{2}{p}|\nabla w|^p\Big) + 2\int_{\bB_R^+ \setminus \bB_{\frac{R}{2}}^+} (1 + |\nabla w|^{p-2}) x^j (\zeta_R)_{x^i} \zeta_R w_{x^i}w_{x^j}.
\end{align*}
Letting $R \to \infty$, by~\eqref{eq:finite-2p} and how we defined $\zeta_R$, we get
\[
\frac{p-2}{p}\int_{\RR^2_+}|\nabla w|^p = 0.
\]
As $p > 2$, this contradicts the fact that $w$ is non-constant.
\end{proof}
Having addressed the comparison between $\ep_k$ and $t_k$ in Proposition~\ref{prop:rescale}, we can now rule out energy concentration in the interior and show that the domain of any rescaled limit is a half-plane, as opposed to all of $\RR^2$. This is the content of the next proposition.
\begin{prop}\label{prop:no-interior-bubble}
Given $x_0 \in \cS$, again let $t_k, x_k$ be as defined before Proposition~\ref{prop:rescale}. Then 
\[
\limsup_{k \to \infty}\frac{1 - |x_k|}{t_k} < \infty.
\]
In particular, $\cS \subseteq \partial \bB$.
\end{prop}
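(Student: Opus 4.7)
The plan is to argue by contradiction: suppose that, along a further subsequence, $\rho_k := (1-|x_k|)/t_k \to \infty$. First I would rescale, setting $v_k(y) := u_k(x_k + t_k y)$ on the disks $\bB_k := \{y : x_k + t_k y \in \bB\}$, whose domains exhaust $\RR^2$. By Proposition~\ref{prop:rescale}, passing to a further subsequence we have $\wep_k := \ep_k/t_k \to 0$, and $v_k$ solves
\[
\Div\bigl([1 + \wep_k^{\,p-2}(t_k^2+|\nabla v_k|^2)^{(p-2)/2}]\nabla v_k\bigr) = rf(v_k)(v_k)_{y^1}\times (v_k)_{y^2}
\]
on $\bB_k$, with $v_k(\bB_k)\subseteq \overline{\Omega'_{t_0}}$. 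The maximal property $Q_k(t_k)=\eta/3$ gives $\int_{\bB_1(y)}|\nabla v_k|^2 \leq \eta/3 < \eta$ for every $y$ in any fixed compact subset of $\RR^2$, once $k$ is large. The interior forms of Propositions~\ref{prop:small-regular-2} and~\ref{prop:W14-W2q} (Remarks~\ref{rmk:small-regular-int} and~\ref{rmk:interior-W2q}) then provide uniform $W^{2,4}_{\loc}$-bounds on $v_k$. After a diagonal subsequence, $v_k \to v$ in $C^1_{\loc}(\RR^2)$, where $v:\RR^2\to\overline{\Omega'_{t_0}}$ is smooth, has finite energy $\int_{\RR^2}|\nabla v|^2 \leq 2(C_0+1)$, satisfies $\Delta v = rf(v)v_{x^1}\times v_{x^2}$, and is non-constant because $\int_{\bB_1(0)}|\nabla v|^2 = \eta/3$.

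Next I would compactify. Composing with inverse stereographic projection and applying the interior case of Lemma~\ref{lemm:remove} at the north pole, $v$ extends to a smooth map $\widetilde v: S^2 \to \overline{\Omega'_{t_0}}$ solving the same PDE in any conformal chart. The Hopf differential of $\widetilde v$ is a holomorphic quadratic differential on $S^2$ and therefore vanishes, so $\widetilde v$ is weakly conformal. I would then rerun the computation from the proof of Proposition~\ref{prop:max-principle}(b) on the open set $C'_+ := \{x \in S^2 : d(\widetilde v(x)) > 0\}$: for $F := e^{ad}$ with $a$ chosen as there, one obtains $\Delta_{S^2} F(\widetilde v) \geq c|\nabla \widetilde v|^2$ on $C'_+$, for some $c > 0$. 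If $\emptyset \neq C'_+ \subsetneq S^2$, the strong maximum principle on each connected component contradicts $F(\widetilde v)|_{\partial C'_+}=1 < F(\widetilde v)|_{C'_+}$. If instead $C'_+ = S^2$, then $F(\widetilde v)$ is subharmonic on a closed manifold, hence constant, and the strict inequality forces $\nabla\widetilde v \equiv 0$, contradicting non-constancy. Therefore $\widetilde v$ maps into $\overline{\Omega'}$, where $f \equiv H$, so $\Delta\widetilde v = rH\,\widetilde v_{x^1}\times\widetilde v_{x^2}$.

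Finally, the Brezis-Coron classification~\cite{BrezisCoron1985} identifies $\widetilde v$ as a conformal parametrization of a round sphere $S\subseteq\overline{\Omega'}$ of mean curvature $rH$. Translating $S$ inside $\overline{\Omega'}$ in a fixed direction until it first touches $\Sigma'$ at a point $p$, and comparing second fundamental forms in local graph coordinates, yields $H_{\Sigma'}(p)\leq rH \leq H < H_0$, contradicting $H_{\Sigma'}\geq H_0$. Hence $(1-|x_k|)/t_k$ stays bounded, and since $t_k\to 0$, this forces $|x_0|=1$, proving $\cS\subseteq\partial\bB$. The main technical hurdle I anticipate is orchestrating the maximum-principle step on $S^2$ --- particularly handling the closed-manifold case $C'_+ = S^2$ --- together with verifying that the hypotheses of Lemma~\ref{lemm:remove} transfer cleanly through stereographic projection; the remaining steps are a compact reuse of the estimates already established.
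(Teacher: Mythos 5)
Your proposal is correct and follows essentially the same route as the paper: rescale at rate $t_k$, use Proposition~\ref{prop:rescale} to send $\wep_k \to 0$, pass to a $C^1_{\loc}(\RR^2)$ limit using the interior small-energy estimates, compactify to a map on $S^2$ via Lemma~\ref{lemm:remove}, deduce weak conformality from the Hopf differential, run the $e^{ad}$ maximum-principle argument to conclude the bubble lies in $\overline{\Omega'}$, and invoke the Brezis--Coron classification to reach a contradiction with $H_{\Sigma'} \geq H_0 > rH$. The two places where you supply more detail than the paper --- dispatching the case $C'_+ = S^2$ via subharmonicity on a closed surface, and making the sphere-touching comparison with $\Sigma'$ explicit at the end --- are exactly the steps the paper leaves implicit, so they constitute a correct filling-in rather than a different argument.
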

\begin{proof}
We follow the idea in~\cite{Struwe88}. Assume towards a contradiction that $t_k^{-1}(1 - |x_k|) \to \infty$ along a subsequence which we do not relabel. Without loss of generality we assume that $|x_k - x_0| \leq \frac{d}{2}$ for all $k$. Next, we define $\widetilde{\ep}_k = \frac{\ep_k}{t_k}$ and
\[
v_k(y) = u_k(x_k + t_k y) \text{ for }y \in \bB_k' := \bB_{\frac{1}{t_k}}(-\frac{x_k}{t_k}).
\]
Then $\wep_k \to 0$ by Proposition~\ref{prop:rescale}. Also, $v_k$ has the following properties.
\vskip 1mm
\begin{enumerate}
\item[(i)] $v_k$ maps into $\overline{\Omega'_{t_0}}$ and
\[
\int_{\bB_k'} \frac{(\widetilde{\ep}_k)^{p-2}( t_k^2 + |\nabla v_k|^2)^{\frac{p}{2}}}{p} + \frac{|\nabla v_k|^2}{2} = \int_{\bB} \frac{\ep_k^{p-2}(1 + |\nabla u_k|^2)^{\frac{p}{2}}}{p} + \frac{|\nabla u_k|^2}{2} \leq C_0 + 1.
\]
\vskip 1mm
\item[(ii)] 
\[
\int_{\bB_1(y) \cap \bB_k'} |\nabla v_k|^2 = \int_{\bB_{t_k}(x_k + t_k y) \cap \bB} |\nabla u_k|^2 \leq \frac{\eta}{3},
\]
for all $y \in \overline{\bB_k'} \cap \overline{\bB_{\frac{d}{2t_k}}(0)}$. Moreover, equality holds when $y = 0$.
\vskip 1mm
\item[(iii)]  $\Div\left( \left[ 1 + (\widetilde{\ep}_k)^{p-2}(t_k^2 + |\nabla v_k|^2)^{\frac{p}{2} - 1}  \right] \nabla v_k\right) = rf(v_k) (v_k)_{x^1} \times (v_k)_{x^2}$ on $\bB'_k$, along with $v_k(x) \in \Sigma$ and $\pa{v_k}{n}(x) \perp T_{v_k(x)}\Sigma \text{ for all } x \in \partial \bB'_k$.
\end{enumerate}

Since $t_k \to 0$ and $t_k^{-1}(1 - |x_k|) \to \infty$, for all $R > 0$ we eventually have $B_{R + 1}(0) \subseteq \bB_k' \cap \bB_{\frac{d}{2t_k}}(0)$. Thus, by (ii) and Remark~\ref{rmk:small-regular-int}, we see that, passing to a subsequence if necessary, $v_k$ converges in $C^1_{\loc}(\RR^2)$ to a limit $v: \RR^2 \to \overline{\Omega'_{t_0}}$ which is non-constant by the equality case in (ii), and satisfies by (i) and (iii) that
\begin{equation}\label{eq:cmc-R2}
\left\{
\begin{array}{l}
\displaystyle\int_{\RR^2} |\nabla v|^2 \leq 2(C_0 + 1) \text{, and }\\
\Delta v = rf(v) v_{x^1} \times v_{x^2} \text{ weakly on }\RR^2.
\end{array}
\right.
\end{equation}
Elliptic regularity implies that $v$ is smooth, and then by Lemma~\ref{lemm:remove} we obtain a non-constant smooth map $w: S^2 \to \overline{\Omega'_{t_0}}$ such that, in terms of isothermal coordinates,
\[
\Delta w = rf(w)w_{x^1} \times w_{x^2} \text{ on }S^2.
\]
Since the domain is $S^2$, a standard argument using the Hopf differential shows that $w$ must be weakly conformal. Then, computing as in the proof of Proposition~\ref{prop:max-principle}(b), we get 
\[
\Delta (e^{ad(w)}) \geq ae^{ad(w)}\frac{H_0 - H}{2}\frac{|\nabla w|^2}{2} \text{ on }\{x \in S^2\ |\ d(w(x)) > 0\} =: C_+.
\]
In particular $C_+ \neq S^2$, as $w$ is non-constant. Now since $d(w) = 0$ on $\partial C_+$, we obtain by the maximum principle that $C_+ = \emptyset$. In other words, $w(S^2) \subseteq \overline{\Omega'}$, and hence $w$ in fact solves
\[
\Delta w = rH w_{x^1} \times w_{x^2} \text{ on }S^2.
\]
Since $w$ is non-constant, by~\cite[Lemma A.1]{BrezisCoron1985} the image of $w$ is a sphere of mean curvature $rH$ contained $\overline{\Omega'}$, a contradiction since $\Sigma'$ has mean curvature at least $H_0 > rH$. Thus we must have $\limsup_{k \to \infty}t_k^{-1}(1 - |x_k|) < \infty$. The second conclusion follows since $t_k \to 0$.
\end{proof}

\subsection*{Proof of Theorem~\ref{thm:main-1}}
It suffices to prove that for any $H \in (0, H_0)$, the asserted existence holds with mean curvature $rH$ for almost every $r \in (0, 1]$. To that end we choose $t_0$ and $f$ as in Section~\ref{subsec:max-principle}, let $r$ be in the full-measure set yielded by Proposition~\ref{prop:existence-with-index} and suppose $u_j, \ep_j, C_0$ are as in the conclusion there. By Propositions~\ref{prop:convergence-mod-bubbling} and~\ref{prop:no-interior-bubble}, we get a smooth solution $u: (\overline{\bB}, \partial \bB) \to (\overline{\Omega'}, \Sigma)$ to~\eqref{eq:fb-cmc} and a finite set $\cS \subseteq \partial \bB$ such that a subsequence of $u_j$, which we do not relabel, converges in $C^1_{\loc}(\overline{\bB}\setminus \cS)$ to $u$. 
\vskip 2mm
\noindent\textbf{Case 1: $\cS = \emptyset$.}
\vskip 2mm
In this case, $u_j$ converges to $u$ in $C^1(\overline{\bB})$, and the uniform lower bound in Proposition~\ref{prop:existence-with-index}(a) guarantees that $u$ is non-constant. It remains to verify $\Ind_{rH}(u) \leq 1$. Recall that Proposition~\ref{prop:existence-with-index}(b) gives $\Ind_{\ep_j, p, rf}(u_j) \leq 1$ for all $j$.

The $C^1$-convergence of $u_j$ to $u$ on all of $\overline{\bB}$ gives $\rho > 0$ and $j_0 \in \NN$ such that for all $j \geq j_0$ and $x \in \partial \bB$, we have that $u(\overline{\bB_{2\rho}(x) \cap \bB})$ and $u_j(\overline{\bB_{2\rho}(x) \cap \bB})$ are contained in $B_{\rho_\Sigma}(u(x))$. Let $\bB_0, \cdots, \bB_N$ be a covering of $\overline{\bB}$ by open balls such that $\bB_0 \subseteq \bB$ and that $\bB_1, \cdots, \bB_N$ are centered on $\partial \bB$ with radius $\rho$, and let $\zeta_0, \zeta_1, \cdots, \zeta_N$ be a partition of unity subordinate to this covering. Then for $i = 1, \cdots, N$, there exist a neighborhood $U_i$ and diffeomorphisms $\Psi_i, \Upsilon_i$ as described in Section~\ref{subsec:spaces-and-coordinates}, such that $U_i$ contains $u(\overline{\bB_i \cap \bB}), u_j(\overline{\bB_i \cap \bB})$ for all $j$. For $\psi \in T_u\cM_p \cap C^{\infty}(\overline{\bB}; \RR^3)$ and $i \in \{1, \cdots, N\}$, define
\[
\widehat{\psi}_i := (d\Psi_i)_u(\psi) \text{ on }\overline{\bB_i \cap \bB},
\]
and note that for all $x \in \bB_i \cap \partial\bB$, since $\widehat{\psi}_i^3(x) = 0$, we have $\Big((d\Psi_i)_{u_j}\Big)^{-1} \big(\widehat{\psi}_i(x)\big) \in T_{u_j(x)}\Sigma$. Letting
\[
\psi^{(j)} = \zeta_0\psi + \sum_{i = 1}^N \zeta_i \cdot \Big((d\Psi_i)_{u_j}\Big)^{-1} \big(\widehat{\psi}_i\big),
\]
we see that $\psi^{(j)} \in T_{u_j}\cM_p$ and converges in $C^1$ to $\psi$ on $\overline{\bB}$. Consequently, 
\[
\lim_{j \to \infty}\delta^2 E_{\ep_j, p, rf}(u_j)(\psi^{(j)}, \psi^{(j)}) = \delta^2 E_{rf}(u)(\psi, \psi) = \delta^2 E_{rH}(u)(\psi, \psi),
\]
where the second equality follows since $f$ is equal to $H$ on a neighborhood of $\overline{\Omega'}$. This allows us to prove that the bound $\Ind_{\ep_j, p, rf}(u_j) \leq 1$ passes to the limit, giving $\Ind_{rH}(u) \leq 1$.
\vskip 1em
\noindent\textbf{Case 2: $\cS \neq \emptyset$.}
\vskip 1mm
Choose $x_0 \in \cS \subseteq \partial \bB$ and let $x_k, t_k$ be as introduced before Proposition~\ref{prop:rescale}. Again assume without loss of generality that $|x_k - x_0| \leq \frac{d}{2}$ for all $k$, and define $\widetilde{\ep}_k, v_k$ and $\bB_k'$ as in the proof of Proposition~\ref{prop:no-interior-bubble}. Then again $\wep_k \to 0$ by Proposition~\ref{prop:rescale}, and $v_k$ has properties (i), (ii) and (iii) from that proof. Letting $z_k = \frac{1}{t_k}\big( \frac{x_k}{|x_k|} - x_k \big)$, we see by Proposition~\ref{prop:no-interior-bubble} that $(|z_k|)$ is a bounded sequence. Next we let $I_k, F_k, \lambda_k$ and $w_k$ be as in the proof of Proposition~\ref{prop:rescale} with $t_k$ in place of $\ep_k$, and note that for all $R > 0$, eventually $w_k$ satisfies
\[
\Div\Big( \Big[ 1 + (\widetilde{\ep}_k)^{p-2}\Big(t_k^2 + \lambda_k^{-2}|\nabla w_k|^2\Big)^{\frac{p}{2} - 1}  \Big] \nabla w_k \Big) = rf(w_k)(w_k)_{x^1} \times (w_k)_{x^2} \text{ on }\bB_R^+,
\]
with $w_k(x) \in \Sigma$ and $(w_k)_{x^2}(x) \perp T_{w_k(x)}\Sigma \text{ for all }x \in \bT_R$. Moreover, by (ii), Proposition~\ref{prop:small-regular-2} and Remark~\ref{rmk:small-regular-int}, as well as the fact that each $w_k$ maps into $\overline{\Omega_{t_0}'}$, we see that up to taking a subsequence, $w_k$ converges in $C^{1}_{\loc}(\overline{\RR^2_+})$ to a limiting map $w: \overline{\RR^2_+} \to \overline{\Omega'_{t_0}}$ such that
\begin{equation}\label{eq:cmc-half-space}
\left\{
\begin{array}{l}
\displaystyle\int_{\RR^2_+}|\nabla w|^2 < +\infty,\\
\Delta w = rf(w)w_{x^1} \times w_{x^2} \text{ on }\RR^2_+,\\
w(x) \in \Sigma \text{ and }w_{x^2}(x) \perp T_{w(x)}\Sigma \text{ for all }x \in \partial \overline{\RR^2_+}.
\end{array}
\right.
\end{equation}
Furthermore, $w$ must be non-constant by the equality case in (ii) and the fact that the sequence $(I_k(0))$ is bounded. Letting $\tau$ be the standard conformal map from $\overline{\bB} \setminus \{(1, 0)\}$ to $\overline{\RR^2_+}$, then by~\eqref{eq:cmc-half-space} and Lemma~\ref{lemm:remove}, the map $\widetilde{w}:= w \circ \tau$ extends smoothly to all of $\overline{\bB}$. By Proposition~\ref{prop:max-principle}(b), the resulting map has image contained in $\overline{\Omega'}$ and consequently solves~\eqref{eq:fb-cmc} on $\overline{\bB}$.

It remains to prove that $\Ind_{rH}(\widetilde{w}) \leq 1$. To that end, let $\psi_1, \cdots, \psi_l \in T_{\widetilde{w}}\cM_p \cap C^{\infty}(\overline{\bB}; \RR^3)$ be linearly independent such that $\delta^2 E_{rH}(\widetilde{w})$ restricts to be negative definite on their span. By the logarithmic cut-off trick, we may assume that $\psi_1, \cdots, \psi_{l}$ are supported away from $(1, 0)$. Thus, there exists $R>0$ such that, letting 
\[
\xi_{a} = \psi_{a} \circ \tau^{-1}, \text{ for }a = 1, \cdots, l,
\]
we have $\xi_a(x) \in T_{w(x)}\Sigma$ on $\partial \RR_+^2$ and that $\supp(\xi_a) \subseteq \bB_{R}^+ \cup T_R$. Since $w_j$ converges to $w$ in $C^1_{\loc}(\overline{\RR^2_+})$, we may repeat the construction in Case 1 to get $\xi_a^{(j)}$ for all $j$ sufficiently large such that $\supp(\xi_a^{(j)}) \subseteq \bB_{R}^+ \cup T_R$, that $\xi_a^{(j)}(x) \in T_{w_j(x)}\Sigma \text{ for all }x \in \partial \RR_+^2$, and that 
\[
\xi_a^{(j)} \to \xi_a \text{ in } C^1(\overline{\bB_R^+}) \text{ as }j \to \infty, \text{ for all } a = 1, \cdots, l.
\] 
Consequently, letting $\tau_j(y) = x_j + t_j y$ and $\theta_{a}^{(j)} =\xi_{a}^{(j)}\circ F_j^{-1}\circ I_j \circ \tau_j^{-1}$, and noting how the second variation formula~\eqref{eq:2nd-variation-formula} transforms under conformal maps, we get after a straightforward computation that
\begin{align*}
\delta^2 E_{\ep_j, p, rf}(u_j)(\theta_{a}^{(j)}, \theta_{a}^{(j)}) 
\longrightarrow \delta^2 E_{rH}(\widetilde{w})(\psi_a, \psi_a), \text{ as }j \to \infty.
\end{align*}
Recalling our choice of $\psi_1, \cdots, \psi_l$ and polarizing, we see that eventually the matrix
\[
\Big( \delta^2 E_{\ep_j, p, rf}(u_j)(\theta_{a}^{(j)}, \theta_{b}^{(j)}) \Big)_{a, b \in \{1, \cdots, l\}}
\]
is negative definite. The bound $\Ind_{\ep_j, p, rf}(u_j) \leq 1$ then forces $l\leq 1$, and we are done.
\section{Improved existence result under convexity assumptions}\label{sec:improved}
In Section~\ref{subsec:index-comparison} we relate the index of $\delta^2 E_H(u)$ to that of another bilinear form $B_H(u)$. In Section~\ref{subsec:energy-bound} we prove a uniform energy upper bound for free boundary, constant mean curvature disks lying inside $\overline{\Omega}$ with index at most $1$, using the classical Hersch trick. At the end we put everything together and prove Theorem~\ref{thm:main-2} by approximating from the full-measure set obtained in Theorem~\ref{thm:main-1}, which we may apply with $\Sigma' = \Sigma$ thanks to the convexity assumptions on $\Sigma$.


\subsection{An index comparison result}\label{subsec:index-comparison}

Suppose we have a smooth, non-constant solution $u: (\overline{\bB}, \partial \bB) \to (\RR^3, \Sigma)$ of 
\begin{equation}\label{eq:cmc-fb-7}
\left\{
\begin{array}{ll}
\Delta u= H \cdot u_{x^1} \times u_{x^2} & \text{ on }\overline{\bB},\\
|u_{x^1}|  = |u_{x^2}|,\ \langle u_{x^1}, u_{x^2}\rangle = 0 & \text{ on }\overline{\bB},\\
u_r(x) \perp T_{u(x)}\Sigma & \text{ for all }x \in \partial \bB.
\end{array}
\right.
\end{equation}
In terms of complex coordinates $z = x + \sqrt{-1}y$, we have
\begin{equation}\label{eq:complex-cmc}
\left\{
\begin{array}{ll}
u_{z\overline{z}} = \displaystyle\frac{\sqrt{-1}H}{2}u_{\overline{z}} \times u_z & \text{ on }\overline{\bB},\\
\langle u_z, u_z \rangle = 0 & \text{ on }\overline{\bB},\\
\re(zu_z) \perp T_{u}\Sigma & \text{ on } \partial \bB.
\end{array}
\right.
\end{equation}
It is standard that $u_z$ vanishes only at a finite set $\cS\subseteq \overline{\bB}$ of branch points~\cite[Section 5]{Hildebrandt-Nitsche1979}. Moreover, $du((\overline{\bB}\setminus \cS)\times \RR^2)$ extends to a smooth, oriented, rank-two subbundle of $\overline{\bB} \times \RR^3$, which we denote by $\xi$. Orientability of $\xi$ implies that its orthogonal complement, denoted $\nu$, is trivial, and we fix a unit-length smooth section $\mathbf{n}$ of $\nu$.  On the other hand, the orientation and bundle metric on $\xi$ makes it a complex line bundle, and the complexification $\xi_\CC$ splits into $\xi^{0, 1} \oplus \xi^{1, 0}$, which, away from $\cS$, are spanned over $\CC$ by $u_{\overline{z}}$ and $u_z$, respectively. Finally, the Levi-Civita connection $\nabla$ on $\RR^3$ induces connections $\nabla^\perp$ and $\nabla^T$ on $\nu$ and $\xi$, and both $\xi^{1, 0}$ and $\xi^{0,1}$ are invariant under $\nabla^T$.

We next introduce the relevant bilinear forms, namely 
\begin{align}\label{eq:2nd-var-energy}
\delta^2 E_{H}(u)(v, v) =\ & \int_{\bB} |\nabla v|^2 dx^1 \wedge dx^2 + H \int_{\bB} \langle v, v_{x^1} \times u_{x^2} + u_{x^1} \times v_{x^2} \rangle dx^1 \wedge dx^2 \nonumber\\
& + \int_{\partial \bB} A_\Sigma^{u_r}(v, v) d\theta,
\end{align}
defined for all $v \in C^{\infty}(\overline{\bB}; \RR^3)$ satisfying $v(x) \in T_{u(x)}\Sigma$ for all $x \in \partial \bB$, and 
\begin{align}\label{eq:2nd-var-area}
B_H(u)(s, s) = \int_{\bB} |\nabla f|^2 - f^2\frac{|\nabla u|^2}{2}\cdot \frac{H^2}{2} dx^1 \wedge dx^2   + \int_{\partial \bB} f^2 A_\Sigma^{u_r}(\mathbf{n}, \mathbf{n}) d\theta,
\end{align}
defined for all $s \in \Gamma(\nu)$, where $f = \langle s , \mathbf{n} \rangle$. Note that for all $x \in \partial \bB \setminus \cS$ we have $\mathbf{n}(x) \in T_{u(x)}\Sigma$, since $\langle \mathbf{n}(x), u_r(x) \rangle = 0$ and $u_r(x)$ spans the orthogonal complement to $T_{u(x)}\Sigma$. As $\cS$ is a discrete set of points, we deduce that in fact
\[
\mathbf{n}(x) \in T_{u(x)}\Sigma \text{ for all }x \in \partial \bB.
\] 
We now state the main result of this section, which is a comparison between the indices of the above two bilinear forms.
\begin{prop}\label{prop:index-comparison}
Let $u:(\overline{\bB}, \partial\bB) \to (\RR^3, \Sigma)$ be a smooth, non-constant solution to~\eqref{eq:complex-cmc}. Then the index of $B_H(u)$ is less than or equal to $\Ind_H(u)$.
\end{prop}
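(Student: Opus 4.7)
The plan, following the Ejiri-Micallef technique referenced in the introduction, is to construct a linear injection $\iota : \Gamma(\nu) \to \{v \in C^{\infty}(\overline{\bB}; \RR^3) \mid v(x) \in T_{u(x)}\Sigma \text{ for all } x \in \partial \bB\}$ with the property that $\delta^2 E_H(u)(\iota s, \iota s) \leq B_H(u)(s,s)$ for every $s \in \Gamma(\nu)$. Any $k$-dimensional subspace of $\Gamma(\nu)$ on which $B_H(u)$ is negative definite then lifts to a $k$-dimensional subspace of admissible variations on which $\delta^2 E_H(u)$ is negative definite, yielding $\mathrm{Ind}(B_H(u)) \leq \mathrm{Ind}_H(u)$. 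Injectivity of $\iota$ will follow from the fact that the normal component of $\iota s$ equals $s$.

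The naive lift $\iota s = s = f\mathbf{n}$ is admissible (since $\mathbf{n}(x) \in T_{u(x)}\Sigma$ along $\partial \bB$, as noted just before the statement) but insufficient: a direct computation using weak conformality and the Weingarten equations gives
\[
\delta^2 E_H(u)(s, s) - B_H(u)(s, s) = \int_{\bB} \frac{8 f^2 |\Phi|^2}{\lambda^2}\, dx^1 \wedge dx^2 \;\geq\; 0,
\]
where $f = \langle s, \mathbf{n}\rangle$, $\lambda^2 = |\nabla u|^2/2$, and $\Phi = \langle u_{zz}, \mathbf{n}\rangle$ is the Hopf differential of $u$. To cancel this defect, we add a tangential correction $w \in \Gamma(\xi)$ and set $\iota s = s + w$. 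Writing $w = \bar{\zeta}\, u_z + \zeta\, u_{\bar z}$ for a complex-valued function $\zeta$ on $\overline{\bB}$, we choose $\zeta$ so that $v := \iota s$ is infinitesimally conformal, $\langle u_z, v_z\rangle = 0$. Using $\langle u_z, u_z\rangle = 0$ and $\langle u_z, u_{zz}\rangle = 0$, this condition reduces to the $\partial$-equation
\[
\partial_z \zeta = \frac{2 f \Phi}{\lambda^2}.
\]
Since $u$ solves~\eqref{eq:complex-cmc}, $\Phi$ is holomorphic on $\bB$; and since $u$ satisfies the free boundary condition, $\Phi\, dz^2$ is real along $\partial \bB$. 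This permits solving the equation with the real boundary data $\langle w, u_r\rangle = 0$ on $\partial \bB$, which (combined with $\mathbf n \in T\Sigma$ there) guarantees $v|_{\partial\bB} \in T\Sigma$.

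The core of the proof is then a cancellation computation: substituting $v = s + w$ into~\eqref{eq:2nd-var-energy}, integrating by parts, and repeatedly invoking the CMC equation $u_{z\bar z} = \tfrac{\sqrt{-1}\, H}{2}\, u_{\bar z} \times u_z$ and the infinitesimal conformality $\langle u_z, v_z\rangle = 0$, one expects
\[
2\, \delta^2 E_H(u)(s, w) + \delta^2 E_H(u)(w, w) = -\int_{\bB} \frac{8 f^2 |\Phi|^2}{\lambda^2}\, dx^1 \wedge dx^2,
\]
which annihilates the defect above exactly and gives $\delta^2 E_H(u)(\iota s, \iota s) = B_H(u)(s, s)$. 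The main obstacles in executing this plan are (i) the precise bookkeeping in this cancellation, where the CMC equation, weak conformality, free boundary condition, and the $\partial$-equation for $\zeta$ all need to combine so that every bulk error term and boundary integral vanishes; and (ii) the treatment of the finite branch set $\cS \subset \overline{\bB}$ of $u$, where $\lambda$ vanishes and the coefficient $2f\Phi/\lambda^2$ in the $\partial$-equation is a priori singular. The latter is handled by first working on $\overline{\bB} \setminus \cS$ and then invoking the removable-singularity machinery underlying Lemma~\ref{lemm:remove}, exploiting the holomorphicity of $\Phi$ and the finite-energy nature of $v$.
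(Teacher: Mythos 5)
Your plan is essentially the paper's own: complement $s$ by a tangential correction $w$ so that $v=s+w$ kills the defect term, and the condition $\langle u_z,v_z\rangle=0$ you impose is, once the connection on $\xi$ is unwound, precisely the boundary value problem~\eqref{eq:complementing-section}. Indeed, in the trivialization $\sigma^{0,1}=\zeta\,u_{\bar z}$ of $\xi^{0,1}$ away from branch points, $\nabla_z^T u_{\bar z}=(u_{z\bar z})^T=0$ by the CMC equation, so $\nabla_z^T\sigma^{0,1}=-(\nabla_z s)^{0,1}$ becomes your $\bar\partial$-equation $\partial_z\zeta=2f\Phi/\lambda^2$, and $\langle w,u_r\rangle=0$ is the same as $\re\langle\sigma,\eta^{1,0}\rangle=0$. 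Where the outline has genuine gaps is in the two pieces you flag as ``obstacles.'' Solvability of this Riemann--Hilbert problem is not a consequence of $\Phi$ being holomorphic and $\Phi\,dz^2$ being real on $\partial\bB$; those facts make the data \emph{compatible} with the boundary condition but say nothing about existence, since the operator is only Fredholm. The paper proves surjectivity by exhibiting the $L^2$-cokernel as trivial: any cokernel element $\alpha$ satisfies $\nabla^T_{\bar z}\alpha=0$ and $\im z^2\langle\alpha,u_z\rangle=0$ on $\partial\bB$, so $z^2\langle\alpha,u_z\rangle$ is a real holomorphic function, hence a real constant $\lambda$, and $\lambda=0$ because $\langle\alpha,u_z\rangle\in L^1(\bB)$. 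Some argument of this nature is unavoidable.

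The second gap is the branch set, and your proposed fix does not work. Your scalar unknown $\zeta=2\langle\sigma,u_z\rangle/\lambda^2$ is singular at a branch point of order $n$ (it is of order $z^{-n}$ even for a smooth section $\sigma$), and the coefficient $2f\Phi/\lambda^2$ is likewise singular; so your $\bar\partial$-equation is genuinely degenerate at $\cS$ in the scalar formulation, and Lemma~\ref{lemm:remove}, which removes point singularities of solutions of the CMC system (not of your $\bar\partial$-problem), does not address it. The paper sidesteps this entirely by \emph{never} trivializing: $\xi^{0,1}$ extends smoothly across $\cS$, $\nabla_z^T$ acts on $W^{1,2}$-sections of that smooth bundle, and no division by $\lambda^2$ ever occurs. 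You should work at the bundle level as in Lemma~\ref{lemm:index-calculation} rather than with the scalar $\zeta$. One smaller point: the exact cancellation $\delta^2 E_H(u)(\iota s,\iota s)=B_H(u)(s,s)$ that you ``expect'' is more than is true or needed; the paper's Lemma~\ref{lemm:index-calculation} gives only the inequality $\delta^2 E_H(u)(v,v)\le B_H(u)(s,s)+8\int_\bB|\zeta|^2$, which already suffices once $\zeta$ is made to vanish.
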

This is a free boundary analogue of~\cite[Proposition 5.1]{ChengZhou-cmc}. Results of this type go back to the work of Ejiri-Micallef~\cite{EM} on closed minimal surfaces. More recently, Lima~\cite{Lima2017} obtained index comparison results for free boundary minimal surfaces. As in these references, the proof of Proposition~\ref{prop:index-comparison} comes down to complementing a negative direction $s \in \Gamma(\nu)$ for $B_H(u)$ by a suitable section $\sigma \in \Gamma(\xi)$ so that $\sigma + s$ is a negative direction for $\delta^2 E_H(u)$. The following lemma exposes the equation that needs to be solved to achieve this.

\begin{lemm}
\label{lemm:index-calculation}
Given $\sigma \in \Gamma(\xi)$ and $s \in \Gamma(\nu)$ with $\sigma(x) \in T_{u(x)}\Sigma \text{ for all }x \in \partial \bB$, let $v = s + \sigma$ and define $\zeta \in \Gamma(\xi^{0, 1})$ by $\zeta = (\nabla_z s)^{0, 1} + \nabla_z^{T}\sigma^{0, 1}$. Then 
\begin{equation}\label{eq:index-computation}
\delta^2 E_{H}(u)(v, v) \leq B_H(u)(s, s) + 8\int_{\bB} |\zeta|^2 dx^1 \wedge dx^2.
\end{equation}
\end{lemm}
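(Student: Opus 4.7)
The plan is to compute $\delta^2 E_H(u)(v, v)$ in complex coordinates with $v = s + \sigma$ and exploit the orthogonal splitting $\RR^3 = \xi \oplus \nu$ over $\overline{\bB} \setminus \cS$ to isolate the pieces involving $s$ from those involving $\sigma$. First I would rewrite $\int_{\bB} |\nabla v|^2$ as $4\int_{\bB} |v_z|^2$, and express the cross-product term in~\eqref{eq:2nd-var-energy} in terms of $u_z, u_{\bar z}$, so that everything is phrased via $\partial_z, \partial_{\bar z}$. I would then decompose the derivatives: writing $\partial_z s = \nabla_z^\perp s + (\partial_z s)^T$ with $(\partial_z s)^T \in \xi_\CC$ encoding the Weingarten map of $u$ applied to $s$, and $\partial_z \sigma = \nabla_z^T \sigma + (\partial_z \sigma)^\perp$ with the $\nu$-part being $\mathrm{II}(u_z,\sigma)\,\mathbf{n}$, and further splitting $\nabla_z^T\sigma$ into its $\xi^{1,0}$ and $\xi^{0,1}$ parts.

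Next, I would use the CMC equation $2u_{z\bar z} = \sqrt{-1}H\,u_{\bar z}\times u_z$ from~\eqref{eq:complex-cmc} together with $\langle u_z, u_z\rangle = 0$ to rewrite the second fundamental form in the direction $\mathbf{n}$: this yields a traceless Hopf-type $(2,0)$ part and a mean-curvature part $-\tfrac{H}{2}g$, which in turn converts the pairing $\langle \mathrm{II}(u_z,\sigma)\mathbf{n}, s\rangle$ into a multiple of $fH |u_z|^2$-type quantities, where $f = \langle s,\mathbf{n}\rangle$. After this substitution, the expansion of $4|v_z|^2$ should split naturally into: (i) the intrinsic quantity $|\nabla^\perp s|^2$ contributing the $|\nabla f|^2$ piece of $B_H(u)(s,s)$; (ii) a curvature term $-\tfrac{H^2}{2}|u_z|^2 f^2$ matching the remaining bulk part of $B_H(u)(s,s)$; (iii) pure $\sigma$-terms involving $\nabla_z^T\sigma^{1,0}$ and $\nabla_z^T\sigma^{0,1}$; and (iv) cross terms between $\nabla_z^\perp s$ and $\nabla_z^T\sigma$. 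The pure $\sigma$-terms and cross terms should then be reorganised into the single expression $8|(\nabla_z s)^{0,1} + \nabla_z^T\sigma^{0,1}|^2 = 8|\zeta|^2$ after completing the square, using that anti-holomorphic pieces pair with themselves via the Euclidean inner product.

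For the boundary contributions, I would use the assumption $\sigma(x)\in T_{u(x)}\Sigma$ on $\partial\bB$ together with $\mathbf{n}(x)\in T_{u(x)}\Sigma$ (which holds because $u_r\perp T_u\Sigma$ and $u_r\in\xi$), so that $v(x)\in T_{u(x)}\Sigma$ and the boundary integral of $\delta^2 E_H(u)$ is defined. The expansion $A_\Sigma^{u_r}(v,v) = A_\Sigma^{u_r}(s,s) + 2A_\Sigma^{u_r}(s,\sigma) + A_\Sigma^{u_r}(\sigma,\sigma)$ gives exactly $f^2 A_\Sigma^{u_r}(\mathbf{n},\mathbf{n})$ from the first summand, which is the boundary term of $B_H(u)(s,s)$. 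The remaining two pieces must be matched by boundary terms produced when integrating by parts the cross terms in step two; here I would use the free boundary condition $\mathrm{Re}(z u_z)\perp T_u\Sigma$ from~\eqref{eq:complex-cmc}, which forces $\partial_r u \in \nu$ at the boundary, together with the fact that differentiating $\sigma \in T_u\Sigma$ tangentially along $\partial\bB$ produces a Weingarten-type term controlled by $A_\Sigma^{u_r}$, so that the boundary integration-by-parts residue is exactly $-2A_\Sigma^{u_r}(s,\sigma) - A_\Sigma^{u_r}(\sigma,\sigma)$.

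The main obstacle I expect is the boundary analysis: organising the integration by parts in complex coordinates on $\bB$ so that every interior cross term is paired with its correct boundary residue, and verifying that the free boundary condition together with the identity $\mathbf{n}\in T_u\Sigma$ on $\partial\bB$ produces \emph{precisely} the cancellation needed to leave only the $f^2 A_\Sigma^{u_r}(\mathbf{n},\mathbf{n})$ contribution. A secondary point is checking that the numerical factor $8$ in front of $\int|\zeta|^2$ is correct; this should come from the combination of the factor $4$ in $|\nabla v|^2 = 4|v_z|^2$ and a further factor of $2$ arising from the symmetry between $(1,0)$ and $(0,1)$ parts of real sections (so that the relevant quadratic form is $2|\xi^{0,1}|^2$ rather than $|\xi^{0,1}|^2$ alone).
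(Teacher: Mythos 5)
Your outline follows the same overall strategy as the paper: pass to complex coordinates, split $v = s + \sigma$, reorganize the bulk into an intrinsic $s$-piece plus a $|\zeta|^2$-piece, and use the free boundary condition to convert the integration-by-parts residues into $A_\Sigma^{u_r}$-terms. The factor $8 = 4 \times 2$ is also explained correctly. But there are two substantive gaps.

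First, you appear to expect an \emph{equality}, whereas the lemma asserts an inequality, and the $\leq$ does not come from dropping a nonnegative cross-term. After the boundary cleanup, the paper obtains the identity
\[
\delta^2 E_H(u)(v, v) = 8\int_{\bB}|\zeta|^2 + \Big(4\int_{\bB}|\nabla_z^{\perp}s|^2 - |\nabla_z^T s|^2 + \int_{\partial\bB}A_\Sigma^{u_r}(s,s)\,d\theta\Big),
\]
and it is only at the very end, by rewriting $|\nabla_z^\perp s|^2 - |\nabla_z^T s|^2$, that the parenthesized expression is \emph{estimated} from above by $B_H(u)(s,s)$. This step is not cosmetic: while $4|\nabla_z^\perp s|^2 = |\nabla f|^2$ matches the first bulk term of $B_H(u)$ exactly, the term $-4|\nabla_z^T s|^2$ involves the full squared second fundamental form $|A|^2$ of the image of $u$, not $\frac{H^2}{2}$. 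Your item (ii), claiming the curvature term is $-\frac{H^2}{2}|u_z|^2 f^2$, skips precisely this point. The inequality $|A|^2 \geq \frac{H^2}{2}$ (Cauchy--Schwarz on the trace) is what lets you pass from the exact curvature coefficient to the weaker one in $B_H(u)$, and without invoking it you do not get the stated conclusion.

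Second, the boundary analysis is more delicate than your ``Weingarten-type term controlled by $A_\Sigma^{u_r}$'' suggests. The boundary residues from the three integrations by parts do not directly produce $-2A_\Sigma^{u_r}(s,\sigma) - A_\Sigma^{u_r}(\sigma,\sigma)$. The paper instead writes $\sigma = \overline{a}u_{\overline{z}} + a u_z$, expresses the second-fundamental-form pairing in terms of $\nabla_\sigma s$ and $\nabla_\sigma\sigma^{0,1}$, and then uses the CMC identities (such as $2(\nabla_{\overline{z}}\sigma^{1,0})^\perp = \sqrt{-1}H\,u_{\overline{z}}\times\sigma^{1,0}$, $-2(\nabla_z s)^{1,0}=\sqrt{-1}H\,u_z\times s$, $2(\nabla_z\sigma^{0,1})^\perp = -\sqrt{-1}H\,u_z\times\sigma^{0,1}$) to move tangential projections around. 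After this, there remain two boundary integrals that are \emph{not} visibly of the form $A_\Sigma^{u_r}$-terms at all; they vanish only because $\langle\sigma, u_r\rangle = 0$ on $\partial\bB$ gives $\re(z\overline{a}) = 0$, and because $d\theta = -\sqrt{-1}\,dz/z$, $\sqrt{-1}H\langle s, u_z\times u_{\overline{z}}\rangle$ and $\frac{\sqrt{-1}}{2}\frac{dz}{z}$ are all real. This reality argument is the nontrivial point of the boundary computation; without it the cancellation you claim is asserted rather than demonstrated.

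So the approach is right, but the proposal as written would not close: you need to supply the Ejiri--Micallef-type estimate comparing $-|\nabla_z^T s|^2$ with $-\frac{H^2}{2}\frac{|\nabla u|^2}{2}f^2$, and you need the $\re(z\overline{a}) = 0$ reality argument to make the boundary residues land precisely on $-\frac{1}{4}\int_{\partial\bB}[A_\Sigma^{u_r}(v,v)-A_\Sigma^{u_r}(s,s)]\,d\theta$.
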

\begin{proof}
By the logarithmic cut-off trick, it suffices to treat the case where $s$ and $\sigma$ are supported away from $\cS$. We first rewrite the interior integral in $\delta^2 E_{H}(u)(v, v)$ to get
\begin{align}\label{eq:d2E-complex}
\delta^2 E_{H}(u)(v, v) = 4&\int_{\bB} |\nabla_{z} v|^2 dx^1 \wedge dx^2+4\int_{\bB} \re\langle \nabla_{z}v, \overline{\sqrt{-1}H \cdot u_z\times v} \rangle dx^1 \wedge dx^2\nonumber\\
& + \int_{\partial \bB} A_\Sigma^{u_r}(v, v) d\theta.
\end{align}
Note that this rewriting does not involve integration by parts. On the other hand, we did integrate by parts on three other occasions during the calculations in~\cite[Section 5]{ChengZhou-cmc}. The corresponding steps here introduce boundary terms as follows.
\begin{align*}
\int_{\bB}|\nabla_z \sigma^{1, 0}|^2 - \int_{\bB}|\nabla_z \sigma^{0, 1}|^2 =\ &  \int_{\partial \bB} \langle \sigma^{1, 0}, \nabla_z \sigma^{0, 1} \rangle\frac{\sqrt{-1}}{2}dz  +\langle \sigma^{1, 0}, \nabla_{\overline{z}} \sigma^{0, 1} \rangle\frac{\sqrt{-1}}{2}d\overline{z} =: I_1. 
\end{align*}
\begin{align*}
\int_{\bB} \langle \nabla_z s, \nabla_{\overline{z}}\sigma^{0, 1} \rangle - \int_{\bB} \langle \nabla_{\overline{z}}s, \nabla_z\sigma^{0, 1} \rangle =\ &  \int_{\partial \bB} \langle s, \nabla_z \sigma^{0, 1} \rangle\frac{\sqrt{-1}}{2}dz + \langle s, \nabla_{\overline{z}} \sigma^{0, 1} \rangle\frac{\sqrt{-1}}{2}d\overline{z} = : I_2
\end{align*}
\begin{align*}
2\re\int_{\bB}\langle \nabla_z s, (\nabla_{\overline{z}}\sigma^{1, 0})^\perp \rangle - \langle (\nabla_z s)^{1, 0}, \overline{\nabla_z \sigma^{1, 0}} \rangle =\ & \re\sqrt{-1}H \int_{\bB} \langle \nabla_z s, u_{\overline{z}} \times \sigma^{1, 0} \rangle + \langle u_z \times s, \overline{\nabla_z \sigma^{1, 0}} \rangle\\
=\ & \re\sqrt{-1}H \int_{\partial \bB} \langle s, u_{\overline{z}} \times \sigma^{1, 0} \rangle \frac{\sqrt{-1}}{2}d\overline{z}\\
=\ & 2\re \int_{\partial \bB} \langle s, \nabla_{\overline{z}}\sigma^{1, 0} \rangle \frac{\sqrt{-1}}{2}d\overline{z} = : I_3,
\end{align*}
where in deriving the third identity, besides~\eqref{eq:complex-cmc}, we used that $2(\nabla_{\overline{z}}\sigma^{1, 0})^\perp = \sqrt{-1}H u_{\overline{z}} \times \sigma^{1, 0}$ and that $-2(\nabla_z s)^{1, 0} = \sqrt{-1}H u_z \times s$, both simple consequences of~\eqref{eq:complex-cmc}. Recalling the computations in~\cite[Lemma 5.2]{ChengZhou-cmc}, we get
\begin{align}
&\int_{\bB}|\nabla_z v|^2 + \re \langle  \nabla_z v, \overline{\sqrt{-1}H \cdot u_z\times v} \rangle \frac{\sqrt{-1}}{2}dz \wedge d\overline{z} \nonumber\\
=\  & \Big(2\int_{\bB}|\zeta|^2 + \int_{\bB} |\nabla_z^\perp s|^2 - |\nabla_z^T s|^2\Big) + 2\re (I_2) + I_3 + I_1 \nonumber\\
=\ & 2\int_{\bB}|\zeta|^2 + \int_{\bB} |\nabla_z^\perp s|^2 - |\nabla_z^T s|^2 + 2\re \int_{\partial \bB}  \langle s, \nabla_{\overline{z}} \sigma^{0, 1} \rangle\frac{\sqrt{-1}}{2}d\overline{z}\nonumber\\
&+\int_{\partial \bB} \langle \sigma^{1, 0}, \nabla_z \sigma^{0, 1} \rangle\frac{\sqrt{-1}}{2}dz  +\langle \sigma^{1, 0}, \nabla_{\overline{z}} \sigma^{0, 1} \rangle\frac{\sqrt{-1}}{2}d\overline{z}.\label{eq:E-B-almost}
\end{align}
Noting that $\int_{\bB} |\nabla_z^\perp s|^2 - |\nabla_z^T s|^2$ is part of the complex form of the second variation of the area, we would next like to show that 
\begin{align}
&\int_{\partial \bB} \langle \sigma^{1, 0}, \nabla_z \sigma^{0, 1} \rangle\frac{\sqrt{-1}}{2}dz  +\langle \sigma^{1, 0}, \nabla_{\overline{z}} \sigma^{0, 1} \rangle\frac{\sqrt{-1}}{2}d\overline{z} + 2\re \int_{\partial \bB}  \langle s, \nabla_{\overline{z}} \sigma^{0, 1} \rangle\frac{\sqrt{-1}}{2}d\overline{z}\nonumber\\
=\ & -\frac{1}{4}\int_{\partial \bB} \big[A_\Sigma^{u_r}(v, v) - A_\Sigma^{u_r}(s, s)\big] d\theta. \label{eq:boundary-cleanup-1}
\end{align}
We begin by examining the right-hand side. Note that $d\theta = -\sqrt{-1}\frac{dz}{z}$ and $u_r = 2\re (zu_z)$ on $\partial\bB$. Also, since $\sigma$ is supported away from the branch points of $u$, we may write $\sigma = \overline{a}u_{\overline{z}} + au_z$ for some complex-valued function $a$. Now we compute
\begin{align*}
-\frac{1}{4}\int_{\partial \bB} \big[A_\Sigma^{u_r}(v, v) - A_\Sigma^{u_r}(s, s)\big] d\theta =\ & \re\int_{\partial \bB}\langle u_z, 2 \nabla_\sigma s + \nabla_\sigma \sigma^{0, 1} \rangle \frac{\sqrt{-1}}{2}dz,
\end{align*}
where $\nabla_\sigma$ applied to $s$ and $\sigma^{0, 1}$ is understood to mean $\overline{a}\nabla_{\overline{z}} + a\nabla_z$, and we have used the fact that $\langle  u_z, \nabla_{\sigma}\sigma^{1, 0} \rangle = 0$ as $\nabla_{\sigma}^T\sigma^{1, 0}$ is of type $(1, 0)$ still. For the term involving $\nabla_\sigma s$ on the right-hand side, as $s \in \Gamma(\nu)$ and the ambient connection is flat, we have
\begin{align}
&2\re\int_{\partial \bB} \langle u_z, \nabla_\sigma s \rangle \frac{\sqrt{-1}}{2}dz\nonumber\\
=\ & -2\re\int_{\partial \bB} \langle s, \nabla_z \sigma^{1, 0} + \nabla_z \sigma^{0, 1} \rangle\frac{\sqrt{-1}}{2}dz\nonumber\\
=\ & 2\re\int_{\partial \bB} \langle s, \nabla_{\overline{z}} \sigma^{0, 1}  \rangle\frac{\sqrt{-1}}{2}d\overline{z} + \re \sqrt{-1}H\int_{\partial \bB} \langle s,  u_z \times \sigma^{0, 1} \rangle\frac{\sqrt{-1}}{2}dz, \label{eq:sig-s-calculation}
\end{align}
where in getting the very last term we again used that $2(\nabla_z \sigma^{0, 1})^{\perp} = -\sqrt{-1}Hu_z \times \sigma^{0, 1}$. On the other hand, 
\begin{align}\label{eq:sig-sig-calculation}
\re\int_{\partial \bB}\langle u_z, \nabla_\sigma \sigma^{0, 1} \rangle \frac{\sqrt{-1}}{2}dz =\ & \re \int_{\partial \bB}\Big(\overline{a} \langle u_z, \nabla_{\overline{z}}\sigma^{0, 1} \rangle + a\langle u_z, \nabla_z \sigma^{0, 1} \rangle\Big) \frac{\sqrt{-1}}{2}dz.
\end{align}
Next we turn to the left-hand side of the desired identity~\eqref{eq:boundary-cleanup-1}. Noting that the first integral is a real number by considering $d|\sigma^{1, 0}|^2$ on $\partial \bB$, we find that
\begin{align*}
&\int_{\partial \bB} \langle \sigma^{1, 0}, \nabla_z \sigma^{0, 1} \rangle\frac{\sqrt{-1}}{2}dz  +\langle \sigma^{1, 0}, \nabla_{\overline{z}} \sigma^{0, 1} \rangle\frac{\sqrt{-1}}{2}d\overline{z}\\
=\ & \re\int_{\partial \bB} \langle \sigma^{1, 0}, \nabla_z \sigma^{0, 1} \rangle\frac{\sqrt{-1}}{2}dz  +\langle \sigma^{1, 0}, \nabla_{\overline{z}} \sigma^{0, 1} \rangle\frac{\sqrt{-1}}{2}d\overline{z}\\
=\ & \re\int_{\partial \bB}a\langle u_z, \nabla_z\sigma^{0, 1} \rangle \frac{\sqrt{-1}}{2}dz + a\langle u_z, \nabla_{\overline{z}}\sigma^{0, 1} \rangle \frac{\sqrt{-1}}{2}d\overline{z}.
\end{align*}
Combining this with~\eqref{eq:sig-s-calculation} and~\eqref{eq:sig-sig-calculation}, we obtain
\begin{align*}
&\int_{\partial \bB} \langle \sigma^{1, 0}, \nabla_z \sigma^{0, 1} \rangle\frac{\sqrt{-1}}{2}dz  +\langle \sigma^{1, 0}, \nabla_{\overline{z}} \sigma^{0, 1} \rangle\frac{\sqrt{-1}}{2}d\overline{z} + 2\re \int_{\partial \bB}  \langle s, \nabla_{\overline{z}} \sigma^{0, 1} \rangle\frac{\sqrt{-1}}{2}d\overline{z}\\
& +\frac{1}{4}\int_{\partial \bB} \big[A_\Sigma^{u_r}(v, v) - A_\Sigma^{u_r}(s, s)\big] d\theta\\
=\ & -\re \sqrt{-1}H\int_{\partial \bB} \langle s,  u_z \times \sigma^{0, 1}\rangle \frac{\sqrt{-1}}{2}dz\\
& + \re\int_{\partial \bB} a \langle u_z, \nabla_{\overline{z}}\sigma^{0, 1} \rangle \frac{\sqrt{-1}}{2}d\overline{z} - \overline{a}\langle u_z, \nabla_{\overline{z}}\sigma^{0, 1} \rangle \frac{\sqrt{-1}}{2}dz\\
=\ &  -\re \sqrt{-1}H \int_{\partial \bB} \langle s, u_z \times u_{\overline{z}}\rangle \overline{a}z \frac{\sqrt{-1}}{2}\frac{dz}{z}+\re\int_{\partial \bB}  \langle u_z, \nabla_{\overline{z}}\sigma^{0, 1} \rangle\big( a\overline{z}\frac{\sqrt{-1}}{2}\frac{d\overline{z}}{\overline{z}} -  \overline{a}z\frac{\sqrt{-1}}{2}\frac{dz}{z}\big).
\end{align*}
On $\partial \bB$, since $\langle \sigma, u_r \rangle = 0$, we have $\re(z\overline{a}) = 0$. Noting that $\sqrt{-1}H \langle s,  u_z \times u_{\overline{z}}\rangle$, $\frac{\sqrt{-1}}{2}\frac{d\overline{z}}{\overline{z}}$ and $\frac{\sqrt{-1}}{2}\frac{dz}{z}$ are all real, we see that both integrals in the last line above vanish, and therefore we indeed obtain~\eqref{eq:boundary-cleanup-1}. Combining it with~\eqref{eq:E-B-almost}, we get that 
\begin{align*}
\delta^2E_H(u)(v, v)  =\ & 8\int_{\bB} |\zeta|^2 dx^1 \wedge dx^2+ \Big(4\int_{\bB} |\nabla_z^{\perp}s|^2 - |\nabla^T_z s|^2 dx^1 \wedge dx^2 + \int_{\partial\bB} A_\Sigma^{u_r}(s, s) d\theta\Big).
\end{align*}
We then rewrite and estimate $ |\nabla_z^{\perp}s|^2 - |\nabla^T_z s|^2$ as in~\cite[Lemma 5.2]{ChengZhou-cmc} to obtain finally that
\begin{align*}
\delta^2 E_H(u)(v, v) \leq B_H(u)(s, s) + 8\int_{\bB}|\zeta|^2 dx^1 \wedge dx^2.
\end{align*}
This completes the proof of Lemma~\ref{lemm:index-calculation}.
\end{proof}
\begin{proof}[Proof of Proposition~\ref{prop:index-comparison}] 
In view of~\eqref{eq:index-computation}, to estimate $\Ind_{H}(u)$ from below by the index of $B_H(u)$, we want to solve the following boundary value problem for $\sigma \in \Gamma(\xi^{0, 1})$ given $s \in \Gamma(\nu)$. Below we let $\eta$ be the inward unit normal to $\Sigma$. Then $\eta(u(x)) \in \xi_x$ for all $x \in \partial \bB$. For brevity, we denote $\eta\circ (u|_{\partial \bB})$ still by $\eta$.
\begin{equation}\label{eq:complementing-section}
\left\{
\begin{array}{ll}
\nabla_z^T \sigma = -(\nabla_z s)^{0, 1} & \text{ in }\bB,\\
\re \langle \sigma, \eta^{1, 0} \rangle = 0 & \text{ on }\partial \bB.
\end{array}
\right.
\end{equation}
Denote by $L^2\Gamma(\xi^{0, 1})$ the space of $L^2$-sections of $\xi^{0, 1}$, regarded as a real Hilbert space with the inner product 
\[
(\sigma, \alpha) \mapsto \re\int_{\bB} \langle \sigma, \overline{\alpha} \rangle \frac{\sqrt{-1}}{2} dz \wedge d\overline{z}.
\]
The space $W^{1, 2}\Gamma(\xi^{0, 1})$ is defined similarly. Next, letting
\[
\cD = \{\sigma \in W^{1, 2}\Gamma(\xi^{0, 1})\ |\ \re\langle \sigma, \eta^{1, 0} \rangle = 0 \text{ on }\partial \bB\},
\]
we have the following global \textit{a priori} estimate: 
\[
\|\sigma\|_{1, 2} \leq C(\|\sigma\|_{2} + \|\nabla_z^T \sigma\|_2), \text{ for all }\sigma \in \cD.
\]
In particular, $\nabla^T_z:\cD \to L^2\Gamma(\xi^{0, 1})$ has finite-dimensional kernel and closed range. Now suppose $\alpha \in L^2\Gamma(\xi^{0, 1}) \cap (\Ran(\nabla^T_z))^\perp$, so that
\[
\re\int_{\bB}\langle \nabla_z^T \sigma, \overline{\alpha} \rangle  \frac{\sqrt{-1}}{2} dz \wedge d\overline{z}=0 \text{, for all }\sigma \in \cD.
\]
Elliptic regularity gives $\alpha \in \Gamma(\xi^{0, 1})$, and we may integrate by parts to get
\begin{align*}  
\re\int_{\partial \bB} \langle \sigma, \overline{\alpha} \rangle \frac{\sqrt{-1}}{2}d\overline{z} - \re\int_{\bB} \langle \sigma, \overline{\nabla^T_{\overline{z}}\alpha} \rangle \frac{\sqrt{-1}}{2}dz \wedge d\overline{z} = 0, \text{ for all } \sigma \in \cD.
\end{align*}
In particular, $\nabla^T_{\overline{z}}\alpha = 0$. Moreover, writing the boundary integrand as
\begin{align*}
\langle \sigma, \overline{\alpha} \rangle \frac{\sqrt{-1}}{2}d\overline{z} =\ & 2\langle \sigma, \eta^{1, 0} \rangle \langle \eta^{0, 1}   , \overline{\alpha} \rangle \frac{\sqrt{-1}}{2}d\overline{z} =  2\langle \sigma, \eta^{1, 0} \rangle \langle \eta^{0, 1} , \overline{z}\overline{\alpha} \rangle \frac{\sqrt{-1}}{2}\frac{d\overline{z}}{\overline{z}},
\end{align*}
we deduce that the boundary condition on $\alpha$ is
\[
\im\langle z\alpha, \eta^{1, 0} \rangle = 0 \text{ on }\partial \bB.
\]
Noting that $ zu_z = c \eta^{1, 0}$ for some real-valued function $c$ on $\partial \bB$, we have
\begin{equation}\label{eq:section-bc}
\im z^2\langle \alpha, u_z \rangle = 0 \text{ on }\partial \bB.
\end{equation}
As $\nabla^T_{\overline{z}}\alpha = 0$ and $\nabla^T_{\overline{z}}u_z = 0$, the latter being a consequence of~\eqref{eq:complex-cmc}, we see that $z^2 \langle \alpha, u_z \rangle$ is a holomorphic function on $\bB$, and hence by~\eqref{eq:section-bc} it must be a real constant, which we denote by $\lambda$. Since $\frac{\lambda}{z^2} = \langle \alpha, u_z \rangle \in L^{1}(\bB)$, we conclude that $\lambda = 0$, which gives $\alpha = 0$, since $\alpha = \langle \alpha, u_z \rangle \frac{u_{\overline{z}}}{|u_z|^2}$ away from the branch points of $u$. Having shown that $(\Ran(\nabla^T_z))^\perp = \{0\}$, and recalling that $\Ran(\nabla^T_z)$ is closed, we see that~\eqref{eq:complementing-section} has a solution for all $s \in \Gamma(\nu)$. The asserted inequality now follows as in the proof of~\cite[Proposition 5.1]{ChengZhou-cmc}.
\end{proof}
\subsection{Uniform energy bound}\label{subsec:energy-bound}
\begin{prop}\label{prop:dirichlet-bound}
Suppose $A_\Sigma^\eta > 0$, where $\eta$ is the inward unit normal to $\Sigma$, and that $u: (\overline{\bB}, \partial \bB) \to (\overline{\Omega}, \Sigma)$ is a smooth solution to~\eqref{eq:cmc-fb-7} with the index of $B_H(u)$ at most $1$. Then 
\begin{equation}\label{eq:dirichlet-bound}
D(u) \leq \frac{16\pi}{H^2}.
\end{equation}
\end{prop}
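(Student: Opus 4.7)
Plan: The proof is a free-boundary adaptation of Hersch's balancing trick, combined with the convexity hypothesis to control the boundary term in $B_H$. Since $B_H(u)$ has index at most $1$, there exists $g \in L^2(\bB)$ (either the unique first eigenfunction when the index equals $1$, or $g\equiv 0$ when the index is $0$) such that $B_H(u)(f\mathbf{n}, f\mathbf{n}) \geq 0$ for every scalar function $f$ with $\int_\bB f\, g\, dx^1\wedge dx^2 = 0$.

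For the test functions, let $\sigma^{-1}:\CC \to S^2 \subset \RR^3$ denote inverse stereographic projection, and for each $\lambda > 0$ and each conformal automorphism $\psi$ of $\bB$, set
\[
\phi_{\lambda,\psi}(z) := \sigma^{-1}(\lambda\,\psi(z)) \in S^2.
\]
Each $\phi_{\lambda,\psi}:\bB \to S^2$ is conformal with $|\phi_{\lambda,\psi}|^2 \equiv 1$, and the conformal factor of $\sigma^{-1}$ yields the explicit identity
\[
\int_\bB |\nabla \phi_{\lambda,\psi}|^2\, dx^1\wedge dx^2 \;=\; 8\pi \cdot \frac{\lambda^2}{1+\lambda^2}\; <\; 8\pi.
\]
The $3$-dimensional group of conformal automorphisms of $\bB$ provides precisely the freedom needed to enforce the three balancing conditions
\[
\int_\bB \phi_{\lambda,\psi}^{\,i}\, g\, dx^1\wedge dx^2 \;=\; 0,\qquad i=1,2,3,
\]
which can be arranged via a standard Hersch-type topological degree argument (cf.~\cite{LiYau,ABCS2019}). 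This produces, for each $\lambda > 0$, a $\psi_\lambda$ such that $\phi := \phi_{\lambda,\psi_\lambda}$ has every component $L^2$-orthogonal to $g$.

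Taking $f_i := \phi^i$ as test functions, the index hypothesis gives $B_H(u)(f_i\mathbf{n}, f_i\mathbf{n}) \geq 0$ for each $i$. Summing over $i$ and using $\sum_i (\phi^i)^2 = 1$, the defining formula~\eqref{eq:2nd-var-area} yields
\[
0 \;\leq\; \sum_{i=1}^3 B_H(u)(f_i\mathbf{n}, f_i\mathbf{n}) \;=\; \int_\bB |\nabla\phi|^2\, dx^1\wedge dx^2 - \frac{H^2}{2}\, D(u) + \int_{\partial\bB} A_\Sigma^{u_r}(\mathbf{n},\mathbf{n})\, d\theta.
\]
Since $u(\overline{\bB}) \subseteq \overline{\Omega}$ together with the free boundary condition $u_r \perp T_u\Sigma$ forces $u_r = -|u_r|\,\eta$ on $\partial\bB$, the convexity hypothesis $A_\Sigma^\eta > 0$ yields $A_\Sigma^{u_r}(\mathbf{n}, \mathbf{n}) \leq 0$, and the boundary term is non-positive. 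Combining with the Dirichlet-energy identity above gives $\tfrac{H^2}{2}\, D(u) \leq 8\pi\,\lambda^2/(1+\lambda^2)$ for every $\lambda > 0$; passing to the limit $\lambda \to \infty$ produces $D(u) \leq 16\pi/H^2$.

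The main technical obstacle is the degree-theoretic step producing the Möbius transformation $\psi_\lambda$ for each $\lambda$: one must compactify the parameter space $\mathrm{Aut}(\bB)$ and verify that the balancing map has non-trivial degree, which comes down to tracking the degeneration of $\phi_{\lambda,\psi}$ (concentrating at a point of $S^2$) as $\psi$ approaches the boundary of the parameter space. This is classical Hersch machinery, but requires a small case analysis when $\int_\bB g = 0$. A minor auxiliary check is that the boundary integrand retains its sign at isolated boundary branch points where $u_r$ vanishes, which is immediate since the integrand vanishes there.
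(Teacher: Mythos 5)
Your plan is the same as the paper's: exploit that $B_H(u)(\mathbf{n},\mathbf{n})<0$ to see the index is exactly $1$ when $u$ is non-constant, balance against the first eigenfunction via a Hersch-type degree argument, sum the components, and discard the boundary term using the sign forced by $u_r=-|u_r|\eta$ and the convexity $A_\Sigma^\eta>0$. The formula $\sum_i B_H(u)(\phi^i\mathbf{n},\phi^i\mathbf{n})=\int_\bB|\nabla\phi|^2-\tfrac{H^2}{2}D(u)+\int_{\partial\bB}A_\Sigma^{u_r}(\mathbf{n},\mathbf{n})\,d\theta$ and the Dirichlet-energy computation $\int_\bB|\nabla\phi_{\lambda,\psi}|^2=8\pi\lambda^2/(1+\lambda^2)$ are both correct.

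However, your balancing step has a genuine gap. For \emph{fixed} $\lambda$, every member of your family $\phi_{\lambda,\psi}$ has the \emph{same} image, namely the spherical cap $C_\lambda:=\sigma^{-1}(\bB_\lambda)\subset S^2$ (a cap centered at the south pole, of size determined by $\lambda$). Consequently, the balancing vector $\int_\bB\phi_{\lambda,\psi}\,g$ always lies in the convex hull of $C_\lambda$, and as $\psi$ degenerates to the boundary of $\mathrm{Aut}(\bB)$ the maps $\phi_{\lambda,\psi}$ concentrate only at points of the bounding \emph{circle} $\sigma^{-1}(\partial\bB_\lambda)$, not at arbitrary points of $S^2$. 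The boundary map of the balancing map therefore has image in a circle, hence is null-homotopic in $\RR^3\setminus\{0\}$, and the degree you would need is $0$, not $\pm1$ — the cited Hersch machinery does not apply to this family. In fact, when $\lambda<1$ the cap lies strictly below the equator, so $\int_\bB\phi^3_{\lambda,\psi}\,g$ has a fixed sign (recall $g>0$) and balancing is outright impossible for \emph{every} $\psi$; and even for large $\lambda$ your family is a strict $3$-parameter subfamily (conformal maps onto one fixed cap) of the full family used in the standard argument, which also allows the \emph{center} of the cap to move. The paper avoids this by taking $F\circ f$ with $F$ ranging over all conformal diffeomorphisms of $S^2$ and $f$ a fixed conformal map $\bB\to S^2_+$; as $F$ ranges over a $3$-parameter slice of dilations, the maps concentrate at \emph{every} point of $S^2$, giving the standard degree $1$ and hence balancing, after which $\int_\bB|\nabla(F\circ f)|^2\leq 8\pi$ directly yields $D(u)\leq 16\pi/H^2$ with no limit in $\lambda$ required. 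To repair your argument, replace the family $\phi_{\lambda,\psi}$ by the full family of conformal embeddings of $\bB$ into $S^2$ (equivalently, allow the image cap's center and size both to vary), which is exactly the paper's family.
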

\begin{proof}
It suffices to consider the case where $u$ is non-constant. Since $u(\overline{\bB}) \subseteq \overline{\Omega}$ and $u(\partial \bB) \subseteq \Sigma$, for all $x \in \partial \bB$ the condition $u_r(x) \perp T_{u(x)}\Sigma$ implies that $u_r(x)$ is a non-positive multiple of $\eta(u(x))$. Therefore we have
\begin{equation}\label{eq:convexity-sign}
\int_{\partial \bB} A_\Sigma^{u_r}(\mathbf{n}, \mathbf{n}) \leq  0.
\end{equation}
Since $H > 0$ and $u$ is not a constant, substituting $f \equiv 1$ into~\eqref{eq:2nd-var-area} then shows that the index of $B_H(u)$ is positive, and hence exactly $1$ under our assumptions. Letting $v > 0$ be a lowest eigenfunction for the operator $-\Delta  - \frac{|\nabla u|^2}{2}\frac{H^2}{2}$ subject to the boundary condition $v_r + A_\Sigma^{u_r}(\mathbf{n}, \mathbf{n})v = 0 \text{ on }\partial \bB$, we have $B_H(u)(f\mathbf{n}, f\mathbf{n}) \geq 0$ whenever $\int_{\bB} fv = 0$. 

The remainder of the proof is similar to a special case of~\cite[Lemma 48]{ABCS2019}. Take a conformal diffeomorphism $f$ from $\bB$ onto the upper hemisphere $S^2_+ \subseteq \RR^3$. Then by a degree theory argument, there exists a conformal diffeomorphism $F: S^2 \to S^2$ such that, viewing $F$ as a map into $\RR^3$, we have
\begin{equation}\label{eq:balanced}
\int_{\bB} (F^i\circ f) v = 0 \text{ for }i = 1, 2, 3.
\end{equation}
Hence 
\[
\int_{\bB} |\nabla (F^i \circ f)|^2 - (F^i \circ f)^2 \frac{H^2}{2}\frac{|\nabla u|^2}{2} + \int_{\partial \bB} (F^i \circ f)^2 A_\Sigma^{u_r}(\mathbf{n}, \mathbf{n}) \geq 0 \text{ for }i = 1, 2, 3.
\]
Summing from $i = 1$ to $i =3$ and using the conformality of $F\circ f$ give
\begin{equation}\label{eq:Hersch-outcome}
8\pi + \int_{\partial \bB} A_\Sigma^{u_r}(\mathbf{n}, \mathbf{n}) \geq \frac{H^2}{2}\int_{\bB} \frac{|\nabla u|^2}{2}.
\end{equation}
Recalling~\eqref{eq:convexity-sign} finishes the proof.
\end{proof}
\subsection*{Proof of Theorem~\ref{thm:main-2}}
The assumptions on $\Sigma$ allows us to take $\Sigma' = \Sigma$ in the proof of Theorem~\ref{thm:main-1}. Thus, given $H \in (0, H_0)$, we obtain a sequence $(r_n)$ increasing to $1$ such that for all $n$, there exists a non-constant $u_n:(\overline{\bB}, \partial\bB) \to (\overline{\Omega}, \Sigma)$ with
\begin{equation}\label{eq:cmc-n}
\left\{
\begin{array}{ll}
\Delta u_n = r_nH \cdot (u_n)_{x^1} \times (u_n)_{x^2} & \text{ on } \bB, \\
|(u_n)_{x^1}| = |(u_n)_{x^2}|,\ \langle (u_n)_{x^1}, (u_n)_{x^2} \rangle = 0 & \text{ on }\bB, \\
(u_n)_r(x) \perp T_{u_n(x)}\Sigma & \text{ on }\partial \bB,
\end{array}
\right.
\end{equation}
satisfying in addition that $\Ind_{r_nH}(u_n) \leq 1$. Moreover, we note that the thresholds $\eta$ and $\beta$ in Proposition~\ref{prop:small-regular-2}, Remark~\ref{rmk:small-regular-int} and Proposition~\ref{prop:uniform-lower-bound} are independent of $n$, and hence, as a consequence of the proof of Theorem~\ref{thm:main-1}, we have 
\[
D(u_n) \geq \min\{\frac{\eta}{6}, \beta\} \text{, for all }n.
\]
On the other hand, by the index bound $\Ind_{r_nH}(u_n)\leq 1$, Proposition~\ref{prop:index-comparison} and Proposition~\ref{prop:dirichlet-bound}, we get for all $n$ that
\[
D(u_n) \leq \frac{16\pi}{r_n^2H^2}.
\]
We can then finish the proof by a bubbling analysis similar to that in Section~\ref{sec:passage}. In this last part, the strict inequality $H< H_0 \ ( \leq H_{\Sigma})$ is required to derive a contradiction at the step corresponding to Proposition~\ref{prop:no-interior-bubble}.
\bibliographystyle{amsplain}
\bibliography{cmc-free-boundary}

\providecommand{\bysame}{\leavevmode\hbox to3em{\hrulefill}\thinspace}
\providecommand{\MR}{\relax\ifhmode\unskip\space\fi MR }
\providecommand{\MRhref}[2]{%
  \href{http://www.ams.org/mathscinet-getitem?mr=#1}{#2}
}
\providecommand{\href}[2]{#2}
\begin{thebibliography}{10}

\bibitem{ABCS2019}
Lucas Ambrozio, Reto Buzano, Alessandro Carlotto, and Ben Sharp, \emph{Bubbling
  analysis and geometric convergence results for free boundary minimal
  surfaces}, J. \'{E}c. polytech. Math. \textbf{6} (2019), 621--664.

\bibitem{ACS2017}
Lucas Ambrozio, Alessandro Carlotto, and Ben Sharp, \emph{Compactness analysis
  for free boundary minimal hypersurfaces}, Calc. Var. Partial Differential
  Equations \textbf{57} (2018), no.~1, Paper No. 22, 39.

\bibitem{BrezisCoron1985}
H.~Brezis and J.-M. Coron, \emph{Convergence of solutions of {$H$}-systems or
  how to blow bubbles}, Arch. Rational Mech. Anal. \textbf{89} (1985), no.~1,
  21--56.

\bibitem{Wolfram-Kuwert2008}
Wolfram B\"{u}rger and Ernst Kuwert, \emph{Area-minimizing disks with free
  boundary and prescribed enclosed volume}, J. Reine Angew. Math. \textbf{621}
  (2008), 1--27.

\bibitem{Carlotto-Franz-Schulz2020}
Alessandro Carlotto, Giada Franz, and Mario~B. Schulz, \emph{Free boundary
  minimal surfaces with connected boundary and arbitrary genus}, Camb. J. Math.
  \textbf{10} (2022), no.~4, 835--857.

\bibitem{ChengZhou2021}
Da~Rong Cheng and Xin Zhou, \emph{Existence of curves with constant geodesic
  curvature in a {R}iemannian 2-sphere}, Trans. Amer. Math. Soc. \textbf{374}
  (2021), no.~12, 9007--9028.

\bibitem{ChengZhou-cmc}
\bysame, \emph{Existence of constant mean curvature 2-spheres in {R}iemannian
  3-spheres}, Comm. Pure Appl. Math. \textbf{76} (2023), no.~11, 3374--3436.

\bibitem{Colding-Minicozzi08b}
T.~Colding and W.~Minicozzi, II, \emph{Width and finite extinction time of
  {R}icci flow}, Geom. Topol. \textbf{12} (2008), no.~5, 2537--2586.

\bibitem{DeLellis-Ramic16}
Camillo De~Lellis and Jusuf Ramic, \emph{Min-max theory for minimal
  hypersurfaces with boundary}, Ann. Inst. Fourier (Grenoble) \textbf{68}
  (2018), no.~5, 1909--1986.

\bibitem{DMDP2021}
Luigi De~Masi and Guido De~Philippis, \emph{Min-max construction of minimal
  surfaces with a fixed angle at the boundary}, arXiv:2111.09913 (2021).

\bibitem{EM}
Norio Ejiri and Mario Micallef, \emph{Comparison between second variation of
  area and second variation of energy of a minimal surface}, Adv. Calc. Var.
  \textbf{1} (2008), no.~3, 223--239; (see also arXiv:0708.2188v1).

\bibitem{Fraser2000}
Ailana~M. Fraser, \emph{On the free boundary variational problem for minimal
  disks}, Comm. Pure Appl. Math. \textbf{53} (2000), no.~8, 931--971.

\bibitem{Gruter1984}
Michael Gr\"{u}ter, \emph{Conformally invariant variational integrals and the
  removability of isolated singularities}, Manuscripta Math. \textbf{47}
  (1984), no.~1-3, 85--104.

\bibitem{GruterJost1986}
Michael Gr\"{u}ter and J\"{u}rgen Jost, \emph{Allard type regularity results
  for varifolds with free boundaries}, Ann. Scuola Norm. Sup. Pisa Cl. Sci. (4)
  \textbf{13} (1986), no.~1, 129--169.

\bibitem{GruterJost1986b}
\bysame, \emph{On embedded minimal disks in convex bodies}, Ann. Inst. H.
  Poincar\'{e} Anal. Non Lin\'{e}aire \textbf{3} (1986), no.~5, 345--390.

\bibitem{Guang-Li-Wang-Zhou2021}
Qiang Guang, Martin Man-chun Li, Zhichao Wang, and Xin Zhou, \emph{Min-max
  theory for free boundary minimal hypersurfaces {II}: general {M}orse index
  bounds and applications}, Math. Ann. \textbf{379} (2021), no.~3-4,
  1395--1424.

\bibitem{Guang-Wang-Zhou2018}
Qiang Guang, Zhichao Wang, and Xin Zhou, \emph{Free boundary minimal
  hypersurfaces with least area}, Comm. Anal. Geom., to appear.

\bibitem{Hildebrandt-Nitsche1979}
S.~Hildebrandt and J.~C.~C. Nitsche, \emph{Minimal surfaces with free
  boundaries}, Acta Math. \textbf{143} (1979), no.~3-4, 251--272.

\bibitem{Ketover2016}
Daniel Ketover, \emph{Free boundary minimal surfaces of unbounded genus},
  arXiv:1612.08691 (2016).

\bibitem{LadyzhenskayaUraltseva1968}
Olga~A. Ladyzhenskaya and Nina~N. Ural'tseva, \emph{Linear and quasilinear
  elliptic equations}, Academic Press, New York-London, 1968, Translated from
  the Russian by Scripta Technica, Inc, Translation editor: Leon Ehrenpreis.

\bibitem{Laurain-Petrides2019}
Paul Laurain and Romain Petrides, \emph{Existence of min-max free boundary
  disks realizing the width of a manifold}, Adv. Math. \textbf{352} (2019),
  326--371.

\bibitem{Li-Zhou-Zhu2021}
Chao Li, Xin Zhou, and Jonathan Zhu, \emph{Min-max theory for capillary
  surfaces}, arXiv:2111.09924 (2021).

\bibitem{Li2015}
Martin Man-chun Li, \emph{A general existence theorem for embedded minimal
  surfaces with free boundary}, Comm. Pure Appl. Math. \textbf{68} (2015),
  no.~2, 286--331.

\bibitem{Li-Zhou2021}
Martin Man-Chun Li and Xin Zhou, \emph{Min-max theory for free boundary minimal
  hypersurfaces {I}---{R}egularity theory}, J. Differential Geom. \textbf{118}
  (2021), no.~3, 487--553.

\bibitem{LiYau}
Peter Li and Shing~Tung Yau, \emph{A new conformal invariant and its
  applications to the {W}illmore conjecture and the first eigenvalue of compact
  surfaces}, Invent. Math. \textbf{69} (1982), no.~2, 269--291.

\bibitem{Lima2017}
Vanderson Lima, \emph{Bounds for the {M}orse index of free boundary minimal
  surfaces}, Asian J. Math. \textbf{26} (2022), no.~2, 227--252.

\bibitem{Lin-Sun-Zhou2020}
Longzhi Lin, Ao~Sun, and Xin Zhou, \emph{Min-max minimal disks with free
  boundary in {R}iemannian manifolds}, Geom. Topol. \textbf{24} (2020), no.~1,
  471--532.

\bibitem{Marques-Neves16}
F.~C. Marques and A.~Neves, \emph{Morse index and multiplicity of min-max
  minimal hypersurfaces}, Camb. J. Math. \textbf{4} (2016), no.~4, 463--511.

\bibitem{Morrey1968}
Charles~B. Morrey, Jr., \emph{Multiple integrals in the calculus of
  variations}, Classics in Mathematics, Springer-Verlag, Berlin, 2008, Reprint
  of the 1966 edition.

\bibitem{Pigati2022}
Alessandro Pigati, \emph{The viscosity method for min-max free boundary minimal
  surfaces}, Arch. Ration. Mech. Anal. \textbf{244} (2022), no.~2, 391--441.

\bibitem{Pigati-Riviere-Duke}
Alessandro Pigati and Tristan Rivi\`ere, \emph{A proof of the multiplicity 1
  conjecture for min-max minimal surfaces in arbitrary codimension}, Duke Math.
  J. \textbf{169} (2020), no.~11, 2005--2044.

\bibitem{Pigati-Riviere-CPAM}
\bysame, \emph{The regularity of parametrized integer stationary varifolds in
  two dimensions}, Comm. Pure Appl. Math. \textbf{73} (2020), no.~9,
  1981--2042.

\bibitem{Riviere17}
Tristan Rivi\`ere, \emph{A viscosity method in the min-max theory of minimal
  surfaces}, Publ. Math. Inst. Hautes \'Etudes Sci. \textbf{126} (2017),
  177--246.

\bibitem{Sacks-Uhlenbeck81}
J.~Sacks and K.~Uhlenbeck, \emph{The existence of minimal immersions of
  {$2$}-spheres}, Ann. of Math. (2) \textbf{113} (1981), no.~1, 1--24.

\bibitem{Song19}
Antoine Song, \emph{A dichotomy for minimal hypersurfaces in manifolds thick at
  infinity}, Ann. Sci. \'{E}c. Norm. Sup\'{e}r. (4) \textbf{56} (2023), no.~4,
  1085--1134.

\bibitem{Struwe1984}
Michael Struwe, \emph{On a free boundary problem for minimal surfaces}, Invent.
  Math. \textbf{75} (1984), no.~3, 547--560.

\bibitem{Struwe1985}
\bysame, \emph{On the evolution of harmonic mappings of {R}iemannian surfaces},
  Comment. Math. Helv. \textbf{60} (1985), no.~4, 558--581.

\bibitem{Struwe88}
\bysame, \emph{The existence of surfaces of constant mean curvature with free
  boundaries}, Acta Math. \textbf{160} (1988), no.~1-2, 19--64.

\bibitem{Struwe1988survey}
\bysame, \emph{Heat-flow methods for harmonic maps of surfaces and applications
  to free boundary problems}, Partial differential equations ({R}io de
  {J}aneiro, 1986), Lecture Notes in Math., vol. 1324, Springer, Berlin, 1988,
  pp.~293--319.

\bibitem{Sun-Wang-Zhou2020}
Ao~Sun, Zhichao Wang, and Xin Zhou, \emph{Multiplicity one for min-max theory
  in compact manifolds with boundary and its applications}, arxiv:2011.04136
  (2020).

\bibitem{YuchinSun22}
Yuchin Sun, \emph{Min-max free boundary minimal surface with genus at least
  one}, arXiv:2203.06305 (2022).

\bibitem{Tolksdorf1984}
Peter Tolksdorf, \emph{Regularity for a more general class of quasilinear
  elliptic equations}, J. Differential Equations \textbf{51} (1984), no.~1,
  126--150.

\bibitem{Uhlenbeck1981}
K.~Uhlenbeck, \emph{Morse theory by perturbation methods with applications to
  harmonic maps}, Trans. Amer. Math. Soc. \textbf{267} (1981), no.~2, 569--583.

\bibitem{Wang2020}
Zhichao Wang, \emph{Existence of infinitely many free boundary minimal
  hypersurfaces}, J. Differential Geom., to appear.

\bibitem{Zhou-Zhu19}
Xin Zhou and Jonathan Zhu, \emph{Min-max theory for constant mean curvature
  hypersurfaces}, Invent. Math. \textbf{218} (2019), no.~2, 441--490.

\bibitem{Zhou-Zhu20}
\bysame, \emph{Existence of hypersurfaces with prescribed mean curvature
  {I}---generic min-max}, Camb. J. Math. \textbf{8} (2020), no.~2, 311--362.

\end{thebibliography}
\end{document}